\newtheorem{theorem}{Theorem}[section]
\newtheorem{proposition}[theorem]{Proposition}
\newtheorem{lemma}[theorem]{Lemma}
\newtheorem{corollary}[theorem]{Corollary}
\theoremstyle{definition}
\newtheorem{condition}[theorem]{Condition}
\newtheorem{model}[theorem]{Model}
\theoremstyle{remark}
\newtheorem{remark}[theorem]{Remark}
\newtheorem{example}[theorem]{Example}
\numberwithin{equation}{section}
\newcommand{\R}{\mathbb{R}}
\newcommand{\Z}{\mathbb{Z}}
\newcommand{\N}{\mathbb{N}}
\newcommand{\eps}{\varepsilon}
\newcommand{\Gb}{\mathbb {G}}
\newcommand{\Ac}{\mathcal{A}}
\newcommand{\Lc}{\mathcal{L}}
\newcommand{\Mc}{\mathcal{M}}
\newcommand{\Nc}{\mathcal{N}}
\newcommand{\Xc}{\mathcal{X}}
\newcommand{\mc}[1]{\mathcal{#1}}
\newcommand{\mb}[1]{\mathbb{#1}}
\newcommand{\weak}{\rightsquigarrow}
\newcommand{\wconv}{\rightsquigarrow}
\newcommand{\pto}{\xrightarrow{\mathbb{P}}}
\newcommand{\Prob}{\mathbb{P}}    %probability
\newcommand{\Exp}{\operatorname{E}}
\newcommand{\Var}{\operatorname{Var}}
\newcommand{\Cov}{\operatorname{Cov}}
\newcommand{\diag}{\operatorname{diag}}
\newcommand{\argmin}{\operatornamewithlimits{\arg\min}}
\newcommand{\argmax}{\operatornamewithlimits{\arg\max}}
\newcommand{\diff}{\,\mathrm{d}}
\newcommand{\flo}[1]{\lfloor #1 \rfloor}
\DeclareMathOperator{\circf}{circ}
\DeclareMathOperator{\circmax}{circ{\text-}max}
\DeclareMathOperator{\slidmax}{slid{\text-}max}
\newcommand{\dbl}{{\ensuremath {\operatorname{d} }}}
\newcommand{\sbl}{{\ensuremath {\operatorname{s} }}}
\newcommand{\mbl}{{\ensuremath {\operatorname{m} }}}
\newcommand{\cbl}{{\ensuremath {\operatorname{c} }}}
\newcommand{\cblk}{{\ensuremath {\operatorname{c}}}}
\newcommand{\sblcbl}{{\sbl\text{-}\cbl}}
\newcommand{\rlhat}{{\hat{\mathrm{RL}}}}
\newcommand{\rl}{{\mathrm{RL}}}
\newcommand{\err}{{\mathrm{err}}}
\newcommand{\CI}{{\mathrm{CI}}}
\newcommand{\refstar}{\ref}
\begin{document}
\begin{bibunit}

\title{\fontsize{16}{19} Bootstrapping Estimators based on the Block Maxima Method}

\author{
Axel B\"ucher\thanks{Ruhr-Universität Bochum, Fakultät für Mathematik. Email: \href{mailto:axel.buecher@rub.de}{axel.buecher@rub.de}} \orcidlink{0000-0002-1947-1617}
\and
Torben Staud\thanks{Ruhr-Universität Bochum, Fakultät für Mathematik. Email: \href{mailto:torben.staud@rub.de}{torben.staud@rub.de}}\hspace{0.05cm} \orcidlink{0009-0004-4526-8585}
}

\date{\today}

\maketitle

\begin{abstract} 
The block maxima method is a standard approach for analyzing the extremal behavior of a potentially multivariate time series.
It has recently been found that the classical approach based on disjoint block maxima may be universally improved by considering sliding block maxima instead.
However, the asymptotic variance formula for estimators based on sliding block maxima involves an integral over the covariance of a certain family of multivariate extreme value distributions, which makes its estimation, and inference in general, an intricate problem. As an alternative, one may rely on bootstrap approximations: we show that naive block-bootstrap approaches from time series analysis are inconsistent even in i.i.d.\ situations, and provide a consistent alternative based on resampling circular block maxima. 
As a by-product, we show consistency of the classical resampling bootstrap for disjoint block maxima, and that estimators based on circular block maxima have the same asymptotic variance as their sliding block maxima counterparts. 
The finite sample properties are illustrated by Monte Carlo experiments, and the methods are demonstrated by a case study of precipitation extremes.

\end{abstract}

\color{black}
\noindent\textit{Keywords.} 
Bootstrap Consistency;
Disjoint and Sliding Block Maxima;
Extreme Value Statistics; 
Pseudo Maximum Likelihood Estimation;
Time Series Analysis.

\smallskip

\noindent\textit{MSC subject classifications.} 
Primary
62F40, %Bootstrap, jackknife and other resampling methods
62G32; %Statistics of extreme values; tail inference
% 62G20, %Asymptotic properties of nonparametric inference
% %62G09 %Nonparametric statistical resampling methods
% %60F17 %Functional limit theorems; invariance principles
% 62H05; %Characterization and structure theory for multivariate probability distributions; copulas
% %62G30 %Order statistics; empirical distribution functions
Secondary
% 62G09.  %Nonparametric statistical resampling methods
62E20. %Asymptotic distribution theory in statistics

\tableofcontents

\section{Introduction}

A common approach to statistically assess extreme events is the block maxima method.
For block size $r \in \N$, for instance $r=365$ corresponding to a year of daily observations, its starting point is the vector $\bm M_r=\max_{t=1}^r \bm X_t = (\max_{t=1}^r X_{t1}, \dots, \max_{t=1}^r X_{td})^\top$ of  componentwise maxima over $r$ successive observations from a time series $(\bm X_t)_{t \in \Z}$  in $\R^d$; with $\bm X_t = (X_{t1}, \dots, X_{td})^\top$. In view of a version of the classical extremal types theorem for multivariate time series, the distribution of $\bm M_r$ may be approximated by a multivariate extreme value distribution, for sufficiently large block size $r$. 
Both the distribution of $\bm M_r$ and parameters related to its potential weak limit distribution are common target parameters in extreme value statistics, with applications in finance, insurance or environmental statistics among others \citep{BeiGoeSegTeu04, Kat02}. For instance, in extreme event attribution studies (e.g., for cumulative daily precipitation amounts), one is typically interested in specific large quantiles or exeedance probabilities of $M_{365}$ that are defined by a recent observed extreme event. These parameters are directly linked to return levels and return periods of the underlying time series, and allow for assessing the effect of global warming on the observed event, for instance in terms of probability ratios \citep{Van17, philip2020protocol, Tra23}.

Formally, the statistical analysis starts from $\bm X_1, \dots, \bm X_n$, an observed stretch from an (approximately) stationary time series $(\bm X_t)_{t \in \Z}$. The classical block maxima method for estimating quantities associated with the law of $\bm M_r$ (or its limiting law for $r\to\infty$) consists of dividing the sampling period $\{1, \dots, n \}$ into $m$ disjoint blocks of length $r$ (for simplicity, we assume that $n=mr$), and of using the (disjoint) block maxima sample $(\bm M_{1,r}^{[\dbl]}, \dots, \bm M_{m,r}^{[\dbl]})$ as a starting point for statistical methods; here $\bm M^{[\dbl]}_{i,r}=\max_{t=(i-1)r+1, \dots, ir}\bm X_t$ denotes the $i$th disjoint block maximum. This classical approach can often be improved by instead considering the sliding block maxima sample $(\bm M_{1,r}^{[\sbl]}, \dots, \bm M_{n-r+1,r}^{[\sbl]})$ as a starting point; here $\bm M^{[\sbl]}_{i,r}=\max_{t=i, \dots, i+r-1}\bm X_t$ denotes the block maximum of size $r$ starting at ``day'' $i$. Note that this sample also provides an (approximately) stationary, but strongly auto-correlated sample from the law of $\bm M_r$. It was shown in \cite{BucSeg18-sl, ZouVolBuc21} among others that estimators that explicitly or implicitly involve empirical means of the sliding block maxima sample typically have the same expectation but a slightly smaller variance than the respective counterparts based on the disjoint block maxima sample.

It seems natural that a lower estimation variance offers the possibility of constructing smaller confidence intervals for a given confidence level, thereby for instance allowing for more precise statements regarding the aforementioned extreme event attribution. The main goal of this work is to find universal practical solutions for this heuristic based on suitable bootstrap approximations. A challenge consists of the fact that the large sample asymptotics of the sliding block maxima method depend on a blocking of blocks approach involving some intermediate blocking parameter converging to infinity at a well-balanced rate. As a consequence, standard bootstrap approaches for time series like the block bootstrap \citep{Lah03} would depend on such an intermediate ``blocking of blocks'' parameter as well (which must be avoided due to the typically small effective sample size in extreme value analysis), while naive approaches based on (blockwise) resampling of sliding block maxima will be inconsistent (see \cref{fig:incHists}, Remark~\ref{rem:slidboot-inconsistent} below, and Section~\refstar{sec:inconsistency-sliding} in the supplement).
As illustrated on the right-hand side of \cref{fig:incHists}, a novel approach based on what we call the circular (sliding) block maxima sample allows for consistent distributional approximations without relying on such an intermediate parameter sequence. At the same time, the method is computationally attractive because it avoids recalculation of any block maxima for the bootstrap sample.

\begin{figure}
    \centering
    \includegraphics[width=0.91\textwidth]{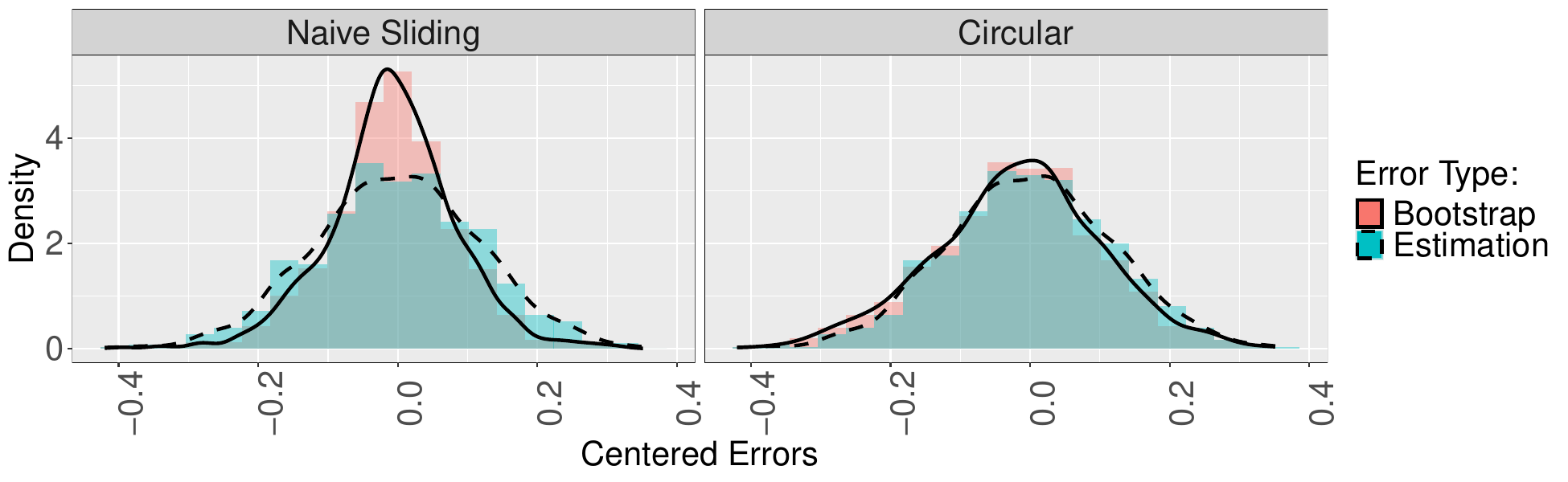}
    \caption{
    Empirical distribution of the estimation error and the bootstrap estimation error for 100-year return level estimation in Model~\ref{mod:armax-gpd} with $\gamma = -0.2, \beta = 0.5, m = 80, r = 365$. Left: naive sliding approach. Right: circular bootstrap approach. See Section~\ref{sec:bootgen} and Figure~\ref{fig:RlHists} for further details.
    }\vspace{-.2cm}
    \label{fig:incHists}
\end{figure}

In general, various variants of the bootstrap have been routinely applied in extreme value statistics (see, for instance, \citealp{EasTaw08, HusDav14}). However, to the best of our knowledge, respective statistical theory is only available for the peak-over-threshold method \citep{Pen08, Dre15, Dav18, KulSou20, Jen21, DehZho24}.
Concerning the block maxima method, within the regime where the block size is treated as a parameter sequence $r=r_n$ converging to infinity, the only reference we are aware of is \cite{DehZho24}, who derive an asymptotic expansion for the tail quantile process of bootstrapped disjoint block maxima. However, their results require an underlying IID sequence, they only concern the disjoint block maxima method and they do not eventually imply desirable classical consistency statements. While our main focus is on the sliding block maxima method, we establish consistency results for the disjoint block maxima method as a by-product, for serially dependent data.

Finally, we also derive limit results for empirical means and estimators based on the circular (sliding) block maxima sample, and prove that they show the same favorable asymptotic behavior as the sliding block maxima counterparts. 
As we will discuss in the conclusion, the result may be of independent interest when dealing with non-stationary situations involving, for example, trends, which is typical for climate extremes.

All methods in this paper are intrinsically linked to the block maxima method. In typical use cases of that method, target parameters directly depend on the block maxima distribution (and hence on~$r$); it can thus be considered a very natural approach in this situation. 
When target parameters depend on the marginal distribution of $\bm X_t$ (e.g., high quantiles), the peak-over-threshold method is typically considered the more natural and useful approach. As such, both methods have their raison d'être and are most suitable for different applications; see also the discussion in  Section~3.3 in \cite{buecher2020horse}.

The remaining parts of this paper are organized as follows: basic model assumptions and known results on the disjoint and sliding block maxima method are collected in Section~\ref{sec:mathpre}. The sample of circular block maxima is introduced in Section~\ref{sec:circmax} and supplemented by a central limit theorem. Appropriate bootstrap schemes are proposed in Section~\ref{sec:bootgen}, where we also provide respective consistency statements. The previous high-level results are applied to a specific estimation problem for univariate heavy tailed time series in Section~\ref{sec:frechet-small}. Finite-sample results on a large-scale simulation study and two case studies are presented in Sections~\ref{sec:sim} and \ref{sec:case}, respectively. Section~\ref{sec:conclusion} concludes. Some technical integrability and bias conditions are given in Section~\ref{sec:conditions}. 
Finally, all proofs, additional theoretical results and additional simulation results are provided in a supplementary material. The code used for the simulation study is available on \href{https://github.com/torbenstaud/Bootstrapping-block-maxima}{Github}, see \cite{Sta24}.
Throughout, weak convergence of sequences of random vectors/distributions is denoted by $\weak$.

\section{Mathematical preliminaries}
\label{sec:mathpre}

Throughout this section, we present basic model assumptions rooted in extreme value theory for stationary time series. We then summarize the most important aspects about the (disjoint and sliding) block maxima method, including a central limit theorem (CLT) for sample averages based on affinely standardized block maxima. We also present high level conditions under which the CLTs can be used as a basis for statistical inference and illustrate them with some examples.

\subsection{Basic model assumptions}
\label{sec:basicmodel}

An extension of the classical extremal types theorem to strictly stationary time series \citep{Lea83} implies that, under suitable broad conditions, affinely standardized maxima extracted from a stationary time series converge to the generalized extreme value distribution (GEV). This was generalized to the multivariate case in \cite{Hsi89}, where the marginals are necessarily GEV-distributed. We make this an assumption, and additionally require the scaling sequences to exhibit some common regularity inspired by the max-domain of attraction condition in the IID case \citep{deextreme}. 

\begin{condition}[Multivariate max-domain of attraction] \label{cond:mda}
Let $(\bm X_t)_{t\in\Z}$ denote a strictly stationary time series in $\R^d$ with continuous margins. There exist sequences $(\bm a_r)_r=(a_{r}^{(1)}, \dots, a_{r}^{(d)})_{r} \subset (0,\infty)^d, (\bm b_r)_r = ( b_{r}^{(1)}, \dots, b_{r}^{(d)})_r \subset \R^d$  and $\bm \gamma = (\gamma^{(1)}, \dots, \gamma^{(d)}) \in\R^d$, such that, for any $s>0$ and $j\in\{1, \dots, d\}$,
\begin{align}\label{eq:rvscale}
\lim\limits_{r \to \infty}\frac{a_{\lfloor rs \rfloor}^{(j)}}{a_{r}^{(j)}}  = s^{\gamma^{(j)}},
\qquad 
\lim\limits_{r \to \infty} \frac{b_{\lfloor rs \rfloor }^{(j)} -b_{r}^{(j)}}{a_{r}^{(j)}} = \frac{s^{\gamma^{(j)}} -1}{\gamma^{(j)}}, 
\end{align} 
where the second limit  is interpreted as $\log(s)$ if $\gamma^{(j)}=0$. Moreover,   	
for $r \to \infty$, 			
\begin{align}\label{eq:firstorder} 
\bm Z_r 
= 
\frac{\bm M_r - \bm b_r}{\bm a_r}
\weak 
\bm Z \sim G,
\end{align}
where $G$ denotes the cumulative distribution function (CDF) of a $d$-variate extreme-value distribution with marginal CDFs $G_{\gamma^{(1)}}, \dots, G_{\gamma^{(d)}}$ (with $G_\gamma(x)=\exp(-(1+\gamma x)^{-1/\gamma}) \bm 1(1+\gamma x >0)$ the CDF of the GEV($\gamma$)-distribution) and where division by the vector $\bm a_r$ is understood componentwise, that is, 
$
\bm Z_r = (Z_{r}^{(1)}, \dots, Z_{r}^{(d)})$ with $Z_{r}^{(j)} = \{ \max(X_1^{(j)}, \ldots, X_r^{(j)}) - b_{r}^{(j)} \} / {a_{r}^{(j)}}$ for $j \in \{1, \dots, d\}$.
\end{condition}

Note that (\ref{eq:rvscale}) and (\ref{eq:firstorder}) may for instance be deduced from Leadbetter's $D(u_n)$ condition, a domain-of-attraction condition on the associated IID sequence with stationary distribution equal to that of $\bm X_0$ and a weak requirement on the convergence of the CDF of $\bm Z_r,$ see Theorem 10.22 in \cite{BeiGoeSegTeu04}.

Throughout this paper, we assume to  observe $\bm X_1,\ldots, \bm X_n$, an excerpt from a $d$-dimensional time series $(\bm X_t)_{t\in\Z}$ satisfying Condition \ref{cond:mda}. Block maxima will be formed with respect to a block size parameter $r=r_n$ converging to infinity such that $r = o(n)$. 
For establishing the subsequent limit results, the serial dependence of $(\bm X_t)_t$ will additionally be controlled via the Rosenblatt mixing coefficient \citep{Bra05}. For two sigma-fields $\mathcal{F}_1, \mathcal{F}_2$ on a probability space $(\Omega, \mathcal{A}, \Prob)$, let
$
\alpha(\mathcal F_1, \mathcal F_2) 
= \sup_{A \in \mathcal{F}_1, B \in \mathcal{F}_2} 
| \Prob(A \cap B) - \Prob(A) \Prob(B) |.
$
For positive integer $p$, let $\alpha(p) := \alpha \big(\sigma((\bm X_t)_{t \leq 0}), \sigma((\bm X_t)_{t \geq p})\big)$, with $\sigma(\cdot)$ denoting the sigma-field generated by its argument. 

\begin{condition}[Block size and mixing]\label{cond:ser_dep}
The block size sequence $(r_n)_n$ satisfies, as $n\to\infty$:
\begin{compactenum}
\item[(a)] $r_n \to \infty$ and $r_n = o(n).$
\item[(b)] There exists a sequence $(\ell_n)_n \subset \N$ such that $\ell_n \rightarrow \infty $, $\ell_n = o(r_n)$, $\frac{r_n}{\ell_n}  \alpha(\ell_n) = o(1)$, and $\frac{n}{r_n} \alpha(\ell_n)=o(1)$.
\item[(c)] $\bigl(\frac{n}{r_n}\bigr)^{1+\omega} \alpha(r_n) = o(1) $ for some $\omega > 0$.
\end{compactenum}
\end{condition}

\subsection{Empirical means of rescaled disjoint or sliding block maxima}
\label{sec:empmean}

A central theoretical ingredient for establishing weak limit results on estimators based on the block maxima method is the weak convergence of either the tail quantile process or of empirical means of (unobservable) rescaled block maxima. Respective results on the former can be found in Theorem~2.1 in \cite{FerDeh15}, while the latter approach was taken in Theorems~2.6/B.1 in \cite{BucZan23}, in the proof of Theorem 2.6 in \cite{BucSeg18-sl} or in Theorem 2.4 in \cite{ZouVolBuc21}. Throughout this paper, we also follow the latter approach, and for completeness, we summarize the essence in a theorem and briefly summarize and discuss potential statistical applications.

For $i\in \{1, \dots, n-r+1\}$, write $\bm M_{r,i}= \max(\bm X_{i}, \dots, \bm X_{i+r-1})$, 
and define $\overline \Mc_{n,r}^{[\dbl]}= (\bm M_{r,i}: i \in I_n^\dbl)$ and $\overline \Mc_{n,r}^{[\sbl]}= (\bm M_{r,i}: i \in I_n^\sbl)$ as the (vanilla) disjoint and sliding block maxima samples, respectively, where $I_n^\dbl=\{(i-1)r+1: 1 \le i \le n/r\}$ and $I_n^\sbl=\{1, \dots, n-r+1\}$. 
We are interested in the associated empirical measures $n_\mbl^{-1} \sum_{i \in I_n^\mbl} \delta_{\bm M_{r,i}}$, or their versions based on rescaled block maxima $n_\mbl^{-1} \sum_{i \in I_n^\mbl} \delta_{(\bm M_{r,i} - \bm b_r)/\bm a_r}$, with $n_\mbl=|I_n^\mbl|$ and $\delta_{\bm z}$ the Dirac-measure at $\bm z$.
The fact that the sample size $n_\mbl$ depends on $\mbl$ is a notational nuisance which we subsequently resolve by the following asymptotically negligible modification: first, the disjoint block maxima sample may be identified with the sample $\Mc_{n,r}^{[\dbl]}=(\bm M_{r,1}, \dots, \bm M_{r,1}, \dots, \bm M_{r,n/r}, \dots, \bm M_{r, n/r})$ of size $n$ containing each disjoint block maximum exactly $r$ times; note that the respective empirical measures of $\overline \Mc_{n,r}^{[\dbl]}$ and $\Mc_{n,r}^{[\dbl]}$ are the same. Next, we define $\Mc_{n,r}^{[\sbl]}$ as the sliding block maxima sample of size $n$ calculated from the extended sample $(\bm X_1, \dots, \bm X_n, \bm X_{n+1}, \dots, \bm X_{n+r-1})$. In the subsequent asymptotic results, this modification is asymptotically negligible since the first $n-r+1$ maxima in $\Mc_{n,r}^{[\sbl]}$ are exactly the maxima in $\overline \Mc_{n,r}^{[\sbl]}$.
These modifications allow to define $(\bm M_{r,1}^{[\mbl]}, \dots, \bm M_{r,n}^{[\mbl]}) := \Mc_{n,r}^{[\mbl]}$, and using the notation  $\bm Z_{r,i}^{[\mbl]} := (\bm M_{r,i}^{[\mbl]}-\bm b_r)/\bm a_r$, we define
\[
\bar \Gb_{n,r}^{[\mbl]} = \sqrt{\frac{n}r} \big(\Prob_{n,r}^{[\mbl]} - P_r\big),
\quad
\Gb_{n,r}^{[\mbl]} = \sqrt{\frac{n}r} \big(\Prob_{n,r}^{[\mbl]} - P\big),
\]
where $\Prob_{n,r}^{[\mbl]} = \frac{1}{n} \sum_{i=1}^n \delta_{\bm Z_{r,i}^{[\mbl]}}$, $P_r = \Prob(\bm Z_{r} \in \cdot)$ and $P = \Prob(\bm Z \in \cdot)$. Finally, let
\begin{equation} 
\label{eq:Gxi-prod}
G_\xi(\bm x, \bm y)= 
    G(\bm x)^{\xi \wedge 1} G(\bm y)^{\xi \wedge 1} G(\bm x \wedge \bm y)^{1-(\xi \wedge 1)}, \qquad \bm x, \bm y \in \R^d,
\end{equation}
and note that $G_\xi$ is a $2d$-variate extreme-value distribution; see Formula (3.8) and its discussion in \cite{BucSta24} for further details.
The following result relies on some suitable integrability and bias conditions (Conditions \ref{cond:int_h} and \ref{cond:bias_dbsl}), which are postponed to Section~\ref{sec:conditions} in the appendix for presentation reasons.

\begin{theorem}
\label{theo:cltblocks1}
Under Conditions~\ref{cond:mda} and  \ref{cond:ser_dep}, 
for any finite set of real valued functions $h_1, \dots, h_q$ satisfying the integrability Condition \ref{cond:int_h}(a) with $\nu > 2/\omega$ where $\omega$ is from Condition \ref{cond:ser_dep}, we have, writing $\bm h=(h_1, \dots, h_q)^\top$,
\[
\bar \Gb_{n,r}^{[\mbl]} \bm h = \big(\bar \Gb_{n,r}^{[\mbl]} h_1, \dots, \bar \Gb_{n,r}^{[\mbl]} h_q\big)^\top \weak
\Nc_q(\bm 0, \Sigma_{\bm h}^{[\mbl]}),  \qquad \mbl \in \{ \dbl, \sbl\},
\]
where
\begin{equation}
\label{eq:asy_Cov}
    \big(\Sigma_{\bm h}^{[\dbl]}\big)_{j,j^\prime=1}^q = \Cov\big(h_j(\bm Z), h_{j^\prime}(\bm Z) \big), \quad
    \big(\Sigma_{\bm h}^{[\sbl]}\big)_{j,j^\prime=1}^q = 2 \int_0^1 \Cov\big(h_j(\bm Z_{1,\xi}), h_{j^\prime}(\bm Z_{2,\xi}) \big) \diff \xi,
\end{equation}
with $\bm Z\sim G$ from Condition~\ref{cond:mda} and $(\bm Z_{1,\xi}, \bm Z_{2,\xi}) \sim G_\xi$ from \eqref{eq:Gxi-prod}.
Moreover, $ \Sigma_{\bm h}^{[\sbl]} \le_L  \Sigma_{\bm h}^{[\dbl]}$, where $\le_L$ denotes the Loewner ordering. Finally, if the bias Condition \ref{cond:bias_dbsl} holds, then
\[
\Gb_{n,r}^{[\mbl]} \bm h = \big(\Gb_{n,r}^{[\mbl]} h_1, \dots, \Gb_{n,r}^{[\mbl]} h_q\big)^\top \weak
\Nc_q(\bm B_{\bm h}, \Sigma_{\bm h}^{[\mbl]}).
\]
\end{theorem}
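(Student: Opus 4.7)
The plan is to handle the three assertions sequentially---the centered CLT for $\bar\Gb_{n,r}^{[\mbl]} \bm h$ (for each $\mbl$), the Loewner comparison, and the bias extension---after reducing to the univariate case via the Cramér--Wold device applied to $h := \lambda^\top\bm h$ for arbitrary $\lambda\in\R^q$. For the disjoint case, $\bar\Gb_{n,r}^{[\dbl]} h$ equals $m^{-1/2}\sum_{i=1}^m \{h(\bm Z_{r,i}^{[\dbl]}) - \Exp\, h(\bm Z_r)\}$ with $m=n/r$ strictly stationary summands that are approximately independent. I would apply a Bernstein-style big-small block decomposition at this level with inner scale $\ell_n$ from Condition~\ref{cond:ser_dep}(b); the $(2+\delta)$-moment bound in Condition~\ref{cond:int_h}(a) verifies Lindeberg's condition, Condition~\ref{cond:ser_dep}(c) controls the coupling error, and the variance $\Var\, h(\bm Z_r)$ converges to $\Var\, h(\bm Z)$ by $\bm Z_r \weak \bm Z$ together with uniform integrability from Condition~\ref{cond:int_h}(a), yielding the asserted limit $\Nc(0, \Sigma_{\bm h}^{[\dbl]})$.

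For the sliding case, write
\[
\bar\Gb_{n,r}^{[\sbl]} h = \frac{1}{\sqrt{nr}} \sum_{i=1}^n \bigl\{h(\bm Z_{r,i}^{[\sbl]}) - \Exp\, h(\bm Z_r)\bigr\},
\]
and partition $\{1,\dots,n\}$ into alternating big blocks of length $L_n r$ and small blocks of length $r$, with $L_n\to\infty$ sufficiently slowly. By Condition~\ref{cond:ser_dep}(c) the big-block sums are asymptotically independent, and the small-block contributions vanish in variance. The central computation is the variance of a single big-block sum, which expands into a double sum of $\Cov(h(\bm Z_{r,i}^{[\sbl]}), h(\bm Z_{r,j}^{[\sbl]}))$ for $i,j$ in the big block. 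For $|i-j| = \lfloor\xi r\rfloor\le r$ with $\xi\in[0,1]$, split $\{i,\dots,i+r-1\}\cup\{j,\dots,j+r-1\}$ into an overlap of asymptotic length $(1-\xi)r$ and two non-overlapping tails each of length $\xi r$; Condition~\ref{cond:mda} then yields joint weak convergence $(\bm Z_{r,i}^{[\sbl]}, \bm Z_{r,j}^{[\sbl]}) \weak G_\xi$ from \eqref{eq:Gxi-prod}. A Riemann-sum argument, combined with uniform integrability supplied by Condition~\ref{cond:int_h}(a), delivers the integral representation \eqref{eq:asy_Cov}, and a Lindeberg CLT across the big blocks completes the proof.

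For the Loewner ordering, decompose the sliding mean by starting offset,
\[
\frac{1}{n}\sum_{i=1}^n h\bigl(\bm Z_{r,i}^{[\sbl]}\bigr) = \frac{1}{r}\sum_{k=1}^r \frac{1}{m}\sum_{j=1}^m h\bigl(\bm Z_{r,(j-1)r+k}^{[\sbl]}\bigr),
\]
so that the normalized sliding sum is the average over $k$ of offset-disjoint normalized sums, each converging by the first part to $\Nc(0, \Sigma_{\bm h}^{[\dbl]})$ (stationarity makes the limit independent of $k$). Jensen's inequality applied to the quadratic form $v \mapsto v^\top\Cov(\cdot)v$ then gives $\Sigma_{\bm h}^{[\sbl]} \le_L \Sigma_{\bm h}^{[\dbl]}$. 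Finally, the bias statement follows from $\Gb_{n,r}^{[\mbl]}\bm h = \bar\Gb_{n,r}^{[\mbl]}\bm h + \sqrt{n/r}\,(P_r - P)\bm h$ by combining the already-established CLT for $\bar\Gb_{n,r}^{[\mbl]}$ with Slutsky's lemma and Condition~\ref{cond:bias_dbsl}, which controls the deterministic remainder and sends it to $\bm B_{\bm h}$. The main obstacle is the sliding-case variance: proving uniform-in-$\xi\in[0,1]$ convergence of the finite-sample covariances, where the combined strength of Condition~\ref{cond:int_h}(a) (uniform integrability across all overlap ratios) and Condition~\ref{cond:ser_dep}(c) (to discard index pairs with $|i-j|>r$ via mixing-based covariance bounds, using $\nu > 2/\omega$ in a Hölder step) is essential; the endpoint behavior near $\xi=0$, where the two overlapping maxima become nearly identical, is the most delicate.
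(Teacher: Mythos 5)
Your proposal follows essentially the same route as the paper's: the paper's own proof of this theorem is a one-line deferral to the proof of Theorem~\ref{theo:cltblocks2} (and to the cited references), and that proof has exactly your architecture --- Cram\'er--Wold reduction, identification of the limiting covariance via joint weak convergence of two overlapping rescaled block maxima to $G_\xi$ combined with the $(2+\nu)$-moment bound and a Riemann-sum/dominated-convergence step, mixing-based covariance bounds with a H\"older step using $\nu>2/\omega$ for distant pairs, a big--small block decomposition with a Lyapunov CLT across big blocks, and the deterministic decomposition $\Gb_{n,r}^{[\mbl]}\bm h=\bar\Gb_{n,r}^{[\mbl]}\bm h+\sqrt{n/r}\,(P_r-P)\bm h$ plus Slutsky for the bias statement. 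Your Loewner argument (offset decomposition of the sliding mean into $r$ disjoint-type means, then Cauchy--Schwarz/Jensen on the variance of the average) is the standard and correct one. One remark: you do not need uniform-in-$\xi$ convergence of the finite-sample covariances; pointwise convergence from the joint weak convergence together with the uniform $(2+\nu)$-moment bound and dominated convergence suffices, which is how the paper argues.

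One step fails as literally written. In the sliding case you separate big blocks of length $L_nr$ by small blocks of length $r$, but the sliding maxima indexed at the last $r-1$ positions of a big block use observations reaching $r-1$ places into the following small block, so the observation windows of consecutive big blocks are separated by a gap of only one observation; the coupling/characteristic-function argument for asymptotic independence of the big-block sums then incurs an error of order $p\,\alpha(1)$ with $p\approx n/(L_nr)$, which does not vanish. The repair is standard and costless: take small blocks of length $2r$ (or $r+\ell_n$), so that the gap is at least $r$ (resp.\ $\ell_n$) and Condition~\ref{cond:ser_dep}(c) (resp.\ (b)) yields $p\,\alpha(\cdot)=o(1)$, while the discarded fraction of indices remains $O(1/L_n)=o(1)$. (The paper sidesteps this by first averaging over blocks of length $kr$ and doing the big--small blocking at the level of these block averages, with singleton separating blocks, which automatically leaves a gap of order $r$.) Relatedly, within a big block of length $L_nr$ the pairs at distance greater than $r$ contribute $O(L_n)$ times individually vanishing covariances after normalization, so your mixing bound must be made quantitative, e.g.\ $\tfrac1r\sum_{e\le L_nr}\alpha(e)^{\nu/(2+\nu)}\le \tfrac{\ell_n}{r}+L_n\alpha(\ell_n)^{\nu/(2+\nu)}=o(1)$ for $L_n$ growing slowly enough --- consistent with your ``sufficiently slowly,'' but worth making explicit.
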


In statistical applications, 
one is typically interested in distributional approximations for the estimation error of general statistics depending on the observable block maxima $\bm M_{r,i}$ (here and in the following, we omit the superscript `$\mbl$').
As discussed in the following remark, such approximations can be deduced from Theorem~\ref{theo:cltblocks1}.

\begin{remark}[Normal approximations for statistics depending on observable block maxima]
\label{rem:normal}
Suppose that $\hat {\bm \theta}_n = {\bm \varphi}_n(\bm M_{r,1}, \dots, \bm M_{r,n})$ is some estimator of interest for a target parameter ${\bm \theta}_r \in \R^p$ (with $p\in\N$), where ${\bm \varphi}_n$ is a measurable function on $(\R^d)^n$ with values in $\R^p$. Asymptotic normality of $\hat {\bm \theta}_n$ can often be deduced from asymptotic normality of some possibly different function evaluated in the (unobservable) rescaled block maxima  $\bm Z_{r,1}, \dots, \bm Z_{r,n}$; see, for instance, \cite{Seg01, BucZan23, Oor23, BucSta24}. In this remark, we provide some high-level conditions under which such a conclusion is possible.

As discussed in Example~\ref{ex:mnstat} below, for many functions ${\bm \varphi}_n$ of practical interest there exists a positive integer $q$ and functions ${\bm \psi}_n\colon (\R^d)^n \to \R^{q}$, $A \colon (0,\infty)^d \times \R^d \to \R^{p \times q}$ and $B \colon \R^d \times (0, \infty)^d \to \R^{p}$ such that 
\begin{equation}\label{eq:phi_decomposition}
    {\bm \varphi}_n(\bm m_1 , \dots, \bm m_n) 
    =
    A(\bm a, \bm b) {\bm \psi}_n\Big( \frac{\bm m_1 - \bm b}{\bm a}, \dots,  \frac{\bm m_n - \bm b}{\bm a} \Big)  + B(\bm a, \bm b)
\end{equation}
for all $\bm m_1, \dots, \bm m_n \in \R^d$ and $\bm b \in \R^d, \bm a \in (0,\infty)^d$.
In such a case, if we further assume that there exists a sequence ${\bm \vartheta}_r \in \R^{q}$ solving the equations 
\begin{equation}\label{eq:vartheta}
    {\bm \theta}_r = A(\bm a_r, \bm b_r) {\bm \vartheta}_r + B(\bm a_r, \bm b_r)
\end{equation}
(again, see below for examples), we immediately obtain that the estimation error of $\hat {\bm \theta}_n$ can be written as
\begin{equation}\label{eq:theta_vartheta}
\hat {\bm \theta}_n - {\bm \theta}_r = A(\bm a_r,  \bm b_r) ( \hat {\bm \vartheta}_n - {\bm \vartheta}_r),
\end{equation}
where $\hat {\bm \vartheta}_n := {\bm \psi}_n(\bm Z_{r,1}, \dots, \bm Z_{r,n})$ is a function of the rescaled block maxima. The difference $\hat {\bm \vartheta}_n-{\bm \vartheta}_r$ often allows for a linearization (for instance, by the delta-method):
there exist real-valued functions $h_1, \dots, h_{q}$ such that
\begin{align} \label{eq:linerror}
\sqrt{\frac{n}r}( \hat {\bm \vartheta}_n - {\bm \vartheta}_r) = \bar \Gb_{n,r} (h_1, \dots, h_q)^\top + R_n
\end{align}
for some remainder $R_n=o_\Prob(1)$.
Loosely spoken, if the error term $R_n$ is sufficiently small, the previous two displays and Theorem~\ref{theo:cltblocks1} imply the normal approximation
\begin{align} \label{eq:normaltheta}
\hat {\bm \theta}_n - {\bm \theta}_r 
=
\sqrt{\frac{r}n} A(\bm a_r, \bm b_r) \big\{ \bar \Gb_{n,r} (h_1, \dots, h_q)^\top + R_n \big\}
\approx_d 
\Nc_p\Big(0, \frac{r}n A(\bm a_r, \bm b_r) \Sigma_{\bm h} A(\bm a_r, \bm b_r)^\top \Big),
\end{align}
where $ \Sigma_{\bm h}$ is the matrix from Theorem~\ref{theo:cltblocks1}. More precisely, in the case where the matrix $A$ is invertible (in particular, $p=q$), 
we obtain the more precise result
\begin{align} \label{eq:weaktheta}
A(\bm a_r, \bm b_r)^{-1} \sqrt{\frac{n}r} (\hat {\bm \theta}_r - {\bm \theta}_r) = \bar \Gb_{n,r} (h_1, \dots, h_q)^\top + o_\Prob(1) \weak \Nc_p(\bm 0, \Sigma_{\bm h}).
\end{align}
\end{remark}

\begin{example}
\label{ex:mnstat}

(i) Empirical variance: consider the case $d=1$ and the empirical variance
$
\hat \theta_n = n^{-1} \sum_{i=1}^n (M_{r,i} - \bar{M}_{r,n})^2,
$
where $\bar M_{r,n} := n^{-1} \sum_{i=1}^n M_{r,i}$, considered as an estimator for $\theta_r := \Var(M_{r,1})$. In that case, \eqref{eq:phi_decomposition} is met with $p=1, q=2$, ${\bm \psi}_n(z_1, \dots, z_n)=((\frac1n \sum_{i=1}^n z_i)^2, \frac1n  \sum_{i=1}^n z_i^2)^\top$, $B(a,b)=0$ and $A(a,b) = (-a^2,a^2)$. Moreover, ${\bm \vartheta}_r=(\Exp[Z_{r,1}]^2, \Exp[Z_{r,1}^2])^\top$ is a solution to \eqref{eq:vartheta} satisfying \eqref{eq:linerror}: indeed, with $h_1(x)=x, h_2(x)=x^2$, and using $P_r h_1 \to P h_1$, we may write 
\begin{align*}
\sqrt{\frac{n}r}( \hat {\bm \vartheta}_n - {\bm \vartheta}_r)
=
\sqrt{\frac{n}{r}} \begin{pmatrix}
    (\Prob_{n,r} h_1)^2 - (P_r h_1)^2 \\
    \Prob_{n,r} h_2 - P_r h_2
\end{pmatrix}
&=
\sqrt{\frac{n}{r}} \begin{pmatrix}
    (2P h_1 + o_\Prob(1)) ( \Prob_{n,r} h_1 - P_r h_1 ) \\
    \Prob_{n,r} h_2 - P_r h_2
\end{pmatrix}
\\&=
\bar \Gb_{n,r}((2 P h_1) \cdot h_1, h_2)^\top + o_\Prob(1). 
\end{align*}
Assembling terms and solving for $\hat \theta_n - \theta_r$, we obtain
$
\sqrt{\frac{n}{r}} a_r^{-2} ( \hat \theta_n - \theta_r )
= 
\bar \Gb_{n,r} (h_2 - (2P h_1) \cdot h_1) + o_\Prob(1).
$
The asymptotic variance of the limiting distribution has been explicitly calculated in \cite{BucSta24}, see their Corollary 4.1 and Equations (C.1) and~(C.2). 

\smallskip

(ii) Probability weighted moments: consider the case $d=1$ and let ${\bm \varphi}_n: \R^n \to \R^p$ denote the vector containing the first $p$ empirical probability weighted moments. In that case, Equation~\eqref{eq:phi_decomposition} is met with $q=p$, ${\bm \psi}_n={\bm \varphi}_n$, $A(a,b)=\diag(a, \dots, a)$ and $B(a,b)=\diag(b, b/2, \dots, b/(p+1))$; see Formula (A.7) in \cite{BucZan23}. Moreover, Equation~\eqref{eq:linerror} can be deduced from (A.5) in that paper (for $p=3$), with functions $h_1(x)=x$, $h_2(x)=x G_\gamma(x) +  \int_x^\infty z \diff G_{\gamma}(z)$ and $h_3(x) = x G_\gamma^2(x)+2\int_x^\infty z G_\gamma(z) \diff G_\gamma(z)$. Finally, up to the treatment of a bias term,  \eqref{eq:weaktheta} corresponds to the assertion in their Theorem 3.2. 

\smallskip

(iii) Pseudo maximum likelihood estimation in the heavy tailed case: the main steps from Remark~\ref{rem:normal} also apply in a slightly modified setting tailored to the heavy tailed case; details are worked out in Section~\ref{sec:frechet-small} below.
\end{example}

In statistical applications, results like those in \eqref{eq:normaltheta} are typically used as a starting point for inference, for instance in the form of confidence intervals for $\hat {\bm \theta}_n$. Routinely, such intervals would be based on normal approximations involving consistent estimators for the variances on the right-hand side of those displays, which in our case requires estimation of $\bm a_r, \bm b_r$ and $ \Sigma_{\bm h} = \Sigma_{\bm h}^{[\mbl]}$. This can be a complicated task, especially for the sliding block maxima method in view of the complicated formula for the limiting variance. Alternatively, one can rely on bootstrap approximations instead of normal approximations. However, as already illustrated in the introduction, a naive approach for bootstrapping sliding block maxima fails (see also Remark~\ref{rem:slidboot-inconsistent} below),  which prompts us to first introduce the circular block maxima method in the next section.

\section{The circular block maxima sample}
\label{sec:circmax}

In this section we introduce the circular block maxima method. We show that respective empirical means yield the same asymptotic variance as the sliding block maxima counterparts. The approach is interesting in its own right (particularly for computational reasons), but most importantly for this paper it suggests a straightforward bootstrap approach for bootstrapping sliding block maxima that will be discussed in Section~\ref{sec:bootgen}.

Formally, given a sample $\mathcal X_n=(\bm X_1, \dots, \bm X_n)$ as before, the circular block maxima sample 
\[
\Mc_{n,r}^{[\cblk]}= ( \bm M_{r,1}^{[\cblk]}, \dots, \bm M_{r,n}^{[\cblk]})
\]
is a new sample of size $n$ containing suitable block maxima that asymptotically follow the distribution $G$ in \eqref{eq:firstorder}.
The sample depends on two parameters: the block length parameter $r$ and an integer-valued parameter $k$ (for instance, $k=2$ or $k=3$) that determines the length of the interval over which we apply the circular maximum operation. Details for the univariate case are provided in Figure~\ref{fig:circ}, where the sampling period $\{1, \dots, n\}$ has been decomposed into $m(k):=m_n(k) := n/(kr)$ blocks of size $kr$, say $I_{kr,1}, \dots, I_{kr, m(k)}$ with $I_{kr,i}=\{(i-1)kr+1, \dots, ikr\}$, and where we assume that $m(k) \in \N$ for simplicity. Note that every observation $X_s$ appears within  exactly $r$ maximum-operations, and hence has the same chance to become a block maximum (this is not the case for the plain sliding block maxima sample, where, for instance, the very first observation $X_1$ can appear only once as sliding block maximum). This observation is in fact the main motivation for the circularization approach within each $kr$-block.

\begin{figure}[t!]
\begin{center}
\scalebox{0.89}{
    \begin{tikzpicture}[framed, rounded corners]
        \draw 
            (5,2) node(sample) {$\mathcal X=(X_1, \ldots, X_n) \in \R^n$}
%            (-1,0) node(kr1) {$(X_1, \ldots, X_{kr})$}
%            (2,0) node {$\dots$}
            (5,0) node(kri) {$(X_s)_{s \in I_{kr,i}} = (X_{(i-1)kr+1}, \ldots, X_{ikr}) \in \R^{kr}$}
%            (7.5,0) node {$\dots$}
%            (11,0) node(krm) {$(X_{(m_k-1)kr+1}, \ldots, X_{m_kkr})$}
            (5,-2) node(kri2) {$({\color{RedOrange} 
                                        \underbrace{X_{(i-1)kr+1}, \ldots, X_{(i-1)kr+r-1}}_{\text{first $r-1$ observations}}
                                    },
                                    {\color{Cerulean} 
                                        \underbrace{X_{(i-1)kr+r}, \ldots, X_{ikr}}_{\text{last $kr-(r-1)$ observations}}
                                    })\in \R^{kr}$
                                    }
            (5,-4.5) node(ckri) {$({\color{RedOrange} 
                                        X_{(i-1)kr+1}, \ldots, X_{(i-1)kr+r-1}
                                    },
                                    {\color{Cerulean} 
                                        X_{(i-1)kr+r}, \ldots, X_{ikr}
                                    },
                                    {\color{RedOrange} 
                                        \underbrace{X_{(i-1)kr+1}, \ldots, X_{(i-1)kr+r-1}}_{\text{first $r-1$ observations get repeated}}
                                    })\in \R^{kr+r-1}$}
            (5,-7) node(ckrimax) {$(\underbrace{\max({\color{RedOrange}X_{(i-1)kr+1}, \dots,X_{(i-1)kr+r-1}}, {\color{Cerulean}X_{(i-1)kr+r}})}_{
                                        \genfrac{}{}{0pt}{1}{\text{maximum over the first block of size $r$}}{\text{in the previous vector}}}, 
                                    \dots,
                                    \underbrace{\max({\color{Cerulean}X_{ikr}}, {\color{RedOrange}X_{(i-1)kr+1}, \ldots, X_{(i-1)kr+r-1}})}_{
                                        \genfrac{}{}{0pt}{1}{\text{maximum over the last (i.e., $kr$th) block of size $r$}}{\text{in the previous vector}}}
                                    )\in \R^{kr}$}
%            (5,-9.5) node(ckrimax2) {$\circmax(\pi_{kr,i}(\mathbb X) \mid r) \in \R^{kr}$};
            (5,-9.5) node(ckrimax2) {$(M_{r,s}^{[\cblk]})_{s \in I_{kr,i}} = (M_{r,(i-1)kr+1}^{[\cblk]}, \dots, M_{r,ikr}^{[\cblk]}) = \circmax((X_{(i-1)kr+1}, \ldots, X_{ikr}) \mid r)$}
            (5,-11.5) node(ckrmax) {$\Mc_{n,r}^{[\cblk]} := (M_{r,1}^{[\cblk]}, \dots, M_{r,n}^{[\cblk]})$};
%            (5,-11.5) node(hckrimax) {$\displaystyle\frac1{kr} \sum_{s=1}^{kr} 
%                                        h(\circmaX_s(\pi_{kr,i}(\mathbb X) \mid r)) $};

%        \draw[|->, very thick] (sample) to node[right] {$\pi_{kr,1}$} (kr1);
        \draw[|->, very thick] (sample) to node[right] {$\pi_{kr,i}$ (extract obs.\ from the $i$th $kr$-block $I_{kr,i}$)} (kri);
%        \draw[|->, very thick] (sample) to node[right] {$\pi_{kr,m_k}$} (krm);
        \draw[<->, very thick] (kri) to node[right] {$=$ (rewrite block)} (kri2);
        \draw[|->, very thick] (kri2) to node[right] {$\circf(\cdot \mid r)$} (ckri);
        \draw[|->, very thick] (ckri) to node[right] {$\slidmax(\cdot \mid r)$} (ckrimax);
        \draw[<->, very thick] (ckrimax) to node[right] {$=:$} (ckrimax2);
        \draw[|->, very thick] (ckrimax2) to node[right] {(concatenate blocks)} (ckrmax);
%        \draw[|->, very thick] (ckrimax2) to node[right] {$\bar h_{kr}$} (hckrimax);
        
%        \node[above,font=\bfseries] at (current bounding box.north) {Illustration of calculating the $i$th $kr$-block of circular block maxima};
    \end{tikzpicture}
}
\end{center}
\vspace{-.6cm}
\caption{Illustration of calculating the circular block maxima sample.}\vspace{-.2cm}
\label{fig:circ}
\end{figure}

More formally, the notations used in Figure~\ref{fig:circ} are defined as follows: for a given vector $\bm W = (\bm w_1, \ldots, \bm w_n) \in (\R^d)^n$ and $i\in \{1, \dots, m(k)\}$, we let
\begin{align*}    
\pi_{kr,i}(\bm W) 
    &= (\bm w_s)_{s \in I_{kr,i}} = \big( \bm w_{(i-1)kr+1}, \dots, \bm w_{ikr} \big)
\end{align*}
denote the projection of $\bm W$ to its coordinates defined by the $i$-th $kr$-block. For  $\bm V =(\bm v_1, \dots, \bm v_q) \in (\R^d)^q$ with $q \in \N_{\ge r}$, consider the sliding-maxima-operation
\begin{align*}
\slidmax(\bm V \mid r) 
    &= \Big(\max_{s\in [1:r]} \bm v_s, 
        \max_{s\in [2:r+1]} \bm v_s, \dots, \max_{s\in [q-r+1:q]} \bm v_s\Big) 
        \in (\R^d)^{q-r+1},
\end{align*}
where maxima over vectors in $\R^d$ are understood componentwise and where $[i:j] = \{i, i+1, \dots, j\}$.
Next, for 
$\bm {U} = (\bm u_1, \dots, \bm u_{kr}) \in (\R^d)^{kr} $, the circularization function is defined as
\begin{align*}
\circf(\bm U \mid r)
    &:= \big( \circf_1(\bm U \mid r), \dots, \circf_{kr+r-1}(\bm U \mid r) \big) 
    := (\bm u_1, \dots, \bm u_{kr}, \bm u_1, \dots, \bm u_{r-1}) \in (\R^d)^{kr+r-1}.
\end{align*}
Finally, we define $\circmax(\bm U\mid r):=\slidmax( \circf(\bm U \mid r ) \mid r)$, that is,
\begin{align*}
\circmax(\bm U \mid r)
    &= \big(\circmax_1(\bm U \mid r), \dots, \circmax_{kr}(\bm U \mid r) \big) \\
    &= \Big(\max_{s\in [1:r]} \circf_{s}(\bm U \mid r), 
        \max_{s\in [2:r+1]} \circf_s(\bm U \mid r), \dots, \max_{s\in [kr:kr+r-1]} \circf_{s}(\bm U \mid r)\Big) 
        \in (\R^d)^{kr}.
\end{align*}
The entire construction is also summarized in pseudo-code in Algorithm~\ref{alg:kmax-paper}.

\begin{algorithm}[t]
\caption{(Univariate) circ-max Method}
\label{alg:kmax-paper}
\begin{algorithmic}[1]
\Require Sample \( x = (x_1, \dots, x_n) \in \R^n \), block size \( r \), blocking parameter \( k \)
\Function{Circmax}{$x, r, k$}
\State \textbf{Partition:} Divide \( x \) into \( m(k) = n / (k r) \) disjoint blocks of size \( k r \). 
\For{each block $(x_{(i-1)kr+1}, \dots, x_{ikr}) \in \R^{kr}$ with \( i = 1, \dots, m(k) \)}
    \State $\mathcal{X}_{kr,i}^\text{circ} \gets (x_{(i-1) k r + 1}, \dots, x_{i k r}, x_{(i-1) k r + 1}, \dots,x_{(i-1) k r + r-1}) \in \R^{kr+r-1}$ \\
    \Comment{Circular trafo (repeat first $r-1$ observations)}
    \State $(M_{r, s}^{[\cblk]})_{s \in I_{kr,i}} \gets \text{slid-max}(\mathcal{X}_{kr,i}^{\text{circ}}  \mid r) \in \R^{kr}$  \Comment{Sliding maxima of circular blocks}
\EndFor
\State $\Mc_{n,r}^{[\cblk]} \gets (M_{r,1}^{[\cblk]}, \dots, M_{r,n}^{[\cblk]}) \in \R^n$  \Comment{Concatenate}
\State \Return \( \Mc_{n,r}^{[\cblk]} \)
\EndFunction
\end{algorithmic}
\end{algorithm}\vspace{-.2cm}

It is constructive to take a closer look at the choices $k=1$ and $k=n/r$ (the number of disjoint blocks). In the former case, the circmax-sample is the same as the disjoint block maxima sample, but with every observation repeated exactly $r$ times. In the latter case, the first $n-r+1$ observations of the circmax-sample coincide with the sliding block maxima sample, so the two are asymptotically equivalent. New samples are obtained for every other choice of $k$, and in the subsequent developments we will mostly be interested in choices like $k=2$ or $k=3$.

A key feature of the circmax-sample consists of the fact that it can be stored and evaluated efficiently. More precisely, for each $kr$ block, the number of distinct circmax-values in that block is typically very small (typically around 10 for realistic block and sample sizes as observed in the simulation study); for instance, the largest observation in each block is necessarily appearing exactly $r$ times. As such, the entire circmax-sample can also be regarded as a weighted sample whose (random) size corresponds to the number of distinct values in the circmax-sample.

The following central result shows that the circmax-sample can be considered as a sample from the `correct' limit distribution $G$ from \eqref{eq:firstorder} (a similar result on joint convergence of two circmax-observations can be found in Proposition \refstar{prop:overlap_wconv_kloop} of the supplement). 
Let 
\[
\bm Z_{r,s}^{[\cblk]} 
:= 
\frac{\bm M_{r,s}^{[\cblk]} - \bm b_r}{ \bm a_r},
\qquad s\in\{1, \dots, n\};
\]
note that we suppress the dependence on $k$ for notational convenience. 

\begin{proposition} [Weak convergence of circular block maxima]\label{prop:Z_kloop_xi_wconv}
Suppose that Condition~\ref{cond:mda} and \ref{cond:ser_dep}(a) are met and that $\alpha(r_n) \to 0$. Then, for every fixed $k \in \N$ and $\xi \in [0,k)$, we have 
\[
\bm Z_{r,1+\lfloor\xi r \rfloor}^{[\cblk]} \wconv \bm Z \sim G \qquad(n \to \infty).
\]
\end{proposition}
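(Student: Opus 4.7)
The plan is to split by whether the sliding-max window of length $r$ starting at $s_n := 1 + \lfloor \xi r \rfloor$ wraps across the end of the first $kr$-block. If $\xi \in [0, k-1]$, then $s_n + r - 1 \le kr$ eventually, so $\bm M_{r, s_n}^{[\cblk]}$ reduces to the ordinary block maximum $\max(\bm X_{s_n}, \dots, \bm X_{s_n + r - 1})$ of length $r$, and strict stationarity together with Condition~\ref{cond:mda} immediately yields $\bm Z_{r,s_n}^{[\cblk]} \weak \bm Z \sim G$. The case $k=1$ is absorbed here, since any circular window of length $r$ inside a block of length $r$ necessarily covers all of $\bm X_1, \dots, \bm X_r$.

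The genuinely nontrivial case is $k \ge 2$ and $\xi \in (k-1, k)$, where the circular maximum decomposes cleanly as $\bm M_{r,s_n}^{[\cblk]} = \max(\bm M_A, \bm M_B)$ with $\bm M_A := \max_{t=s_n}^{kr} \bm X_t$ based on $n_A := kr - s_n + 1 \sim (k-\xi)r$ observations and $\bm M_B := \max_{t=1}^{s_n + r - 1 - kr} \bm X_t$ based on $n_B \sim (1+\xi - k) r$ observations. The decisive structural feature is that these two index sets live at opposite ends of the same $kr$-block and are separated by a deterministic gap of exactly $(k-1)r \ge r$. The strategy is then threefold: (i) decouple via mixing, (ii) compute marginal limits under the ``wrong'' norming sequences $\bm a_r, \bm b_r$, and (iii) combine. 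For (i), the assumption $\alpha(r_n) \to 0$ together with monotonicity of $\alpha$ gives $|\Prob(\bm M_A \le \bm u, \bm M_B \le \bm u) - \Prob(\bm M_A \le \bm u)\Prob(\bm M_B \le \bm u)| \le \alpha((k-1)r) \to 0$ uniformly in $\bm u \in \R^d$. For (ii), strict stationarity identifies $\bm M_A$ in distribution with $\max_{t=1}^{n_A} \bm X_t$, so Condition~\ref{cond:mda} along the subsequence $n_A \to \infty$ gives $(\bm M_A - \bm b_{n_A})/\bm a_{n_A} \weak \bm Z \sim G$; combining with the regularity \eqref{eq:rvscale} via
\[
\frac{\bm M_A - \bm b_r}{\bm a_r} = \frac{\bm a_{n_A}}{\bm a_r} \cdot \frac{\bm M_A - \bm b_{n_A}}{\bm a_{n_A}} + \frac{\bm b_{n_A} - \bm b_r}{\bm a_r}
\]
and Slutsky, with $c := k-\xi$, produces the weak limit $c^{\bm \gamma} \bm Z + (c^{\bm \gamma}-1)/\bm \gamma$, whose CDF is $G^{c}$ by max-stability of multivariate extreme-value distributions; analogously $(\bm M_B - \bm b_r)/\bm a_r$ has limit CDF $G^{1-c}$. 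For (iii), setting $\bm u_r := \bm b_r + \bm a_r \bm z$, one obtains $\Prob(\bm Z_{r,s_n}^{[\cblk]} \le \bm z) = \Prob(\bm M_A \le \bm u_r, \bm M_B \le \bm u_r) \to G(\bm z)^c \cdot G(\bm z)^{1-c} = G(\bm z)$.

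The main obstacle is step (ii): transferring Condition~\ref{cond:mda} from block size exactly $r$ to the fractional block size $n_A = \Theta(r)$ via the pointwise regularity of $\bm a_r, \bm b_r$ in \eqref{eq:rvscale} (which needs a mild extension from rational $s$ to arbitrary $s > 0$, standard for regularly varying functions), and then recognizing the resulting affine pushforward of $\bm Z$ as having CDF $G^{c}$ through max-stability. Once the correct wrap decomposition, the marginal limits and the mixing decoupling are in place, the final combination $G^c \cdot G^{1-c} = G$ is immediate.
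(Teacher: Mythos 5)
Your proposal is correct and follows essentially the same route as the paper's own argument (given inside the proof of Proposition~\refstar{prop:overlap_wconv_kloop} in the supplement): split off the trivial non-wrapping case, decompose the wrapped circular maximum into the two sub-maxima at opposite ends of the $kr$-block, decouple them using $\alpha(r_n)\to 0$ across the gap of length $(k-1)r$, identify the marginal limits under the common norming $(\bm a_r,\bm b_r)$ as $G^{k-\xi}$ and $G^{1+\xi-k}$ via stationarity, the local uniformity of \eqref{eq:rvscale} and max-stability, and multiply. No substantive differences to report.
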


Due to Proposition~\ref{prop:Z_kloop_xi_wconv}, statistical methods based on the circular block maxima method may be expected to work asymptotically. As discussed in Section~\ref{sec:empmean}, a central ingredient for studying respective methods is weak convergence of empirical means. Adopting the notation from that section, we denote the empirical processes associated with the normalized sample $\bm Z_{r,1}^{[\cblk]}, \dots, \bm Z_{r,n}^{[\cblk]}$ by
\begin{align}
\bar \Gb_{n,r}^{[\cblk]} = \sqrt{\frac{n}r} \big(\Prob_{n,r}^{[\cblk]} - P_{n,r}^{[\cblk]}\big),
\quad
\tilde \Gb_{n,r}^{[\cblk]} = \sqrt{\frac{n}r} \big(\Prob_{n,r}^{[\cblk]} - P_r\big),
\quad
\Gb_{n,r}^{[\cblk]} = \sqrt{\frac{n}r} \big(\Prob_{n,r}^{[\cblk]} - P\big),
\label{eq:gn}
\end{align}
where $\Prob_{n,r}^{[\cblk]} = \frac{1}{n} \sum_{s =1}^n \delta_{\bm Z_{r,s}^{[\cblk]}}$, $P_{n,r}^{[\cblk]}=\frac1{kr} \sum_{s=1}^{kr} \Prob(\bm Z_{r,s}^{[\cblk]} \in \cdot)$, $P_r = \Prob(\bm Z_{r,1} \in \cdot)$ and $P = \Prob(\bm Z \in \cdot)$. 
The following result can be regarded as a circmax-counterpart of Theorem~\ref{theo:cltblocks1}. 
Recall that the integrability and bias Conditions~\ref{cond:int_h}-\ref{cond:bias_cbl} have been postponed to Section~\ref{sec:conditions} in the appendix, where we also provide an extended discussion of the bias in \ref{rem:bias_dhk}.

\begin{theorem}
\label{theo:cltblocks2}
Suppose that Conditions~\ref{cond:mda} and \ref{cond:ser_dep} are met. 
Then, for fixed $k \in \N_{\ge 2}$ and any finite set of real valued functions $h_1, \dots, h_q$ 
satisfying the integrability Condition \ref{cond:int_h}(b) with $\nu > 2/\omega$,  where $\omega$ is from Condition \ref{cond:ser_dep}, we have $
    \lim_{n \to \infty} \Cov(\bar \Gb_{n,r}^{[\cblk]} \bm h ) = \Sigma_{\bm h}^{[\sbl]}
$ and 
\[
\bar \Gb_{n,r}^{[\cblk]} \bm h 
\weak
\Nc_q(\bm 0, \Sigma_{\bm h}^{[\sbl]})
\]
with $\Sigma_{\bm h}^{[\sbl]}$ from \eqref{eq:asy_Cov}. Moreover, if the bias Condition \ref{cond:bias_cbl} holds, then
\[
\tilde \Gb_{n,r}^{[\cblk]} \bm h
\weak
\Nc_q(\tfrac{D_{\bm h,k}+E_{\bm h}}{k}, \Sigma_{\bm h}^{[\sbl]})
\]
(with $D_{\bm h,k}$ often equal to zero, see Remark~\ref{rem:bias_dhk}),
and if additionally the bias Condition \ref{cond:bias_dbsl} is met, then
\[
\Gb_{n,r}^{[\cblk]} \bm h
\weak
\Nc_q(B_{\bm h} + \tfrac{D_{\bm h,k}+E_{\bm h}}{k}, \Sigma_{\bm h}^{[\sbl]}).
\]
\end{theorem}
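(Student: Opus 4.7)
By linearity of $\bar\Gb_{n,r}^{[\cblk]}$ and the Cramér--Wold device, it suffices to prove all three assertions for a single function $h$ (i.e., $q=1$). The plan is to exploit the key structural feature that each circmax value $\bm M_{r,s}^{[\cblk]}$ depends solely on $(\bm X_t)_{t\in I_{kr,i}}$ for the unique $i$ with $s\in I_{kr,i}$: the circularization never leaves the ambient $kr$-block. Writing $S_i=\sum_{s\in I_{kr,i}}h(\bm Z_{r,s}^{[\cblk]})$, I then have
\[
\bar\Gb_{n,r}^{[\cblk]} h \;=\; \tfrac{1}{\sqrt{nr}}\sum_{i=1}^{m(k)}\bigl(S_i - \Exp S_i\bigr),
\]
a sum of $m(k)=n/(kr)$ stationary, strictly non-overlapping block totals. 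Using $\tfrac{n}{r}\alpha(r_n)=o(1)$ from Condition~\ref{cond:ser_dep}(c) together with the $(2+\nu)$-integrability in Condition~\ref{cond:int_h}(b) and $\nu>2/\omega$, I would replace $(S_i)_i$ by an i.i.d.\ sequence via Bradley's coupling lemma at negligible $L^2$-cost.

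The central task is the variance identification $\tfrac{1}{kr^2}\Var(S_1)\to\Sigma_h^{[\sbl]}$. Expand $\Var(S_1)=\sum_{s_1,s_2=1}^{kr}\Cov_{s_1,s_2}$ and classify the pairs by the cyclic offset $\delta=(s_2-s_1)\bmod kr$. For $\delta\in\{0,\ldots,r-1\}$, the two circmax windows share $r-\delta$ observations, so a joint-convergence counterpart of Proposition~\ref{prop:Z_kloop_xi_wconv} (cf.\ the pair result cited from the supplement) gives $\Cov_{s_1,s_2}\to\Cov(h(\bm Z_{1,\xi}),h(\bm Z_{2,\xi}))$ at $\xi=\delta/r$; wrap-around offsets $\delta\in\{kr-r+1,\ldots,kr-1\}$ are handled symmetrically with $\xi=(kr-\delta)/r$. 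The crucial observation is that the limit does not depend on $s_1$, even at window-wrap positions, which follows by using $\alpha(r)\to 0$ to decouple the two disconnected window halves produced by wrap-around and showing they share the same limiting joint law $G_\xi$ as in the non-wrap case. The middle offsets $\delta\in[r,kr-r]$ correspond to disjoint windows and contribute $o(r)$ in total by Davydov's inequality. A Riemann sum then yields
\[
\tfrac{1}{kr^2}\Var(S_1) \;\longrightarrow\; 2\int_0^1 \Cov\bigl(h(\bm Z_{1,\xi}),h(\bm Z_{2,\xi})\bigr)\diff\xi \;=\; \Sigma_h^{[\sbl]}.
\]
The first weak-convergence assertion then follows from Lyapunov's CLT on the coupled i.i.d.\ block totals, with the Lyapunov ratio controlled via $\Exp|S_1-\Exp S_1|^{2+\nu}=O((kr)^{2+\nu})$ (Rosenthal plus Condition~\ref{cond:int_h}(b)), giving order $m(k)^{-\nu/2}\to 0$.

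For the second statement, decompose $\tilde\Gb_{n,r}^{[\cblk]}h=\bar\Gb_{n,r}^{[\cblk]}h+\sqrt{n/r}(P_{n,r}^{[\cblk]}h-P_r h)$. At the non-wrap positions $s\in\{1,\ldots,(k-1)r+1\}$, $\bm M_{r,s}^{[\cblk]}$ is a genuine maximum of $r$ contiguous observations, so $\Exp[h(\bm Z_{r,s}^{[\cblk]})]=P_r h$ by stationarity of $(\bm X_t)$ and these positions contribute zero bias. Only the $r-1$ wrap-around positions per $kr$-block remain, producing the deterministic bias $(D_{h,k}+E_h)/k$ via Condition~\ref{cond:bias_cbl}; the factor $1/k$ is the asymptotic proportion $(r-1)/(kr)$ of wrap-around positions. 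The third statement follows by adding and subtracting $Ph$ and invoking $\sqrt{n/r}(P_r h-Ph)\to B_h$ from Condition~\ref{cond:bias_dbsl}. The hard part will be the variance calculation: establishing joint weak convergence of circmax pairs at every offset (and showing, via mixing-based decoupling, that the limit does not see the wrap-around), upgrading this to covariance convergence via $(2+\nu)$-uniform integrability, and tuning the exponents $\nu$ and $\omega$ so that the coupling step, the Davydov bounds for disjoint offsets, and the moment bounds all coexist.
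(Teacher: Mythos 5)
Your overall architecture matches the paper's proof closely: reduction to $q=1$ by Cram\'er--Wold, decomposition into totals over the disjoint $kr$-blocks, identification of the limiting variance through the joint weak convergence of circular block maxima pairs (the supplement's Proposition on overlapping circmax values) combined with the uniform $(2+\nu)$-integrability from Condition~\ref{cond:int_h}(b), a blocking CLT, and the split of the bias into the $r-1$ wrap-around positions per $kr$-block with the factor $1/k$ arising as their asymptotic proportion. Your variance bookkeeping by cyclic offset is correct and, if anything, slightly more transparent than the paper's change-of-variables computation of $\frac1k\int_0^k\int_0^k g_{(k)}(\xi,\xi')\diff\xi'\diff\xi$: there are exactly $kr$ pairs per nonzero cyclic offset, and the two one-sided Riemann sums over $\delta/r$ produce $2\int_0^1\Cov\bigl(h(\bm Z_{1,\xi}),h(\bm Z_{2,\xi})\bigr)\diff\xi$. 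The treatment of $\tilde\Gb_{n,r}^{[\cblk]}$ and $\Gb_{n,r}^{[\cblk]}$ is exactly the paper's.

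The step that would fail as written is the replacement of $(S_i)_i$ by an i.i.d.\ sequence ``via Bradley's coupling lemma at negligible $L^2$-cost''. The blocks $I_{kr,i}$ are adjacent with no separating gap, so coupling $S_{i+1}$ to a copy independent of $S_1,\dots,S_i$ is governed by the mixing coefficient at lag $1$, not at lag $r_n$; the bound $(n/r)^{1+\omega}\alpha(r_n)=o(1)$ is not applicable here, and the accumulated coupling error over $m(k)$ adjacent blocks does not vanish. The paper avoids this with Bernstein's big-block/small-block device: it groups the block totals into runs of $m^*-1$ consecutive blocks ($J_j^+$) separated by single discarded blocks ($J_j^-$), shows the discarded part is $o_\Prob(1)$, and then factorizes characteristic functions across the resulting gaps of length $kr$ (with total error of order $p\,\alpha(kr)=o(1)$) before applying Lyapunov; you need to insert this or an equivalent gap-creating device, after which your Lyapunov bound goes through. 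A second, smaller imprecision: for the ``middle'' cyclic offsets the two windows are disjoint but may be separated by an arbitrarily small gap, so Davydov's inequality alone does not make those covariances small; they must be handled either via the asymptotic-independence part of the joint convergence result (the limit $G_{|\xi-\xi'|}$ is the product measure for $|\xi-\xi'|\ge 1$, so the limiting covariance vanishes, which is how the paper's dominated-convergence argument treats them) or by noting that only $o(r^2)$ pairs have gap below $\ell_n$ and bounding those crudely. Both fixes are available with the tools you already cite.
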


The proof of Theorem~\ref{theo:cltblocks2} (see in particular Proposition \refstar{prop:overlap_wconv_kloop} in the supplement) shows that the sample of $kr$-blocks containing the circular block maxima can be considered asymptotically independent. This is akin to the sample of disjoint block maxima and, as discussed after Theorem~\ref{theo:cltblocks1}, it suggests to define bootstrap versions of $\Gb_{n,r}^{[\cblk]}$ by drawing $m(k)$-times from the sample of $kr$-blocks with replacement (which is computationally efficient: recall that all circular block maxima in a $kr$-block could be efficiently stored as a weighted sample with typically very few observations).

\section{Bootstrapping block maxima estimators}
\label{sec:bootgen}

Throughout this section, we discuss bootstrap approximations for the empirical processes from Section~\ref{sec:circmax}. For completeness, we also cover the disjoint block maxima case, as it corresponds to the circular block maxima sample when $k=1$.

For fixed $k\in\N$, consider the circmax-sample $\mathcal M_{n,r}^{[\cblk]}$. Independent of the observations, let $\bm W_{m(k)}=(W_{m(k),1}, \dots, W_{m(k), m(k)}) = (W_{1}, \dots, W_{m(k)})$ be multinomially distributed with $m(k)$ trials and class probabilities $(m(k)^{-1}, \dots, m(k)^{-1})$. Given the sample $\mathcal M_{n,r}^{[\cblk]}$, the bootstrap sample $\mathcal M_{n,r}^{[\cblk],*}$ is obtained by repeating the observations $(M_{r,s}^{[\cblk]})_{s \in I_{kr,i}}$ from the $i$th $kr$-block exactly $W_{m(k),i}$-times, for every $i=1, \dots, m(k)$. The respective empirical measure of the rescaled bootstrap sample $\bm Z_{r,1}^{[\cblk],*}, \dots, \bm Z_{r,n}^{[\cblk],*}$ with $\bm Z_{r,1}^{[\cblk],*} = (\bm M_{r,s}^{[\cblk],*}-\bm b_r)/\bm a_r$ can then be written as
\[
\hat \Prob_{n,r}^{[\cblk],*}  =  
\frac{1}{n} \sum_{s =1}^{n} \delta_{\bm Z_{r,s}^{[\cblk],*}}
=
\frac{1}{n} \sum_{i =1}^{m(k)} W_{m(k),i} \sum_{s \in I_{kr,i}}\delta_{\bm Z_{r,s}^{[\cblk]}},
\]
and the associated (centered) empirical process is given by
\begin{align}
\label{eq:gnboot}
\hat \Gb_{n,r}^{[\cblk],*} = \sqrt{\frac{n}r} ( \hat \Prob_{n,r}^{[\cblk],*} - \Prob_{n,r}^{[\cblk]} ).
\end{align}
Note that the conditional distribution of $\hat \Gb_{n,r}^{[\cblk],*}$ given the data $\bm X_1, \dots, \bm X_n$ can in practice be approximated to an arbitrary precision based on repeated sampling of $\bm W_{m(k)}$ (if $\bm a_r, \bm b_r$ were known; we discuss extensions below).
The following result shows that the bootstrap process $\hat \Gb_{n,r}^{[\cblk],*}$ provides a consistent distributional approximation for $\bar \Gb_{n,r}^{[\cblk]}$ and for $\bar \Gb_{n,r}^{[\sbl]}$.

Again, note that the integrability and bias conditions~\ref{cond:int_h}-\ref{cond:bias_cbl} referenced in the results of this section can be found in the Section~\ref{sec:conditions}.
\begin{theorem}[Asymptotic validity of the circmax-resampling bootstrap]\label{theo:mult_boot}
Suppose Conditions \ref{cond:mda} and \ref{cond:ser_dep} are met. Then, for fixed $k \in \N_{\ge 2}$ and $\bm h$ satisfying the integrability Condition~\ref{cond:int_h}(b) with $\nu > 2/\omega$ where $\omega$ is from Condition \ref{cond:ser_dep}, we have, for $\mbl \in \{\sbl, \cblk\}$,
\begin{align*}
&d_K \Big( \mathcal L \big( \hat \Gb_{n,r}^{[\cblk],*} \bm h  \mid \mathcal X_n \big) , 
\mathcal L \big( \bar \Gb_{n,r}^{[\mbl]} \bm h \big) \Big) = o_\Prob(1)
\end{align*}
as $n \to \infty$,
where $d_K$ denotes the Kolmogorov metric for probability measures on $\R^q$. As a consequence, if the bias Condition~\ref{cond:bias_dbsl} is met and if $r$ is chosen sufficiently large to guarantee that $B_{\bm h}=0$, we have
\begin{align*}
&d_K \Big( \mathcal L \big( \hat \Gb_{n,r}^{[\cblk],*} \bm h \mid \mathcal X_n \big) , 
\mathcal L \big( \Gb_{n,r}^{[\sbl]} \bm h \big) \Big) = o_\Prob(1),
\end{align*}
and if also the bias Condition~\ref{cond:bias_cbl} is met  with $E_{\bm h} = D_{\bm h,k} =0$, we have
\begin{align*}
&d_K \Big( \mathcal L \big( \hat \Gb_{n,r}^{[\cblk],*} \bm h \mid \mathcal X_n \big) , 
\mathcal L \big( \Gb_{n,r}^{[\cblk]} \bm h \big) \Big) = o_\Prob(1).
\end{align*}
\end{theorem}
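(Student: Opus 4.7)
The key observation is that the resampling acts on the $m(k) = n/(kr)$ disjoint $kr$-blocks $I_{kr,1}, \ldots, I_{kr,m(k)}$, which are asymptotically independent under Condition~\ref{cond:ser_dep}(c). Writing $S_i = \sum_{s\in I_{kr,i}} \bm h(\bm Z_{r,s}^{[\cblk]})$ for the block sums and $\bar S = m(k)^{-1}\sum_i S_i$, a short calculation (using $\sum_i(W_{m(k),i}-1)=0$) shows
\[
\hat{\Gb}_{n,r}^{[\cblk],*} \bm h
= \frac{1}{\sqrt{rn}}\sum_{i=1}^{m(k)}(W_{m(k),i}-1)(S_i - \bar S).
\]
Conditionally on $\mathcal X_n$, this is a centered multinomial-weighted sum of $m(k)\to\infty$ ``data points'' $(S_i - \bar S)/\sqrt{rn}$, to which an Efron-type conditional CLT (in the spirit of Bickel--Freedman or Præstgaard--Wellner) can be applied, with moments controlled by Condition~\ref{cond:int_h}(b) and dependence among the $S_i$ controlled by mixing.

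\textbf{Main steps.} The plan is as follows. \emph{(i)}~Compute the conditional covariance: using $\Cov^*(W_{m(k),i},W_{m(k),j}) = \delta_{ij}-1/m(k)$ and $\sum_i(S_i-\bar S)=0$, one obtains
$\Cov^*(\hat{\Gb}_{n,r}^{[\cblk],*}\bm h) = (kr^2)^{-1}\hat V_n$,
where $\hat V_n = m(k)^{-1}\sum_i (S_i-\bar S)^{\otimes 2}$, and show $(kr^2)^{-1}\hat V_n \pto \Sigma_{\bm h}^{[\sbl]}$. This combines (a)~the fact, already underlying Theorem~\ref{theo:cltblocks2}, that $(kr^2)^{-1}\Var(S_1)\to\Sigma_{\bm h}^{[\sbl]}$, and (b)~a weak law for $\hat V_n$ obtained by bounding $\Var(\hat V_n)$ via Rio/Davydov-type covariance inequalities under the mixing Condition~\ref{cond:ser_dep}(c) together with the moment assumption $\nu > 2/\omega$ from Condition~\ref{cond:int_h}(b). \emph{(ii)}~Verify a conditional Lindeberg condition. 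Since the multinomial weights are exchangeable rather than independent, I would pass to a Poissonization by iid $\mathrm{Poisson}(1)$ multipliers and control the resulting deficit as $m(k)\to\infty$ (either via a Hájek projection or a total-variation bound coming from conditioning on $\sum \tilde W_i$); the Lindeberg condition then reduces to $\max_i \|S_i-\bar S\|^2 = o_\Prob(rn)$, which follows from a union bound using $\Exp[\|S_1\|^{2+\delta}] \lesssim (kr)^{2+\delta}$ granted by Condition~\ref{cond:int_h}(b). \emph{(iii)}~Combine (i) and (ii) to obtain
$\hat{\Gb}_{n,r}^{[\cblk],*}\bm h \mid \mathcal X_n \wconv \mathcal N_q(\bm 0, \Sigma_{\bm h}^{[\sbl]})$
in probability, and upgrade this in-probability weak convergence to $o_\Prob(1)$ in Kolmogorov distance by Polyà's theorem, since the limit has a continuous CDF.

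\textbf{Matching the target laws.} For the first conclusion, Theorems~\ref{theo:cltblocks1} and~\ref{theo:cltblocks2} yield $\bar\Gb_{n,r}^{[\mbl]}\bm h \wconv \mathcal N_q(\bm 0, \Sigma_{\bm h}^{[\sbl]})$ for both $\mbl\in\{\sbl,\cblk\}$, which agrees with the bootstrap limit, so that a triangle inequality for $d_K$ closes the argument. For the second conclusion, under Condition~\ref{cond:bias_dbsl} and $B_{\bm h}=0$ one has $\Gb_{n,r}^{[\sbl]}\bm h \wconv \mathcal N_q(\bm 0, \Sigma_{\bm h}^{[\sbl]})$, which again coincides with the bootstrap limit. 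For the third conclusion, the additional assumption $D_{\bm h,k}=E_{\bm h}=0$ under Condition~\ref{cond:bias_cbl} collapses the limit of $\Gb_{n,r}^{[\cblk]}\bm h$ to the same centered normal, and the same matching argument applies.

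\textbf{Main obstacle.} The delicate step is step~(i), the consistency $(kr^2)^{-1}\hat V_n \pto \Sigma_{\bm h}^{[\sbl]}$. The block sums $S_i$ are only asymptotically independent across blocks while being strongly dependent within a $kr$-block, their variance scales as $kr^2$ (making $\hat V_n$ a quadratic functional whose second-moment control is nontrivial), and one has to interleave mixing bounds with the integrability assumption $\nu > 2/\omega$ to bound both the bias and the variance of $\hat V_n$. The remainder of the argument is, by comparison, a fairly standard conditional multiplier CLT adapted from the multinomial-bootstrap literature to the block-sampling setting; the bias shifts enter only via additive constants and are handled entirely by the final Slutsky-type matching argument.
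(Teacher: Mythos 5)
Your overall architecture is sound and the final ``matching of target laws'' via Theorems \ref{theo:cltblocks1}/\ref{theo:cltblocks2} and the triangle inequality for $d_K$ is exactly what the paper does for the second and third assertions. But for the core first assertion you take a genuinely different route. The paper never works with the conditional distribution directly: it invokes the characterization of bootstrap consistency via \emph{two independent bootstrap replicates} (Lemma 2.3 of B\"ucher and Kojadinovic), reducing the claim to the unconditional joint weak convergence $(\hat\Gb^{[1]}_{n,r}\bm h,\hat\Gb^{[2]}_{n,r}\bm h)\weak \Nc_q(\bm 0,\Sigma^{[\sbl]}_{\bm h})\otimes\Nc_q(\bm 0,\Sigma^{[\sbl]}_{\bm h})$, which is then proved by Poissonization of the multinomial weights plus an unconditional multiplier CLT (Lemma \ref{lem:aux_mult_clt}). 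The payoff of that route is that all covariance computations are \emph{unconditional} and only involve $\Var(D_{n,1})$ and cross-block covariances, i.e.\ second moments already controlled in the proof of Theorem \ref{theo:cltblocks2}; the price is a somewhat delicate analysis of the Poissonization remainder. Your route (conditional covariance consistency plus a conditional Lindeberg/negligibility condition, then P\'olya) is a legitimate alternative in the Pr{\ae}stgaard--Wellner style, and your algebra for $\Cov^*(\hat\Gb^{[\cblk],*}_{n,r}\bm h)=(kr^2)^{-1}\hat V_n$ and the negligibility bound $\max_i\|S_i-\bar S\|^2=o_\Prob(rn)$ both check out under Condition \ref{cond:int_h}(b).

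There is, however, a concrete gap in your step (i), which you yourself flag as the delicate one. To prove $(kr^2)^{-1}\hat V_n\pto\Sigma^{[\sbl]}_{\bm h}$ by ``bounding $\Var(\hat V_n)$ via Rio/Davydov-type covariance inequalities'' you need the \emph{variance} of the summands $T_i:=\|S_i-\bar S\|^2/(kr^2)$ (the diagonal of the double sum) to be finite, and $\Exp[T_i^2]$ involves $\Exp[\|S_i\|^4]$, i.e.\ fourth moments of $\bm h(\bm Z_{r,s}^{[\cblk]})$. Condition \ref{cond:int_h}(b) only guarantees moments of order $2+\nu$, and since $\nu>2/\omega$ permits $\nu<2$, these fourth moments need not exist; Davydov's inequality controls off-diagonal covariances but not the diagonal. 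The step is repairable --- the $T_i$ have uniformly bounded moments of order $1+\nu/2>1$, hence are uniformly integrable, so a truncate-then-bound-the-variance weak law for the asymptotically independent (mixing) array $(T_i)_i$ gives the conclusion --- but that truncation argument is a missing ingredient, not a routine application of a covariance inequality. This is precisely the technical burden that the paper's unconditional two-replicate formulation is designed to avoid.
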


Remarkably, the consistency of the circmax bootstrap for the sliding block maxima method does not require the bias Condition~\ref{cond:bias_cbl} (nor $E_{\bm h}=D_{\bm h,k}=0$ in case it is met).

\begin{remark}[Bootstrapping the disjoint block maxima empirical process]
\label{rem:db_bst}
Consider the disjoint block maxima sample, whose empirical means coincide with empirical means based on the circular block maxima sample with $k=1$. The results from Theorem~\ref{theo:mult_boot} continue to hold for $k=1$ ($D_{\bm h,k}=E_{\bm h}=0$ is then immediate), but with `$\sbl$' replaced by `$\dbl$' at all instances.
\end{remark}

\begin{remark}[Inconsistency of naive resampling of sliding block maxima]
\label{rem:slidboot-inconsistent}
It seems natural to bootstrap sliding block maxima empirical means by blockwise resampling, that is, by the sliding block maxima analogue of \eqref{eq:gnboot}, 
\[
\hat \Gb_{n,r}^{[\sbl],*} = \sqrt{\frac{n}r} ( \hat \Prob_{n,r}^{[\sbl],*} - P_r ), \qquad
\hat \Prob_{n,r}^{[\sbl],*}  =  
\frac{1}{n} \sum_{s =1}^{n} \delta_{\bm Z_{r,s}^{[\sbl],*}}
=
\frac{1}{n} \sum_{i =1}^{m(k)} W_{m(k),i} \sum_{s \in I_{kr,i}}\delta_{\bm Z_{r,s}^{[\sbl]}}.
\]
Note that $\hat \Gb_{n,r}^{[\sbl],*}$ depends on $k$, which is suppressed from the notation. However, unlike $\hat \Gb_{n,r}^{[\cblk],*}$, this process is inconsistent for both $\bar\Gb_{n,r}^{[\sbl]}$ and $\bar{\Gb}_{n,r}^{[\cblk]}$, for any $k\in\N$. 
An heuristic explanation is provided in Section~\refstar{sec:inconsistency-sliding} in the supplement, where we also formally prove that, considering the case $d=q=1$ for simplicity, 
\begin{align}
\label{eq:slidboot-inconsistent}
d_K \Big( \mathcal L \big( \hat \Gb_{n,r}^{[\sbl],*} h  \mid \mathcal X_n \big) , 
\Nc(\bm 0, \Sigma_{h}^{(k)}) \Big) = o_\Prob(1),
\end{align}
where
$
\Sigma_{h}^{(k)} = \Sigma_h^{[\sbl]} - \frac2k \int_0^1 \xi \Cov(h(\bm Z_{1,\xi}), h(\bm Z_{2,\xi})) \diff \xi. 
$
If $\Var(h(\bm Z))>0$, we have $\Sigma_h^{(k)} <\Sigma_h^{[\sbl]}$ for any $k\in\N$ by Lemma~\refstar{lem:sliding-covpos} in the supplement,
so in view of Theorem~\ref{theo:cltblocks2} the bootstrap process has a smaller asymptotic variance than needed for approximating $\bar\Gb_{n,r}^{[\sbl]}$ or $\bar{\Gb}_{n,r}^{[\cblk]}$. This circumstance is one of the main motivations for working with circular block maxima.
\end{remark}

In statistical applications, as discussed after Theorem~\ref{theo:cltblocks1}, we do not want to bootstrap the estimation error of empirical means involving the (unobservable) $\bm Z_{r,i}$, but the estimation error of general statistics depending on the block maxima $\bm M_{r,i}$ itself. We follow the general setting of Remark~\ref{rem:normal}, that is,  $\hat {\bm \theta}_n = {\bm \varphi}_n(\bm M_{r,1}, \dots, \bm M_{r,n})$ is some estimator of interest for a target parameter ${\bm \theta}_r \in \R^p$.
If similar equations/linearizations as in \eqref{eq:theta_vartheta} and \eqref{eq:linerror} can be shown to hold for the (unobservable) circmax-bootstrap counterpart $\hat {\bm \vartheta}_n^{[\cblk],*} = {\bm \psi}_n(\bm Z_{1,r}^{[\cblk],*}, \dots, \bm Z_{n,r}^{[\cblk],*})$, that is, if
\begin{align}
\label{eq:bootlin1}
\hat {\bm \theta}_n^{[\cblk],*} - \hat {\bm \theta}_n^{[\cblk]} 
&= A(\bm a_r, \bm b_r) (\hat {\bm \vartheta}_n^{[\cblk],*} - \hat {\bm \vartheta}_n^{[\cblk]}), 
\\
\label{eq:bootlin2}
\sqrt{\frac{n}r}( \hat {\bm \vartheta}_n^{[\cblk],*} - \hat {\bm \vartheta}_n^{[\cblk]}) 
&= \hat \Gb_{n,r}^{[\cblk],*} (h_1, \dots, h_{q})^\top + o_\Prob(1)
\end{align}
then, loosely speaking, the distributional approximation $\Gb_{n,r}^{[\cblk]} \bm h \approx_d (\Gb_{n,r}^{[\cblk],*} \bm h \mid \Xc_n)$ from Theorem~\ref{theo:mult_boot} carries over to the distributional approximation $\hat {\bm \theta}_n^{[\cblk]} - {\bm \theta}_r \approx_d (\hat {\bm \theta}_n^{[\cblk],*}- \hat {\bm \theta}_n^{[\cblk]} \mid \Xc_n)$. Formally, we have the following result.

\begin{proposition} \label{prop:M_Bst_cons}
Fix $k\in \N_{\ge 2}$ and assume that Conditions~\ref{cond:mda} and \ref{cond:ser_dep} are met.
Suppose $\hat {\bm \theta}_n^{[\mbl]} = {\bm \varphi}_n(\bm M_{r,1}^{[\mbl]}, \dots, \bm M_{r,n}^{[\mbl]})$ is some estimator of interest for a target parameter ${\bm \theta}_r \in \R^p$ such that \eqref{eq:phi_decomposition}, \eqref{eq:vartheta} and \eqref{eq:linerror} (with $\bar \Gb_{n,r}=\bar \Gb_{n,r}^{[\mbl]}$) is met for some function $\bm h=(h_1, \dots, h_q)^\top$ satisfying the conditions of Theorem \ref{theo:cltblocks2}. Moreover, assume that \eqref{eq:bootlin1} and \eqref{eq:bootlin2} are met. 
Then, if $\Sigma_{\bm h}^{[\sbl]}$ is invertible, for $\mbl \in \{\sbl, \cblk\}$,
\begin{align*}
&d_K \Big( \mathcal L \big( \hat {\bm \theta}_n^{[\cblk],*}  - \hat {\bm \theta}_n^{[\cblk]} \mid \mathcal X_n \big) , 
\mathcal L \big( \hat {\bm \theta}_n^{[\mbl]}-{\bm \theta}_r  \big) \Big) = o_\Prob(1).
\end{align*}
\end{proposition}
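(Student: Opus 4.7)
The plan is to lift the bootstrap consistency of Theorem~\ref{theo:mult_boot}, which controls $\hat\Gb_{n,r}^{[\cblk],*}\bm h$ relative to $\bar\Gb_{n,r}^{[\mbl]}\bm h$ in Kolmogorov distance on $\R^q$, up to the level of the estimator $\hat{\bm\theta}_n^{[\mbl]}$ by combining it with the linear representations \eqref{eq:theta_vartheta}, \eqref{eq:bootlin1} and the stochastic linearizations \eqref{eq:linerror}, \eqref{eq:bootlin2}.

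Set $A_n := A(\bm a_r, \bm b_r)$, $V_n := \hat{\bm\vartheta}_n^{[\mbl]} - {\bm\vartheta}_r$ and $V_n^* := \hat{\bm\vartheta}_n^{[\cblk],*} - \hat{\bm\vartheta}_n^{[\cblk]}$, so that $\hat{\bm\theta}_n^{[\mbl]} - {\bm\theta}_r = A_n V_n$ and $\hat{\bm\theta}_n^{[\cblk],*} - \hat{\bm\theta}_n^{[\cblk]} = A_n V_n^*$, while $\sqrt{n/r}V_n = \bar\Gb_{n,r}^{[\mbl]}\bm h + R_n$ and $\sqrt{n/r}V_n^* = \hat\Gb_{n,r}^{[\cblk],*}\bm h + R_n^*$ with $R_n, R_n^* = o_\Prob(1)$. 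A routine Markov--Fubini argument upgrades the bootstrap remainder to $o_\Prob(1)$ in conditional $\Prob^*$-probability. Combining Theorem~\ref{theo:mult_boot} with the central limit theorems of Theorems~\ref{theo:cltblocks1}--\ref{theo:cltblocks2}, Polya's theorem (the limit $\Nc_q(\bm 0, \Sigma_{\bm h}^{[\sbl]})$ is continuous by invertibility of $\Sigma_{\bm h}^{[\sbl]}$, and both values of $\mbl$ share this asymptotic covariance), and a bootstrap Slutsky argument then yields
\[
d_K\!\left(\mathcal L\!\left(\sqrt{n/r}\,V_n^* \mid \Xc_n\right),\; \mathcal L\!\left(\sqrt{n/r}\,V_n\right)\right) = o_\Prob(1).
\]

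Transferring this $q$-dimensional Kolmogorov bound through the ($n$-varying) linear map $A_n$ is the crux of the argument. Rescaling $y\mapsto\sqrt{n/r}\,y$ inside the supremum defining $d_K$ on $\R^p$ reduces the goal to showing $d_K(\mathcal L(A_n\sqrt{n/r}V_n^*\mid\Xc_n),\; \mathcal L(A_n\sqrt{n/r}V_n)) = o_\Prob(1)$. The key observation is that, for each fixed $n$, both laws are well-approximated by the \emph{same} reference Gaussian $\Nc_p(\bm 0, \Sigma_n)$ with $\Sigma_n := A_n\Sigma_{\bm h}^{[\sbl]}A_n^\top$, which is non-degenerate under the stated invertibility hypothesis (assuming, as in all examples of Section~\ref{sec:empmean}, that $A_n$ has full row rank). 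Whitening by $\Sigma_n^{-1/2}$ reduces the situation to the fixed continuous standard normal $\Nc_p(\bm 0, I_p)$, on which Polya's theorem applies uniformly; a Berry--Esseen-type Gaussian approximation argument shows each of the two (whitened) laws is $o_\Prob(1)$-close in $d_K$ to $\Nc_p(\bm 0, I_p)$, and the triangle inequality with respect to this common reference, transferred back via the deterministic linear map $\Sigma_n^{1/2}$, delivers the conclusion.

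The main obstacle lies precisely in this last step: the Kolmogorov metric on $\R^p$ is \emph{not} invariant under general (non-coordinatewise) linear transformations, so one cannot directly push the $q$-dimensional $d_K$-bound through $A_n$. The workaround exploits that both compared laws converge to the \emph{same} (possibly $n$-dependent) multivariate normal $\Nc_p(\bm 0, \Sigma_n)$, which the invertibility of $\Sigma_{\bm h}^{[\sbl]}$ ensures is non-degenerate; the triangle inequality with respect to this common reference Gaussian then bypasses the need for direct transformation-invariance, and legitimizes the whitening and the appeal to Polya's theorem.
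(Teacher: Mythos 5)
Your proposal follows essentially the same route as the paper: linearize the estimation error and its bootstrap counterpart via \eqref{eq:theta_vartheta}, \eqref{eq:linerror} and \eqref{eq:bootlin1}, \eqref{eq:bootlin2}, invoke Theorem~\ref{theo:mult_boot} together with Theorems~\ref{theo:cltblocks1} and \ref{theo:cltblocks2} to obtain the $q$-dimensional Kolmogorov bound for $S_n^* = \hat \Gb_{n,r}^{[\cblk],*}\bm h + o_\Prob(1)$ versus $S_n = \bar \Gb_{n,r}^{[\mbl]}\bm h + o_\Prob(1)$, and then push this bound through the $n$-dependent matrix $A(\bm a_r,\bm b_r)$ by exploiting that both laws approach the same continuous Gaussian; the paper packages this last step in Lemmas~\ref{lem:ZtoM} and \ref{lem:ZBstToMBst}. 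The one substantive difference is how you execute the transfer: you whiten by $\Sigma_n^{-1/2}$ with $\Sigma_n = A_n \Sigma_{\bm h}^{[\sbl]} A_n^\top$, which forces you to \emph{add} the assumption that $A_n$ has full row rank. The proposition imposes no such condition, and Lemma~\ref{lem:ZtoM} avoids it by discarding zero rows, normalizing the remaining rows of $A_n$ to unit length, extracting a convergent subsequence $\bm E$ of the normalized matrices, and checking that every marginal of $\bm E \bm S$ is continuous --- which only requires nonzero rows together with absolute continuity of $\Nc_q(\bm 0, \Sigma_{\bm h}^{[\sbl]})$, the latter being exactly what the invertibility hypothesis delivers. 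Relatedly, no Berry--Esseen estimate is needed anywhere: once the subsequential limit law is continuous, Polya's theorem applied along almost-surely convergent subsequences of the conditional laws (as in Lemma~\ref{lem:ZBstToMBst}) already yields the uniform convergence of distribution functions. So your argument is sound and covers all the examples of Section~\ref{sec:empmean} and Remark~\ref{rem:normal}, where $A$ is invertible, but it proves a slightly weaker statement than the one asserted; replacing the whitening step by the row-normalization and subsequence device closes that gap.
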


The results from Proposition \ref{prop:M_Bst_cons} are sufficient for showing that  basic bootstrap confidence intervals \citep{DavHin97} asymptotically hold their intended level. As a proof of concept, details on a specific example are given in the next section, see also Corollary~\refstar{cor:confint-fremle} in the supplement.

\section{Application: bootstrapping the pseudo-maximum likelihood estimator for the Fr\'echet distribution}
\label{sec:frechet-small}

We provide details how the previous methods and results can be used for a specific important estimation problem, namely maximum likelihood estimation for parameters associated with a univariate heavy-tailed version of the MDA Condition~\ref{cond:mda}. An extended version of this section, including formal mathematical statements and proofs, can be found in the supplement, Section \refstar{sec:frechet}.

\begin{condition}[Fréchet Max-Domain of Attraction]\label{cond:mda_Fre-small}
Let $(X_t)_{t\in\Z}$ denote a strictly stationary univariate time series with continuous margins. There exists some $\alpha_0 > 0$ and some sequence $(\sigma_r)_{r \in \N} \subset (0,\infty)$ such that
\[
\lim_{r \to \infty} \frac{\sigma_{\lfloor rs \rfloor }}{ \sigma_r} = s^{1/\alpha_0}, \,\, (s > 0)
\quad \text{ and } \quad
\frac{\max(X_1, \ldots, X_r)}{\sigma_r}  \wconv Z \sim P_{\alpha_0,1} \qquad (r \to \infty),
\]
where $P_{\alpha, \sigma}$ denotes the Fréchet-scale family on $(0,\infty)$ defined by its CDF $F_{\alpha,\sigma}(x) = \exp(-(x/\sigma)^{-\alpha})$; here $(\alpha, \sigma) \in (0,\infty)^2$.
\end{condition}

Suppose $X_1, \dots, X_n$ is an observed sample from $(X_t)_{t\in\Z}$ as in Condition~\ref{cond:mda_Fre-small}, and let $r=r_n$ denote a block length parameter.
In view of the heuristics and results from Sections~\ref{sec:mathpre} and \ref{sec:circmax}, the associated block maxima samples $\Mc_{n,r}^{[\mbl]}$ with $\mbl \in \{\dbl, \sbl, \cblk\}$ (with $k \in \N$ fixed) can all be considered approximate samples from $P_{\alpha_0, \sigma_r}$. As in \cite{BucSeg18a, BucSeg18-sl}, this suggests to estimate $(\alpha_0, \sigma_r)$ by maximizing the independence Fréchet-log-likelihood, that is, we define
\begin{align}
\label{eq:mle-frechet-small}
\hat {\bm \theta}_n^{[\mbl]} := (\hat \alpha_n^{[\mbl]}, \hat \sigma_n^{[\mbl]})^\top := \argmax_{{\bm \theta}= (\alpha, \sigma) \in (0,\infty)^2}
\sum_{M_i \in \Mc_{n,r}^{[\mbl]}}  \ell_{\bm \theta} (M_i \vee c),
\end{align}
where, for ${\bm \theta}=(\alpha, \sigma)^\top \in (0,\infty)^2$, 
$
%\label{eq:frechetloglik}
\ell_{\bm \theta}(x) = \log(\alpha/\sigma) - (x/\sigma)^{-\alpha} - (\alpha+1) \log(x/\sigma)
$
denotes the log density of the Fréchet distribution $P_{(\alpha, \sigma)}$ and where $c>0$ denotes an arbitrary truncation constant. As shown in the last-named references, there exists a unique maximizer of \eqref{eq:mle-frechet-small} (with probability tending to one), and the rescaled estimation error $\sqrt{n/r}(\hat \alpha_n^{[\mbl]} - \alpha_0, \hat \sigma_n^{[\mbl]} / \sigma_{r_n} - 1)$ for both $\mbl=\dbl$ and $\mbl=\sbl$ is asymptotically normal under suitable regularity conditions. 

For fixed $k\in\N$, consider the circmax-sample $\mathcal M_{n,r}^{[\cblk]}$ and its bootstrap version $\mathcal M_{n,r}^{[\cblk], *}$ as constructed in Section~\ref{sec:bootgen}. In the supplement we show that, under suitable regularity assumption, the independence Fréchet-log-likelihood 
\[
{\bm \theta} \mapsto 
\sum_{s=1}^n \ell_{\bm \theta}(M_{r,s}^{[\cblk],*} \vee c) =
\sum_{i=1}^{m(k)} W_{m(k), i} \sum_{s \in I_{kr,i}} \ell_{\bm \theta}(M_{r,s}^{[\cblk]} \vee c).
\]
has a unique maximimizer (with probability converging to one), say $(\hat \alpha_n^{[\cblk],*}, \hat \sigma_n^{[\cblk],*})$, and that the conditional distribution of the rescaled bootstrap-estimation error, $\sqrt{n/r}(\hat \alpha_n^{[\cblk],*} - \hat \alpha_n^{[\cblk]}, \hat \sigma_n^{[\cblk],*} / \hat \sigma_n - 1)$, given the observations is close to the distribution of $\sqrt{n/r}(\hat \alpha_n^{[\mbl]} - \alpha_0, \hat \sigma_n^{[\mbl]} / \sigma_{r_n} - 1)$ for both $\mbl \in {\sbl, \cblk}$: for $n\to\infty$, we have
\begin{align*}
d_K\bigg[ 
\Lc\bigg( 
\sqrt{\frac{n}r}
        \begin{pmatrix}
            \hat \alpha_n^{[\cblk],*} - \hat \alpha_n^{[\cblk]} \\
            \hat \sigma_n^{[\cblk],*}/\hat \sigma_n^{[\cblk]} - 1
        \end{pmatrix} \,\Big|\, \mathcal X_n\bigg) ,
\sqrt{\frac{n}r}
        \begin{pmatrix}
            \hat \alpha_n^{[\mbl]} - \alpha_0 \\
            \hat \sigma_n^{[\mbl]}/\sigma_{r_n} - 1
        \end{pmatrix}
        \bigg] = o_\Prob(1).
\end{align*}

The result in the previous display allows for statistical inference on the parameters $\alpha_0, \sigma_{r_n}$, for instance in the form of confidence intervals. We provide details on $\sigma_r$, using the circular block bootstrap approximation to the sliding block estimator: for $\beta \in (0,1)$, let 
$q_{\hat \sigma_n^{[\cblk],*}}(\beta)$ denote the $\beta$-quantile of the conditional distribution of 
$\hat \sigma_n^{[\cblk],*}$ given the data, 
that is,
$q_{\hat \sigma_n^{[\cblk],*}}(\beta)= (F_{\hat \sigma_n^{[\cblk],*}})^{-1}(\beta)$, where
$
F_{\hat \sigma_n^{[\cblk],*}}(x) = \Prob( \hat \sigma_n^{[\cblk],*} \le x \mid \Xc_n)$ for $x \in \R$.
Note that the quantile may be approximated to an arbitrary precision by repeated bootstrap sampling. Consider the following version of the basic bootstrap confidence interval \citep{DavHin97}
\begin{align*}
I_{n,\sigma}^{(\sbl, \cblk)}(1-\beta) &= 
\Big[\hat \sigma_n^{[\sbl]}  + \{ \hat \sigma_n^{[\cblk]} - q_{\hat \sigma_n^{[\cblk],*}}(1-\tfrac\beta2) \} , \hat \sigma_n^{[\sbl]}  + \{ \hat \sigma_n^{[\cblk]} - q_{\hat \sigma_n^{[\cblk],*}}(\tfrac\beta2)\} \Big].
\end{align*}
Note that the interval is anchored to the sliding blocks estimator $\hat \sigma_n^{[\sbl]}$, and that its size is controlled by quantiles of the circular blocks estimation error, $\hat \sigma_n^{[\cblk],*} - \hat \sigma_n^{[\cblk]}$.
We will show in the supplement, see Corollary~\refstar{cor:confint-fremle}, that
\[
\lim_{n \to \infty} \Prob\big( \sigma_{r_n} \in I_{n,\sigma}^{(\sbl, \cblk)}(1-\beta)\big) = 1-\beta,
\]
and that an analogous result can be derived for the disjoint block maxima estimator. In view of the fact that the disjoint block maxima estimator exhibits a larger asymptotic estimation variance, the width of the disjoint blocks confidence interval is typically larger than the one of the sliding-circular method, for every fixed confidence level (for an empirical illustration, see Figure~\refstar{fig:plotFreFixNCi} for the related problem of providing a confidence interval for the shape parameter $\alpha$).

\section{Simulation study}
\label{sec:sim}

The finite-sample properties of the estimators and of the bootstrap approaches have been investigated in a large scale Monte-Carlo simulation study. Two asymptotic regimes were considered: first, the case where the block size $r$ is fixed and the number of disjoint blocks is increasing which can be regarded as the most important use case for the block maxima method), and second, the case where $n$ is fixed and the block size is treated as a tuning parameter. For the sake of brevity, we only report results for the fixed block size case in the main paper; the other results are reported in the supplement.

Several target parameters associated with the law of $\bm M_r$ have been considered. In the main paper, we only report results on the univariate case and the estimation of the $(1-1/T)$-quantile of $M_r$, denoted as  $\rl(T,r)$, with $T \in \{50,100,200\}$. Note that $\rl(T,r)$ is a standard target parameter in environmental extremes, as it corresponds to the $(T,r)$-return level of the underlying time series, that is, it will take on average $T$ IID disjoint blocks of size $r$ until the first such block whose maximum exceeds $\rl(T,r)$. In the supplement, we also present results for the expected value $\mu_r = \Exp[M_{r}]$ (univariate case), as well as for Spearman's rho $\rho=\rho(\bm M_r)$ (bivariate case).

Regarding return level estimation, we proceed as follows: the DoA Condition~\ref{cond:mda} implies that the law of $M_r$ is approximately GEV with location parameter $b_r$, scale parameter $a_r$ and shape parameter $\gamma$.  As a consequence, $\rl(T,r) \approx \rl^\circ(T,r)$, where 
\[
\rl^\circ(T,r) = a_r \frac{c_T^{-\gamma}-1}\gamma + b_r, \qquad c_T=-\log(1-1/T)
\]
is the $(1-1/T)$-quantile of the $\mathrm{GEV}(b_r, a_r, \gamma)$ distribution. This suggests the disjoint, sliding and circular block maxima estimators
\begin{align}
\label{eq:rlhat}
\rlhat^{[\mbl]}
:=
\rlhat^{[\mbl]}(T,r)
= \hat a_r^{[\mbl]} \frac{c_T^{-\hat \gamma^{[\mbl]}}-1}{\hat \gamma^{[\mbl]}} + \hat b_r^{[\mbl]}, \qquad \mbl \in \{\dbl, \sbl, \cblk\},
\end{align}
where $(\hat a_r^{[\mbl]}, \hat b_r^{[\mbl]} , \hat \gamma^{[\mbl]})$ are suitable estimators obtained by fitting the GEV-distribution to the respective sample of block maxima; here, $\cblk$ denotes the circmax estimator with parameter $k=2$ (other choices lead to similar results). Throughout, as in Section~\ref{sec:frechet-small}, we employ (pseudo)-maximum likelihood estimation, that is, 
\begin{align*}
\hat \theta^{[\mbl]} := (\hat a_r^{[\mbl]}, \hat b_r^{[\mbl]} , \hat \gamma^{[\mbl]}) 
\in 
\argmin_{\theta} \sum_{M_i \in \mathcal M_{n,r}^{[\mbl]} } \ell_\theta(M_i),
\end{align*}
where $\ell_\theta$ denotes the log-likelihood function of the GEV family.

Two different classes of time series models have been employed: the first one is the ARMAX-GPD-Model, which depends on two simple parameters controlling the serial dependence and the shape parameter $\gamma$ in the DoA condition, respectively, and the second one comprises three rather specific models designed for the extreme value analysis of precipitation amounts, temperatures, and stock returns, respectively. The first class of models will be treated in this section, and the next one will be part of the  next section.

The following model serves as a basis for understanding the influence of the serial dependence and the heaviness of the tail on the estimators and the bootstrap. Moreover, the finite-sample results will be used to construct a data-adaptive size-correction for the basic bootstrap confidence interval.

\begin{model}[ARMAX-GPD-Model]
\label{mod:armax-gpd}
$(X_t)_{t\in\Z}$ is a stationary real-valued time series whose stationary distribution is the generalized Pareto distribution $\mathrm{GPD}(0,1,\gamma)$ with CDF\ 
\begin{align*}
F_\gamma(x) := 
\begin{cases}
\left( 1- (1+\gamma x)^{-1/\gamma} \right) \bm{1}(x \geq 0), \quad &  \gamma > 0,\\
\left( 1- (1+\gamma x)^{-1/\gamma} \right) \bm{1}(0 \leq x \leq -1/\gamma), \quad &\gamma < 0,\\
\left( 1- \exp(-x) \right) \bm{1}(x \geq 0), & \gamma = 0.
\end{cases}
\end{align*}
After transformation to the $\text{Fréchet}(1)$-scale, the temporal dynamics correspond to the ARMAX(1)-model (Example 10.3 in \citealp{BeiGoeSegTeu04}). More precisely, the time series $(Y_t)_t$ with $Y_t= F_W^{\leftarrow}(F_\gamma(X_t))$ satisfies the recursion
\begin{equation} \label{eq:ArmaxRec}
    Y_t = \max \big( \beta Y_{t-1}, (1-\beta) W_t \big) \quad \forall t \in \Z
\end{equation}
for an IID sequence $(W_t)_{t \in \Z}$ of Fréchet-distributed random variables with shape parameter 1 and some $\beta \in [0,1)$;
here, $F_W$ denotes the CDF\ of a Fréchet(1) distributed random variable and $F^\leftarrow$ denotes the generalised inverse of $F$. Throughout the simulation study, we consider the choices $\beta =0$ (iid case) and $\beta=0.5$ combined with $\gamma\in\{-0.2, -0.1, 0, 0.1, 0.2\}$, 
giving a total of ten different models (for the sake of brevity, we only present results for $|\gamma| \in \{0, 0.2\}$).
Note that the process $(X_t)_t$ 
is exponentially beta mixing and
satisfies Condition~\ref{cond:mda} with $Z\sim \mathrm{GEV}(\gamma)$, $a_r = \{r(1-\beta)\}^\gamma$ and $b_r = \{r(1-\beta)^\gamma -1 \}/\gamma$. Moreover, $\Prob(M_r \le x) = F_\gamma(x)^{\beta + (1-\beta) r}$, whence the true return level is given by $\rl(T,r) = F_\gamma^{-1}( (1-1/T)^{1/\{\beta+(1-\beta)r\}})$.
\end{model}

For the simulation experiments, the block size was fixed to $r = 365$ as it corresponds to the number of days in a year - a common block size in environmental applications. The effective sample size (i.e., the number of disjoint blocks of size $r$, abbreviated by $ m = \flo{n/r}$ hereafter) has been varied between $50$ and $100$ resulting in total sample sizes $n$ between $18,250$ and $36,500$. For the sake of brevity, we only present results for $T=100$, and we also write $\rl=\rl(T=100, r=365)$.

\smallskip
\noindent \textbf{Performance of the estimators.}
We start by comparing the four estimators from \eqref{eq:rlhat} in terms of their variance, squared bias and mean squared error (MSE). 
The results are summarized in Figure~\ref{fig:RlRelMseBias}. As suggested by the theory, both the sliding blocks estimator and the two circular blocks estimators perform uniformly better than the respective disjoint blocks estimator. The improvement gets smaller as the observations are getting more heavy-tailed; a phenomenon that has already been observed in the literature; see e.g. \cite{BucZan23}, \cite{BucSta24}. 
It is noteworthy that the additional bias effect introduced by the circular blocks estimators is negligible: both circular and sliding blocks estimators perform almost identically in all scenarios.
In general, the contribution of the squared bias to the MSE was found to be relatively small for all estimators, and typically much smaller than the variance component (median around $1.4\%$, maximum $12\%$).
Furthermore, the serial dependence does not change the qualitative results significantly. 

\begin{figure}[t!] 
\centering
\makebox{\includegraphics[width=0.91\textwidth]{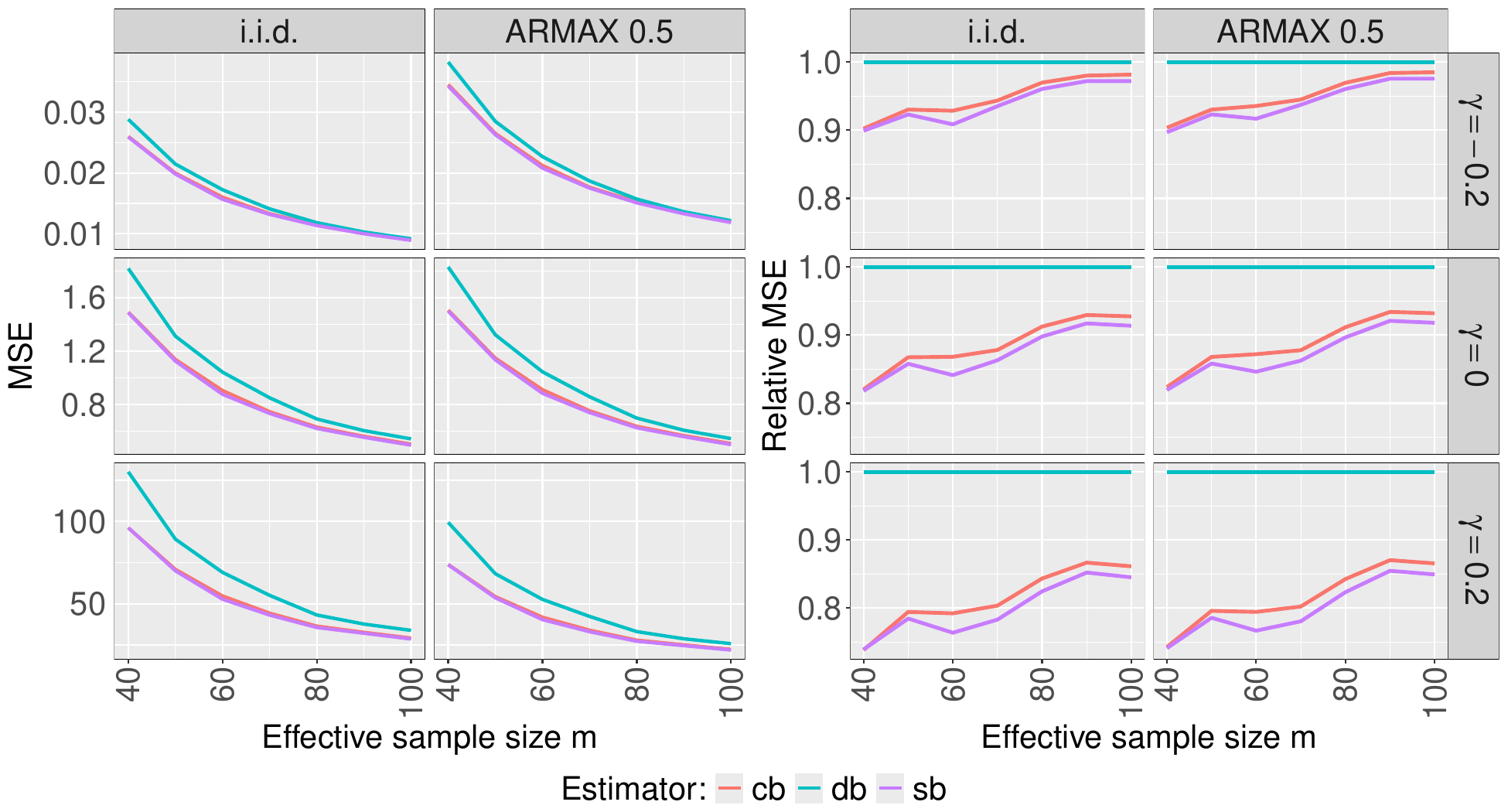}}\vspace{-.1cm}
\caption{Return level estimation with fixed block size $r = 365$ and ‘annuity’ $T = 100$. Left: Mean squared error $\mathrm{MSE}(\rlhat^{[\mbl]})$. Right: relative MSE with respect to the disjoint blocks method, i.e., $\mathrm{MSE}(\rlhat^{[\mbl]}) / \mathrm{MSE}(\rlhat^{[\dbl]})$.}\label{fig:RlRelMseBias}	
\vspace{-.1cm}
\end{figure}

\smallskip
\noindent \textbf{Performance of the bootstrap.}
We consider each of the bootstrap estimators $\rlhat^{[\mbl], \ast} := \rlhat^{[\mbl], \ast}(T=365,r=365)$ with $\mbl \in \{\dbl, \sbl, \cblk\}$,  with number of bootstrap replications set to $B=1,000$. Define 
\begin{align}
\label{eq:err}
\err^{[\mbl]} := \rlhat^{[\mbl]} - \rl,
\qquad
\err^{[\mbl], \ast} := \rlhat^{[\mbl], \ast} - \rlhat^{[\mbl]}
\end{align}
as the estimation error and the (conditional) bootstrap estimation error, respectively.
Recall that our results from the previous sections suggest that the conditional circular bootstrap estimation error $\err^{[\cblk], \ast}$  is consistent for the sliding estimation error $\err^{[\sbl]}$, that $\err^{[\dbl], \ast}$ is consistent for $\err^{[\dbl]}$, and that $\err^{[\sbl], \ast}$ is inconsistent for $\err^{[\sbl]}$. 

\begin{figure}[t!] 
\centering
\makebox{\includegraphics[width=.95\textwidth]{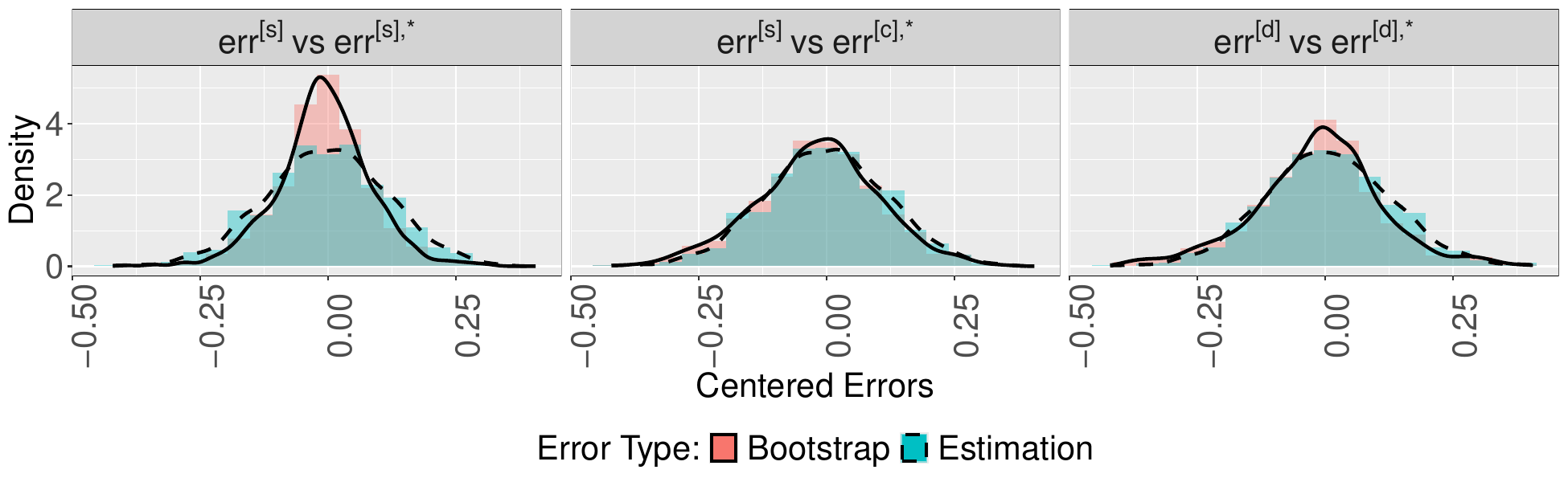}}\vspace{-.1cm}
\caption{Comparison of the error distributions from \eqref{eq:err} for return level estimation with fixed block size $r = 365$ and ‘annuity’ $T = 100$ in Model~\ref{mod:armax-gpd} with $\gamma = -0.2, \beta = .5, m = 80$.
}\label{fig:RlHists}	
\vspace{-.1cm}
\end{figure}

All three statements are illustrated in Figure~\ref{fig:RlHists} 
by means of histograms for the parameter choice $\gamma = -0.2, \beta=0.5, T=100$ and $m = 80$; other choices lead to similar results.

In the next step, we evaluate the performance of the three bootstrap approaches in terms of their ability to provide accurate estimates of the estimation variance $\sigma_\mbl^2 = \Var(\rlhat^{[\mbl]})$ with $\mbl \in \{\dbl, \sbl\}$.
For that purpose, we estimate the respective variance by the empirical variance of the sample of bootstrap estimates; recall that each such sample is of size $B=1,000$, for every $\mbl\in\{\dbl, \sbl, \cblk \}$. Despite the fact that the developed theory does not guarantee these estimators to be consistent (essentially as weak convergence does not imply convergence of moments in general), we observe a decent empirical performance:
indeed, in Figure \ref{fig:rlBstVar}, we depict the average over the $N=1,000$ bootstrap estimates, along with the true estimation variances that were determined in a presimulation based on $10^6$ repetitions; note that this variance depends reciprocally on the effective sample size $m$.
We observe that the bootstrap estimates are reasonably close to their target values for the disjoint and circmax method, while the naive sliding blocks bootstrap underestimates the true variance substantially, as indicated in Remark \ref{rem:slidboot-inconsistent} and as already apparent in Figure~\ref{fig:RlHists}.

\begin{figure}[t!] 
\centering
\makebox{\includegraphics[width=0.91\textwidth]{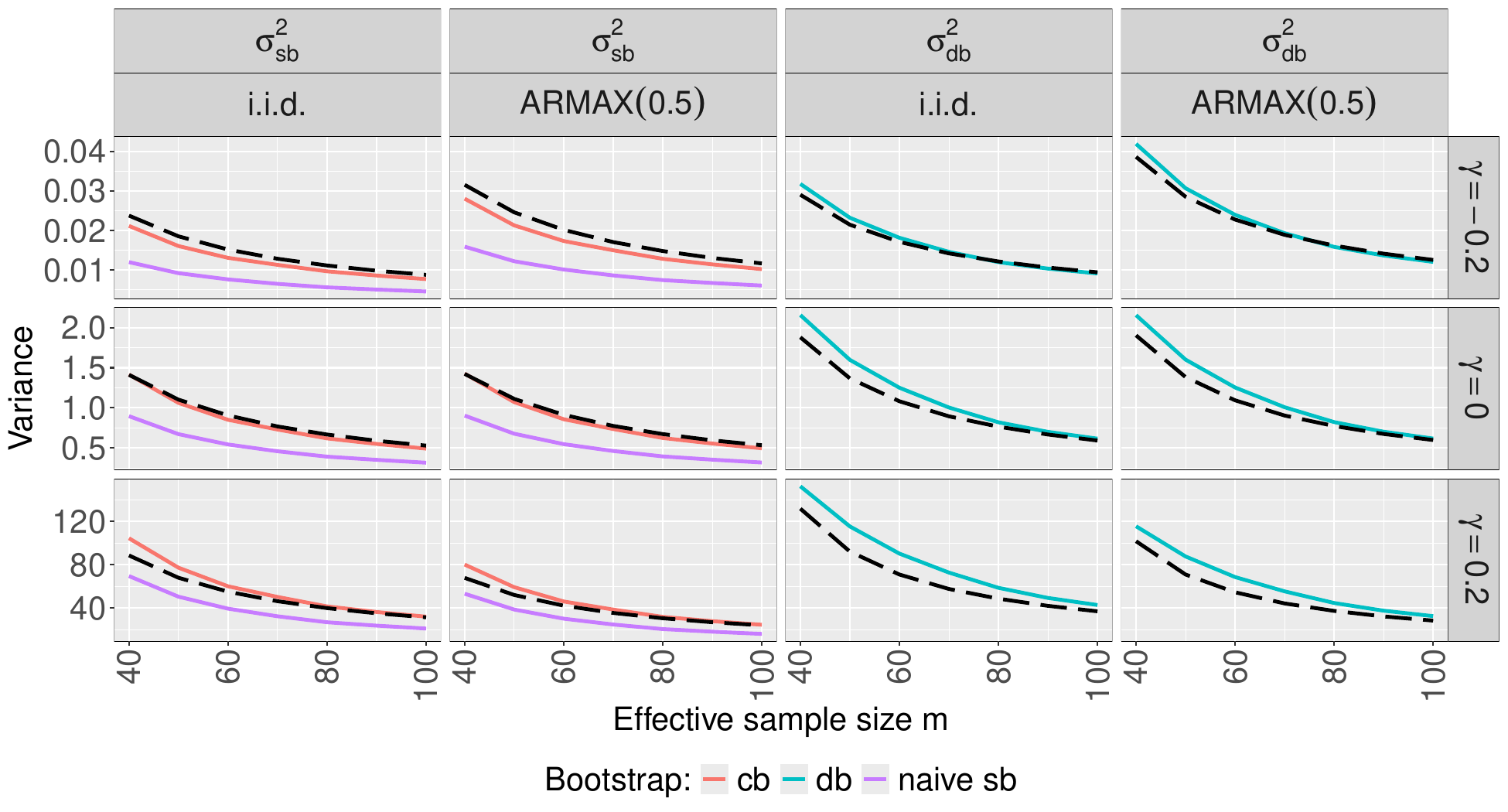}}\vspace{-.1cm}
\caption{Bootstrap-based estimation of the return level estimation variance $\sigma_\mbl^2 = \Var(\rlhat^{[\mbl]})$ with fixed block size $r=365$ and `annuity' $T=100$. Left two columns: target parameter $\sigma_\sbl^2$ (dashed line), with the two colored lines representing the empirical variance of the naive sliding and the circular bootstrap sample, respectively, averaged over $N=1,000$ simulation runs. Right two columns: the same with target parameter $\sigma_\dbl^2$ (dashed line) and the colored line the empirical variance of the disjoint bootstrap sample. 
}\label{fig:rlBstVar}	
\vspace{-.1cm}
\end{figure}

Finally, we evaluate the performance of the bootstrap approaches in terms of their ability to provide accurate confidence intervals of pre-specified coverage $1-\alpha=0.95$; clearly, smaller intervals of similar coverage would be preferable. For that purpose, we restrict ourselves to a version of the basic bootstrap confidence interval \citep{DavHin97} (see also Section~\ref{sec:frechet-small} and Corollary~\refstar{cor:confint-fremle} of the supplement) defined as
\begin{align*}
\CI^{[\sblcbl]} &= \Big[\rlhat^{[\sbl]} - \err^{[\cbl],*}_{\lfloor(1-\alpha/2) B \rfloor: B}), \rlhat^{[\sbl]} - \err^{[\cbl],*}_{\lfloor(\alpha/2) B \rfloor: B}) \Big], \\
\CI^{[\dbl]} &= \Big[\rlhat^{[\dbl]} - \err^{[\dbl],*}_{\lfloor(1-\alpha/2) B \rfloor: B}), \rlhat^{[\dbl]} - \err^{[\dbl],*}_{\lfloor(\alpha/2) B \rfloor: B}) \Big],
\end{align*}
where $\err^{[\mbl],*}_{1: B} \le \dots \le \err^{[\mbl],*}_{B: B}$ denote the order statistics of the $B$ bootstrap replicates of the estimation error $\err^{[\mbl],*}$; note that the sliding-circular interval is anchored to the sliding estimator, from which it moves away using the circular bootstrap estimation error.
The empirical coverage and the average widths are depicted in Figure~\ref{fig:rlBstCiVar}, where we omit the naive sliding method because of its inconsistency. We find that in most scenarios the desired coverage is not reached by any of the methods, in particular for smaller sample sizes.
The disjoint blocks approach has the best coverage overall, albeit with only minimal advantages and sometimes on a par with or even slightly worse than the cb-method. However, the price for this is universally wider confidence intervals.

\begin{figure}[t!] 
\centering
\makebox{\includegraphics[width=0.91\textwidth]{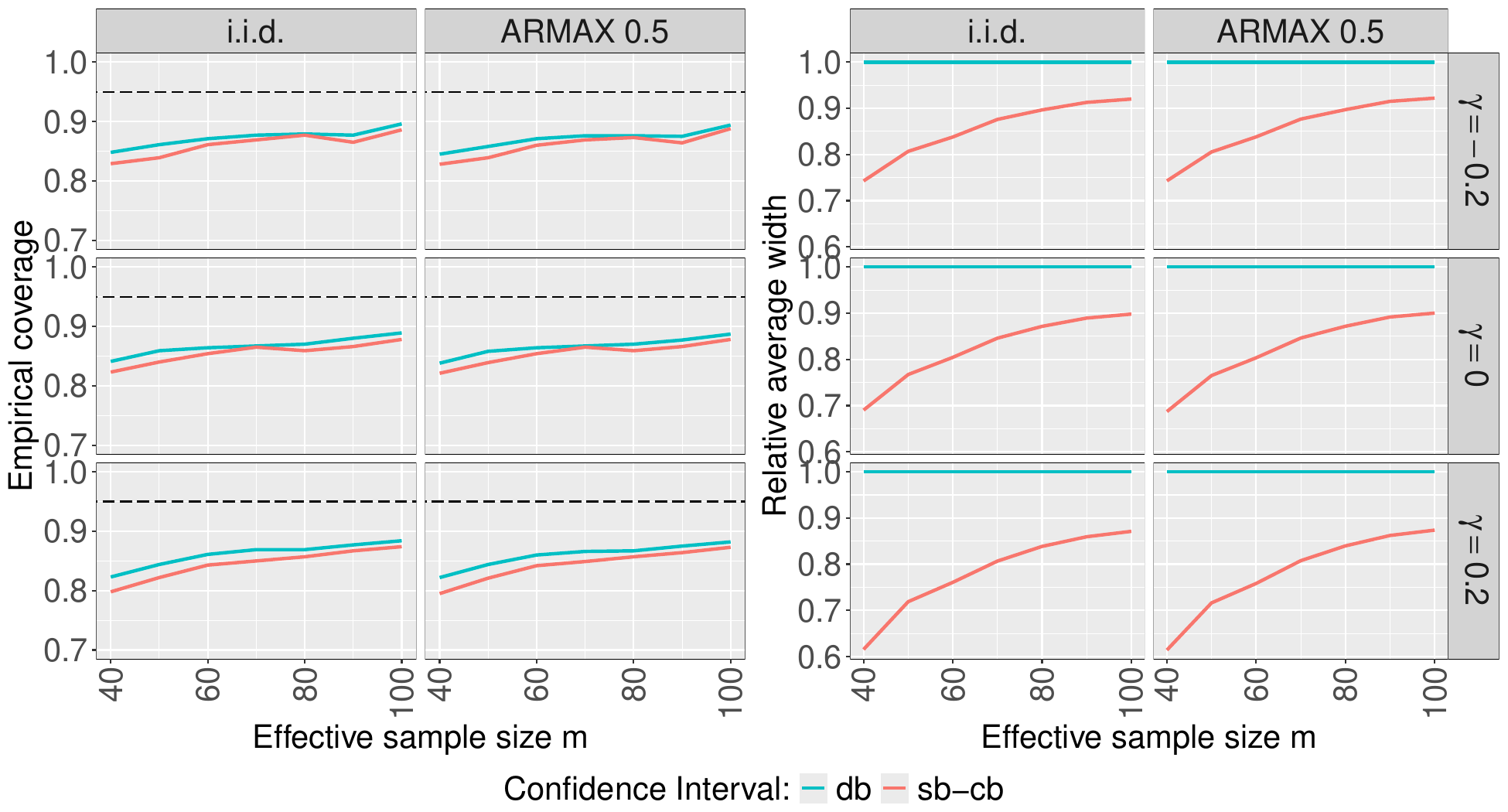}}\vspace{-.1cm}
\caption{Basic bootstrap confidence intervals for $\rl(T=100, r=365)$ based on the sliding-circular and the disjoint blocks approach.
Left: empirical coverage with intended coverage of $95\%$ (dashed line). 
Right: relative average width with respect to the disjoint method, i.e., $\mathrm{width}(\CI^{[\sblcbl]}) / \mathrm{width}(\CI^{[\dbl]})$.
}\label{fig:rlBstCiVar}	
\vspace{-.1cm}
\end{figure}

\smallskip
\noindent \textbf{Size-corrected confidence intervals.}
The results described in the previous paragraph show that the raw versions of the basic bootstrap confidence interval do not hold their intended level, in particular for small sample sizes.

To address this situation, we consider enlarged confidence intervals of the form 
\[
\CI(c)= \Big[\rlhat - c \times \err^{*}_{\lfloor(1-\alpha/2) B \rfloor: B}, \rlhat - c \times \err^{*}_{\lfloor(\alpha/2) B \rfloor: B} \Big], 
\]
where  $c \ge 1$ denotes an enlargement factor. The enlargements  allow for a size correction: for each scenario under consideration, we determine (a posteriori) the factor $c=c(\gamma, m, \beta, \mbl)$ such that the respective confidence interval has empirical coverage exactly equal to $1-\alpha=0.95$; here, $\mbl \in \{ \dbl, \sblcbl\}$. We may then compare the widths of the resulting confidence intervals, which are illustrated in Figure~\ref{fig:simaRelWidthFact}. We observe that the interval anchored to the sliding blocks estimator is universally smaller than the one anchored to the disjoint blocks estimator, with great advantages for small effective sample sizes.

\begin{figure}
    \centering
    \includegraphics[width=0.91\textwidth]{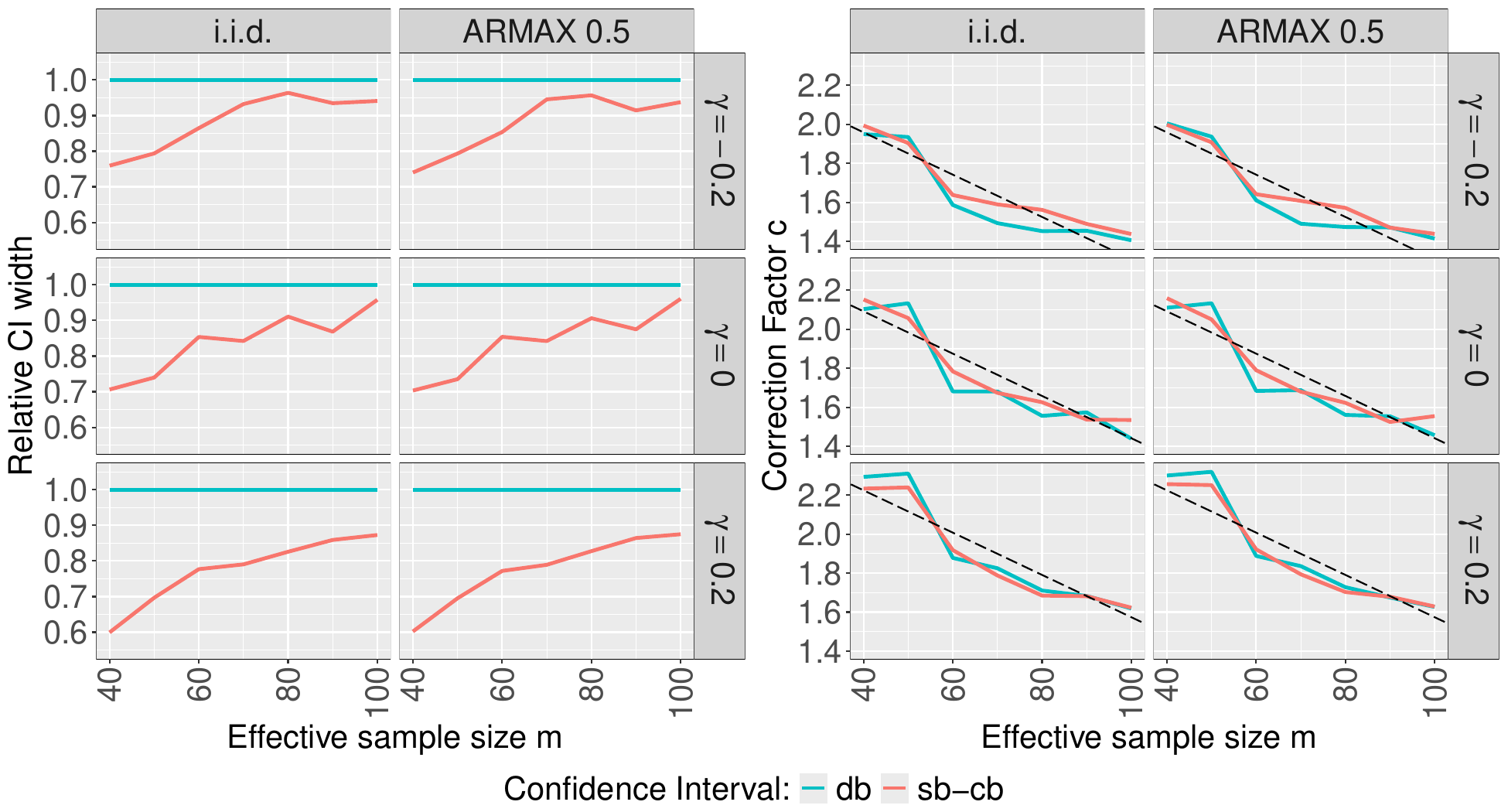}
    \caption{Left: Relative width of the size-corrected confidence interval for $\rl(T=100, r=365)$ rooted at the sliding estimator with respect to the one rooted at the disjoint blocks estimator, i.e., $\mathrm{width}(\CI^{[\sblcbl]}(c))  / \mathrm{width}(\CI^{[\dbl]}(c))$.
    Right: Enlargement factors $c$ that were needed to obtain confidence intervals for $\rl(T=100, r=365)$ of exact empirical coverage $1-\alpha=0.95$. The respective fitted regression curves $m \mapsto c(m, \gamma)$ are plotted as black dashed lines. }
    \label{fig:simaRelWidthFact}
\end{figure}

The results described in the previous paragraph are very encouraging and clearly show the potential advantages of the sliding-circular method. Unfortunately, however, the procedure cannot be applied in practice, as it relies on matching observed empirical coverages obtained from repetitions that are unavailable in practice. In the following, we describe a simple approach that leads to a data-adaptive method that can also be used in practice.

We start from the above factors $c$ for which the observed (empirical) coverage is exactly 95\%. The obtained values of $c$ for return level estimation, $\mathrm{RL}( T=100, r=365)$, are shown in Figure~\ref{fig:simaRelWidthFact} (right panel). We observe that the factors are decreasing in the number of years/blocks $m$ and increasing in the tail index $\gamma$, and with only little difference between the disjoint and sliding-circular approach and between the different values of $\beta$. To support these visual findings we run 
a simple linear regression with regressors 
$m$ (number of blocks), 
$\gamma$ (tail index), 
$\mbl \in \{\dbl, \sblcbl\}$ (method) and 
$\beta$ (time series parameter of the ARMAX-model), which showed that the number of blocks and the tail index are indeed the only significant parameters. The estimated linear relationship is 
$c(m, \gamma) = 2.48 -  0.01 \, m + 0.68\, \gamma$; 
the curves $m \mapsto c(m, \gamma)$ are also depicted in Figure~\ref{fig:simaRelWidthFact}.

The obtained linear relationship yields a rule of thumb for applications with unseen data of size $m$: simply apply $\CI(\hat c)$ with correction factor $\hat c= c(m, \hat \gamma)$, with $\hat \gamma$ estimated from the sample under consideration.
This method has been applied to the entire simulation design involving Model~\ref{mod:armax-gpd}; the respective results are depicted in Figure~\ref{fig:faCoRlCovWidth}. We observe that the coverage is close to 95\% for both methods, and that the bootstrap interval anchored to the sliding blocks estimator is still smaller than the disjoint blocks counterpart.

\begin{figure}
    \centering
    \includegraphics[width=0.91\textwidth]{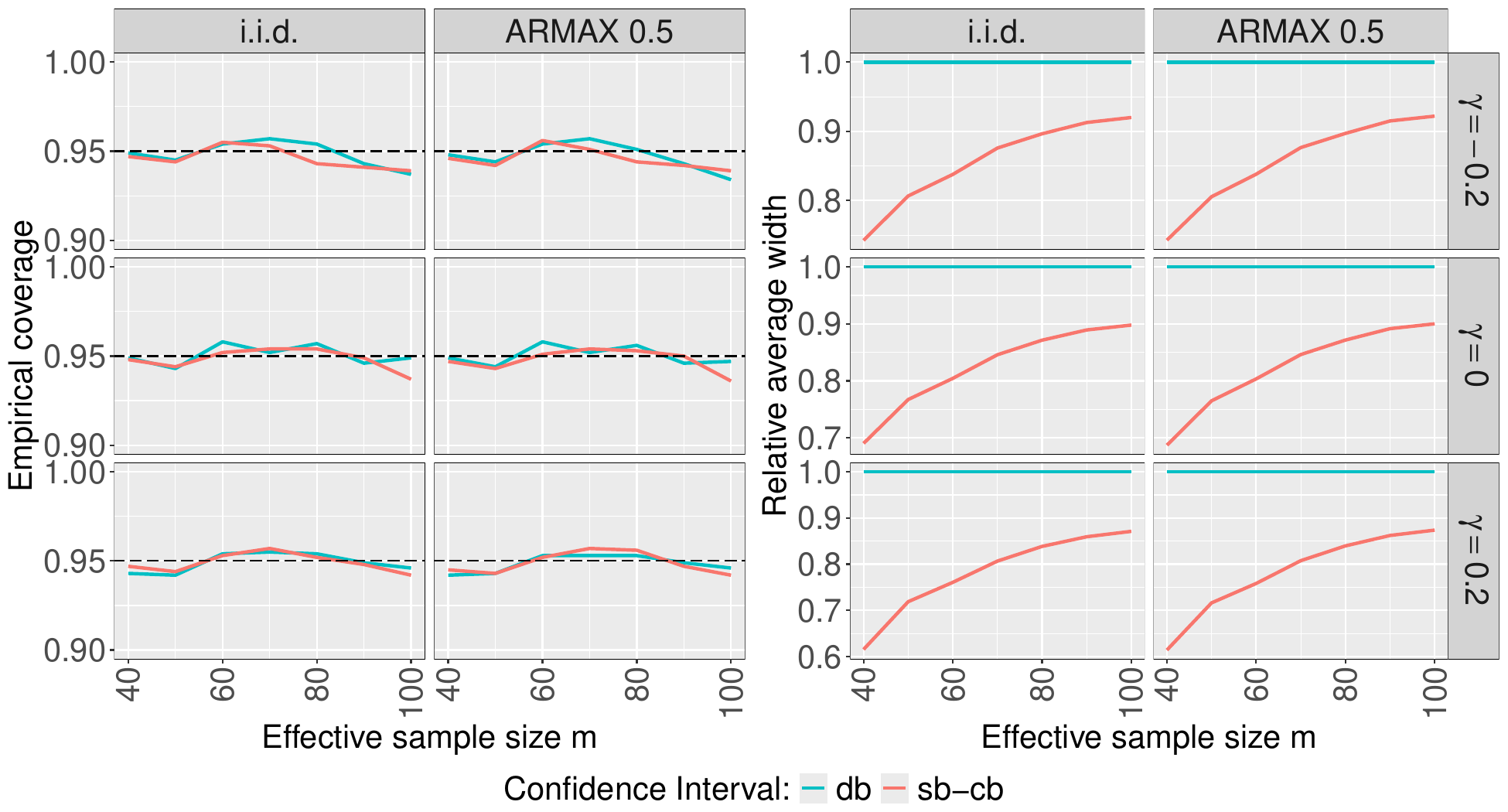}
    \caption{Left: empirical coverage of the data-adaptive size-corrected confidence intervals $\CI^{[\dbl]}(\hat c)$ and $\CI^{[\sblcbl]}(\hat c)$. Right: Respective relative width $\mathrm{width}(\CI^{[\sblcbl]}(\hat c))  / \mathrm{width}(\CI^{[\dbl]}(\hat c))$}. 
    \label{fig:faCoRlCovWidth}
\end{figure}

Although the approach described above yields very promising results,  some shortcomings should not go unmentioned: first, the above correction factors were obtained for return level estimation with $T=100$, and in applications with a different target parameter (e.g., the 200-year return level or small exceedance probability), further simulation results have shown that other factors may be needed. 
Similar caution is required if the block size is outside the range $m \in [40,100]$ or if the tail index is outside the range $\gamma \in [-.2,.2]$. In both cases, additional simulation experiments would be needed to derive suitable correction factors. A more detailed examination of this issue is postponed to future research.

\section{Case study}
\label{sec:case}

\subsection{Specific time series models for three selected applications} \label{subsec:sim-applied-models}

In this section, we provide finite-sample results for three selected time series models that were trained using real data and that allow for arbitrary long simulation. The first two data models correspond to meteorological variables (daily cumulative precipitation amounts, and daily maximal temperatures during the summer months, respectively), and were simulated using weather generators:
first, regarding precipitation, we employed the stationary version of the stochastic, multi-site, multivariate weather generator from \cite{Ngu21, Ngu24}, trained on observational data from the Bamberg station \citep{Mac24} for the period 1961–2010, where precipitation is modeled using a monthly-based extended GPD. Second, regarding temperature, we employed the weather generator from \cite{wxgen19}, trained on observed records at the Hohenpeißenberg station in Germany over an (approximately stationary) 20 years window from 2000 to 2020.
The third data set corresponds to financial log-returns, for which we employed the classical GARCH(1,1)-model with parameters chosen based on a fit to the SP500-stock market index for the time span 1995 - 2024 calculated using the R-package \texttt{rugarch} \citep{rug24}; the respective parameters are $\mu \approx 6.92\cdot 10^{-4}, \omega \approx 2.16\cdot 10^{-6}, \alpha \approx 1.16\cdot 10^{-1}, \beta \approx 8.69 \cdot 10^{-1}$.

or all three data sets, the target parameter is the $100$-year return level, which was estimated using $m\in \{40, \dots, 100\}$ years of observations for both temperature and precipitation and $m \in \{20, \dots, 80\}$ for log-returns. Here, a year (and hence the block size) was chosen as $r=250$ for the financial data set (there are 250 trading days in a year), as $r=365$ for the precipitation data set, and as $r=92$ for the temperature data sets (there are 92 summer days in a year; we refrained from using a year here due to the substantial seasonality of temperature). The unknown true parameter was assessed in a preliminary Monte Carlo experiment using $10^6$ identically distributed `years' of simulated observations.
For completeness, we also state respective estimates for the shape parameter~$\gamma$:  0.220 (log-returns), 0.228 (precipitation), and $-0.145$ (temperature).

\begin{figure}[t]
    \centering
    \includegraphics[width=0.90\textwidth]{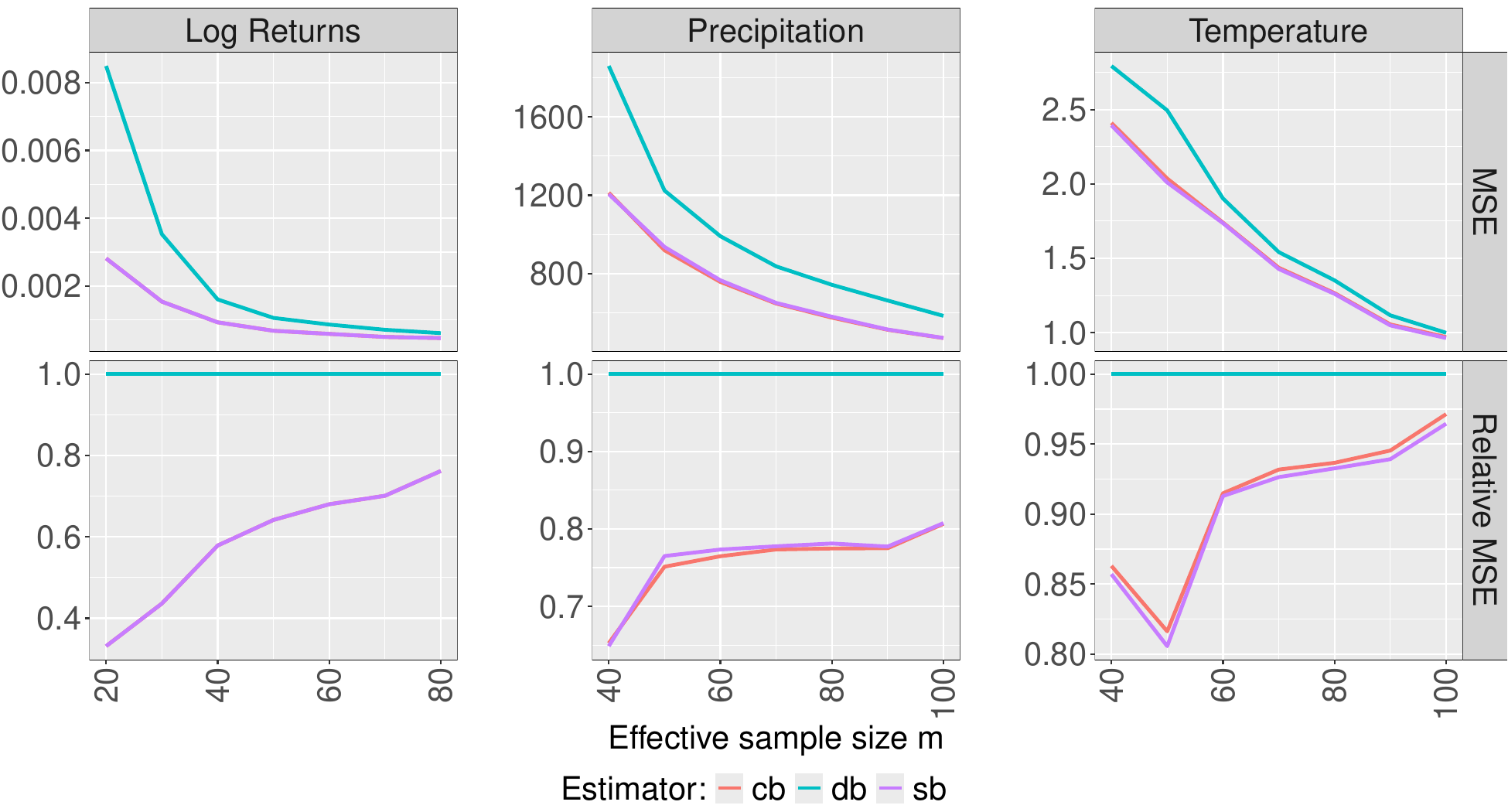}
    \caption{Performance of the different return level estimators with $T=100$ for the models from Section~\ref{subsec:sim-applied-models}. 
    Top:  
    Absolute MSE of the respective estimator and model combinations.
    Bottom: 
    Relative MSE with respect to the MSE of the disjoint estimator.
    }
    \label{fig:finPrcpTempEst}
\end{figure}

\begin{figure}[t]
    \centering
    \includegraphics[width=.90\textwidth]{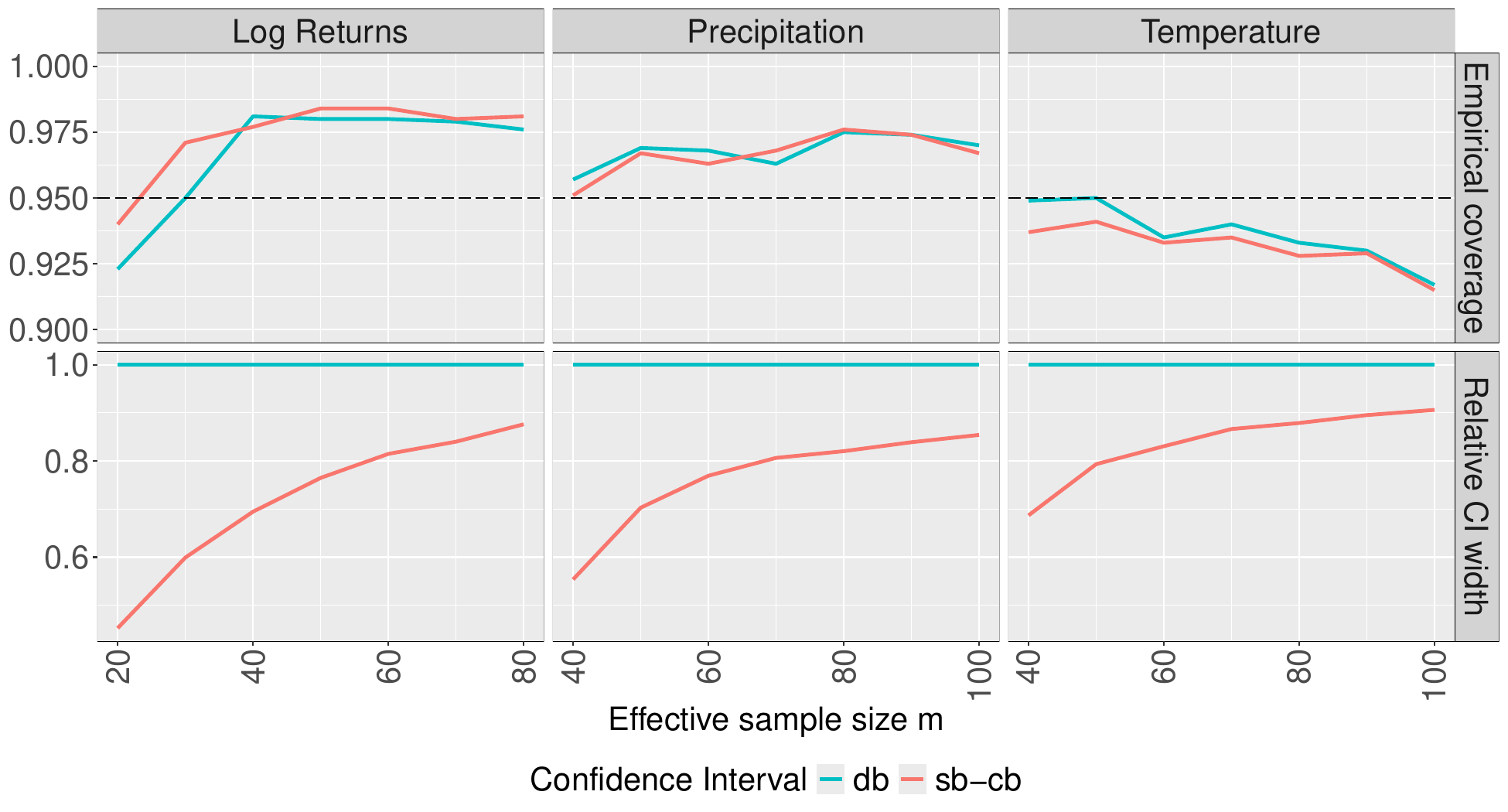}
    \caption{Data-adaptive size-corrected confidence intervals $\CI(\hat c)$ for return level estimation with $T=100$ for the models from Section~\ref{subsec:sim-applied-models}. Top: empirical coverage. Bottom: relative width $\mathrm{width}(\CI^{[\sblcbl]}(\hat c))  / \mathrm{width}(\CI^{[\dbl]}(\hat c))$. 
    }
    \label{fig:finPrcpTemp}
\end{figure}

We only report results on the performance of the estimators and the data-adaptive size-corrected confidence intervals described in the previous section, which are summarized in Figures~\ref{fig:finPrcpTempEst} and~\ref{fig:finPrcpTemp}, respectively. As in the previous section, the sliding and circmax estimators behave very similar, and they consistently outperform the disjoint version.
Regarding the performance of the confidence intervals, we observe that both approaches exhibit similar coverage close to the intended level of $95\%$, and that the sliding-circular intervals are considerably smaller than the disjoint counterparts.

\subsection{Precipitation data from a weather station}
\label{subsec:case-study-precipitation}

We consider daily accumulated precipitation amounts at a German weather station in Hohenpeißenberg, in the 145 year period from 1879 to 2023, resulting in $52,960$ daily observations in total. 
As target parameters, we consider the expected yearly maximum precipitation ($\mathrm{Rx1day}$) and the $T=100$ year return level, both of which correspond to a block size of $r=365$. To account for possible non-stationarities in the target parameters over such a long observation period, we conduct the subsequent analyses on moving windows of 40 years. More precisely, for each fixed 40-year window $w$, the target parameter may be informally written as $\mu(w) = \Exp[M_{365}(w)]$ and as $\rl(w)= F_{M_{365}(w)}^{-1}(1-1/T))$, where $M_{365}(w)$ denotes a generic annual maximum variable corresponding to the climate over the 40 years under consideration.

Each moving window $w$ contains approximately $n = 14,600$ daily observations, and the target parameters are estimated using the disjoint and sliding blocks estimator. 
Respective data-adaptive size-corrected basic bootstrap confidence intervals are constructed using the sliding-circular and the the classical disjoint blocks approach as described in the previous section. 
The resulting estimates and confidence bands are presented in Figure~\ref{fig:caseStudyCbands}, where the years on the x-axis correspond to the endpoints of the 40-year window. 
Regarding return level estimation, we observe rather wide confidence intervals for both methods, with the sliding-circular blocks confidence intervals being consistently narrower than the disjoint blocks versions, with an average relative width of about 0.68. For the mean, however, no clear benefit is apparent; the average width remains close to 1 (0.996). This observation may be linked to the relatively minor variance reduction — roughly $4\%$ — when switching from disjoint to sliding blocks estimation in settings with heavy-tailed observations (here, the average estimated shape is 0.12); see also Section~\refstar{sec:armax-mu} in the supplementary material.

\begin{figure}[t!] 
\centering
\makebox{\includegraphics[width=0.91\textwidth]{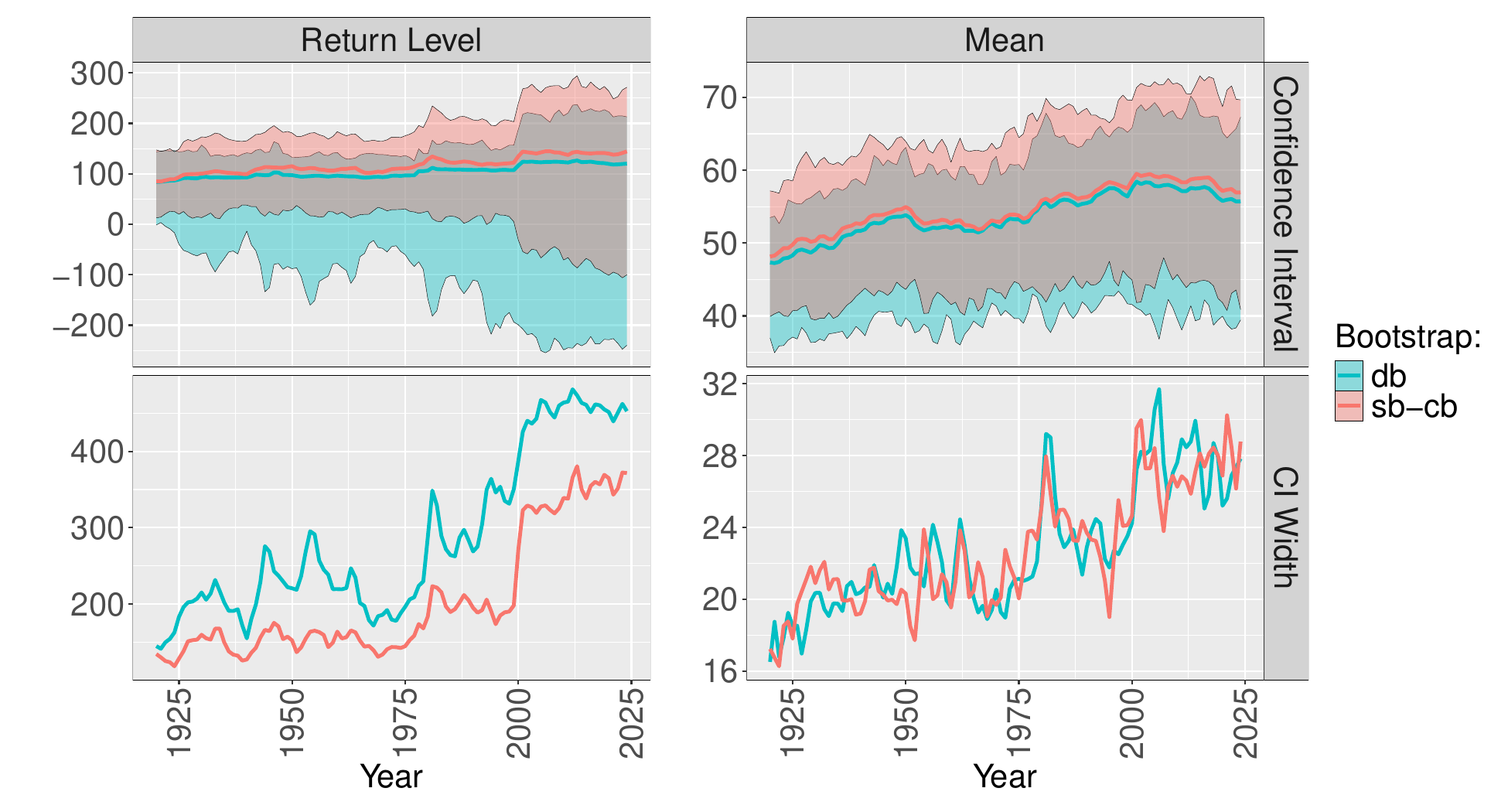}}\vspace{-.1cm}
\caption{Size-corrected confidence Intervals $\CI(\hat c)$ with intended $95\%$ coverage for the estimation of $\rl(w)$ and $\mu(w)$. First row: Widths of the respective confidence intervals. Second row: Estimations (solid lines) alongside with the db and sb-cb based confidence intervals (for a moving window of 40 years). All curves have been smoothed by a moving average filter of size 2.
}\label{fig:caseStudyCbands}	
\vspace{-.1cm}
\end{figure}

\section{Conclusion}
\label{sec:conclusion} 

Both the block-maxima method and the bootstrap are time-honored statistical methods that have seen wide use in applied statistics for extremes. Surprisingly, bootstrap consistency has never been proven, not even for the classical block-maxima method based on disjoint blocks. In this paper, respective consistency statements were established under high-level conditions on the data-generating process. A new approach, called the circular block-maxima method, has been proposed to allow for valid and computationally efficient bootstrap inference regarding the sliding block maxima method. The approach may be of independent interest for extreme-value analysis of non-stationary extremes, for instance in the presence of a temporal trend. Indeed, a reasonable model assumption inspired by many applications of the disjoint block maxima method would consist of the assumption that the circular block-maxima within a fixed $kr$-block of observations, say the $i$th one, follow an extreme-value distribution whose distribution/parameters are depending on $i$. Respective methods could be studied mathematically under a suitable triangular array structure, and we conjecture that advantages over the disjoint block maxima method will eventually show up in respective estimation variances. Other possible extensions would be the inclusion of inner-seasonal non-stationarities in the data-generating process or the generalization to empirical processes indexed by non-finite function classes. Also, a more in-depth analysis regarding a data-adapative size correction would also be an interesting possibility for future research.
%%%%%%%%%%%%%%%%%%%%%%%%%%%%%%%%%%%%%%%%%%%%%%%%%%%%%%%%%%%%%%%%%%%%%%%%%%%%%%%%%%

\section{Appendix: Integrability and bias conditions}
\label{sec:conditions}

In order to integrate $\bm h(\bm Z_{r,i})$ to the limit, we need mild asymptotic integrability conditions. For $\mbl \in \{\dbl, \sbl \}$ the condition is standard. As the circmax-sample permutates the underlying time series on a non-negligible part of the sample, we need a more involved assumption on asymptotic integrability. In most cases both conditions can be verified by similar arguments. Note that (b) implies (a) by letting $t=r$ in the supremum. Moreover, for $k=1$ (disjoint blocks case), (a) and (b) are equivalent.

\begin{condition}[Asymptotic Integrability]
\label{cond:int_h}
    Fix $k \in \N.$ Let $\bm h = (h_1, \ldots, h_q)^\top\colon \R^d \to \R^q$ be an a.e. continuous function with respect to the Lebesgue-measure on $\R^d.$ There exists a $\nu > 0$  such that:
    \begin{compactenum}
        \item[(a)] $\limsup_{r \to \infty} \Exp[\| \bm h(\bm Z_{r,1})\|^{2+\nu}] < \infty.$
        \item[(b)] $\limsup_{r \to \infty} \sup_{t=1,\ldots, r} \Exp[\| \bm h\big( (\max \{\bm X_1, \ldots, \bm X_t, \bm X_{(k-1)r+t+1}, \ldots, \bm X_{kr} \} - \bm b_r)/\bm a_r \big)\|^{2+\nu}] < \infty.$
    \end{compactenum}
\end{condition}

The next two conditions specify the asymptotic bias resulting from the approximation of the distribution of the various block maxima of size $r_n$ by the extreme value distribution~$G$.

\begin{condition}[Disjoint and sliding blocks bias]\label{cond:bias_dbsl}
Let $r=r_n\to \infty$ with $r_n=o(n)$ and let $\bm h = (h_1, \ldots, h_q)^\top\colon \R^d \to \R^q$ be measurable such that $h(\bm Z_{r,1})$ and $h(\bm Z)$ are integrable. The following limit exists:
\begin{equation}\label{eq:cond_bias_dbsl}
    \bm B_{\bm h} =  \lim_{n \to \infty}  
    \sqrt{\frac{n}{r}} \big\{ \Exp\big[\bm h(\bm Z_{r,1})] - \Exp\big[\bm h( \bm Z) \big] \big\}.
\end{equation}
\end{condition}

Circularization as applied within the cirmax-sample changes the order of observations used to calculate some of the cirular block maxima. The next conditions captures the resulting bias by decomposing it into two pieces, the first of which is due to a coupling with an independent copy $(\tilde{\bm X}_t)_t$ of $(\bm X_t)_t.$

\begin{condition}
\label{cond:bias_cbl} 
Let $r=r_n\to \infty$ with $r_n=o(n)$ and let $\bm h = (h_1, \ldots, h_q)^\top\colon \R^d \to \R^q$ be measurable and $k \in \N_{\geq 2}$ be fixed. The following expectations and limits exist:
\begin{alignat}{3}\label{eq:cond_bias_cbl}
\begin{split}
    \bm D_{\bm h,k} =&  \lim_{n \to \infty} \sqrt{\frac{n}{r}} \cdot \frac{1}{r}\sum_{t=1}^{r-1} \Exp\Big[\bm h \Big( \frac{\max (\bm X_1, \dots, \bm X_t, \bm X_{{(k-1)r+t+1}},  \dots, \bm X_{kr}) -  \bm b_r}{ \bm a_r} \Big) \\ 
    & \hspace{4.5 cm }- \bm h \Big( \frac{\max (\bm X_1, \dots, \bm X_t, \tilde{\bm X}_{1},  \dots, \tilde{\bm X}_{r-t}) - \bm b_r}{ \bm a_r} \Big) \Big], \\
    \bm E_{\bm h} = & \lim_{n \to \infty}  \sqrt{\frac{n}{r}} \cdot \frac{1}{r} \sum_{t=1}^{r-1} \Exp\Big[ \bm h \Big( \frac{\max (\bm X_1, \dots, \bm X_t, \tilde{\bm X}_{1},  \dots, \tilde{\bm X}_{r-t}) - \bm b_r}{ \bm a_r} \Big) - \bm h\big(\bm Z_{r,1}\big) \Big].
\end{split}
\end{alignat}
\end{condition}

\begin{remark}
\label{rem:bias_dhk}
In Theorem~\ref{theo:cltblocks2}, the asymptotic bias of the $\tilde{\Gb}_n^{[\cblk]}$-process, $D_{\bm h,k} + E_{\bm h}$, is due to the fact that the rescaled circular block maxima $\max (\bm X_{(k-1)r + t + 1}, \ldots, \bm X_{kr}, \bm X_1, \ldots, \bm X_{t})$ with index $s =(k-1)r + 1 +t$ for some $t\in \{1, \dots, r-1\}$ are not equal in distribution to a generic block maximum variable $\bm M_r$.
This bias has two sources that are quantified in Condition~\ref{cond:bias_cbl}: $D_{\bm h,k}$ is a measure for the average (over $t$) difference induced by the approximation of $\max (\bm X_{(k-1)r + t + 1}, \ldots, \bm X_{kr}, \bm X_1, \ldots \bm X_{t})$ by $\max (\bm X_{(k-1)r + t + 1}, \ldots, \bm X_{kr}, \tilde{\bm X}_1, \ldots, \tilde{ \bm X}_{t})$ (with $(\tilde{\bm X}_t)_t$ an independent copy of $(\tilde{\bm X}_t)_t$), while $E_{\bm h}$  is a measure for the average difference induced by the approximation of $\max (\bm X_{(k-1)r + t + 1}, \ldots, \bm X_{kr}, \tilde{\bm X}_1, \ldots, \tilde{ \bm X}_{t})$ by that of a generic block maximum variable $\bm M_r$. Note that the asymptotic bias $\bm E_{\bm h}$ already appeared in \cite{BucZan23} in a slightly modified form  in the setting of observing piecewise-stationary time series.
In Lemma \refstar{lem:Eh_zero} of the supplement we provide simple sufficient conditions for beta mixing time series that imply $D_{\bm h,k} = E_{\bm h} = 0$.
\end{remark}

\section*{Acknowledgements} 
The authors are grateful to two unknown referees and an associate editor for their constructive comments on an earlier version of this paper.
Financial support by the German Research Foundation (DFG grant number 465665892) and by Ruhr University Research School (funded by Germany’s Excellence Initiative - DFG GSC 98/3) is gratefully acknowledged.
Computational infrastructure and support were provided by the Centre for Information and Media Technology at Heinrich Heine University Düsseldorf.
The authors are grateful to Johan Segers for fruitful discussions on the circular block maxima method, to the participants of the Oberwolfach Workshop on ``Mathematics, Statistics, and Geometry of Extreme Events in High Dimensions'' for their valuable comments, and to Viet Dung Nguyen from GFZ Helmholtz Centre for Geosciences Potsdam for providing us with the simulated data set on daily precipitation amounts.

\putbib
\end{bibunit}
\newpage

\begin{bibunit}

\newpage

\begin{center}
	{\huge\textbf{Supplement to the paper:}\\
\textbf{``Bootstrapping Estimators based on the Block Maxima Method''}}

	\vspace{.5cm}
	
	{{\Large Axel B\"ucher and Torben Staud}} \\
    \vspace{.28cm}
    \textit{Ruhr-Universität Bochum, Fakultät für Mathematik}
	
	\vspace{.48cm}

\end{center}

\begin{center}
\textbf{Abstract}
\vspace{.4cm}

\begin{minipage}{.89\textwidth}
\begin{compactitem}
    \item In Section~\ref{sec:frechet}, we provide a detailed treatment of the pseudo-maximum likelihood estimator for the Fréchet distribution from Section \ref*{sec:frechet-small} in the main paper. 
    \item Section \ref{sec:proofs} contains all proofs for the main paper and for Section~\ref{sec:frechet}, with some auxiliary results postponed to Section~\ref{sec:auxres}. 
    \item In Section~\ref{sec:inconsistency-sliding}, we provide an extended discussion of the inconsistency of the naive sliding bootstrap, including heuristic arguments why the circular bootstrap solves the problem.
    \item A refined result on the asymptotic normality of mean estimators in the ARMAX-GPD-Model is stated and proven in Section~\ref{sec:armax-mu}.
    \item Finally some additional simulation results are presented in Section~\ref{sec:sim-add}.
    \item All numberings in the main article are preceded by numbers (e.g. Theorem 3.1), while the numberings in the appendix are always preceded by letters (e.g. Condition A.1).
\end{compactitem}
 \end{minipage}
 \begin{minipage}{.06\textwidth}~
 \end{minipage}
\end{center}
\vspace{.3cm}

% \clearpage

% \tableofcontents

\appendix
\section{Application: bootstrapping the pseudo-maximum likelihood estimator for the Fr\'echet distribution}
\label{sec:frechet}

This section is an extended version of Section~\refstar{sec:frechet-small} from the main paper. To make reading easier, the section is self-contained, with some notations from the main article being reintroduced. For instance, the following condition is Condition~\refstar{cond:mda_Fre-small} from the main paper.

\begin{condition}[Fréchet Max-Domain of Attraction]\label{cond:mda_Fre}
Let $(X_t)_{t\in\Z}$ denote a strictly stationary univariate time series with continuous margins. There exists some $\alpha_0 > 0$ and some sequence $(\sigma_r)_{r \in \N} \subset (0,\infty)$ such that
\[
\lim_{r \to \infty} \frac{\sigma_{\lfloor rs \rfloor }}{ \sigma_r} = s^{1/\alpha_0}, \,\, (s > 0)
\quad \text{ and } \quad
\frac{\max(X_1, \ldots, X_r)}{\sigma_r}  \wconv Z \sim P_{\alpha_0,1} \qquad (r \to \infty),
\]
where $P_{\alpha, \sigma}$ denotes the Fréchet-scale family on $(0,\infty)$ defined by its CDF $F_{\alpha,\sigma}(x) = \exp(-(x/\sigma)^{-\alpha})$; here $(\alpha, \sigma) \in (0,\infty)^2$.
\end{condition}

Suppose $X_1, \dots, X_n$ is an observed sample from $(X_t)_{t\in\Z}$ as in Condition~\ref{cond:mda_Fre}, and let $r=r_n$ denote a block length parameter.
In view of the heuristics and results from Sections~\refstar{sec:mathpre} and \refstar{sec:circmax}, the associated block maxima samples $\Mc_{n,r}^{[\mbl]}$ with $\mbl \in \{\dbl, \sbl, \cblk\}$ (with $k \in \N$ fixed) can all be considered approximate samples from $P_{\alpha_0, \sigma_r}$. As in \cite{BucSeg18a, BucSeg18-sl}, this suggests to estimate $(\alpha_0, \sigma_r)$ by maximizing the independence Fréchet-log-likelihood, that is, we define
\begin{align}
\label{eq:mle-frechet}
\hat {\bm \theta}_n^{[\mbl]} := (\hat \alpha_n^{[\mbl]}, \hat \sigma_n^{[\mbl]})^\top := \argmax_{{\bm \theta}= (\alpha, \sigma) \in (0,\infty)^2}
\sum_{M_i \in \Mc_{n,r}^{[\mbl]}}  \ell_{\bm \theta} (M_i \vee c),
\end{align}
where, for ${\bm \theta}=(\alpha, \sigma)^\top \in (0,\infty)^2$, 
\begin{align} 
\label{eq:frechetloglik}
\ell_{\bm \theta}(x) = \log(\alpha/\sigma) - (x/\sigma)^{-\alpha} - (\alpha+1) \log(x/\sigma), \quad x>0,
\end{align}
denotes the log density of the Fréchet distribution $P_{(\alpha, \sigma)}$ and where $c>0$ denotes an arbitrary truncation constant.
For the case $\mbl \in \{\dbl, \sbl\}$, the rescaled estimation error of $\hat {\bm \theta}_n^{[\mbl]}$ is known to be asymptotically normal (and independent of $c$), that is, under suitable additional assumptions, 
\begin{align} \label{eq:frechetml}
\sqrt{\frac{n}r}
\begin{pmatrix} 
\hat \alpha_n^{[\mbl]} - \alpha_0 \\
\hat \sigma_n^{[\mbl]}/\sigma_{r_n} -1
\end{pmatrix}
\wconv \mathcal N_2(\bm \mu, \Sigma^{[\mbl]}),
\end{align}
for some $\bm \mu\in \R^2$ and some $\Sigma^{[\mbl]} \in \R^{2 \times 2}$ positive definite, see \cite{BucSeg18a, BucSeg18-sl}. Subsequently, we will extend these results to $\mbl=\cblk$, and we will derive a consistent bootstrap scheme for the estimation error.

For fixed $k\in\N$, consider the circmax-sample $\mathcal M_{n,r}^{[\cblk]}$. Independent of the observations, let $\bm W_{m(k)}=(W_{m(k),1}, \dots, W_{m(k), m(k)}) = (W_{1}, \dots, W_{m(k)})$ be multinomially distributed with $m(k)$ trials and class probabilities $(m(k)^{-1}, \dots, m(k)^{-1})$. As in Section~\refstar{sec:bootgen}, conditional on $\mathcal M_{n,r}^{[\cblk]}$, the bootstrap sample $\mathcal M_{n,r}^{[\cblk],*}$ is obtained by repeating the observations $(M_{r,s}^{[\cblk]})_{s \in I_{kr,i}}$ from the $i$th $kr$-block exactly $W_{m(k),i}$-times, for every $i=1, \dots, m(k)$. We are going to show that the independence Fréchet-log-likelihood 
\[
{\bm \theta} \mapsto 
\sum_{s=1}^n \ell_{\bm \theta}(M_{r,s}^{[\cblk],*} \vee c) =
\sum_{i=1}^{m(k)} W_{m(k), i} \sum_{s \in I_{kr,i}} \ell_{\bm \theta}(M_{r,s}^{[\cblk]} \vee c).
\]
has a unique-maximimizer (with probability converging to one), say $(\hat \alpha_n^{[\cblk],*}, \hat \sigma_n^{[\cblk],*})$, and that the conditional distribution of the rescaled bootstrap-estimation error, $\sqrt{n/r}(\hat \alpha_n^{[\cblk],*} - \hat \alpha_n^{[\cblk]}, \hat \sigma_n^{[\cblk],*} / \hat \sigma_n - 1)$, given the observations is close to the distribution of $\sqrt{n/r}(\hat \alpha_n^{[\mbl]} - \alpha_0, \hat \sigma_n^{[\mbl]} / \sigma_{r_n} - 1)$ for both $\mbl=\sbl$ and $\mbl=\cblk$. We also show how this result can be used to derive valid asymptotic confidence intervals.

A couple of conditions akin to those imposed in \cite{BucSeg18a, BucSeg18-sl} will be needed.

\begin{condition}[All disjoint block maxima of size $\lfloor r_n/2 \rfloor$ diverge]\label{cond:div_Fre}
For all $c > 0$, the probability of the event that all disjoint block maxima of block size $\tilde r_n=\lfloor r_n/2 \rfloor$ are larger than $c$ converges to one.
\end{condition}

This is Condition~2.2 in \cite{BucSeg18-sl}, and guarantees that the probability of the event that all (blocksize $r_n$) block maxima in $\Mc_{n,r}^{[\mbl]}$  are larger than $c$ converges to one as well, for $\mbl \in \{\dbl, \sbl, \cblk\}$. This will guarantee that the Fréchet-log-likelihoods under consideration are well-defined, with probability converging to one. The following condition is Condition 3.4 in \cite{BucSeg18a}; recall that $M_{r,1}=\max(X_1, \dots, X_r)$.

\begin{condition}[Moments]\label{cond:mom_Fre}
There exists $\nu > 2/\omega$ with $\omega$ from Condition \refstar{cond:ser_dep} such that 
\[
\limsup_{r \to \infty} \Exp[g_{\nu, \alpha_0}\big( (M_{r,1} \vee 1)/\sigma_r \big)] < \infty,
\]
where $g_{\nu, \alpha_0}(x) := \{x^{-\alpha_0} \bm1 \{x \leq e\}+ \log(x) \bm 1\{x > e\}\}^{2+\nu}$.
\end{condition}

\begin{condition}[Bias] \label{cond:bias_Fre}
There exists $c_0 > 0$ such that, for $j\in\{1,2,3\}$, the limits $C(f_j) := \lim_{n \to \infty} C_n(f_j)$ and $C_k(f_j) := \lim_{n \to \infty} C_{n,k}(f_j)$ exist, where
\begin{align*}
C_n(f_j) &:= \sqrt{\frac{n}r} \left( \Exp\big[f_j\big( (M_{r,1} \vee c_0) /\sigma_r \big)\big] - P_{\alpha_0,1}f_j \right) \\
C_{n,k}(f_j) &:= \sqrt{\frac{n}r} \Big( \frac{1}{r} \sum_{s=(k-1)r+1}^{kr}\big\{ \Exp\big[f_j\big((M_{r,s} \vee c_0)/\sigma_r \big)\big] - \Exp\big[f_j\big( (M_{r,1} \vee c_0) /\sigma_r \big)\big] \big\} \Big)
\end{align*}
where $f_j:(0,\infty)\to \R$ are defined as
\begin{equation} \label{eq:Fref_i}
    f_1(x) = x^{-\alpha_0}, \quad 
    f_2(x) = x^{-\alpha_0} \log x, \quad
    f_3(x) =\log x.     
\end{equation}
\end{condition}

The first part of this condition on $C_n$ is Condition 3.5 in \cite{BucSeg18a}, and provides control of the bias for those block maxima which are calculated based on $r$ successive observations. For the circmax-sample, the last $r$ observations in a $kr$-block of that sample are not of that form; their contribution to the bias is controlled by the condition on $C_{n,k}$.

Subsequently, we fix a truncation constant $c > 0$, and we write
\[
\Gb_{n,r}^{[\cblk]} = \sqrt{\frac{n}r} \big(\Prob_{n,r}^{[\cblk]} - P_{(\alpha_0,1)}\big),
\quad
\hat \Gb_n^{[\cblk],*} = \sqrt{\frac{n}{r}}( \hat \Prob_{n,r}^{[\cblk], \ast} - \Prob_{n,r}^{[\cblk]})
\]
where 
\[
\Prob_{n,r}^{[\cblk]} = \frac{1}{n} \sum_{s =1}^n \delta_{(M_{r,s}^{[\cblk]} \vee c) / \sigma_r},
\qquad
\hat \Prob_{n,r}^{[\cblk],*} = \frac{1}{n} \sum_{i=1}^{m(k)} W_{m(k), i} \sum_{s \in I_{kr,i}}  \delta_{(M_{r,s}^{[\cblk]} \vee c) / \sigma_r};
\]
the double use of notation with (\refstar{eq:gn}) and (\refstar{eq:gnboot}) should not cause any confusion. Furthermore, we suppress the dependence on $c$ in the notation, which is motivated by the fact that the limiting distribution does not depend on $c$, as shown in the following theorem.

\begin{theorem}
\label{theo:boot-mle-blockmax}
Fix $k \in \N_{\ge 2}$ and $c>0$, and suppose that Conditions \refstar{cond:ser_dep} and \ref{cond:mda_Fre}--\ref{cond:bias_Fre} are satisfied. Then, with probability tending to one, there exists a unique maximizer $(\hat \alpha_n^{[\cblk]}, \hat \sigma_n^{[\cblk]})$ of the Fréchet log-likelihood ${\bm \theta} \mapsto \sum_{i=1}^n \ell_{\bm \theta}(M_{r,s}^{[\cblk]} \vee c)$, and this maximizer satisfies
\begin{align*}
\sqrt{\frac{n}r}
        \begin{pmatrix}
            \hat \alpha_n^{[\cblk]} - \alpha_0 \\
            \hat \sigma_n^{[\cblk]}/\sigma_{r_n} - 1
        \end{pmatrix}
        =
M(\alpha_0) \Gb_{n,r}^{[\cblk]} (f_1, f_2, f_3)^\top + o_\Prob(1)
\wconv 
\Nc_2(M(\alpha_0) (\bm C + k^{-1}\bm C_k), \Sigma^{[\sbl]}),
\end{align*}
where $\bm C=(C(f_1), C(f_2), C(f_3))^\top, \bm C_k=(C_k(f_1), C_k(f_2), C_k(f_3))^\top$ and 
\[
M(\alpha_0) = \frac{6}{\pi^2} \begin{pmatrix}
\alpha_0^2 & \alpha_0(1-\gamma) & - \alpha_0^2 \\
\gamma-1 & -(\Gamma''(2)+1)/\alpha_0 & 1-\gamma \end{pmatrix},
\quad
 \Sigma^{[\sbl]} = 
\begin{pmatrix}
0.4946 \alpha_0^2 & -0.3236 \\
-0.3236 & 0.9578 \alpha_0^{-2}
\end{pmatrix}
\]
with
$\Gamma(z) = \int_0^\infty t^{z-1} {e}^{-t} \diff t$ the Euler Gamma function and $\gamma = 0.5772\ldots$ the Euler-Mascheroni constant.

Moreover,  also with probability tending to one, there exists a unique maximizer $(\hat \alpha_n^{[\cblk],*}, \hat \sigma_n^{[\cblk],*})$ of ${\bm \theta} \mapsto \sum_{s=1}^n \ell_{\bm \theta}(M_{r,s}^{[\cblk],*} \vee c)$, and this maximizer satisfies
\begin{align*}
\sqrt{\frac{n}r}
        \begin{pmatrix}
            \hat \alpha_n^{[\cblk],*} - \hat \alpha_n^{[\cblk]} \\
            \hat \sigma_n^{[\cblk],*}/ \hat \sigma_{n}^{[\cblk]} - 1
        \end{pmatrix}
        =
M(\alpha_0) \hat \Gb_{n,r}^{[\cblk],*} (f_1, f_2, f_3)^\top + o_\Prob(1)
\wconv 
\Nc_2(0, \Sigma^{[\sbl]}).
\end{align*}
Fix $\mbl \in \{ \sbl, \cblk\}$. If $r_n$ is chosen sufficiently large such that, for $j\in\{1,2,3\}$, $C(f_j)=0$ ($\mbl=\sbl$) or $C(f_j)=C_k(f_j)=0$ ($\mbl=\cblk$) in Condition~\ref{cond:bias_Fre}, we have bootstrap consistency in the following sense:
\begin{align*}
d_K\bigg[ 
\Lc\bigg( 
\sqrt{\frac{n}r}
        \begin{pmatrix}
            \hat \alpha_n^{[\cblk],*} - \hat \alpha_n^{[\cblk]} \\
            \hat \sigma_n^{[\cblk],*}/\hat \sigma_n^{[\cblk]} - 1
        \end{pmatrix} \,\Big|\, \mathcal X_n\bigg) ,
\sqrt{\frac{n}r}
        \begin{pmatrix}
            \hat \alpha_n^{[\mbl]} - \alpha_0 \\
            \hat \sigma_n^{[\mbl]}/\sigma_{r_n} - 1
        \end{pmatrix}
        \bigg] = o_\Prob(1).
\end{align*}
\end{theorem}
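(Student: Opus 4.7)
The plan is to reduce Theorem~\ref{theo:boot-mle-blockmax} to an M-estimation argument driven by the empirical process CLT in Theorem~\ref{theo:cltblocks2} and the bootstrap consistency in Theorem~\ref{theo:mult_boot}, both applied to $\bm h = (f_1, f_2, f_3)^\top$ from \eqref{eq:Fref_i}. The integrability Condition~\ref{cond:int_h}(b) is provided by Condition~\ref{cond:mom_Fre} via the envelope $g_{\nu,\alpha_0}$, and the bias Conditions~\ref{cond:bias_dbsl} and~\ref{cond:bias_cbl} applied to $\bm h$ reduce exactly to the two statements in Condition~\ref{cond:bias_Fre}, with $\bm B_{\bm h}=\bm C$ and $\bm D_{\bm h,k}+\bm E_{\bm h}=\bm C_k$ (see also Remark~\ref{rem:bias_dhk}). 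Throughout, Condition~\ref{cond:div_Fre} ensures that the truncation $\vee c$ is inactive on an event of probability tending to one, so that the subsequent Taylor expansions of $\ell_{\bm\theta}$ are valid.

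For the base estimator, I would reparametrize via $\tau = \sigma/\sigma_{r_n}$, so that $L_n(\alpha,\tau) := n^{-1}\sum_{s=1}^n \ell_{(\alpha, \sigma_{r_n}\tau)}(M_{r,s}^{[\cblk]}\vee c)$ depends on the data only through the rescaled maxima $M_{r,s}^{[\cblk]}/\sigma_{r_n}$. A direct computation shows that $\ell_{(\alpha,\tau)}$ and its first and second derivatives are, up to smooth coefficients in $(\alpha,\tau)$, affine combinations of $f_1, f_2, f_3$ and constants, so Theorem~\ref{theo:cltblocks2} yields uniform convergence of $L_n$ on compacta of $(0,\infty)^2$ to $L_\infty(\alpha,\tau) := P_{(\alpha_0,1)}\ell_{(\alpha,\tau)}$, which is strictly concave with unique maximum at $(\alpha_0,1)$. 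An argmax continuous mapping argument as in \cite{BucSeg18a, BucSeg18-sl} then yields existence, uniqueness and consistency of $(\hat\alpha_n^{[\cblk]}, \hat\sigma_n^{[\cblk]}/\sigma_{r_n})$ on that event, and a Taylor expansion of the score around $(\alpha_0,1)$ gives
\[
\sqrt{n/r}\begin{pmatrix}\hat\alpha_n^{[\cblk]}-\alpha_0\\ \hat\sigma_n^{[\cblk]}/\sigma_{r_n}-1\end{pmatrix} = J(\alpha_0)^{-1}\sqrt{n/r}\,\nabla L_n(\alpha_0,1) + o_\Prob(1),
\]
with $J(\alpha_0)=-\nabla^2 L_\infty(\alpha_0,1)$. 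Identifying $J(\alpha_0)^{-1}$ times the score--coefficient matrix in the basis $(f_1,f_2,f_3)$ with the explicit $M(\alpha_0)$ is a routine algebraic calculation; Theorem~\ref{theo:cltblocks2} then delivers the announced CLT with mean $M(\alpha_0)(\bm C + k^{-1}\bm C_k)$ and covariance $\Sigma^{[\sbl]} = M(\alpha_0)\Sigma_{\bm h}^{[\sbl]}M(\alpha_0)^\top$, whose explicit numerical entries follow from integrating the $G_\xi$-covariances in \eqref{eq:asy_Cov} and reproduce the formula in \cite{BucSeg18-sl}.

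The bootstrap estimator is analyzed along the same lines, with $L_n$ replaced by $L_n^*(\alpha,\tau) := n^{-1}\sum_{i=1}^{m(k)}W_{m(k),i}\sum_{s\in I_{kr,i}}\ell_{(\alpha,\sigma_{r_n}\tau)}(M_{r,s}^{[\cblk]}\vee c)$. The key new ingredient is \emph{conditional} consistency $\hat{\bm\theta}_n^{[\cblk],*} - \hat{\bm\theta}_n^{[\cblk]} = o_\Prob(1)$ given $\mathcal X_n$, which I would derive by applying Theorem~\ref{theo:mult_boot} with envelope $g_{\nu,\alpha_0}$ to obtain uniform (on compacta in $(\alpha,\tau)$) conditional convergence of $L_n^* - L_n$ to zero; chained with the deterministic uniform convergence of $L_n$ this yields the desired conditional consistency of the bootstrap maximizer. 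A Taylor expansion of the bootstrap score around $(\hat\alpha_n^{[\cblk]}, \hat\sigma_n^{[\cblk]}/\sigma_{r_n})$ then produces
\[
\sqrt{n/r}\begin{pmatrix}\hat\alpha_n^{[\cblk],*}-\hat\alpha_n^{[\cblk]}\\ \hat\sigma_n^{[\cblk],*}/\hat\sigma_n^{[\cblk]}-1\end{pmatrix} = M(\alpha_0)\,\hat\Gb_{n,r}^{[\cblk],*}\bm h + o_\Prob(1),
\]
and Theorem~\ref{theo:mult_boot} delivers the conditional weak limit $\mathcal N_2(0,\Sigma^{[\sbl]})$. The final Kolmogorov-distance statement follows by Polya's theorem: under the bias-free assumption, both the unconditional distribution of $\sqrt{n/r}(\hat\alpha_n^{[\mbl]}-\alpha_0,\hat\sigma_n^{[\mbl]}/\sigma_{r_n}-1)$ and the conditional distribution of its bootstrap analogue share the same continuous Gaussian limit $\mathcal N_2(0,\Sigma^{[\sbl]})$, so pointwise convergence of cdfs is uniform. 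The main technical obstacle throughout is controlling the second-order Taylor remainders, whose Hessians involve unbounded functions of the type $x^{-\alpha}\log^2 x$ and $\log^2 x$; this is where the moment assumption $\nu>2/\omega$ in Condition~\ref{cond:mom_Fre} is essential, as it allows both empirical process inputs to absorb the envelope $g_{\nu,\alpha_0}$ uniformly in a shrinking neighborhood of $(\alpha_0,1)$.
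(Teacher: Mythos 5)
Your overall strategy --- treat the Fr\'echet MLE as an M-estimator, feed the empirical process CLT of Theorem~\ref{theo:cltblocks2} and the bootstrap consistency of Theorem~\ref{theo:mult_boot} (applied to $\bm h=(f_1,f_2,f_3)^\top$) into a score expansion, and identify the linearization matrix with $M(\alpha_0)$ --- is the same as the paper's. The difference is organizational: the paper does not redo the M-estimation analysis inline but invokes two general-purpose results (Theorem~\ref{theo:asy-mle-frechet}, imported from B\"ucher and Segers, and its bootstrap analogue Theorem~\ref{theo:boot-mle-frechet}), so that the actual proof of the present theorem consists almost entirely of verifying their hypotheses. Your identification of the bias terms ($\bm B_{\bm h}=\bm C$ and $k^{-1}(\bm D_{\bm h,k}+\bm E_{\bm h})=k^{-1}\bm C_k$), your use of Condition~\ref{cond:mom_Fre} as the integrability input, and the final Polya/Kolmogorov-distance step all match the paper.

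There are, however, two genuine gaps. First, you never verify the ``no ties'' conditions \eqref{eq:noties} and \eqref{eq:noties2}, yet these are exactly what guarantees that the (bootstrap) Fr\'echet likelihood possesses a maximizer at all: when all observations coincide the criterion degenerates, and your uniform-convergence-on-compacta argument only localizes a maximizer, it does not produce one on $(0,\infty)^2$. This is where the paper's proof spends most of its effort: \eqref{eq:noties} follows from the Portmanteau theorem together with asymptotic independence of two block maxima at lag $r$, and \eqref{eq:noties2} needs an additional conditioning argument over the resampled block indices to reduce the bootstrap tie event to a within-block tie of the original sample. Second, your claim that $L_\infty(\alpha,\tau)=P_{(\alpha_0,1)}\ell_{(\alpha,\tau)}$ is strictly concave is not correct (the Fr\'echet log-likelihood is not concave in its parameters); existence and uniqueness of the finite-sample maximizer rests in B\"ucher and Segers on a profile-likelihood argument in $\alpha$, and the required uniformity over the uncountable class $\mathcal F(\alpha_-,\alpha_+)$ is obtained there from pointwise laws of large numbers combined with monotonicity in $\alpha$ --- a pointwise statement is also all that Theorem~\ref{theo:boot-mle-frechet} demands of $\hat \Prob_{n}^{*}$, whereas your appeal to Theorem~\ref{theo:mult_boot} ``with envelope'' for a compacta-uniform conditional law of large numbers asks for more than that theorem delivers (it is a CLT for finitely many functions). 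Both gaps are repairable, but they are precisely the content that the paper's proof does not delegate to the general theorems.
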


The result in Theorem~\ref{theo:boot-mle-blockmax} allows for statistical inference on the parameters $\alpha_0, \sigma_r$, for instance in the form of confidence intervals. We provide details on $\sigma_r$, using the circular block bootstrap approximation to the sliding block estimator: for $\beta \in (0,1)$, let 
$q_{\hat \sigma_n^{[\cblk],*}}(\beta)$ denote the $\beta$-quantile of the conditional distribution of 
$\hat \sigma_n^{[\cblk],*}$ given the data, 
that is,
$q_{\hat \sigma_n^{[\cblk],*}}(\beta)= (F_{\hat \sigma_n^{[\cblk],*}})^{-1}(\beta)$, where
$
F_{\hat \sigma_n^{[\cblk],*}}(x) = \Prob( \hat \sigma_n^{[\cblk],*} \le x \mid \Xc_n)$ for $x \in \R$.
Note that the quantile may be approximated to an arbitrary precision by repeated bootstrap sampling. Consider the basic bootstrap confidence interval \citep{DavHin97} 
\begin{align*}
I_{n,\sigma}^{[\sbl, \cblk]}(1-\beta) &= 
\big[\hat \sigma_n^{[\sbl]} + \hat \sigma_n^{[\cblk]} - q_{\hat \sigma_n^{[\cblk],*}}(1-\tfrac\beta2),\hat \sigma_n^{[\sbl]} + \hat \sigma_n^{[\cblk]}- q_{\hat \sigma_n^{[\cblk],*}}(\tfrac\beta2) \big].
\end{align*} \vspace{-.4cm}

\begin{corollary}
\label{cor:confint-fremle}
Under the conditions of Theorem~\ref{theo:boot-mle-blockmax} with $r_n$ chosen sufficiently large such that, for $j\in\{1,2,3\}$, $C(f_j) = 0$ in Condition~\ref{cond:bias_Fre}, we have, for any $\beta\in(0,1)$,
\[
\lim_{n \to \infty} \Prob( \sigma_{r_n} \in I_{n,\sigma}^{[\sbl, \cblk]}(1-\beta)) = 1-\beta.
\]
\end{corollary}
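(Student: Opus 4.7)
The plan is to rewrite the coverage event as a two-sided inequality for a standardised quantity, reduce the conditional bootstrap quantiles to quantiles of the (common) limit normal, and conclude by Slutsky.

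\emph{Step 1 (Algebraic rewrite).} Setting
$T_n := \sqrt{n/r}\bigl(\hat\sigma_n^{[\sbl]}/\sigma_{r_n}-1\bigr)$,
$T_n^* := \sqrt{n/r}\bigl(\hat\sigma_n^{[\cblk],*}/\hat\sigma_n^{[\cblk]}-1\bigr)$, and writing $q_{T_n^*}(\beta)$ for the conditional $\beta$-quantile of $T_n^*$ given $\Xc_n$, one has
\[
q_{\hat\sigma_n^{[\cblk],*}}(\beta)-\hat\sigma_n^{[\cblk]}=\hat\sigma_n^{[\cblk]}\sqrt{r/n}\,q_{T_n^*}(\beta).
\]
A direct rearrangement then shows that $\sigma_{r_n}\in I_{n,\sigma}^{[\sbl,\cblk]}(1-\beta)$ is equivalent to
\[
R_n\, q_{T_n^*}(\tfrac{\beta}{2})\ \le\ T_n\ \le\ R_n\, q_{T_n^*}(1-\tfrac{\beta}{2}), \qquad R_n:=\hat\sigma_n^{[\cblk]}/\sigma_{r_n}.
\]

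\emph{Step 2 (Limits of the ingredients).} Under the bias assumption $C(f_j)=0$, Theorem~\ref{theo:boot-mle-blockmax} (applied to the second coordinate) gives $T_n\wconv Z\sim \Nc(0,\Sigma^{[\sbl]}_{22})$ and, via continuous mapping, $R_n\pto 1$. The bootstrap consistency in Theorem~\ref{theo:boot-mle-blockmax} (again applied to the second coordinate, with $\mbl=\sbl$) yields
\[
d_K\!\bigl(\Lc(T_n^*\mid\Xc_n),\Lc(T_n)\bigr)=o_\Prob(1),
\]
so by the triangle inequality $d_K(\Lc(T_n^*\mid\Xc_n),\Nc(0,\Sigma^{[\sbl]}_{22}))=o_\Prob(1)$. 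Since the Gaussian limit has a strictly increasing continuous CDF, uniform CDF-convergence implies pointwise convergence of the quantile functions, hence $q_{T_n^*}(\beta)\pto z_\beta$ for every $\beta\in(0,1)$, where $z_\beta$ denotes the $\beta$-quantile of $\Nc(0,\Sigma^{[\sbl]}_{22})$.

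\emph{Step 3 (Slutsky / portmanteau).} Combining Step 2 gives $R_n q_{T_n^*}(\beta/2)\pto z_{\beta/2}$ and $R_n q_{T_n^*}(1-\beta/2)\pto z_{1-\beta/2}$, both constants. Joint convergence $(T_n,R_n q_{T_n^*}(\beta/2),R_n q_{T_n^*}(1-\beta/2))\wconv(Z,z_{\beta/2},z_{1-\beta/2})$ follows from Slutsky. Since $Z$ is continuously distributed and the limit bounds are (distinct) constants, the indicator of the closed interval is a.s.\ continuous at the limit triple, so by the portmanteau theorem the coverage probability converges to $\Prob(z_{\beta/2}\le Z\le z_{1-\beta/2})=1-\beta$, using symmetry $z_{\beta/2}=-z_{1-\beta/2}$ of the centred normal.

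The only non-routine step is Step~2, but it is standard: convergence in Kolmogorov distance to a continuous limit upgrades to uniform convergence of CDFs and hence to pointwise convergence of quantile functions (this argument goes through for random probability measures indexed by $\Xc_n$ since everything holds in probability).
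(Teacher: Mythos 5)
Your proof is correct and follows essentially the same route as the paper: both rewrite the coverage event as a two-sided inequality for the standardized sliding-blocks error against conditional bootstrap quantiles and then exploit that the conditional law of the bootstrap error merges with the $\Nc(0,\Sigma^{[\sbl]}_{22})$ limit. The only difference is presentational — the paper standardizes the bootstrap error by $\sigma_{r_n}$ and delegates the final step to Lemma~4.2 of B\"ucher and Kojadinovic (2019), whereas you standardize by $\hat\sigma_n^{[\cblk]}$, carry the ratio $R_n\pto 1$ explicitly, and spell out the quantile-convergence and Slutsky/portmanteau argument yourself.
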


The above result only concerns the sliding block maxima estimator, but an analogous result can be derived for the disjoint block maxima estimator and under additional bias conditions for the circmax estimator. In view of the fact that the disjoint block maxima estimator exhibits a larger asymptotic estimation variance, the width of the disjoint blocks confidence interval is typically larger than the one of the circmax method, for every fixed confidence level (for an empirical illustration, see Figure~\refstar{fig:plotFreFixNCi} for the related problem of providing a confidence interval for the shape parameter $\gamma$).

%%%%%%%%%%%%%%%%%%%%%%%%%%%%%%%%%%%%%%%%%%%%%%%%%%%%%%%%%%%%%%%%%%%%%%%%%%%%%%%%%%

\section{Proofs}
\label{sec:proofs}

\begin{proof}[Proof of Theorem~\refstar{theo:cltblocks1}]
The proof is a simplified version of our proof of Theorem~\ref{theo:cltblocks2} below, whence we omit further details. Under slightly different conditions, a proof can be found in Theorem B.1 in \cite{BucZan23} or Theorem 8.7 in \cite{BucSta24}. 
\end{proof}

\begin{proof}[Proof of Theorem~\refstar{theo:cltblocks2}]
Throughout, we omit the upper index $[\cblk].$ We start by showing that
\begin{equation}
\label{eq:proof_kloop_sliding_clt_4}
    \lim_{n \to \infty} \Cov(\bar \Gb_{n,r} \bm h ) = \Sigma_{\bm h}^{[\sbl]}.
\end{equation}
By elementary arguments, it is sufficient to consider the case $q=1$. For $i \in \{1, \dots, m(k)\}$ write
\begin{align} 
\label{eq:dni}
D_{n,i} = \frac{1}{kr} \sum_{s \in I_{kr,i}} \big\{ h(\bm Z_{r,s}) - \Exp[h(\bm Z_{r,s})] \big\},
\end{align}
such that $\bar \Gb_{n,r} h_j= \sqrt{\frac{n}r} m(k)^{-1} \sum_{i=1}^{m(k)} D_{n,i}$.
By stationarity and $n/r=km(k)$, we have
\begin{equation}\label{eq:proof_kloop_sliding_clt_1}
    \Var\big(\bar{\mb G}_{n,r}h \big)
    = k \Var(D_{n,1})+ r_{n1} + r_{n2},
\end{equation}
where
$
    r_{n1}  = 2k ( 1-\frac{1}{m(k)} )  \Cov(D_{n,1}, D_{n,2})
$ and $
    r_{n2} = 2k \sum_{d=2}^{m(k)-1} (1- \frac{d}{m(k)}) \Cov(D_{n,1}, D_{n,1+d}).
$
Thus, the proof of \eqref{eq:proof_kloop_sliding_clt_4} is finished once we show that 
\begin{align}\label{eq:proof_kloop_sliding_clt_3}
    \lim_{n\to\infty} k \Var(D_{n,1}) = \Sigma_h^{[\sbl]}, \quad \lim_{n \to \infty} r_{n1}=0, \quad\lim_{n \to \infty} r_{n2}=0.
\end{align}
We start with the former, and for that purpose we define, for $\xi, \xi'\in[0,k)$,
\begin{align*}
    f_{r}(\xi, \xi^\prime) &:= \Cov\big(h( \bm Z_{r, 1+ \lfloor r\xi \rfloor} ) , h(\bm Z_{r, 1+ \lfloor r\xi^\prime \rfloor}) \big),\\
    g_{(k)}(\xi, \xi^\prime) & := \Cov\big(h(\bm Z^{(1)}_{|\xi - \xi^\prime|, (k)}), h(\bm Z^{(2)}_{|\xi - \xi^\prime|, (k)})\big), \\
    g(\xi, \xi^\prime) &:= \Cov\big(h(\bm Z^{(1)}_{|\xi - \xi^\prime|}), h(\bm Z^{(2)}_{|\xi - \xi^\prime|})\big),
\end{align*}
where $(\bm Z^{(1)}_{|\xi - \xi^\prime|, (k)}, \bm Z^{(2)}_{|\xi - \xi^\prime|, (k)})$ has CDF $ G_{|\xi - \xi^\prime|}^{(k)}$ from \eqref{eq:G_xi_kloop} and where $(\bm Z^{(1)}_{|\xi - \xi^\prime|}, \bm Z^{(2)}_{|\xi - \xi^\prime|})$ has CDF $G_{|\xi-\xi^\prime|}$ from (\refstar{eq:Gxi-prod}).  Observe that, by Proposition \ref{prop:overlap_wconv_kloop}, Condition \refstar{cond:int_h} and Example 2.21 in \cite{Van98}, $\lim_{n\to\infty} f_{r}(\xi, \xi^\prime) = g_{(k)}(\xi, \xi^\prime)$. Hence, by Condition \refstar{cond:int_h} and Dominated Convergence
\begin{align}\label{eq:proof_kloop_sliding_clt_2} 
         k \Var(D_{n,1})
        =
        \frac{1}{kr^2}\sum_{s=1}^{kr}\sum_{t=1}^{kr} \Cov\big[h(\bm Z_{r,s}), h(\bm Z_{r,t})\big] 
        &\nonumber = \frac{1}{k} \int_0^{k}\int_0^{k} f_{r}(\xi, \xi^\prime) \diff \xi^\prime \diff \xi \\
        & = \frac{1}{k} \int_0^{k}\int_0^{k} g_{(k)}(\xi, \xi^\prime) \diff \xi^\prime \diff \xi  + o(1).
\end{align}
Now let $u(x) := g(x,0)$ and note that $g(\xi,\xi')=u(|\xi-\xi^\prime|)$  and that $u(x)=0$ for $x > 1$. By the definition of $G_{|\xi-\xi^\prime|}^{(k)}$ in \eqref{eq:G_xi_kloop} we have $G_{|\xi-\xi^\prime|}^{(k)} = G_{|\xi-\xi^\prime|}$ if $0 \le |\xi-\xi^\prime| \le k-1$, which also implies that $g_{(k)}(\xi, \xi^\prime) = g_{(k)}(|\xi-\xi^\prime|,0)=g(|\xi-\xi^\prime|,0)=u(|\xi-\xi^\prime|)$ for $\xi \le k-1.$ Therefore,
\begin{align} \label{eq:intdec2}
\int_0^{k}\int_\xi^{k} g_{(k)}(\xi, \xi^\prime) \diff \xi^\prime \diff \xi  
    &=\nonumber \int_0^{k}\int_{\xi}^{k \wedge (\xi + k-1)}u(\xi^\prime-\xi) \diff \xi^\prime \diff \xi 
    + \int_0^{1}\int_{\xi + k-1}^k g_{(k)}(\xi^\prime-\xi,0) \diff \xi^\prime \diff \xi \\
    &=\int_0^{k}\int_{\xi}^{k \wedge (\xi+1)} u(\xi^\prime-\xi) \diff \xi^\prime \diff \xi 
    + \int_0^{1}\int_{k-1+\xi}^k g_{(k)}(\xi^\prime-\xi,0) \diff \xi^\prime \diff \xi,
\end{align}
where we used that $u(x)=0$ for $x>1$ at the second equality.
After substituting $x=\xi'-\xi$ in the inner integral and noting that $\int_0^1 u(x) \diff x = \Sigma_h^{[\sbl]} /2,$ the first integral on the right-hand side of \eqref{eq:intdec2} can be written as
\begin{align*} 
    \int_0^{k}\int_{\xi}^{k \wedge (\xi+1)} u(\xi^\prime-\xi) \diff \xi^\prime \diff \xi
    =
        \int_0^{k}\int_{0}^{(k-\xi) \wedge 1} u(x) \diff x \diff \xi 
        &=
        \int_0^{k-1}\int_{0}^{1} u(x) \diff x \diff \xi +  \int_{k-1}^k\int_{0}^{k-\xi} u(x) \diff x \diff \xi  \\
        &= (k-1) \frac{ \Sigma_h^{[\sbl]}}2 + \int_0^1 \int_0^y u(x) \diff x \diff y,
\end{align*}
where we used the substitution $y=k-\xi$ in the last step. Moreover, substituting $x=k-\xi'+\xi$ in the inner integral, the second integral on the right-hand side of \eqref{eq:intdec2} can be written as
\begin{align*} 
        \int_0^{1}\int_{k-1+\xi}^k g_{(k)}(\xi^\prime-\xi,0) \diff \xi^\prime \diff \xi
        &=
        \int_0^1 \int_\xi^1 g_{(k)}(k-x,0)\diff x \diff \xi
        =
        \int_0^1 \int_\xi^1 u(x)\diff x \diff \xi,
\end{align*} 
where we used that $g_{(k)}(k-x,0) = u(x)$ for $x \in[0,1]$ by the definition of $G_{k-x}^{(k)}$. 
Overall, again using that $\int_0^1 u(x) \diff x = \Sigma_h^{[\sbl]} /2$, the previous three displays imply that 
\[
    \int_0^{k}\int_\xi^{k} g_{(k)}(\xi, \xi^\prime) \diff \xi^\prime \diff \xi  = k\frac{ \Sigma_h^{[\sbl]}}2.
\]
This implies \eqref{eq:proof_kloop_sliding_clt_3} in view of the symmetry $\int_0^{k}\int_\xi^{k} g_{(k)}(\xi, \xi^\prime) \diff \xi^\prime \diff \xi = \int_0^{k}\int_0^\xi g_{(k)}(\xi, \xi^\prime) \diff \xi^\prime \diff \xi$ and \eqref{eq:proof_kloop_sliding_clt_2}. 

Similar arguments as before, invoking asymptotic independence of $\bm Z_{r,s}$ and $\bm Z_{r,t}$ for $s\in I_{kr,1}$ and $t\in I_{kr,2}$ (see Proposition~\ref{prop:overlap_wconv_kloop}) and a similar identification of the covariances as integrals, imply that $r_{n1}=o(1)$. Finally, by Conditions \refstar{cond:ser_dep}(c) and \refstar{cond:int_h} and Lemma 3.11 in \cite{DehPhi02} we obtain 
\begin{align*}
        \frac{|r_{n2}|}k 
        & \lesssim m(k) \alpha(kr)^{\nu/(2+\nu)}
        \lesssim \Big( \Big(\frac{n}{r}\Big)^{1+2/\nu} \alpha(kr)\Big)^{v/(2+\nu)} = o(1),
\end{align*}
since $2/\omega < \nu$,
where we have used that there is a lag of at least $kr$ between the observations making up $D_{n,1}$ and $D_{n,1+d}$. Overall, we have shown all three assertions in \eqref{eq:proof_kloop_sliding_clt_3}, and hence the proof of \eqref{eq:proof_kloop_sliding_clt_4} is finished.

To verify the asymptotic normality, by the Cramér-Wold Theorem, it is again sufficient consider the case $q=1$. Recalling that $\bar{\mb G}_{n,r} h$ is centered, the case $\Sigma_h^{[\sbl]} = 0$ follows from \eqref{eq:proof_kloop_sliding_clt_4}; thus assume $\Sigma_h^{[\sbl]} > 0.$  
Following \cite{BucSeg18-sl}, we use a suitable blocking technique.
Choose $m^\ast = m_n^\ast \in \N$ with $3 \leq m^\ast \leq m(k)$ such that $m^\ast \to \infty$ and $m^\ast = o((n/r)^{\nu/(2(1+\nu))})$ with $\nu$ from Condition~\refstar{cond:int_h}. Next, define $p := p_n := m(k)/m^\ast$ and assume for simplicity that $p \in \N, n/r \in \N$. Furthermore, let $J_j^+ := \{(j-1)m^\ast + 1, \ldots, jm^\ast-1\}, J_j^- := \{jm^\ast \}$ and note that $|J_j^+| = (m^\ast -1).$
Then,
\begin{equation}\label{eq:proof_kloop_sliding_clt_5}
        \bar{\mb G}_{n,r}  h
        = \frac{1}{\sqrt{p}} \sum_{j=1}^p ( S_{n,j}^+ + S_{n,j}^- ),
\quad \text{ where } \quad
        S_{n,j}^\pm := \sqrt{\frac{np}{r}}\frac{1}{m(k)} \sum_{i \in J_j^\pm}  D_{n,i}
\end{equation}
with $D_{n,i}$ from \eqref{eq:dni}.
Noticing that the time series $(S_{n,j}^-)_j$ is stationary, we have
\begin{align*}
        &\phantom{{}={}} \Var\Big(\frac{1}{\sqrt{p}} \sum_{j=1}^p S_{n,j}^- \Big) 
        \leq 3\Var(S_{n,1}^-) + \sum_{d=2}^{p-1} | \Cov(S_{n,1}^-, S_{n,1+d}^-) | =: r_{n3} + r_{n4}.
\end{align*}
By Conditions \refstar{cond:ser_dep}(a) and \refstar{cond:int_h} we have $r_{n3} = O(1/m^\ast) = o(1)$. Furthermore, by Condition~\refstar{cond:int_h} and Lemma~3.11 from \cite{DehPhi02} with $s=r=1/(2+\nu)$ and $t=\nu/(2+\nu)$, we have 
$\sup_{d \geq 2} | \Cov(S_{n,1}^-, S_{n,1+d}^-) | 
\lesssim 10 k(m^\ast)^{-1} \alpha^{\nu/(2+\nu)}(kr)$, 
whence $r_{n4} \lesssim r^{-1} n (m^\ast)^{-2} \alpha^{\nu/(2+\nu)}(kr) = o((m^*)^{-2})=o(1)$ by Condition \refstar{cond:ser_dep} and the fact that $2/\nu<\omega$.
Put together, in view of $\Exp[S_{n,j}^-]=0$, these convergences imply $p^{-1/2} \sum_{j=1}^p S_{n,j}^- = o_\Prob(1).$

As there is a lag of at least $kr$ between the observations making up $S_{n,j_1}^+$ and $S_{n,j_2}^+$ for $j_1 \neq j_2$ a standard argument involving characteristic functions, a complex-valued version of Lemma 3.9 in \cite{DehPhi02}, Condition \refstar{cond:ser_dep} (b) and Levy's Continuity Theorem allows for assuming that the $(S_{n,j}^+)_j$ are independent.
Hence, we may subsequently apply Lyapunov's central limit theorem. To verify its conditions, note that, since  $\Var(p^{-1/2} \sum_{j=1}^p S_{n,j}^-) = o(1)$ and by using \eqref{eq:proof_kloop_sliding_clt_4} and \eqref{eq:proof_kloop_sliding_clt_5}, 
$
        \Var( p^{-1/2}\sum_{j=1}^p  S_{n,j}^+ ) =  
         \Var(\bar{\mb G}_{n,r} h) + o(1) 
        = \Sigma_h^{[\sbl]} + o(1).
$
Hence,
\begin{equation*}
        \frac{\sum_{j=1}^p \Exp[ \big|S_{n,j}^+\big|^{2+\nu} ]}{ \{ \Var(\sum_{j=1}^p S_{n,j}^+) \}^{1+\nu/2}}
        \lesssim \frac{p(np/r)^{1+ \nu/2} (m^\ast / m(k))^{2+\nu}}{(p\sigma^2/2)^{1+\nu/2}} 
        \lesssim \frac{(m^\ast)^{1+\nu}}{(n/r)^{\nu/2}} =o(1),
\end{equation*}
by Condition \refstar{cond:int_h} and the choice of $m^\ast.$ An application of Lyapunov's central limit theorem implies the assertion. 

To prove the additional weak convergences note that 
\[
    \tilde \Gb_{n,r}^{[\cblk]} h = 
    \bar \Gb_{n,r}^{[\cblk]}  h + \sqrt{\frac{n}{r}} ( P_{n,r}^{[\cblk]} - P_r) h, \quad 
    \Gb_{n,r}^{[\cblk]} h = 
    \bar \Gb_{n,r}^{[\cblk]} h + \sqrt{\frac{n}{r}} ( P_{n,r}^{[\cblk]} - P_r + P_r - P) h.
\]
Denote by $(\tilde{ \bm X}_t)_t$ an independent copy of $(\bm X_t)_t$. By stationarity we have 
\begin{align*}
     (P_{n,r}^{[\cblk]} - P_r) h = & 
     \frac{1}{m(k)}\sum_{i=1}^{m(k)} \frac{1}{kr}\sum_{s=1}^{kr} \Exp[h(Z_{r,(i-1)kr+s})] - P_r h \\
     & = \frac{1}{kr} \sum_{s = (k-1)r + 2}^{kr} \big( \Exp[h(Z_{r,s})] - P_r h \big) \\
     & = \frac{1}{kr} \sum_{t=1}^{r-1} \Exp\Big[h \Big( \frac{\max ( X_1, \dots,  X_t, X_{{(k-1)r+t+1}},  \dots, X_{kr}) -  b_r}{a_r} \Big) -  h\Big( \frac{\bm M_{r,1} - b_r}{a_r}\Big) \Big].
\end{align*}
Condition \refstar{cond:bias_cbl} then gives $\sqrt{n/r} (P_{n,r}^{[\cblk]} - P_r) h \to  k^{-1} ( D_{h, k} + E_{h}).$ Furthermore, Condition \refstar{cond:bias_dbsl} immediately implies $\sqrt{n/r}(P_r - P)h \to B_{h}.$ Combined with the first assertion and the penultimate display, we obtain the additional two claimed weak convergences.
\end{proof}

\begin{proof}[Proof of Theorem \refstar{theo:mult_boot}] 
The second and third assertion follow immediately from the first one and the triangular inequality. For the proof of the first one, we omit the upper index $\cblk$.
By Theorem~\refstar{theo:cltblocks1} (for $\mbl=\sbl$) or Theorem~\refstar{theo:cltblocks2} (for $\mbl=\cblk$) and Lemma 2.3 from \cite{BucKoj2019note}, it is sufficient to show that, unconditionally, 
\[
(\hat \Gb_{n,r}^{[1]} \bm h,  \hat \Gb_{n,r}^{[2]} \bm h) \weak \Nc_q(\bm 0, \Sigma^{[\sbl]}_h) \otimes \Nc_q(\bm 0, \Sigma^{[\sbl]}_h),
\]
where, for $b\in\{1,2\},$ $\hat \Gb_{n,r}^{[b]} = \sqrt{\frac{n}r} \frac1n \sum_{i=1}^{m(k)} (W_{m(k),i}^{[b]}-1) \sum_{s \in I_{kr,i}} \delta_{\bm Z_{r,s}}$ with $\bm W^{[b]}_{m(k)} = (W_{m(k),1}^{[b]}, \dots, W_{m(k), m(k)}^{[b]})$ two i.i.d.\ copies of $\bm W_{m(k)}$. By the Cram\'er-Wold device, it is sufficient to consider $q=1$, and for this in turn, yet again by the Cram\'er-Wold device, it is sufficient to show that
\begin{align}\label{eq:proof_efron_boot_1}
    T_n & := \sum_{b=1}^2 a_b \sqrt{\frac{n}r} \frac1n \sum_{i=1}^{m(k)} \big( W_{m(k),i}^{[b]} -1 \big) \sum_{s \in I_{kr,i}} h(\bm Z_{r,s}) 
       \wconv \Nc_2 \big(0,  (a_1^2+a_2^2) \Sigma_h^{[\sbl]} \big)
\end{align}
for all $\bm a=(a_1, a_2)^\top \in \R^2$. Note that $\sum_{i=1}^{m(k)} \big( W_{m(k),i}^{[b]} -1 \big) =0$ and that $\sum_{s \in I_{kr,i}}\Exp[h(\bm Z_{r,s})]$ does not depend on $i$, whence we may write
\begin{align}
\label{eq:Tn-centered}
     T_n 
     &= \nonumber
     \sum_{b=1}^2 a_b \sqrt{\frac{n}r} \frac1n \sum_{i=1}^{m(k)} \big( W_{m(k),i}^{[b]} -1 \big) \Big\{ \sum_{s \in I_{kr,i}}  h(\bm Z_{r,s}) - \Exp[h(\bm Z_{r,s})] \Big\}\\
     &=
     \sum_{b=1}^2 a_b \sqrt{\frac{n}r} \frac1{m(k)} \sum_{i=1}^{m(k)} \big( W_{m(k),i}^{[b]} -1 \big) D_{n,i}
\end{align}   
with $D_{n,i}$ from \eqref{eq:dni}.
The subsequent proof strategy,  known as \textit{Poissonization},  consists of removing the dependence of the multinomial multipliers by introducing rowwise i.i.d.\ multiplier sequences that approximate the multinomial multipliers. More precisely, we employ the construction from Lemma~\ref{lem:poiss}:
for each fixed $n \in \N$, let 
$(\bm U_{j,m(k)}^{[b]})_{j \in \N, b \in \{1, 2\}}$ be i.i.d.\ multinomial vectors with 1 trial and $m(k)$ classes, with class probabilities $1/m(k)$ for each class, and independent of $(\bm X_t)_{t\in\mathbb Z}$. We may then assume that
\[
{\bm W}_{m(k)}^{[b]} 
= \sum_{j=1}^{m(k)} \bm U_{j,m(k)}^{[b]}.
\]
Further, independent of $(\bm U_{j,m(k)}^{[b]})_{j \in \N, b \in \{1, \dots, 2\}}$ and of $(\bm X_t)_{t\in\mathbb Z}$, let $(N_{m(k)}^{[b]})_{b=1,\ldots,M}$ be i.i.d.\ $\mathrm{Poisson}(m(k))$ distributed random variables. Define
\[
    \tilde{\bm W}_{m(k)}^{[b]} =
    (\tilde{W}_{m(k),1}^{[b]}, \dots, \tilde{W}_{m(k), m(k)}^{[b]}) = \sum_{j=1}^{N_{m(k)}^{[b]}} \bm U_{j,m(k)}^{[b]}.
\]
By Lemma \ref{lem:poiss} the random vectors $\tilde{\bm W}_{m(k)}^{[1]}, \tilde{\bm W}_{m(k)}^{[2]}$ are i.i.d.\ $\mathrm{Poisson}(1)^{\otimes m(k)}$ distributed. Then, in view of \eqref{eq:Tn-centered}, we may write $T_n = \tilde T_n + a_1 R_{n1} + a_2 R_{n2}$, where
\begin{align*}
    \tilde T_n 
    & = \sqrt{\frac{n}r} \frac1{m(k)} \sum_{i=1}^{m(k)} \Big\{ \sum_{b=1}^2 a_b \big( \tilde W_{m(k), i}^{[b]} -1 \big) \Big\} D_{n,i}, \\
    R_{nb} &=  \sqrt{\frac{n}r} \frac1{m(k)} \sum_{i=1}^{m(k)} \big( W_{m(k), i}^{[b]} -\tilde W_{m(k), i}^{[b]} \big) D_{n,i}.
\end{align*}
We may apply Lemma \ref{lem:aux_mult_clt} to obtain that $\tilde T_n \wconv \Nc(0, (a_1^2+a_2^2) \Sigma_h^{[\sbl]})$. For the proof of \eqref{eq:proof_efron_boot_1}, it hence remains to verify that $R_{nb} = o_\Prob(1)$ for $b\in\{1,2\}$. We will suppress the upper index $[b]$ in the following.
First write 
\[
        R_{nb} = \sqrt{\frac{n}r} \frac1{m(k)} \sum_{i=1}^{m(k)} D_{n,i} \mathrm{sgn}(N_{m(k)}-m(k)) \sum_{j=1}^\infty \bm 1(i \in I_{m(k)}^j),
\]
where $I_{m(k)}^j = \{ i \in \{1, \ldots, m(k)\} \colon |\tilde W_{m(k),i} - W_{m(k),i}| \geq j \}.$ 

We will start by showing that $\Prob(A_n) = o(1),$ where $A_n := \{|I_{m(k)}^3| > 0 \}=\{ \exists i \in \{1, \dots, m(k)\}: |\tilde W_{m(k),i} - W_{m(k),i}| \ge 3\}$. Fix $\delta > 0$. Invoking the central limit theorem, we may choose $C=C_\delta>0$ sufficiently large such that $\Prob(| N_{m(k)} - m(k)| > C \sqrt{m(k)}) \leq \delta$ for all $n\in\N$. Next, note that, conditional on $N_{m(k)} = M$, the difference $|\tilde W_{m(k), i} - W_{m(k),i}|$ follows a $\mathrm{Bin}(|M-m(k)|, m(k)^{-1})$ distribution. Further, setting $c=\lceil C\sqrt{m(k)} \rceil$ and choosing $M$ such that $| M - m(k)| \leq c$, we have that $\mathrm{Bin}(|M-m(k)|, m(k)^{-1})([3, |M-m(k)|]) \leq \mathrm{Bin}(c, m(k)^{-1})([3,c])$. As a consequence, conditioning on the event $N_{m(k)}=M$, we obtain 
\begin{align*}
        \Prob(A_n) 
        &\leq \Prob(|I_{m(k)}^3| > 0, |N_{m(k)} - m(k)| \leq c) + \delta \\
        & \leq \sum_{i=1}^{m(k)} \Prob(|\tilde W_{m(k),i} - W_{m(k),i} | \geq 3, | N_{m(k)}-m(k)| \leq c) + \delta \\
        & \le m(k) \mathrm{Bin}(c, m(k)^{-1})([3,c]) + \delta \\
        & \leq m(k)\{c m(k)^{-2} + \mathrm{Poisson}(c m(k)^{-1})([3,c])\} + \delta \\
        & = \frac{c}{m(k)} + m(k) e^{-c/m(k)} \sum_{j=3}^{c} \frac{(c m(k)^{-1})^j}{j!}  +\delta \\
        & = O((m(k))^{-1/2}) + \delta,
\end{align*}
where we used the approximation error of the Poisson Limit Theorem. Hence, since $\delta>0$ was arbitrary, $\Prob(A_n)=o(1)$.

Next, note that, on $A_n^c$,
\[
 |I_{m(k)}^1| + |I_{m(k)}^2|  = \sum_{j=1}^\infty |I_{m(k)}^j| =  \sum_{i=1}^{m(k)} \sum_{j=1}^\infty \bm 1(i \in I_{m(k)}^j)  =\sum_{i=1}^{m(k)} |\tilde W_{m(k),i} - W_{m(k),i} |= |N_{m(k)} - m(k)|,
\]
using the fact that $\bm U_{j,m(k)}$ is multinomially distributed with 1 trial. As a consequence,  letting $H_{n,i} =  \bm 1( N_{m(k)} \neq m(k)) \{ \bm 1(i \in I_{m(k)}^1) + \bm 1(i \in I_{m(k)}^2) \} / |N_{m(k)}-m(k)|$ and 
$
    \tilde R_{nb}  = \big| \sum_{i=1}^{m(k)} H_{n,i} D_{n,i} \big|,
$
we obtain that
\begin{align*}
        |R_{nb}| & \leq \bm 1_{A_n^c} \Big| \sqrt{\frac{n}r} \frac1{m(k)}  \sum_{i=1}^{m(k)} D_{n,i} \sum_{j=1}^2 \bm 1(i \in I_{m(k)}^j) \Big|  + o_\Prob(1) 
        = \bm 1_{A_n^c} \sqrt k  \times \frac{|N_{m(k)}-m(k)|}{{\sqrt {m(k)}}}  \times \tilde R_{nb} + o_\Prob(1).
\end{align*}
Therefore, since $|N_{m(k)}-m(k)|/\sqrt {m(k)} = O_\Prob(1)$ by the Central Limit Theorem, the proof of \eqref{eq:proof_efron_boot_1} is finished if we show that $\tilde R_{nb}=o_\Prob(1)$. 
For that purpose note that, for any fixed $\varepsilon > 0$,
\begin{align*}
    &\phantom{{}={}}\Prob\Big( \frac{\bm 1(N_{m(k)} \neq m(k))}{|N_{m(k)}-m(k)|}  \geq \varepsilon\Big)
        \leq \Prob\Big( |N_{m(k)}-m(k)| \leq  \frac{1}{\varepsilon}\Big) = o(1)
\end{align*}
by the Central Limit Theorem, which in turn yields $\max_{i=1,\ldots,m(k)} H_{n,i} \leq 2|(N_{m(k)}-m(k))|^{-1} \bm 1\{N_{m(k)} \neq m(k)\} = o_\Prob(1)$.
    
Now let $\sigma_r(i,j) = \Cov(D_{n,i}, D_{n,j})$ and invoke the Conditional Tschebyscheff inequality 
to obtain that, for fixed $\varepsilon > 0$,
\begin{align*} 
        & \phantom{{}={}} \Prob\big(|\tilde R_{nb}| > \varepsilon \mid (H_{n,i})_{i=1, \dots, m(k)}\big) \\
        &\lesssim \sum_{i,j=1}^{m(k)} H_{n,i}H_{n,j} |\sigma_r(i,j)| \\
        & \leq \Big\{ \max_{j=1}^{m(k)} H_{n,j} \Big\} \times  \Big\{ \sum_{i=1}^{m(k)}H_{n,i}\sigma_r(1,1)+ 2 \sum_{i=1}^{m(k)-1}H_{n,i} |\sigma_r(1,2)| +\sum_{|i-j|\geq 2} H_{n,i} |\sigma_r(i,j)| \Big\}\\
        &\le 
        \Big\{ \max_{j=1}^{m(k)} H_{n,j} \Big\} \times  \Big\{ \sigma_r(1,1)+ 2 |\sigma_r(1,2)| + 2  m(k) \max_{d=2}^{m(k)-1} |\sigma_r(1,1+d)|  \Big\},
\end{align*}
where we used $\sum_{i=1}^{m(k)}H_{n,i} \le 1$. Since $\max_{j=1}^{m(k)} H_{n,j} = o_\Prob(1)$ and by the (proof of the) three assertions in \eqref{eq:proof_kloop_sliding_clt_3}, we obtain that the expression on the right-hand side of the previous display is $o_\Prob(1)$.
Hence, writing $\Prob(|\tilde R_{nb}| > \varepsilon) = \Exp\big[\Prob\big(|\tilde R_{nb}| > \varepsilon \mid (H_{n,i})_i\big)\big]$ and invoking the Dominated Convergence Theorem for convergence in probability, we obtain $\tilde R_{nb}=o_\Prob(1)$ and the proof is finished.
\end{proof}

\begin{proof}[Proof of Proposition~\ref{prop:M_Bst_cons}]
By Equations \eqref{eq:phi_decomposition}, \eqref{eq:vartheta}, \eqref{eq:linerror} and \eqref{eq:bootlin1}, \eqref{eq:bootlin2} we have 
\begin{align*}
        \hat {\bm \theta}_n^{[\mbl]} - {\bm \theta}_r
        &= A(\bm a_r, \bm b_r) \sqrt{\frac{r}{n}}\big( \bar{\mb G}_{n,r}^{[\mbl]}\bm h+ o_\Prob(1) \big) =: \sqrt{\frac{r}{n}}A(\bm a_r, \bm b_r) S_n,\\
        \hat {\bm \theta}_n^{[\cblk],*} - \hat {\bm \theta}_n^{[\cblk]}
        &= A(\bm a_r, \bm b_r) \sqrt{\frac{r}{n}}\big(\hat {\mb G}_{n,r}^{(\cblk, \ast)}\bm h+ o_\Prob(1) \big) =: \sqrt{\frac{r}{n}}A(\bm a_r, \bm b_r) S_n^*.
\end{align*}
By Theorems \ref{theo:cltblocks1} and \ref{theo:cltblocks2} we have $S_n \weak \mc N_q(\bm0, \Sigma_{\bm h}^{[\sbl]})$. 
Theorem \ref{theo:mult_boot} implies
$
        d_K ( \mathcal L ( S_n^* \mid \mathcal X_n) , 
\mathcal L( S_n )) = o_\Prob(1).
$
The assertion then follows from Lemma \ref{lem:ZBstToMBst} of the supplement.
\end{proof}

\begin{proof}[Proof of Theorem~\ref{theo:boot-mle-blockmax}]
The results regarding $(\hat \alpha_n^{[\cblk]}, \hat \sigma_n^{[\cblk]})^\top$ and $(\hat \alpha_n^{[\cblk], *}, \hat \sigma_n^{[\cblk], *})^\top$ follow from an application of Theorem~\ref{theo:asy-mle-frechet} and Theorem~\ref{theo:boot-mle-frechet}, respectively.

For the former we need to show that, with $v_n = \sqrt{n/r}, \omega_n = n$ and $X_{n,s}=M_{r,s}^{[\cblk]}\vee c$, the ``no-ties-condition'' in Equation~\eqref{eq:noties} is met, and that the three convergences in Condition~\ref{cond:fre} are satisfied. 
Equation~\eqref{eq:noties} follows from $\Prob(X_{n,1} = \dots = X_{n,n}) \le \Prob(X_{n,1} = X_{n, r+1})=\Prob((M_{r,1} \vee c)/\sigma_r=(M_{r,r+1} \vee c)/\sigma_r)$, 
which converges to zero by the Portmanteau theorem, observing that $(M_{r,1} \vee c)/\sigma_r, (M_{r,r+1} \vee c)/\sigma_r)$ weakly converges to the product of two independent Fr\'echet$(\alpha_0,1)$ random variables by Lemma 5.1 in \cite{BucSeg18-sl}.
Condition~\ref{cond:fre}(iii) is a consequence of Condition~\ref{cond:bias_Fre}. Condition~\ref{cond:fre}(ii) follows from a straightforward modification of Theorem~\refstar{theo:cltblocks2} to the Fr\'echet domain of attraction. Finally, for $\alpha_+$ chosen sufficiently close to $\alpha_0$, Condition~\ref{cond:fre}(i) follows from such a modification as well, following the argumentation on page 1116 in \cite{BucSeg18a}.

Next, consider the assertions regarding $(\hat \alpha_n^{[\cblk], *}, \hat \sigma_n^{[\cblk], *})^\top$, which follow from Theorem~\ref{theo:boot-mle-frechet} if we additionally show that the ``no-ties-condition'' in Equation~\eqref{eq:noties2} is met, that $\hat \Prob_{n,r}^{[\cblk],*} f=P_{\alpha_0,1} f +o_\Prob(1)$ for all $f \in \mathcal F(\alpha_-, \alpha_+)$ and that \eqref{eq:boot-mle-expansion} is met. The latter two assertions follow from similar arguments as given in the proof of Theorem~\refstar{theo:mult_boot} (adaptations to the Fr\'echet domain of attraction are needed), and it remains to show \eqref{eq:noties2}. 
For that purpose, let $I_1, \dots, I_{m(k)}$ be iid uniformly distributed on $\{1, \dots, m(k)\}$ independent of $(X_{n,1}, \ldots, X_{n,n})$, such that 
\[
    (X_{n,1}^*, \dots, X_{n,n}^*)
    \stackrel{d}{=}
    (X_{n,kr(I_1-1)+1}, \dots, X_{n,kr(I_1-1)+kr},
    \dots, X_{n,kr(I_{m(k)}-1)+1}, \dots, X_{n,kr(I_{m(k)}-1)+kr}).
\]
Then
\begin{align*}
\Prob(X_{n,1}^\ast = \ldots = X_{n,n}^\ast) 
&=
\sum_{i=1}^{m(k)} \Prob( I_1 = i, X_{n,1}^\ast = \ldots = X_{n,n}^\ast) 
\\& \le
\sum_{i=1}^{m(k)} \Prob( I_1 = i, X_{n,kr(I_1-1)+1} = \ldots = X_{n,kr(I_1-1)+kr}) 
\\&=
\frac1{m(k)}\sum_{i=1}^{m(k)} \Prob( X_{n,kr(i-1)+1} = \ldots = X_{n,kr(I_1-1)+kr}) 
=
\Prob( X_{n,1} = \ldots = X_{n,kr}). 
\end{align*}
This probability is bounded by $\Prob(X_{n,1} = X_{n, r+1})$, which converges to zero as shown above when proving Equation~\eqref{eq:noties}.
\end{proof}

%%%%%%%%%%%%%%%%%%%%%%%%%%%%%%%%%%%%%%%%%%%%%%%%%%%%%%%%%%%%%%%%%%%%%%%%%%%%%%%%%%

\begin{proof}[Proof of Corollary~\ref{cor:confint-fremle}] 
Define $S_n^{[\sbl]}=\sqrt{\tfrac{n}r} ( \hat \sigma_{n}^{[\sbl]} / \sigma_{r_n} -1)$ and
$S_n^{[\cblk],*}=  \sqrt{\tfrac{n}r} (\hat \sigma_n^{[\cblk], *} - \hat \sigma_n^{[\cblk]}) / \sigma_{r_n}$, and note that, with $M_2(\alpha_0)$ denoting the second row of $M(\alpha_0)$,
\[
S_n^{[\sbl]} = M_2(\alpha_0) \Gb_{n,r}^{[\sbl]}(f_1, f_2, f_3) + o_\Prob(1),
\qquad
S_n^{[\cblk],*} = \frac{\hat \sigma_n^{[\cblk]}}{\sigma_{r_n}} \sqrt{\frac{n}r}  \Big( \frac{\hat \sigma_n^{[\cblk],*}}{ \hat \sigma_n^{[\cblk]}} - 1 \Big)
=
M_2(\alpha_0) \hat \Gb_{n,r}^{[\cblk],*}(f_1, f_2, f_3) + o_\Prob(1)
\]
by Theorem~\ref{theo:boot-mle-blockmax}. Moreover, arguing as in the proof of that theorem, the weak limit of $S_n^{[\sbl]}$ coincides with the conditional weak limit of $\hat S_n^{[\cblk],*}$ given the observations $\Xc_n$. Therefore, observing that $F_{S_n^{[\cblk],*}}(x) = \Prob(S_n^{[\cblk],*} \le x \mid \Xc_n)$ satisfies $F_{S_n^{[\cblk],*}}(x) =F_{\hat \sigma_n^{[\cblk],*}}(\hat \sigma_n^{[\cblk]} + \sqrt{\tfrac{r}n} \cdot \sigma_{r_n} x)$ and hence
\begin{align*}
\Prob(\sigma_{r_n} \in I_{n,\sigma}^{[\sbl, \cblk]}) 
& = 
\Prob \Big[ \sqrt{\frac{n}r} \cdot \sigma_{r_n}^{-1} \{ (F_{\hat \sigma_n^{[\cblk],*}})^{-1}( \tfrac\beta2) - \hat \sigma_n^{[\cblk]}\} \le \sqrt{\frac{n}r} \Big( \frac{\hat \sigma_{n}^{[\sbl]}}{\sigma_{r_n}} -1\Big) \le  \sqrt{\frac{n}r} \cdot \sigma_{r_n}^{-1} \{ (F_{\hat \sigma_n^{[\cblk],*}})^{-1}(1- \tfrac\beta2) - \hat \sigma_n^{[\cblk]}\} 
\Big]\\
& = \Prob\Big[ (F_{S_n^{[\cblk],*}})^{-1}(\tfrac\beta2) \le S_n^{[\sbl]}  \le (F_{S_n^{[\cblk],*}})^{-1}(1-\tfrac\beta2) \Big], 
\end{align*}
the assertion follows from Lemma 4.2 in \cite{BucKoj2019note}.
\end{proof}

%%%%%%%%%%%%%%%%%%%%%%%%%%%%%%%%%%%%%%%%%%%%%%%%%%%%%%%%%%%%%%%%%%%%%%%%%%%%%%%%%%

%%%%%%%%%%%%%%%%%%%%%%%%%%%%%%%%%%%%%%%%%%%%%%%%%%%%%%%%%%%%%%%%%%%%%%%%%%%%%%%%%%
\section{Auxiliary results}
\label{sec:auxres}

\begin{proposition}[Joint weak convergence of circular block maxima]\label{prop:overlap_wconv_kloop}
Fix $k\in \N_{\ge 2}$ and suppose Conditions \refstar{cond:mda} and \refstar{cond:ser_dep}(a), (b) are met. Then, for any $ \xi, \xi^\prime \in [0, k)$ and $ \bm x, \bm y \in \R^d$, we have 
\begin{align}\label{eq:G_xi_kloop}
    &\phantom{{}={}}\lim_{n \to \infty} 
        \Prob \Big(\bm Z_{r,1+\lfloor\xi r \rfloor}^{[\cblk]} \leq \bm x, \bm Z_{r,1+\lfloor \xi^\prime r \rfloor}^{[\cblk]}\leq \bm y \Big) 
        =
       G_{|\xi-\xi'|}^{(k)} (\bm x, \bm y) := \begin{cases}
    G_{k -|\xi-\xi^\prime|}(\bm x, \bm y), &  |\xi - \xi^\prime| > k-1, \\
    G_{|\xi-\xi^\prime|}(\bm x, \bm y), &|\xi - \xi^\prime| \le k-1,
    \end{cases}
\end{align}
with $G_\xi$ from (\refstar{eq:Gxi-prod}).
Moreover, any circular block maxima taken from distinct $kr$-blocks are asymptotically independent, that is, for any $1 \leq i \ne i'\leq m(k),  \xi, \xi^\prime \in [0, k)$ and $ \bm x, \bm y \in \R^d$,
\begin{align}\label{eq:G_xi_kloop-indep}
\lim_{n \to \infty} 
        \Prob \Big(\bm Z_{r,(i-1)kr+1+\lfloor\xi r \rfloor}^{[\cblk]} \leq \bm x, \bm Z_{r,(i^\prime-1) kr+1+\lfloor \xi^\prime r \rfloor}^{[\cblk]}\leq \bm y \Big) 
        =
       G_{1}(\bm x, \bm y) =G(\bm x) G(\bm y).
\end{align}
\end{proposition}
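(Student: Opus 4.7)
The plan is to combine a careful case analysis of the two circular windows' index structure with the standard $\alpha$-mixing and max-domain-of-attraction toolbox. Assume WLOG $\xi \le \xi'$ and write $W(s) \subseteq \{1,\ldots,kr\}$ for the index set of cardinality $r$ over which $\bm M_{r,s}^{[\cblk]}$ is the maximum, namely $W(s) = \{s,\ldots, s+r-1\}$ if $s+r-1 \le kr$ and $W(s) = \{s,\ldots,kr\} \cup \{1,\ldots, s+r-1-kr\}$ otherwise, with $s := 1+\lfloor \xi r\rfloor$, $s' := 1+\lfloor \xi' r\rfloor$. A direct index computation shows that the overlap between $W(s)$ and $W(s')$ changes topology precisely at $|\xi-\xi'|=k-1$, which explains the piecewise form of $G^{(k)}_{|\xi-\xi'|}$ and motivates the split $W(s') = B_1 \cup B_2$ with tail piece $B_1 := W(s') \cap \{s',\ldots,kr\}$ and head piece $B_2 := W(s') \cap \{1,\ldots, s'+r-1-kr\}$ (empty iff $\xi' \le k-1$).

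For the first assertion and $|\xi-\xi'| \le k-1$, the (possibly empty) head piece $B_2$ is separated from $W(s)$ by a gap of order $(k-1-(\xi'-\xi))\,r \to \infty$, so Condition~\ref{cond:ser_dep}(c) combined with a standard big-block/small-block argument gives
\begin{align*}
\Prob\bigl(M_{W(s)} \le \bm a_r \bm x + \bm b_r,\, M_{W(s')} \le \bm a_r \bm y + \bm b_r\bigr)
= \Prob\bigl(M_{W(s)} \le \bm a_r \bm x + \bm b_r,\, M_{B_1} \le \bm a_r \bm y + \bm b_r\bigr) \cdot \Prob\bigl(M_{B_2} \le \bm a_r \bm y + \bm b_r\bigr) + o(1),
\end{align*}
where the second factor converges to $G(\bm y)^{(\xi'-(k-1))^+}$ by the scaling relation~\eqref{eq:rvscale} and max-stability of $G$. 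The remaining factor is the joint law of two non-wrapping sliding maxima whose overlap/gap structure in the original sequence is straightforward to enumerate; a variant of Lemma~5.1 in \cite{BucSeg18-sl} for windows of possibly unequal sizes, proved along the same lines (blocking, mixing, and the MDA condition applied to a partition of the union of the two windows into consecutive pieces of length $\Theta(r)$), yields the limit as a product of powers of $G(\bm x)$, $G(\bm y)$, and $G(\bm x \wedge \bm y)$. Multiplying the two factors and collapsing exponents via $G(\bm x)^a G(\bm y)^a G(\bm x \wedge \bm y)^{1-a} = G_a(\bm x, \bm y)$ for $a\in[0,1]$ reproduces exactly $G_{\xi'-\xi}(\bm x, \bm y)$; verifying the bookkeeping in the IID case first is instructive.

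When $|\xi-\xi'| > k-1$, the roles of $B_1$ and $B_2$ swap: $B_1$ is separated from $W(s) \cup B_2$ by the empty middle of the $kr$-block (length $\ge (k-1)r$), while $B_2$ overlaps $W(s)$ on a set of size $(\xi'-\xi-(k-1))\,r$. The analogous factorization, the same unequal-sizes convergence applied to $(W(s), B_2)$, and the same exponent collapse with $\eta := k-(\xi'-\xi) \in (0,1)$ then yield $G_{k-(\xi'-\xi)}(\bm x, \bm y)$. For the second assertion \eqref{eq:G_xi_kloop-indep}, the two windows lie in disjoint $kr$-blocks but may be adjacent; shorten each at its inward end by $\ell_n$ observations (with $\ell_n$ from Condition~\ref{cond:ser_dep}(b)) to create a gap of at least $\ell_n$, apply $\alpha(\ell_n) \to 0$ to obtain asymptotic independence of the shortened maxima, bound the shortening error by $\ell_n \cdot \Prob(\bm X_1 \not\le \bm a_r \bm x + \bm b_r) = O(\ell_n/r) \to 0$ via a union bound and the MDA tail estimate, and invoke Proposition~\ref{prop:Z_kloop_xi_wconv} for the marginal limits $G(\bm x), G(\bm y)$. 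The main obstacle is the bookkeeping for the unequal-sizes sliding-maxima step: one has to enumerate the overlap/gap structure between $W(s)$ and $W(s')$ in each case and verify that all exponents assemble into the announced piecewise form.
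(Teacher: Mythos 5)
Your strategy coincides with the paper's: classify the overlap geometry of the two circular windows, factorize across gaps of length proportional to $r$ using mixing, and turn each resulting arc into a power of $G$ via the scaling relation \eqref{eq:rvscale}, collapsing exponents through $G(\bm x)^aG(\bm y)^aG(\bm x\wedge\bm y)^{1-a}=G_a(\bm x,\bm y)$; your treatment of $|\xi-\xi'|>k-1$ and of the cross-block independence \eqref{eq:G_xi_kloop-indep} is in substance the paper's argument. There is, however, a concrete gap in your case $|\xi-\xi'|\le k-1$. You assert that the head piece $B_2$ of $W(s')$ is separated from $W(s)$ by a gap of order $(k-1-(\xi'-\xi))r$, factor $\Prob(M_{B_2}\le\cdot)$ off as an asymptotically independent term, and treat the remainder as ``the joint law of two non-wrapping sliding maxima''. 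Both claims fail when $\xi>k-1$, i.e.\ when \emph{both} windows wrap around (a configuration that always satisfies $|\xi-\xi'|<1\le k-1$ and hence falls into this case): there $W(s)$ itself has a head piece $\{1,\dots,\lfloor\xi r\rfloor-(k-1)r\}$ which is \emph{contained} in $B_2=\{1,\dots,\lfloor\xi' r\rfloor-(k-1)r\}$, so $B_2$ overlaps $W(s)$ on a set of cardinality proportional to $r$, there is no separating gap, and $M_{W(s)}$ is not a non-wrapping sliding maximum. The asserted limit is still correct --- decomposing the whole index set $\{1,\dots,kr\}$ into the arcs on which indices belong to $W(s)$ only, to $W(s')$ only, to both, or to neither, and factorizing arc by arc, again assembles into $G_{\xi'-\xi}(\bm x,\bm y)$ --- but your proposed factorization does not deliver it in this sub-case. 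The paper's proof avoids the problem by handling the wrap-around of each window explicitly in every sub-case rather than always peeling off $B_2$ first.

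Two smaller points. First, at the boundary $\xi'-\xi=k-1$ your ``gap'' is $O(1)$ rather than of order $r$, so the insertion of a small block of length $\ell_n$ is genuinely needed there; more importantly, these factorizations should be justified from Condition~\ref{cond:ser_dep}(b) (which gives $\alpha(\ell_n)\to0$ and hence, by monotonicity of $\alpha$, that the mixing coefficient over any gap of order $r$ vanishes), not from Condition~\ref{cond:ser_dep}(c), which is not among the hypotheses of the proposition. Second, in the case $|\xi-\xi'|>k-1$ the gap separating $B_1$ from $W(s)$ has length of order $((\xi'-\xi)-1)r$, not ``$\ge (k-1)r$''; for fixed $\xi,\xi'$ this is still proportional to $r$, so the conclusion stands, but the bookkeeping should be corrected.
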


\begin{proof}
We start by showing marginal convergence (i.e., $\bm y= \bm \infty$); the result then corresponds to Proposition~\refstar{prop:Z_kloop_xi_wconv}.
The case $\xi \leq k-1$ is trivial. For $\xi > k-1$  and fixed $\bm x \in \R^d$, we may proceed as in the proof of Lemma 2.4 in \cite{BucZan23} to obtain
\begin{align*} 
\Prob \Big(\bm Z_{r,1+\lfloor\xi r \rfloor}^{[\cblk]} \leq \bm x \Big)
&= 
\Prob\Big( \frac{\max(\bm X_{1+\lfloor\xi r \rfloor},\ldots, \bm X_{kr}) - \bm b_r}{\bm a_r} \le \bm x,  \frac{\max(\bm X_{1},\ldots, \bm X_{r+\lfloor\xi r \rfloor - kr}) - \bm b_r}{\bm a_r} \leq \bm x\Big) \\
&=
\Prob\Big( \frac{\max(\bm X_{1+\lfloor\xi r \rfloor},\ldots, \bm X_{kr}) - \bm b_r}{\bm a_r} \le \bm x \Big)
\Prob\Big(\frac{\max(\bm X_{1},\ldots, \bm X_{r+\lfloor\xi r \rfloor - kr}) - \bm b_r}{\bm a_r} \leq \bm x\Big)  + R_n,
\end{align*}
where $R_n=O(\alpha(r_n))=o(1)$.
Next, by stationarity, the product on the right-hand side of the previous display can be written as
\begin{align*}
\Prob\Big( \bm Z_{kr - \flo{\xi r}} \le \frac{\bm a_r \bm x + \bm b_r - \bm b_{kr - \flo{\xi r}}}{\bm a_{kr - \flo{\xi r}}} \Big)
\Prob\Big(\bm Z_{r + \flo{\xi r}-kr}  \le \frac{\bm a_r \bm x + \bm b_r - \bm b_{r + \flo{\xi k} -kr }}{\bm a_{r + \flo{\xi r} -kr}} \Big) 
\end{align*}
The convergence in Equation~(\refstar{eq:rvscale}) being locally uniform we obtain, for $j\in \{1, \dots, d\}$,
\begin{align*}
&\lim_{n \to \infty} \frac{a_r^{(j)} x^{(j)} + b_r^{(j)} - b^{(j)}_{kr - \flo{\xi r}}}{a^{(j)}_{kr - \flo{\xi r}}} 
= (k -\xi)^{-\gamma^{(j)}}x^{(j)} + \frac{(k -\xi)^{-\gamma^{(j)}}-1}{\gamma^{(j)}}, \\
&\lim_{n \to \infty} \frac{a_r^{(j)} x^{(j)} + b_r^{(j)} - b^{(j)}_{r + \flo{\xi r} -kr}}{a^{(j)}_{r + \flo{\xi r} -kr}} 
= (1+\xi - k)^{-\gamma^{(j)}}x^{(j)} +\frac{(1+\xi-k)^{-\gamma^{(j)}}-1}{\gamma^{(j)}}.
\end{align*}
Arguing as in the proof of Lemma 8.9 from \cite{BucSta24}, Condition \refstar{cond:mda} and the previous three displays yield, for $n \to \infty,$
\begin{align*}
\Prob(\bm Z_{r,1+\lfloor\xi r \rfloor}^{[\cblk]} \leq \bm x )
    & = G^{k-\xi}(\bm x) G^{1+\xi-k}(\bm x) +o(1) = G(\bm x) + o(1).
\end{align*}

We proceed by proving \eqref{eq:G_xi_kloop}, and start by considering the case $|\xi-\xi^\prime| > k-1$. Without loss of generality let $\xi< \xi^\prime$ such that $1+ \flo{\xi r} \le r$ and $1+\flo{\xi^\prime r}>(k-1)r$.  Then, by the same arguments as for the marginal convergence,
\begin{align*}
        &\phantom{{}={}}\lim_{n \to \infty} 
        \Prob \big(\bm Z_{1+\lfloor\xi r \rfloor}^{[\cblk]} \leq \bm x, \bm Z_{1+\lfloor\xi^\prime r \rfloor}^{[\cblk]} \leq \bm y \big) \\
        & =\lim_{n \to \infty} \Prob \Big( \frac{\max(\bm X_{r,1},\ldots, \bm X_{r,\flo{\xi r}}) -\bm b_r}{\bm a_r} \leq \bm y, 
        \frac{\max(\bm X_{r,1+\flo{\xi r}}, \ldots, \bm X_{r, \flo{\xi^\prime r}-(k-1) r})-\bm b_r}{\bm a_r} \leq \bm x \wedge \bm y \\
        &\hspace{3cm} \frac{\max(\bm X_{r,  \flo{\xi^\prime r}-(k-1) r + 1} ,\ldots, \bm X_{r,r+\flo{\xi r}})-\bm b_r}{\bm a_r} \leq \bm x, 
        \frac{\max(\bm X_{r, 1+ \flo{\xi^\prime r}}, \ldots, \bm X_{kr}) -\bm b_r}{\bm a_r} \leq \bm y\Big) \\
        & = G^{\xi}(\bm y) \cdot G^{\xi' - (k-1) - \xi}(\bm x \wedge \bm y) \cdot  G^{1+\xi  - \xi^\prime + (k-1)}(\bm x) \cdot  G^{k - \xi^\prime}(\bm y) \\
        & = G^{k-(\xi'-\xi)}(\bm y) \cdot G^{1-(k-(\xi'-\xi))}(\bm x \wedge \bm y) \cdot  G^{k - (\xi^\prime-\xi)}(\bm x)  \\
        & = G_{ k - (\xi^\prime-\xi)}(\bm x, \bm y),
\end{align*}
where we used (\refstar{eq:Gxi-prod}) at the last equality.

Next, consider the case $|\xi-\xi^\prime| \leq k-1$, and again assume $\xi < \xi^\prime$. There are two subcases to handle; first let $\xi^\prime \in (k-1,k].$ In that case we have $\xi \geq 1,$ hence there is no overlapping from the left (induced by the $\circmax$-operation). We may then proceed in a similar (but simpler) way as for $|\xi-\xi'|>k-1$ to obtain the stated limit. Alternatively, we have $\xi^\prime \leq k-1.$ In that case, we are in the sliding block maxima case, where the result is known, see e.g. Lemma 8.6 in \cite{BucSta24}.

Finally, the assertion in \eqref{eq:G_xi_kloop-indep} follows from similar arguments as in Lemmas A.7, A.8 from \cite{BucSeg18a}, which yield 
\begin{align*}
        \Prob \big(\bm Z_{(i-1)kr + 1+\lfloor\xi r \rfloor}^{[\cblk]} \leq \bm x, \bm Z_{(i'-1)kr +1+\lfloor\xi^\prime r \rfloor}^{[\cblk]} \leq \bm y \big) 
        & =  \Prob\big(\bm Z_{1+\lfloor\xi r \rfloor}^{[\cblk]} \leq \bm x\big) \cdot \Prob \big(\bm Z_{1+\lfloor\xi^\prime r \rfloor}^{[\cblk]} \leq \bm y\big) +o(1)
\end{align*}
by stationarity. 
The product on the right-hand side converges to $G(\bm x) G(\bm y)$ by the marginal convergence.
\end{proof}

\begin{lemma}[Multiplier central limit theorem] 
\label{lem:aux_mult_clt}
Suppose Conditions \refstar{cond:mda} and \refstar{cond:ser_dep} are met. Fix $k\in\N$ and let $\bm h=(h_1, \dots, h_q)^\top$ satisfy Condition \refstar{cond:int_h} with $\nu > 2/\omega$ and $\omega$ from Condition~\refstar{cond:ser_dep}.
Let $Y_{n,1},\ldots,Y_{n,m(k)}$ be a triangular array of rowwise independent and identically distributed random variables that is independent of $(\bm X_1, \bm X_2, \dots)$ such that $\limsup_{n\to\infty} \Exp[| Y_{n,1}|^{2+\nu}]< \infty$ with $\nu$ from Condition~\refstar{cond:int_h} and such that the limit $\mu_2 := \lim_{n\to\infty} \Exp[Y_{n,1}^2]>0$ exists. 
Then, for $n \to \infty$ and with $\Sigma_{\bm h}^{[\mbl]}$ from (\refstar{eq:asy_Cov}), we have
\begin{align*}        
    \bar \Gb_{n,r}^\circ \bm h :=  \sqrt{\frac{n}r} \frac{1}{n} \sum_{i=1}^{m(k)}  Y_{n,i} 
    \sum_{s \in I_{kr,i}} \big\{ \bm h(\bm Z_{r,s}^{[\cblk]}) - \Exp[\bm h(\bm Z_{r,s}^{[\cblk]})] \big\} 
   \wconv 
   \begin{cases}
   \Nc_q(0, \mu_2 \Sigma_{\bm h}^{[\dbl]}) & \text{ if } k=1, \\
   \Nc_q(0, \mu_2 \Sigma_{\bm h}^{[\sbl]}) & \text{ if } k\in\{2,3,\dots\}.
   \end{cases}
\end{align*}
\end{lemma}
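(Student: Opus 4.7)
The plan is to closely mimic the proof of Theorem~\refstar{theo:cltblocks2}, with the extra independent multiplier sequence $(Y_{n,i})_i$ slotted in. By the Cramér--Wold device it suffices to treat $q=1$, so I write $h$ for the single scalar function; the trivial case $\mu_2\Sigma_h^{[\mbl]}=0$ will follow from the variance computation below, hence I may assume $\mu_2\Sigma_h^{[\mbl]}>0$. Using $n=krm(k)$, the statistic can be rewritten as
\[
\bar\Gb_{n,r}^{\circ}h \;=\; \sqrt{n/r}\,m(k)^{-1}\sum_{i=1}^{m(k)} Y_{n,i}\,D_{n,i},
\]
with $D_{n,i}$ as in \eqref{eq:dni}; since $\Exp[D_{n,i}]=0$ and $Y_{n,i}$ is independent of $(\bm X_t)_t$, this statistic is centered.

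For the limiting variance, independence of the iid multipliers from the data yields
\[
\Var(Y_{n,i}D_{n,i}) = \mu_2\Var(D_{n,i}), \qquad \Cov(Y_{n,i}D_{n,i},Y_{n,j}D_{n,j}) = (\Exp Y_{n,1})^2\,\Cov(D_{n,i},D_{n,j}) \quad (i\ne j),
\]
so that the decomposition \eqref{eq:proof_kloop_sliding_clt_1} from the proof of Theorem~\refstar{theo:cltblocks2} reappears as $\Var(\bar\Gb_{n,r}^{\circ}h) = \mu_2\,k\Var(D_{n,1}) + (\Exp Y_{n,1})^2(r_{n1}+r_{n2})$. All three convergences in \eqref{eq:proof_kloop_sliding_clt_3} carry over verbatim---they rest only on Proposition~\ref{prop:overlap_wconv_kloop}, Condition~\refstar{cond:int_h} and Lemma~3.11 of \cite{DehPhi02} under Condition~\refstar{cond:ser_dep}(c)---giving $\Var(\bar\Gb_{n,r}^{\circ}h)\to\mu_2\Sigma_h^{[\sbl]}$ for $k\ge 2$. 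For $k=1$ the $r$ summands in each $I_{r,i}$ coincide, so $D_{n,i}$ collapses to a single centered disjoint-block term and $k\Var(D_{n,1})\to\Sigma_h^{[\dbl]}$.

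For asymptotic normality I reuse the blocking scheme of \eqref{eq:proof_kloop_sliding_clt_5}, choosing the same auxiliary sequences $m^\ast,p$ and defining $S_{n,j}^{\pm}$ by inserting $Y_{n,i}$ into the inner sum. The separator contribution $p^{-1/2}\sum_{j}S_{n,j}^{-}$ vanishes in $L^2$ by the very same variance and mixing bounds as in that proof, the only change being harmless constants involving $\mu_2$ and $(\Exp Y_{n,1})^2$. Asymptotic independence of $(S_{n,j}^{+})_{j=1}^p$ is handled by the characteristic-function argument based on Lemma~3.9 of \cite{DehPhi02} and Condition~\refstar{cond:ser_dep}(b); crucially, since the $Y_{n,i}$ are iid across $i$ and independent of $(\bm X_t)_t$, the $\sigma$-fields carrying $S_{n,j_1}^{+}$ and $S_{n,j_2}^{+}$ factor into a data component separated by at least $kr$ and a multiplier component that is outright independent, so the effective mixing coefficient remains $\alpha(kr)$. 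Lyapunov's moment condition follows from $\Exp[|Y_{n,i}D_{n,i}|^{2+\nu}] = \Exp[|Y_{n,1}|^{2+\nu}]\,\Exp[|D_{n,i}|^{2+\nu}]$---the first factor bounded by hypothesis, the second by Jensen combined with Condition~\refstar{cond:int_h}(b)---and the remaining arithmetic copies the display just below \eqref{eq:proof_kloop_sliding_clt_5} up to multiplicative constants. The one delicate point I anticipate is exactly this bookkeeping on the joint $\sigma$-field structure and the resulting mixing estimate; once it is in place, Lyapunov's central limit theorem delivers the result.
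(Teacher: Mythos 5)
Your proof follows essentially the same route as the paper's: Cramér--Wold reduction to $q=1$, the variance computation by recycling the decomposition and the three limits from the proof of Theorem~\ref{theo:cltblocks2}, and the same big-block/separator scheme with asymptotic independence via characteristic functions and Lyapunov's CLT. If anything, your explicit tracking of the $(\Exp Y_{n,1})^2$ factor in the cross-covariances is slightly more careful than the paper's bare assertion that $(Y_{n,i}D_{n,i})_i$ is uncorrelated (which strictly requires centered multipliers), but since $r_{n1},r_{n2}\to 0$ in any case the conclusion is unaffected.
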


\begin{proof} 
We only consider the case $k\ge 2$; the case $k=1$ is a simple modification. By the Cramér-Wold Theorem, it is sufficient to consider the case $q=1$. Since $m(k)=n/(kr)$, we may write $\bar \Gb_{n,r}^\circ h =  \sqrt{\frac{n}r} \frac1{m(k)} \sum_{i=1}^{m(k)} Y_{n,i} D_{n,i}$, with
$D_{n,i} = \frac{1}{kr} \sum_{s \in I_{kr,i}} \{ h(\bm Z_{r,s}^{[\cblk]}) - \Exp[h(\bm Z_{r,s}^{[\cblk]})] \}$ from \eqref{eq:dni}. Then, since $(Y_{n,i} D_{n,i})_i$ is uncorrelated,
\begin{align*}
\Var(\bar \Gb_{n,r}^\circ h)
         &= k \Exp[Y_{n,1}^2]  \Var (D_{n,1}) = \mu_2 \Sigma_h^{[\sbl]} +o(1)
\end{align*}
by \eqref{eq:proof_kloop_sliding_clt_3}. This implies the assertion for $ \Sigma_h^{[\sbl]}=0$.

If $ \Sigma_h^{[\sbl]}>0$, we may follow a similar line of reasoning as in the proof of Theorem~\refstar{theo:cltblocks2}: decompose
\begin{equation*}
        \bar{\mb G}_{n,r}^\circ h
        = \frac{1}{\sqrt{p}} \sum_{j=1}^p ( T_{n,j}^+ + T_{n,j}^- ),
\quad \text{ where } \quad
        T_{n,j}^\pm := \sqrt{\frac{np}{r}}\frac{1}{m(k)} \sum_{i \in J_j^\pm}  Y_{n,i}D_{n,i},
\end{equation*}
where $p=m(k)/m^\ast$ with $m^\ast=o(m^{\nu/(2(1+\nu))})$ and $J_j^+ := \{(j-1)m^\ast + 1, \ldots, jm^\ast-1\}, J_j^- := \{jm^\ast \}$. Following the arguments from the proof of Theorem~\refstar{theo:cltblocks2}, we have $p^{-1/2}\sum_{j=1}^p T_{n,j}^-=o_\Prob(1)$. Moreover, we may assume independence of $(S_{n,j}^+)_j$, which enables an application of Ljapunov's central limit theorem to $p^{-1/2}\sum_{j=1}^p T_{n,j}^+$ to conclude.
\end{proof}

The next result is possibly well-known; we skip its elementary proof (see also Section 2 in \citealp{KlaWel91}).

\begin{lemma}[Poissonization]\label{lem:poiss}
For $q \in\N_{\ge 2}$ and $(p_{1},\dots, p_{q}) \in [0,1]^q$ with $p_1+ \dots + p_q = 1$, let $(\bm U_{j,q})_{j\in\N}$ be i.i.d.\ multinomial vectors with $1$ trial and $q$ classes with class probabilities $p_1, \dots, p_q$. For $m\in \N$ let 
$
\bm W_{m,q} = (W_{m,q,1}, \dots, W_{m,q,q}) = \sum_{j=1}^m \bm U_{j,q}
$
(which is multinomial with $m$ trials and $q$ classes). Further, for $\lambda>0$, let $N$ be a $\mathrm{Poi}(\lambda)$-distributed random variable that is independent of $(\bm U_{j,q})_{j \in \N}$. Then, the random variables $W_{N,q,1}, \dots, W_{N,q,q}$ are independent and $W_{N,q,j}$ follows a $\mathrm{Poi}(\lambda p_{j})$-distribution for $j=1, \dots, q$.
\end{lemma}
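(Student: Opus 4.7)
The plan is a direct computation of the joint probability mass function of $(W_{N,q,1}, \dots, W_{N,q,q})$ by conditioning on $N$. Concretely, for any nonnegative integers $k_1, \dots, k_q$ with $k := k_1 + \dots + k_q$, the joint pmf evaluates to
\[
\Prob(W_{N,q,1} = k_1, \dots, W_{N,q,q} = k_q)
= \sum_{n=0}^\infty \Prob(N=n)\, \Prob(\bm W_{n,q} = (k_1, \dots, k_q)),
\]
where I use the independence of $N$ and $(\bm U_{j,q})_j$. Since $\bm W_{n,q}$ is multinomial with $n$ trials, the summand is zero unless $n = k$, so only a single term survives.

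Next I would substitute the explicit formulas. The Poisson weight at $N = k$ equals $e^{-\lambda}\lambda^k/k!$ and the multinomial weight equals $\binom{k}{k_1, \dots, k_q} p_1^{k_1} \cdots p_q^{k_q} = \tfrac{k!}{k_1!\cdots k_q!} p_1^{k_1}\cdots p_q^{k_q}$. Multiplying these and cancelling $k!$, and then using the identities $e^{-\lambda} = \prod_{j=1}^q e^{-\lambda p_j}$ (which holds because $\sum_j p_j = 1$) and $\lambda^k = \prod_{j=1}^q \lambda^{k_j}$, I obtain the factorisation
\[
\Prob(W_{N,q,1} = k_1, \dots, W_{N,q,q} = k_q) = \prod_{j=1}^q \frac{e^{-\lambda p_j} (\lambda p_j)^{k_j}}{k_j!}.
\]
Since the right-hand side is precisely the product of $\mathrm{Poi}(\lambda p_j)$ pmfs evaluated at $k_j$, the coordinates $W_{N,q,1}, \dots, W_{N,q,q}$ are mutually independent with $W_{N,q,j} \sim \mathrm{Poi}(\lambda p_j)$, as claimed.

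I do not foresee a genuine obstacle: the only potentially delicate book-keeping is verifying that $e^{-\lambda}$ factorises correctly (which uses $\sum_j p_j = 1$) and that the multinomial coefficient cancels cleanly with the Poisson factorial. An equivalent and equally short alternative, which I would mention only if a more conceptual proof is preferred, computes the joint probability generating function $\Exp[\prod_j z_j^{W_{N,q,j}}] = \Exp[(\sum_j p_j z_j)^N] = \exp(\lambda \sum_j p_j(z_j - 1)) = \prod_j \exp(\lambda p_j(z_j-1))$, which is the product of $\mathrm{Poi}(\lambda p_j)$ pgfs and hence yields the same conclusion by uniqueness of the pgf.
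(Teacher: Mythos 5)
Your proof is correct. Note that the paper deliberately omits a proof of this lemma, declaring it elementary and pointing to Section~2 of Klaassen and Wellner (1991), so there is no in-paper argument to compare against; your direct computation of the joint pmf by conditioning on $N$ — observing that only the term $n = k_1 + \dots + k_q$ survives, cancelling the $k!$ against the multinomial coefficient, and splitting $e^{-\lambda}\lambda^k$ via $\sum_j p_j = 1$ — is exactly the standard argument and fills the gap cleanly. The probability generating function alternative you sketch is equally valid.
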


\begin{lemma}
\label{lem:sliding-covpos}
Let $d=1$ and suppose that $h$ is a real-valued function such that $\Var(h(\bm Z)) \in (0,\infty)$ exists. Then, for all $\xi \in [0,1]$,
$
\Cov(h(\bm Z_{1,\xi}), h(\bm Z_{2, \xi})) \ge 0
$
with strict inequality in a neighbourhood of $0$.
\end{lemma}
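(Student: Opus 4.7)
The plan is to establish a max-stable representation of $(\bm Z_{1,\xi}, \bm Z_{2,\xi})$ and then exploit it via a conditioning argument. For $\xi \in [0,1]$, formula \eqref{eq:Gxi-prod} reads $G_\xi(x, y) = G(x)^\xi G(y)^\xi G(x \wedge y)^{1-\xi}$. Let $A_1, A_2, B$ be independent real-valued random variables with CDFs $G^\xi, G^\xi, G^{1-\xi}$, respectively, and set $W_i := \max(A_i, B)$ for $i=1,2$. A direct calculation gives $\Prob(W_1 \leq x, W_2 \leq y) = G(x)^\xi G(y)^\xi G(x\wedge y)^{1-\xi} = G_\xi(x, y)$, and each $W_i$ has margin $G$, hence $W_i \stackrel{d}{=} \bm Z$. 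It therefore suffices to prove the claim for $(W_1, W_2)$ in place of $(\bm Z_{1,\xi}, \bm Z_{2,\xi})$.

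For non-negativity I would condition on $B$. Since $A_1, A_2, B$ are independent, $W_1 \perp W_2 \mid B$; with $g_\xi(b) := \Exp[h(\max(A_1, b))]$ one has $\Exp[h(W_i) \mid B] = g_\xi(B)$ for $i=1,2$ by symmetry, so the conditional covariance formula yields
\[
\Cov\bigl(h(W_1), h(W_2)\bigr) = \Exp\bigl\{\Cov\bigl(h(W_1), h(W_2) \mid B\bigr)\bigr\} + \Var\bigl(g_\xi(B)\bigr) = \Var\bigl(g_\xi(B)\bigr) \geq 0
\]
for every $\xi \in [0,1]$.

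For strict positivity near $\xi=0$, my strategy is to show continuity of the map $\xi \mapsto \Cov(h(W_{1,\xi}), h(W_{2,\xi}))$ at $\xi = 0$ with limiting value $\Var(h(\bm Z)) > 0$. Using a single $\mathrm{Unif}(0,1)$-driven coupling with $B_\xi := G^{-1}(U^{1/(1-\xi)})$ and $A_{i,\xi} := G^{-1}(V_i^{1/\xi})$ for independent uniforms $U, V_1, V_2$, one has $A_{i,\xi} \to \inf \mathrm{supp}(G)$ and $B_\xi \to G^{-1}(U)$ almost surely as $\xi \to 0^+$, so $W_{i,\xi} \to G^{-1}(U) \stackrel{d}{=} \bm Z$ a.s., while the marginal law of $W_{i,\xi}$ remains $G$ for every $\xi$. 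For continuous bounded $h$, dominated convergence yields $\Exp[h(W_{1,\xi}) h(W_{2,\xi})] \to \Exp[h(\bm Z)^2]$, and since the marginal means are constant in $\xi$, $\Cov(h(W_{1,\xi}), h(W_{2,\xi})) \to \Var(h(\bm Z))$. For a general measurable $h$ with $h(\bm Z) \in L^2(G)$, I would approximate $h$ in $L^2(G)$ by continuous bounded functions $h_n$ and use that, since $W_{i,\xi} \stackrel{d}{=} \bm Z$ for all $\xi$, bilinearity of covariance and Cauchy-Schwarz give the uniform-in-$\xi$ bound
\[
\bigl| \Cov\bigl(h(W_1), h(W_2)\bigr) - \Cov\bigl(h_n(W_1), h_n(W_2)\bigr) \bigr| \leq \|h - h_n\|_{L^2(G)} \bigl(\|h\|_{L^2(G)} + \|h_n\|_{L^2(G)}\bigr),
\]
which transfers the limit from the continuous to the general case. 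Continuity together with the strictly positive limit then enforces positivity on some neighborhood $[0, \delta)$.

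The main obstacle is precisely this last passage: the joint weak limit at $\xi = 0$ is the degenerate measure supported on the diagonal $\{(z, z) : z \in \R\}$, so convergence of $\Exp[h(W_1) h(W_2)]$ for merely measurable $h$ is not automatic from joint weak convergence; the $L^2(G)$-density argument, which crucially relies on the one-dimensional marginals of $W_{i,\xi}$ being independent of $\xi$, is what rescues the general case.
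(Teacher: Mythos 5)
Your proof is correct, and both halves take a genuinely different route from the paper's, even though you start from the same max-stable representation $(\bm Z_{1,\xi},\bm Z_{2,\xi})\stackrel{d}{=}(\max(A_1,B),\max(A_2,B))$ with $A_1,A_2\sim G^\xi$ i.i.d.\ independent of $B\sim G^{1-\xi}$. For non-negativity, the paper centres $h$, expands $\Exp[h(U\vee V)h(W\vee V)]$ into three terms $2I_1+I_2+I_3$, and shows $I_1,I_2\ge 0$ by integration by parts against $H(x)=\int_{-\infty}^x h\,\mathrm{d}G$; your law-of-total-covariance argument, which reduces everything to $\Cov(h(W_1),h(W_2))=\Var\bigl(\Exp[h(\max(A_1,B))\mid B]\bigr)\ge 0$ via conditional independence given the shared component $B$, is shorter, more conceptual, and does not use the existence of a Lebesgue density of $G$. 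For positivity near $\xi=0$, the paper simply discards the non-negative terms $I_1,I_2$ and computes $\liminf_{\xi\downarrow 0}I_3=\liminf_{\xi\downarrow 0}(1-\xi)\Exp[h^2(\bm Z)G^\xi(\bm Z)]=\Var(h(\bm Z))>0$ by dominated (or monotone) convergence -- a one-line lower bound; you instead prove full continuity of $\xi\mapsto\Cov(h(W_{1,\xi}),h(W_{2,\xi}))$ at $0$ via an almost-sure coupling plus an $L^2(G)$-approximation by bounded continuous functions, correctly flagging and resolving the obstruction that the joint limit law is degenerate on the diagonal so that weak convergence alone does not suffice for measurable $h$. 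Your route costs more work but yields the sharper statement that the covariance actually converges to $\Var(h(\bm Z))$ as $\xi\downarrow 0$, whereas the paper's argument only delivers the $\liminf$ bound needed for the lemma. Both arguments are complete; the only minor points worth making explicit in a write-up are that $g_\xi(b)=\Exp[h(\max(A_1,b))]$ is well defined for $P_B$-almost every $b$ (by Fubini, since $h(W_1)\in L^2$) and that $G^{-1}$ is continuous on $(0,1)$ for the GEV, which justifies the almost-sure convergence in your coupling.
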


\begin{proof}
By our assumption $d=1$, the CDF $G$ from (\refstar{eq:firstorder}) corresponds to the $\mathrm{GEV}(\gamma)$- distribution, which has a Lebesgue-density that we denote by $g$. For $\xi \in (0,1)$, we may 
employ the following stochastic construction for $(\bm Z_{1,\xi}, \bm Z_{2,\xi})^\top$: let $U, W$ be i.i.d.\ with CDF $G^\xi$ and independent of $V,$ the latter having the CDF $G^{1-\xi}.$ We then have $\mc L(\bm Z_{1,\xi}, \bm Z_{2,\xi}) = \mc L( U \vee V,W \vee V).$ Without loss of generality assume $\Exp[h(\bm Z_{1,\xi})] = 0$ and note that 
\begin{align}\label{eq:proof_sliding-covpos}
    \Cov(h(\bm Z_{1,\xi}), h(\bm Z_{2, \xi}))
    \nonumber&= \Exp[h( U \vee V) h(W \vee V)] \\
    \nonumber& = 2 \Exp[h(U) h(V) G^\xi(V) \bm 1(V < U)] \\ 
    & \hspace{1cm}+ \Exp[h(U) h(W) G^{1-\xi}(U \wedge W)] + \Exp[h^2(V) G^{2\xi}(V)] 
    \equiv 2I_1 + I_2 + I_3.
\end{align}
Let $H\colon \bar{\R} \to \R, H(x) = \int_{-\infty}^x h(z) g(z) \diff z$ and note that $H(\infty) = 0 = H(-\infty)$, where we made use of $\Exp[h(\bm Z_{1,\xi})]=\int_\R h(z)g(z)\diff z$. Using integration by parts we obtain
\[
    I_1 = (1-\xi)\xi \int_{-\infty}^\infty G^{\xi-1}(x) H^\prime(x) H(x) \diff x 
    = \frac{\xi}{2} (1-\xi)^2 \int_{-\infty}^\infty G^{\xi -2}(x) g(x) H^2(x) \diff x \geq 0.
\]
Similarly, we have $I_2 = \xi^2(1-\xi)\int_{-\infty}^\infty G^{\xi -2}(x) g(x)H^2(x) \diff x \geq 0.$ In view of \eqref{eq:proof_sliding-covpos} and $I_3 \geq 0$ this proves the first assertion. 

For the second assertion note that $I_3 = (1-\xi) \Exp[h^2(\bm Z) G^\xi(\bm Z)] \leq \Var(h(\bm Z)) < \infty$ and $I_j \geq 0$ for $j=1,2,3$. By dominated convergence and \eqref{eq:proof_sliding-covpos} we then have
\[
    \liminf_{\xi \downarrow 0} \Cov(h(\bm W_{1,\xi}), h(\bm W_{2, \xi})) 
    \geq \liminf_{\xi \downarrow 0} I_3 = \Var(h(\bm Z)) > 0,
\]
which lets us conclude.
\end{proof}

%%%%%%%%%%%%%%%%%%%%%%%%%%%%%%%%%%%%%%%%%%%%%%%%%%%%%%%%%%%%%%%%%%%%%%%%%%%%%%%%%%
\subsection{A general result on bootstrapping the Fréchet independence MLE}
\label{sec:general-bootmle}

Throughout this section, let $\mathcal X_n = (X_{n,1}, \dots, X_{n,\omega_n})$ denote a sequence of random vectors in $(0, \infty)^{\omega_n}$ with continuous cumulative distribution functions $F_{n,1},  \dots, F_{n,\omega_n}$, where $\omega_n \to \infty$ is a sequence of integers. Throughout, we assume that asymptotically not all observations are tied, that is, 
\begin{align} \label{eq:noties}
\lim_{n\to \infty} \Prob(X_{n,1} = \dots = X_{n,\omega_n}) = 0.
\end{align}
For some $\alpha_0>0$ and $\sigma_n \to \infty$, proximity of the $F_{n,1},  \dots, F_{n,\omega_n}$ to the Fréchet-distribution $P_{\alpha_0, \sigma_n}$ will be controlled by convergence conditions on empirical means of $X_{n,i}/\sigma_n$: 
for $0<\alpha_- < \alpha_0 < \alpha_+<\infty$, let $\mathcal F(\alpha_-, \alpha_+)$ denote the set of functions containing $x\mapsto \log x, x \mapsto x^{-\alpha}, x \mapsto x^{-\alpha} \log x$ and $x \mapsto x^{-\alpha} \log^2x$ for all $\alpha \in (\alpha_-, \alpha_+)$. Moreover, recall 
$f_1(x) = x^{-\alpha_0}$, $f_2(x) = x^{-\alpha_0} \log x$ and $f_3(x) =\log x$ from \eqref{eq:Fref_i},
considered as functions on $(0,\infty)$, and let $\mathcal H=\{f_1, f_2, f_3\}$. For a real-valued function $f$ defined on $(0,\infty)$ such that the following integrals/expectations exist, let
\[
\mathbb P_n f = \frac1{\omega_n} \sum_{i=1}^{\omega_n} f(X_{n,i}/\sigma_n),
\quad 
P_n f  = \frac1{\omega_n} \sum_{i=1}^{\omega_n} \Exp[f(X_{n,i}/\sigma_n)],
\quad P_{\alpha_0,1}f := \int f(x) \diff P_{\alpha_0,1}(x).
\]

\begin{condition}\label{cond:fre}
\begin{compactenum}[(i)]
\item 
There exists  $0<\alpha_- < \alpha_0 < \alpha_+<\infty$ such that $\mathbb P_n f  \rightsquigarrow P_{\alpha_0,1} f$ for all $f \in \mathcal F(\alpha_-, \alpha_+)$.

\item 
There exists $0<v_n \to \infty$ such that $\bar{\mathbb G}_n (f_1, f_2, f_3)^\top   \weak N_3(\bm 0,  \Sigma_G)$, where
$\bar{\mathbb G}_n f = v_n(\mathbb P_n f - P_n f)$ and where $\Sigma_{G} \in \R^{3 \times 3}$ is positive semidefinite.

\item 
With $v_n$ from (ii), the limit  $\bm B_G := \lim_{n \to \infty}
B_n (f_1, f_2, f_3)^\top$ exists, where $B_n f =  v_n(P_n f - P_{\alpha_0,1} f)$.
\end{compactenum}
\end{condition}

Recall the log-likelihood function $\ell_{\bm \theta}$ of the Fréchet distribution from \eqref{eq:frechetloglik}.

\begin{theorem}[\citealp{BucSeg18a}]
\label{theo:asy-mle-frechet}
(i) Suppose that \eqref{eq:noties} holds and that Condition~\ref{cond:fre}(i) is met.  Then, on the complement of the event $\{X_{n,1}=\dots=X_{n,\omega_n}\}$, the independence Fréchet log-likelihood ${\bm \theta} \mapsto 
\sum_{i=1}^{\omega_n}  \ell_{\bm \theta} (X_{n,i})$ has a unit maximizer $\hat {\bm \theta}_n=(\hat \alpha_n, \hat \sigma_n)$, and that maximizer is consistent in the sense that $(\hat \alpha_n, \hat \sigma_n/\sigma_n) = (\alpha_0,1) +o_\Prob(1)$ as $n\to\infty$.

\smallskip
\noindent
(ii) If, additionally, Condition~\ref{cond:fre}(ii)-(iii) is met, then
\[
v_n \begin{pmatrix}
\hat\alpha_n - \alpha_0 \\ 
\hat \sigma_n /\sigma_n - 1
\end{pmatrix}
=
M(\alpha_0)(\bar \Gb_n + B_n)(f_1, f_2, f_3)^\top + o_\Prob(1)
\weak \Nc_2(M(\alpha_0)\bm B_G, M(\alpha_0) \Sigma_G M(\alpha_0)^\top)
\]
as $n\to\infty$, where 
\[
M(\alpha_0) = \frac{6}{\pi^2} \begin{pmatrix}
\alpha_0^2 & \alpha_0(1-\gamma) & - \alpha_0^2 \\
\gamma-1 & -(\Gamma''(2)+1)/\alpha_0 & 1-\gamma \end{pmatrix}
\]
with
$\Gamma(z) = \int_0^\infty t^{z-1} {e}^{-t} \diff t$ the Euler Gamma function and $\gamma = 0.5772\ldots$ the Euler-Mascheroni constant. 
\end{theorem}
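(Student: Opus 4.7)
The plan is to reparametrize in terms of $\tau = \sigma/\sigma_n$ so as to absorb the diverging scale: since $\ell_{(\alpha,\sigma_n\tau)}(x) = \ell_{(\alpha,\tau)}(x/\sigma_n) - \log\sigma_n$, maximizing $\sum_i \ell_{\bm \theta}(X_{n,i})$ in $\bm \theta=(\alpha,\sigma)$ is equivalent to maximizing the rescaled criterion $\tilde L_n(\alpha,\tau) := \mathbb{P}_n \ell_{(\alpha,\tau)}$ in $(\alpha,\tau)\in(0,\infty)^2$. Every first- or second-order partial derivative of $\ell_{(\alpha,\tau)}$ is an affine combination of functions from $\mathcal F(\alpha_-,\alpha_+)$, so Condition~\ref{cond:fre}(i) delivers uniform convergence of $\tilde L_n$ and its first two derivatives to their $P_{\alpha_0,1}$-expectations on any compact neighbourhood of $(\alpha_0,1)$.

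For Part (i), I profile out $\tau$ via the explicit scale solution $\tau_n(\alpha) = (\mathbb{P}_n[x^{-\alpha}])^{1/\alpha}$ of the $\tau$-score equation, which is well defined and positive on the complement of $\{X_{n,1}=\dots=X_{n,\omega_n}\}$. Substitution reduces the problem to a single equation in $\alpha$,
\[
H_n(\alpha) := \tfrac{1}{\alpha} + \frac{\mathbb{P}_n[x^{-\alpha}\log x]}{\mathbb{P}_n[x^{-\alpha}]} - \mathbb{P}_n[\log x] = 0.
\]
A direct Fréchet computation, using $\Exp[X^{-\alpha_0}]=1$, $\Exp[X^{-\alpha_0}\log X]=-(1-\gamma)/\alpha_0$ and $\Exp[\log X]=\gamma/\alpha_0$, shows that the population analogue $H(\alpha)$ satisfies $H(\alpha_0)=0$ with $H'(\alpha_0)<0$, which encodes a positive Fisher information. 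Uniform convergence of $H_n$ and $H_n'$ to $H$ and $H'$ on any compact subinterval of $(\alpha_-,\alpha_+)$ then yields, by a standard perturbation argument, a unique zero $\hat\alpha_n \pto \alpha_0$ with probability tending to one, and setting $\hat\sigma_n := \sigma_n \tau_n(\hat\alpha_n)$ gives $\hat\sigma_n/\sigma_n \pto 1$.

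For Part (ii), I Taylor-expand the score around $(\alpha_0,1)$: from $\nabla\tilde L_n(\hat\alpha_n,\hat\tau_n)=\bm 0$ and the mean-value theorem,
\[
(\hat\alpha_n-\alpha_0,\,\hat\tau_n-1)^\top = -[\nabla^2\tilde L_n(\bar{\bm\theta}_n)]^{-1}\,\nabla\tilde L_n(\alpha_0,1)
\]
for an intermediate $\bar{\bm\theta}_n\pto(\alpha_0,1)$. A direct computation of $\nabla\ell_{(\alpha_0,1)}$ produces a $2\times 3$ matrix $A$ (with rows determined by $\partial_\alpha \ell = 1/\alpha_0 + f_2 - f_3$ and $\partial_\tau \ell = \alpha_0(1-f_1)$ at the truth) such that $\nabla\tilde L_n(\alpha_0,1) = A\mathbb{P}_n(f_1,f_2,f_3)^\top + (1/\alpha_0,\alpha_0)^\top$; the Fréchet moment identities above give $AP_{\alpha_0,1}(f_1,f_2,f_3)^\top = -(1/\alpha_0,\alpha_0)^\top$, so $\nabla\tilde L_n(\alpha_0,1) = A(\mathbb{P}_n-P_{\alpha_0,1})(f_1,f_2,f_3)^\top$ and hence $v_n \nabla\tilde L_n(\alpha_0,1) = A(\bar\Gb_n + B_n)(f_1,f_2,f_3)^\top$. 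Conditions~\ref{cond:fre}(ii)--(iii) then yield weak convergence of the score to $\Nc_2(A\bm B_G, A\Sigma_G A^\top)$. Uniform convergence of the Hessian entries (Condition~\ref{cond:fre}(i) applied to $x^{-\alpha}$, $x^{-\alpha}\log x$ and $x^{-\alpha}\log^2 x$) gives $\nabla^2\tilde L_n(\bar{\bm\theta}_n) \pto -I(\alpha_0)$ for the Fisher information
\[
I(\alpha_0) = \begin{pmatrix} (1+\Gamma''(2))/\alpha_0^2 & 1-\gamma \\ 1-\gamma & \alpha_0^2 \end{pmatrix}
\]
of determinant $\pi^2/6$, using $\Gamma''(2)=(1-\gamma)^2+\pi^2/6-1$. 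A mechanical check identifies $M(\alpha_0) = I(\alpha_0)^{-1}A$, and Slutsky's lemma together with the continuous mapping theorem deliver the claimed linear expansion and the stated weak limit.

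The hard part will be Part (i): pointwise convergence $H_n(\alpha)\pto H(\alpha)$ is immediate, but one must exclude spurious zeros of $H_n$ outside a neighbourhood of $\alpha_0$ and secure an argmax on the unbounded parameter set $(0,\infty)^2$. Local uniqueness of the zero follows from strict monotonicity of $H_n$ near $\alpha_0$ (a consequence of $H_n' \to H'<0$ uniformly), while the boundary behaviour of the profile log-likelihood as $\alpha\downarrow 0$ and $\alpha\uparrow\infty$ is controlled on the no-ties event via~\eqref{eq:noties} and elementary bounds on $\mathbb{P}_n[x^{-\alpha}]$. Once Part (i) is in hand, Part (ii) reduces to the linear-algebraic bookkeeping of identifying $A$, $I(\alpha_0)$ and their product.
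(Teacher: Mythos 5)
First, a structural point: the paper does not prove this statement at all — it is imported from \cite{BucSeg18a} (Theorems 2.3, 2.5 and Addendum 2.6 there), so there is no internal proof to compare against. Your reconstruction follows the same two-step architecture as that source (profile out the scale, then linearize the score at $(\alpha_0,1)$), and most of your computations are correct: the profile score $H_n$, the moment identities $P_{\alpha_0,1}f_1=1$, $P_{\alpha_0,1}f_2=-(1-\gamma)/\alpha_0$, $P_{\alpha_0,1}f_3=\gamma/\alpha_0$, the centering $AP_{\alpha_0,1}(f_1,f_2,f_3)^\top=-(1/\alpha_0,\alpha_0)^\top$, and the Fisher information with determinant $\pi^2/6$ all check out. (Minor slip: the profiled scale is $\tau_n(\alpha)=(\mathbb{P}_n[x^{-\alpha}])^{-1/\alpha}$, with a negative exponent.)

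Two steps need more than the gesture you give them. (1) In Part (i), the global existence and uniqueness claim cannot be reached by your route: Condition~\ref{cond:fre}(i) only controls $\mathbb{P}_n f$ for exponents $\alpha\in(\alpha_-,\alpha_+)$, so neither ``pointwise convergence of $H_n(\alpha)$'' nor any uniform-convergence argument is available outside that interval, and elementary bounds on $\mathbb{P}_n[x^{-\alpha}]$ as $\alpha\downarrow 0$ or $\alpha\uparrow\infty$ do not exclude zeros of $H_n$ on $(0,\alpha_-]\cup[\alpha_+,\infty)$. The missing ingredient is deterministic rather than asymptotic: $\alpha\mapsto\log\mathbb{P}_n[x^{-\alpha}]$ is convex by Cauchy--Schwarz, strictly so on the no-ties event, whence $\alpha\mapsto\mathbb{P}_n[x^{-\alpha}\log x]/\mathbb{P}_n[x^{-\alpha}]$ is strictly decreasing and $H_n$ is strictly decreasing on all of $(0,\infty)$ with a sign change; this yields a unique zero (hence a unique maximizer) for every sample in the no-ties event, after which consistency does follow from your local argument on $(\alpha_-,\alpha_+)$. (2) Your ``mechanical check'' $M(\alpha_0)=I(\alpha_0)^{-1}A$ does not close as written: with $A$ built from $\partial_\alpha\ell\doteq f_2-f_3$ and $\partial_\tau\ell\doteq-\alpha_0 f_1$ and the ordering of \eqref{eq:Fref_i}, the product $I(\alpha_0)^{-1}A$ equals the displayed $M(\alpha_0)$ with its first two columns interchanged; one can confirm that only the permuted version reproduces the classical iid asymptotic variance $6\alpha_0^2/\pi^2$ for $\hat\alpha_n$. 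This traces to an ordering mismatch between \eqref{eq:Fref_i} and the matrix $M(\alpha_0)$ copied from \cite{BucSeg18a}, but you should not assert the identity verbatim: state the matrix you actually obtain and reconcile the conventions, since otherwise the asserted expansion $M(\alpha_0)(\bar\Gb_n+B_n)(f_1,f_2,f_3)^\top$ is not what your Taylor argument produces.
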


Now, conditional on $\mathcal X_n = (X_{n,1}, \dots, X_{n,\omega_n})$, let $\mathcal X_n^*=(X_{n,1}^*, \dots, X_{n,\omega_n}^*)$ denote a bootstrap sample of $\mathcal X_n$; formally, $\mathcal X_n^*$ is assumed to be a measurable function of both $\mathcal X_n$ and of some additional independent random element $\mathcal W_n$ taking values in some measurable space. For the subsequent consistency statements, the bootstrap sample is assumed to satisfy the not-all-tied condition
\begin{align} \label{eq:noties2}
\lim_{n\to \infty} \Prob(X_{n,1}^* = \dots = X_{n,\omega_n}^*) = 0
\end{align}
or, equivalently, $\Prob\big(X_{n,1}^* = \dots = X_{n,\omega_n}^* \mid \Xc_n \big) = o_\Prob(1)$ as $n\to\infty$.
The bootstrap scheme is assumed to be regular in the sense that the conditional distribution of certain rescaled arithmetic means is close to the distribution of respective arithmetic means of the original sample. For a real-valued function $f$ on $(0,\infty)$, define $\hat \Prob_n^* f = \frac1{\omega_n} \sum_{i=1}^{\omega_n} f(X_{n,i}^*/\sigma_n)$ and $\hat \Gb_n^* f = v_n (\hat \Prob_n^* f - \Prob_n f)$.

\begin{theorem}
\label{theo:boot-mle-frechet}
Suppose that \eqref{eq:noties}, \eqref{eq:noties2} and Condition~\ref{cond:fre} is met, that $\hat \Prob_n^* f = P_{\alpha_0, 1} f + o_\Prob(1)$ for all $f \in \mathcal F(\alpha_-, \alpha_+)$ (or, equivalently, $\Prob(|\hat \Prob_n^* f -  P_{\alpha_0, 1}|>\eps \mid \Xc_n) = o_\Prob(1)$ for all $\eps>0$ and all $f \in \mathcal F(\alpha_-, \alpha_+)$) 
and that 
\begin{align}
\label{eq:fre-cons}
d_K\big( \Lc(\hat \Gb_n^\ast (f_1, f_2, f_3)^\top \mid \Xc_n), \Nc_3(\bm 0, \Sigma_G) \big) = o_\Prob(1). 
\end{align}
Then, on the complement of the event $\{X_{n,1}^*= \dots= X_{n,\omega_n}^*\}$, the independence Fréchet-log-likelihood ${\bm \theta} \mapsto \sum_{i=1}^{\omega_n} \ell_{\bm \theta}( X_{n,i}^*)$ has a unit maximizer ${\bm \theta}_n^*=(\hat \alpha_n^*, \hat \sigma_n^*)$, and that maximizer satisfies
\begin{align}
\label{eq:boot-mle-expansion}
v_n 
\begin{pmatrix}
\hat \alpha_n^\ast  - \hat \alpha_n \\
\hat \sigma_n^*/\hat \sigma_n -1
\end{pmatrix}
= M(\alpha_0) \hat \Gb_n^* (f_1,f_2,f_3)^\top + o_\Prob(1).
\end{align}
As a consequence, if additionally $\bm B_G=0$, we have bootstrap consistency in the following sense
\begin{align}
\label{eq:boot-mle-cons}
        d_K\left[  \mc L \left(v_n 
        \begin{pmatrix}
            \hat \alpha_n^\ast  - \hat \alpha_n \\
            \hat \sigma_n^*/\hat \sigma_n -1
        \end{pmatrix}
        \, \Big| \,  \mathcal X_n \right) , 
        \mc L \left(v_n 
        \begin{pmatrix}
            \hat \alpha_n  - \alpha_0 \\
            \hat \sigma_n/\sigma_n -1
        \end{pmatrix}
        \right) \right]
        = o_{\Prob}(1).
\end{align}
\end{theorem}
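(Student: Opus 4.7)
The plan is to mirror the proof of Theorem~\ref{theo:asy-mle-frechet} conditionally on the data, with the unconditional to conditional transfer handled by the standard subsequence principle: a sequence $Y_n = o_\Prob(1)$ has the property that every subsequence has a further subsequence along which $Y_n \to 0$ almost surely, so it suffices to verify deterministic assertions along such subsequences. By assumption $\hat \Prob_n^* f = P_{\alpha_0,1} f + o_\Prob(1)$ for $f\in\mathcal F(\alpha_-,\alpha_+)$, so on a conditional subsequence the bootstrap sample satisfies the analog of Condition~\ref{cond:fre}(i) almost surely; similarly \eqref{eq:noties2} gives $\Prob(X_{n,1}^*=\dots=X_{n,\omega_n}^*\mid \mathcal X_n) = o_\Prob(1)$, so we may work on the complement of the tied event with conditional probability tending to one.

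First I would apply Theorem~\ref{theo:asy-mle-frechet}(i) conditionally to obtain, with conditional probability tending to one, existence and uniqueness of $(\hat\alpha_n^*,\hat\sigma_n^*)$, together with the conditional consistency $(\hat\alpha_n^*,\hat\sigma_n^*/\sigma_n) = (\alpha_0,1) + o_\Prob(1)$. This is essentially a verbatim application once the conditional version of Condition~\ref{cond:fre}(i) is verified along the chosen subsequence.

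Next, I would derive the linear expansion by working with the first-order conditions of the independence Fréchet log-likelihood evaluated in the bootstrap sample. A Taylor expansion of the score around $(\alpha_0,\sigma_n)$, combined with the uniform control in a shrinking neighborhood (as in the proof of Theorem~3.2 in \cite{BucSeg18a}, page~1116, valid conditionally by the same subsequence trick), yields that the observed information matrix converges to the Fisher information $M(\alpha_0)^{-1}$ (up to the usual $6/\pi^2$ normalization), while the rescaled score equals $v_n(\hat\Prob_n^* - P_{\alpha_0,1})(f_1,f_2,f_3)^\top + o_\Prob(1)$. Decomposing
\[
v_n(\hat\Prob_n^* - P_{\alpha_0,1}) = v_n(\hat\Prob_n^* - \Prob_n) + v_n(\Prob_n - P_n) + v_n(P_n - P_{\alpha_0,1}) = \hat\Gb_n^* + \bar\Gb_n + B_n,
\]
inverting the information, and subtracting the analogous expansion from Theorem~\ref{theo:asy-mle-frechet}(ii) for $(\hat\alpha_n,\hat\sigma_n)$ cancels the $\bar\Gb_n + B_n$ terms and produces
\[
v_n\begin{pmatrix} \hat\alpha_n^* - \hat\alpha_n \\ (\hat\sigma_n^* - \hat\sigma_n)/\sigma_n \end{pmatrix}
= M(\alpha_0)\,\hat\Gb_n^*(f_1,f_2,f_3)^\top + o_\Prob(1).
\]
Since $\sigma_n/\hat\sigma_n = 1 + o_\Prob(1)$, replacing $\sigma_n$ by $\hat\sigma_n$ in the second row only costs an $o_\Prob(1)$, which gives \eqref{eq:boot-mle-expansion}.

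For \eqref{eq:boot-mle-cons}, assumption \eqref{eq:fre-cons} and the continuous mapping property of the linear operator $M(\alpha_0)$ imply that the conditional law of the right-hand side of \eqref{eq:boot-mle-expansion} is close in Kolmogorov distance to $\Nc_2(\bm 0, M(\alpha_0)\Sigma_G M(\alpha_0)^\top)$; when $\bm B_G = 0$, Theorem~\ref{theo:asy-mle-frechet}(ii) identifies this same normal as the unconditional weak limit of the target quantity. Absorbing the $o_\Prob(1)$ remainder via Lemma~4.2 of \cite{BucKoj2019note} and the triangle inequality for $d_K$ finishes the argument. The main obstacle I anticipate is Step~2: justifying the Taylor expansion conditionally requires a uniform (in $\bm\theta$) conditional law of large numbers for the observed information, which is not given by the assumed pointwise convergence $\hat \Prob_n^* f = P_{\alpha_0,1} f + o_\Prob(1)$ alone; this is resolved by invoking the envelope-type uniform control used in the proof of Theorem~3.2 in \cite{BucSeg18a}, transferred to the bootstrap side through the subsequence principle combined with the continuity and monotonicity of $\alpha \mapsto x^{-\alpha}$ on compact parameter sets.
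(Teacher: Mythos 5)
Your proposal reaches the right conclusions, but it takes a genuinely different — and considerably more delicate — route than the paper. You condition on the data throughout, transfer Condition~\ref{cond:fre}(i) to the bootstrap sample via the subsequence principle, and then rerun the M-estimation argument (consistency, conditional Taylor expansion of the score, inversion of the observed information) conditionally. The paper instead never conditions until the very last step: it observes that the assumed conditional consistency \eqref{eq:fre-cons} implies, by Lemma~2.3 of \cite{BucKoj2019note}, the \emph{unconditional} weak convergence $\hat\Gb_n^*(f_1,f_2,f_3)^\top \wconv \Nc_3(\bm 0,\Sigma_G)$, hence $\Gb_n^\circ := v_n(\hat\Prob_n^* - P_{\alpha_0,1}) $ applied to $(f_1,f_2,f_3)^\top$ converges unconditionally to $\Nc_3(\bm B_G,\Sigma_G)$. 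Together with the assumed unconditional convergences $\hat\Prob_n^*f \pto P_{\alpha_0,1}f$ and \eqref{eq:noties2}, this means the bootstrap sample satisfies verbatim the hypotheses of Theorems~2.3, 2.5 and Addendum~2.6 of \cite{BucSeg18a}, which are then applied \emph{unconditionally} to $\Xc_n^*$ to get existence, uniqueness and the expansion of $v_n(\hat\alpha_n^*-\alpha_0,\hat\sigma_n^*/\sigma_n-1)^\top$ in terms of $M(\alpha_0)\Gb_n^\circ$; subtracting the analogous expansion for $(\hat\alpha_n,\hat\sigma_n)$ gives \eqref{eq:boot-mle-expansion}. The conditional statement \eqref{eq:boot-mle-cons} is then recovered by the two-independent-bootstrap-samples device: joint unconditional weak convergence of $(\hat\Gb_n^*,\hat\Gb_n^{\#})$ to a product normal, followed by the reverse direction of Lemma~2.3 of \cite{BucKoj2019note} and the triangle inequality. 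What the paper's route buys is that your self-identified ``main obstacle'' — a conditional uniform law of large numbers for the observed information — simply never arises, because no conditional Taylor expansion is ever performed. What your route would still need to nail down, beyond that obstacle, is (a) the subsequence principle applied simultaneously to the uncountable family $\mathcal F(\alpha_-,\alpha_+)$ (your appeal to monotonicity and continuity in $\alpha$ is the right fix, but it must be carried out, reducing to a countable dense set of exponents plus an envelope), and (b) the passage from the conditional weak convergence of $M(\alpha_0)\hat\Gb_n^*(f_1,f_2,f_3)^\top$ plus an unconditional $o_\Prob(1)$ remainder to the Kolmogorov-distance statement, which requires the limit $\Nc_2(\bm 0, M(\alpha_0)\Sigma_G M(\alpha_0)^\top)$ to have a continuous CDF and is most cleanly handled exactly by the Lemma~2.3 equivalence you gesture at (your citation of Lemma~4.2 of \cite{BucKoj2019note} for this step is a misattribution; that lemma concerns quantiles and is used for Corollary~\ref{cor:confint-fremle}). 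In short: your plan is workable but re-proves conditionally what the paper obtains for free unconditionally, and the two genuinely hard points of your plan are precisely the ones the paper's argument is engineered to avoid.
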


Since it concerns empirical means only, the condition in \eqref{eq:fre-cons} is typically a standard result for the bootstrap; see for instance Section 10 and 11 in \cite{Kos08}.

\begin{proof}[Proof of Theorem~\ref{theo:boot-mle-frechet}]
First, by Theorem~\ref{theo:asy-mle-frechet} (which is Theorem 2.3, Theorem 2.5 and Addendum 2.6 in \citealp{BucSeg18a} applied to the sample $\Xc_n$), we have 
\begin{equation}\label{eq:proof_genFreBst-Expa1}
        v_n \begin{pmatrix}
        \hat\alpha_n - \alpha_0 \\ 
        \hat \sigma_n /\sigma_n - 1
        \end{pmatrix} 
        = M(\alpha_0) ( \bar \Gb_n + B_n) (f_1,f_2,f_3)^\top + o_\Prob(1) 
        \wconv M(\alpha_0) \mc N_3(\bm {B}_G,\Sigma_G).
\end{equation}
Next, in view of the fact that \eqref{eq:fre-cons} implies
$\hat \Gb_n^*(f_1, f_2, f_3)^\top \wconv \mc N_3(\bm 0, \Sigma_G)$ (unconditionally) by Lemma 2.3 in \cite{BucKoj2019note},  
we have 
\[
    \Gb_n^\circ(f_1,f_2,f_3)^\top := v_n \big( \hat \Prob_n^\ast(f_1,f_2,f_3)^\top - P_{\alpha_0,1}(f_1,f_2,f_3)^\top\big) \wconv \mc N_3(\bm{B}_G, \Sigma_G).
\]
Hence, we may apply Theorem 2.3, Theorem 2.5 and Addendum 2.6 in \cite{BucSeg18a} to the sample $\Xc_n^*$ (unconditionally) to obtain that
\begin{equation}\label{eq:proof_genFreBst-Expa3}
        v_n \begin{pmatrix}
            \hat \alpha_n^\ast  - \alpha_0 \\
            \hat \sigma_n^\ast/\sigma_n - 1
        \end{pmatrix}
        = M(\alpha_0) {\mb G}_n^\circ(f_1,f_2,f_3)^\top + o_\Prob(1) 
\end{equation}
As a consequence, by \eqref{eq:proof_genFreBst-Expa1}, \eqref{eq:proof_genFreBst-Expa3} and $\hat \sigma_n/\sigma_n = 1+ o_\Prob(1)$, we have
\begin{align}
\label{eq:proof_genFreBst-Expa4}
        v_n \begin{pmatrix}
            \hat \alpha_n^\ast  - \hat \alpha_n \\
            \hat \sigma_n^\ast/\hat \sigma_n - 1
        \end{pmatrix}
        & \nonumber = v_n \begin{pmatrix}
            \hat \alpha_n^\ast  - \alpha_0 - (\hat \alpha_n -\alpha_0) \\
            \sigma_n/\hat \sigma_n\big( \sigma_n^\ast/ \sigma_n - 1 - (\hat \sigma_n / \sigma_n - 1) \big)
        \end{pmatrix} \\
        & \nonumber = \{1+o_\Prob(1)\}  \, \big(M(\alpha_0) (\mb G_n^\circ-\Gb_n - B_n)(f_1,f_2,f_3)^\top+o_\Prob(1) \big) \\
        & = M(\alpha_0) \hat\Gb_n^\ast (f_1,f_2,f_3)^\top + o_\Prob(1),
\end{align}
which is \eqref{eq:boot-mle-expansion}.

Finally, for the proof of \eqref{eq:boot-mle-cons}, let $\Xc_n^{\#}$ denote a second bootstrap sample, generated in the same way as $\Xc_n^*$ and independent of $\Xc_n^*$, conditionally on $\Xc_n$. Denote the respective estimators and empirical measures/processes by $\hat \alpha_n^{\#}, \hat \Prob_n^{\#}$ etc. Then, by the expansions in 
\eqref{eq:proof_genFreBst-Expa4},
 \[
    v_n \begin{pmatrix}
            \hat \alpha_n^{\ast} - \hat \alpha_n \\
            \hat \sigma_n^{\ast} / \hat \sigma_n -1 \\
            \hat \alpha_n^{\#} - \hat \alpha_n \\
            \hat \sigma_n^{\#} / \hat \sigma_n -1 \\
    \end{pmatrix}
    =
    \begin{pmatrix}
        M(\alpha_0) (\hat \Gb_n^*)(f_1, f_2, f_3)^\top \\
        M(\alpha_0) (\hat \Gb_n^{\#})(f_1, f_2, f_3)^\top
    \end{pmatrix}
    + o_\Prob(1).
\]
Equation~\eqref{eq:fre-cons} and Lemma 2.3 in \cite{BucKoj2019note} implies that the dominating term on the right-hand side converges weakly (unconditionally) to $\Nc_2(\bm 0, M(\alpha_0)\Sigma_G M(\alpha_0)^\top)^{\otimes 2}$, which by another reverse application of that lemma implies 
\begin{align*}
        d_K\left[  \mc L \left(v_n 
        \begin{pmatrix}
            \hat \alpha_n^\ast  - \hat \alpha_n \\
            \hat \sigma_n^*/\hat \sigma_n -1
        \end{pmatrix}
        \, \Big| \,  \mathcal X_n \right) , 
        \Nc_2(\bm 0, M(\alpha_0)\Sigma_G M(\alpha_0)^\top) \right]
        = o_{\Prob}(1).
\end{align*}
The assertion then follows from the triangular inequality and \eqref{eq:proof_genFreBst-Expa1}, noting that $\bm B_G=\bm 0$ by assumption.
\end{proof}

%%%%%%%%%%%%%%%%%%%%%%%%%%%%%%%%%%%%%%%%%%%%%%%%%%%%%%%%%%%%%%%%%%%%%%%%%%%%%%%%%%
\subsection{Auxiliary results on bootstrap consistency}
\label{sec:aux-boot}

\begin{lemma}\label{lem:ZtoM}
    Let $\bm S_n, \bm T_n \in \R^q$ with $\bm S_n \wconv \bm S, \bm T_n \wconv \bm S \in \R^q.$ Furthermore, let $\Lc(\bm S)$ be absolutely continuous with respect to the Lebesgue measure on $\R^q.$ We then have 
    \[
        \lim_{n \to \infty} d_K \big[ \mc L(\bm A_n \bm S_n ), \mc L (\bm A_n \bm T_n ) \big] = 0,
    \]
    for any sequence $(\bm A_n)_n \subset \R^{p \times q}$ of matrices.
\end{lemma}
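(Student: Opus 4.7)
The plan is a compactification-plus-Portmanteau argument. By the triangle inequality it suffices to prove $d_K(\mathcal L(\bm A_n \bm S_n), \mathcal L(\bm A_n \bm S)) \to 0$, and, symmetrically, the same with $\bm T_n$ in place of $\bm S_n$. Since the orthant event $\{\bm y \le \bm x\}$ pulled back through $\bm A_n$ is the intersection of at most $p$ closed half-spaces in $\R^q$, the task further reduces to establishing
\[
\sup_{C \in \mathcal C}\big| \Prob(\bm S_n \in C) - \Prob(\bm S \in C) \big| \longrightarrow 0,
\]
where $\mathcal C$ is the collection of all intersections of at most $p$ closed half-spaces of $\R^q$.

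I would argue by contradiction. Suppose there exist $\eps>0$, a subsequence $(n_k)$, and $C_k \in \mathcal C$ with $|\Prob(\bm S_{n_k}\in C_k) - \Prob(\bm S \in C_k)| \ge \eps$. Write $C_k = \bigcap_{i=1}^p \{\bm s : \bm a_{k,i}^\top \bm s \le c_{k,i}\}$ with every nonzero $\bm a_{k,i}$ normalised to the unit sphere, so that the parameters live in the compact product $(\mathbb{S}^{q-1}\cup\{\bm 0\}) \times [-\infty,\infty]$; pass to a sub-subsequence along which $(\bm a_{k,i},c_{k,i}) \to (\bm a_{\infty,i},c_{\infty,i})$ for every $i \in \{1,\dots,p\}$. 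Let $C_\infty$ be the intersection of the corresponding limiting half-spaces, interpreting $c_{\infty,i}=+\infty$ as $\R^q$, $c_{\infty,i}=-\infty$ as $\emptyset$, and a limiting zero normal through the eventual sign of $c_{k,i}$.

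Absolute continuity of $\mathcal L(\bm S)$ gives $\Prob(\bm S \in \partial C_\infty)=0$, because $\partial C_\infty$ lies in a finite union of hyperplanes. For any $\eta>0$, tightness supplies an $R>0$ outside of which $\mathcal L(\bm S)$ and all $\mathcal L(\bm S_{n_k})$ put mass below $\eta$; this lets one replace every ``escaping'' half-space (those with $c_{k,i}\to\pm\infty$) by its limit $\R^q$ or $\emptyset$ while introducing an error of at most $p\eta$ in total. For the non-escaping half-spaces the symmetric difference $C_k \triangle C_\infty$ is eventually contained in an arbitrarily small neighbourhood $(\partial C_\infty)^\delta$; the Portmanteau theorem applied to this closed set, combined with $\Prob(\bm S \in (\partial C_\infty)^\delta) \downarrow 0$ as $\delta\downarrow 0$, then forces $\Prob(\bm S_{n_k} \in C_k \triangle C_\infty)\to 0$ and the analogous statement for $\mathcal L(\bm S)$. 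Together with the continuity-set convergence $\Prob(\bm S_{n_k}\in C_\infty)\to \Prob(\bm S\in C_\infty)$ this yields $\Prob(\bm S_{n_k}\in C_k)\to\Prob(\bm S\in C_\infty)=\lim_k \Prob(\bm S \in C_k)$, contradicting the lower bound $\eps$.

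The main obstacle is the careful bookkeeping in the compactification step: one has to verify that the tightness-based replacement of escaping half-spaces by $\R^q$ or $\emptyset$ really respects the intersection structure, and that the remaining, non-escaping symmetric differences collectively fit into a single shrinking tubular neighbourhood of $\partial C_\infty$ whose $\mathcal L(\bm S)$-mass can be controlled purely via Portmanteau, without having to invoke density estimates on the linear projections $\bm a^\top \bm S$.
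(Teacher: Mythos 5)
Your proof is correct, but it takes a genuinely different route from the paper's. You compactify in the $\bm S$-space: after reducing via the triangle inequality to $d_K[\Lc(\bm A_n\bm S_n),\Lc(\bm A_n\bm S)]\to 0$, you prove a uniform Portmanteau statement over the class of intersections of at most $p$ closed half-spaces, handling the half-space parameters by a contradiction--compactness argument (unit normals on the sphere, thresholds in $[-\infty,\infty]$), with tightness absorbing escaping half-spaces and absolute continuity of $\Lc(\bm S)$ killing the mass of shrinking slabs around the limiting hyperplanes. This is in effect a self-contained proof of a special case of the classical Ranga Rao result that weak convergence to an absolutely continuous limit is uniform over convex sets. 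The paper instead compactifies the matrix sequence: it first notes that $d_K$ is invariant under row-wise rescaling by nonzero constants, normalizes the rows of $\bm A_n$ to unit Euclidean norm, extracts a convergent subsequence $\tilde{\bm A}_{n'}\to\bm E$ by Bolzano--Weierstrass, applies Slutsky to get $\tilde{\bm A}_{n'}\bm S_{n'}\weak\bm E\bm S$ and $\tilde{\bm A}_{n'}\bm T_{n'}\weak\bm E\bm S$, verifies that $\bm E\bm S$ has a continuous CDF (each unit-norm row extends to an invertible matrix, so no marginal of $\bm E\bm S$ has atoms), and concludes because $d_K$ metrizes weak convergence to limits with continuous CDFs, finishing with the subsequence principle. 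The paper's route is shorter because it delegates the hard uniformity to the standard metrization lemma; yours is more elementary and self-contained but requires the careful bookkeeping you flag (escaping half-spaces, tubular neighbourhoods), all of which does go through -- in particular uniform tightness of $(\bm S_n)_n$ follows from its weak convergence, and the symmetric difference $C_k\triangle C_\infty$ is indeed contained in the union of the per-half-space symmetric differences, so no density estimates on the projections $\bm a^\top\bm S$ are needed beyond the fact that hyperplanes are Lebesgue-null.
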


\begin{proof}
Write $\bm A_n = (\bm a_{n,j}^\top)_{j=1,\ldots, p}$ with $\bm a_{n,j} \in \R^q.$ 
A straightforward argument shows that we may assume $\bm a_{n,j} \neq \bm 0$ for all $j=1,\ldots p$.
Next, note that, for all diagonal matrices $\bm D = \operatorname{diag}(d_j)_{j=1, \ldots p}$ with $d_j \neq 0$ and all $\R^{p}$-valued random variables $\bm S, \bm T$, we have $d_K[ \mc L(\bm S), \mc L(\bm T)] = d_K[ \mc L(\bm D \bm S), \mc L( \bm D \bm T)]$. Hence,  letting $d_j := \| \bm a_{n,j}\|_2^{-1}$ and $\tilde {\bm A}_n = (\tilde {\bm a}_{n,j}^\top)_{j=1,\ldots, p}$ with normed $\tilde a_{n,j} = \bm a_{n,j} / \| \bm a_{n,j} \|_2 \in \R^p$, we have  $\tilde{\bm A}_n = \bm D \bm A_n$ and therefore
\begin{align}
    \label{eq:kolmo}
        d_K\big[ \mc L(\bm A_n \bm S_n ), 
        \mc L ( \bm A_n \bm T_n ) 
        \big]
        = d_K\big[ \mc L(\tilde{ \bm A}_n \bm S_n ), 
        \mc L ( \tilde{ \bm A}_n \bm T_n ) 
        \big].
\end{align}
Since $\tilde{\bm A}_n \in [-1,1]^{p\times q}$, the Bolzano-Weierstraß Theorem allows to find a subsequence $n^\prime := n^\prime(n)$ such that $\bm E = \lim_{n \to \infty} \tilde{\bm A}_{n^\prime}$ exists. 
Slutsky's lemma then yields the weak convergences $\tilde{\bm A}_{n^\prime} \bm S_{n^\prime} \wconv \bm E \bm S$ and $\tilde{\bm A}_{n^\prime} \bm T_{n^\prime} \wconv \bm E \bm S$.
We will show below that the CDF of $\bm E \bm S$ is continuous. Since the Kolmogorov-metric $d_K$ metrizes weak convergence to limits with continuous CDFs \citep[Lemma 2.11]{Van98}, an application of the triangular inequality implies
 \[
        d_K\big[ \mc L(\tilde{\bm A}_{n^\prime} \bm S_{n^\prime} ), 
        \mc L ( \tilde{\bm A}_{n^\prime} \bm T_{n^\prime} ) 
        \big]
        \leq d_K\big[ \mc L(\tilde{ \bm A}_{n^\prime} \bm S_{n^\prime} ), 
        \mc L (  \bm E \bm S ) 
        \big] + 
         d_K\big[ \mc L (  \bm E \bm S ) , 
         \mc L(\tilde{ \bm A}_{n^\prime} \bm T_{n^\prime} )
        \big] 
        = o(1)
    \]
for $n'\to\infty$. As the previous argumentation can be repeated for subsequences $n^\prime$ of arbitrary subsequences $n^{\prime \prime},$ we then have $d_K\big[ \mc L(\tilde {\bm A}_n \bm S_n ), \mc L ( \tilde {\bm A}_n \bm T_n )\big]= o(1)$, which implies the assertion by \eqref{eq:kolmo}. 

It remains to show that the CDF of $\bm E \bm S$ is continuous. For that purpose, by Sklar's theorem and the fact that copulas are continuous, it is sufficient to show that each marginal CDF of $\bm E \bm S$ is continuous. Hence, fix $j \in \{1, \dots, p\}$, and note that the $j$th row of $\bm E$, say $\bm e_j^\top$, has Euclidean norm 1. 
We may therefore construct an invertible matrix $\tilde{ \bm E} = \tilde{ \bm E}(\bm e_j) \in \R^{q \times q}$ with the first row of $\tilde{\bm E}$ being equal to $\bm e_j^\top.$ Then $\tilde{\bm E}$ being a diffeomorphism and $\bm S$ being absolutely continuous with respect to the Lebesgue-measure on $\R^q$ implies the latter for $\tilde{\bm E} \bm S.$ Hence, for any $y \in \R$,
$
\Prob((\bm E \bm S)_j = y) 
        = \Prob\big( \tilde{\bm E} \bm S \in \{ y\} \times \R^{q-1} \big) = 0.
$
Since $y\in\R$ was arbitrary, this proves the continuity of $y \mapsto \Prob((\bm E \bm S)_j \le  y).$
\end{proof}

\begin{lemma}\label{lem:ZBstToMBst}
    Let $(\Omega, \mc A, \Prob)$ denote a probability space and $p,q\in\N$. For $n \in \N,$ let $\bm X_n \colon \Omega \to \mc X_n, \, \bm W_n \colon \Omega \to \mc W_n$ denote random variables in some measurable space $\mc X_n, \mc W_n,$ respectively. Let $\bm S_n = \bm S_n(\bm X_n)$ and $\bm S_n^{\ast} = \bm S_n(\bm X_n, \bm W_n)$ be $\R^q$-valued statistics. If  
    \begin{alignat*}{3}
        &(a) \hspace{.5cm} &&d_w\big( \Lc(\bm S_n^\ast \mid \bm X_n), \Lc(\bm S_n) \big) \pto 0, \qquad &&\text{ as } n \to \infty; \\
        &(b)  \hspace{.5cm}  &&\bm S_n \wconv Q, \qquad &&\text{ as } n \to \infty;
    \end{alignat*}
    where $Q$ is absolutely continuous with respect to the Lebesgue-measure on $\R^q$ and $d_w$ denotes any metric characterizing weak convergence on $\R^q$;  then 
    \[
        d_K\big( \Lc(\bm A_n \bm S_n^\ast \mid \bm X_n), \Lc(\bm A_n \bm S_n) \big) \pto 0, \qquad \text{ as } n \to \infty,
    \]
    for any sequence $(\bm A_n)_n \subset \R^{p \times q}$ of matrices.
\end{lemma}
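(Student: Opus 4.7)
The plan is to reduce the statement to a pathwise application of Lemma~\ref{lem:ZtoM} via the standard subsequence principle for convergence in probability. Recall that a sequence of non-negative random variables $Z_n$ satisfies $Z_n \pto 0$ if and only if every subsequence $(n')$ admits a further subsequence $(n'')$ along which $Z_{n''} \to 0$ almost surely. Hence it suffices to show that every subsequence of $n$ contains a further subsequence along which $d_K\bigl(\Lc(\bm A_n \bm S_n^\ast \mid \bm X_n), \Lc(\bm A_n \bm S_n)\bigr) \to 0$ almost surely.

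Fix an arbitrary subsequence $(n')$. By hypothesis (a) and the subsequence principle applied to the non-negative random variables $d_w\bigl(\Lc(\bm S_{n'}^\ast \mid \bm X_{n'}), \Lc(\bm S_{n'})\bigr)$, I extract a further subsequence $(n'')$ along which this distance tends to zero almost surely. Combining with hypothesis (b), which yields $\Lc(\bm S_{n''}) \weak Q$, and using that $d_w$ metrizes weak convergence, one obtains for almost every $\omega$ that the random probability measure $\mu_{n''}(\omega) := \Lc(\bm S_{n''}^\ast \mid \bm X_{n''})(\omega)$ converges weakly to $Q$.

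For any such $\omega$, both sequences of (now deterministic) probability measures $\mu_{n''}(\omega)$ and $\Lc(\bm S_{n''})$ converge weakly to the same absolutely continuous limit $Q$. I then apply Lemma~\ref{lem:ZtoM} pathwise: that lemma uses its random variables $\bm S_n, \bm T_n$ only through their distributions, and therefore produces $d_K\bigl(\bm A_{n''} \# \mu_{n''}(\omega),\ \bm A_{n''} \# \Lc(\bm S_{n''})\bigr) \to 0$, where $\bm A \# \mu$ denotes the pushforward of $\mu$ under $\bm A$. Identifying $\bm A_{n''} \# \mu_{n''}(\omega) = \Lc(\bm A_{n''} \bm S_{n''}^\ast \mid \bm X_{n''})(\omega)$ and $\bm A_{n''} \# \Lc(\bm S_{n''}) = \Lc(\bm A_{n''} \bm S_{n''})$, this is precisely the desired almost sure convergence along $(n'')$, completing the proof.

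The argument is essentially administrative; there is no deep analytic obstacle once the subsequence principle is invoked. The only point requiring care is that Lemma~\ref{lem:ZtoM} must be read as a statement about weak convergence of probability measures rather than of random variables, so that it applies pointwise in $\omega$ to realizations of the random conditional law $\Lc(\bm S_n^\ast \mid \bm X_n)$. This requires a brief measurability remark, namely that $\omega \mapsto \Lc(\bm S_n^\ast \mid \bm X_n)(\omega)$ is a regular conditional distribution on $\R^q$, which is automatic here since $\R^q$ is Polish.
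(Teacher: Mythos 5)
Your proof is correct and follows essentially the same route as the paper's: both invoke the subsequence principle for convergence in probability, extract an almost surely convergent subsequence from hypothesis (a), combine with (b) to get pathwise weak convergence of the conditional laws to $Q$, and then apply Lemma~\ref{lem:ZtoM} pointwise in $\omega$. The additional remark on regular conditional distributions is a reasonable touch of care but does not change the argument.
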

\begin{proof}
Let $n^\prime \subset \N$ denote a subsequence of $\N.$ 
By assumption (a), we may choose a further subsequence $n^{\prime \prime}$ of $n^\prime$ and $\Omega_0 \in \Ac$ with $\Prob(\Omega_0) = 1$ such that 
\[
        \lim_{n \to \infty} d_w\big( \Lc(\bm S_{n^{\prime \prime}}^\ast \mid  \bm X_{n^{\prime \prime}}), \Lc(\bm S_{n^{\prime \prime}}) \big) = 0  \quad \text{ on } \Omega_0.
\]
Hence, since $\bm S_n \wconv Q$ by assumption, we obtain $\Lc(\bm S_{n^{\prime \prime}}^\ast \mid \bm X_{n^{\prime \prime}}) \wconv Q$ on $\Omega_0.$ Lemma \ref{lem:ZtoM} then implies 
\[
        \lim_{n \to \infty} d_K\big( \Lc( \bm A_{n^{\prime \prime}} \bm S_{n^{\prime \prime}}^\ast \mid \bm X_{n^{\prime \prime}}), \Lc(\bm A_{n^{\prime \prime}}\bm S_{n^{\prime \prime}}) \big) = 0 \quad \text{ on } \Omega_0,
\]
which lets us conclude.
\end{proof}

\subsection{A result on circmax biases}
\label{sec:circmaxbias}
Recall the definition of the bias components $\bm D_{\bm h,k}, \bm E_{\bm h}$ from \refstar{eq:cond_bias_cbl} which were introduced by the circmax operation. In this section we present general conditions which imply that both components vanish.

\begin{lemma}\label{lem:Eh_zero} 
Fix $k \in \N_{\geq 2}$ and suppose that Condition \refstar{cond:mda} and \refstar{cond:ser_dep} are met.
Then $\bm D_{\bm h,k} = \bm E_{\bm h} = 0$, provided the time series $(\bm X_t)_t$ is exponentially $\beta$-mixing (i.e., there exists $c>0, \lambda \in (0,1)$ such that $\beta(\ell) \le c \lambda^\ell$ for all $\ell \in \N$), that $\log n = o(r^{1/2})$ and that Condition \refstar{cond:int_h}(b) and
    \begin{align} 
    \label{eq:inttt}
    & \limsup_{n \to \infty} \sup_{t=1,\ldots,r}  \Exp\Big[\Big\|h\Big(\frac{\max (\bm X_1, \dots, \bm X_t, \tilde{\bm X}_{1},  \dots, \tilde{\bm X}_{r-t}) - \bm b_r}{ \bm a_r }\Big)\Big\|^{2+\nu}\Big] 
    < \infty \\
    &\label{eq:inttt2} \limsup_{n \to \infty} \sup_{t=\flo{r^{1/2}}, \ldots, r} \Exp\Big[\Big\|h\Big(\frac{\max (\bm X_1, \dots, \bm X_{t-\flo{r^{1/2}}}, \tilde{\bm X}_{1},  \dots, \tilde{\bm X}_{r-t}) - \bm b_r}{ \bm a_r }\Big)\Big\|^{2+\nu}\Big] 
    < \infty \\
    &\label{eq:inttt3} \limsup_{n \to \infty} \sup_{t=\flo{r^{1/2}}, \ldots, r} \Exp\Big[\Big\|h\Big(\frac{\max (\bm X_1, \dots, \bm X_{t-\flo{r^{1/2}}}, \bm X_{t + 1}, \ldots, \bm X_{r}) - \bm b_r}{ \bm a_r }\Big)\Big\|^{2+\nu}\Big] 
    < \infty
    \end{align}
    hold for some $\nu>2/\omega$ with $\omega$ from Condition~\refstar{cond:ser_dep}, where $(\tilde{\bm X}_t)_t$ is an independent copy of $(\bm X_t)_t$. 
\end{lemma}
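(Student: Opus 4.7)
My plan addresses $\bm D_{\bm h,k}$ and $\bm E_{\bm h}$ separately; in both cases the strategy is to invoke Berbee's coupling lemma to replace a dependent block of observations by an independent copy, and then turn the resulting coupling error (controlled by the exponentially small $\beta$-mixing coefficient) into a bound on the bias via Hölder's inequality with exponent $2+\nu$.

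For $\bm D_{\bm h,k}$, the two blocks $(\bm X_1, \dots, \bm X_t)$ and $(\bm X_{(k-1)r+t+1}, \dots, \bm X_{kr})$ are separated by a gap of $(k-1)r+1 \ge r+1$ indices. Berbee's lemma provides a copy of the second block that is independent of the first and agrees with it on an event of probability at least $1-\beta((k-1)r+1)$; by stationarity this copy has the law of $(\tilde{\bm X}_1, \dots, \tilde{\bm X}_{r-t})$. Using Hölder with conjugate exponent $(2+\nu)/(1+\nu)$, together with Condition \refstar{cond:int_h}(b) and \eqref{eq:inttt} for uniform $L^{2+\nu}$ control, each summand is bounded by $C\,\beta((k-1)r+1)^{(1+\nu)/(2+\nu)} \le C\lambda^{cr}$. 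Summing over $t$ and multiplying by $\sqrt{n/r}/r$ gives a bound of order $\sqrt{n/r}\,\lambda^{cr}$, which tends to zero since $\log n = o(r^{1/2}) = o(r)$.

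For $\bm E_{\bm h}$, the difficulty is that $\bm M_r = \max(\bm X_1, \dots, \bm X_r)$ involves two adjacent sub-blocks, so a direct coupling would cost only $\beta(1)$. The idea I would pursue is to insert an artificial gap of size $\flo{r^{1/2}}$ and exploit the common factorisation
\[
\bm M_r = V_2(t) \vee Q(t), \qquad Y_1(t) := \max(\bm X_1, \dots, \bm X_t, \tilde{\bm X}_1, \dots, \tilde{\bm X}_{r-t}) = V_1(t) \vee Q(t),
\]
where $V_1(t) = \max(\bm X_1, \dots, \bm X_{t-\flo{r^{1/2}}}, \tilde{\bm X}_1, \dots, \tilde{\bm X}_{r-t})$, $V_2(t) = \max(\bm X_1, \dots, \bm X_{t-\flo{r^{1/2}}}, \bm X_{t+1}, \dots, \bm X_r)$, and $Q(t) = \max(\bm X_{t-\flo{r^{1/2}}+1}, \dots, \bm X_t)$ is a ``bridge block'' appearing identically on both sides. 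For $t \ge \flo{r^{1/2}}+1$, I would apply Berbee with gap $\flo{r^{1/2}}+1$ to the outer parts to couple $V_2(t)$ with a version distributed as $V_1(t)$, with coupling error bounded by $\beta(\flo{r^{1/2}}+1) \le c\lambda^{\sqrt{r}}$. Hölder combined with the integrability hypotheses \eqref{eq:inttt}, \eqref{eq:inttt2} and \eqref{eq:inttt3} then yields a uniform per-summand bound of order $\lambda^{c'\sqrt r}$. The remaining range $t \in \{1, \dots, \flo{r^{1/2}}\}$ I would handle via the distributional symmetry $Y_1(t) \stackrel{d}{=} Y_1(r-t)$, obtained by exchanging the roles of $\bm X$ and $\tilde{\bm X}$ (which are i.i.d.\ in law). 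Summing yields $O(\sqrt{n/r}\,\lambda^{c'\sqrt r})$, which vanishes under $\log n = o(r^{1/2})$.

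The main obstacle will be to argue carefully that the Berbee coupling in the $\bm E_{\bm h}$ step preserves the joint distribution of the replaced right block with the bridge $Q(t)$, and not merely with the ``past'' sigma-algebra $\mathcal A = \sigma(\bm X_1,\dots, \bm X_{t-\flo{r^{1/2}}})$. A naive decomposition that separately estimated the probability that $Q(t)$ exceeds the ``big'' max would incur a bias of order $\flo{r^{1/2}}/r$ per summand, which after rescaling by $\sqrt{n/r}$ and summation blows up to $\sqrt{n/r}$, failing to vanish. The decisive point is therefore the choice of decomposition with $Q(t)$ shared between $\bm M_r$ and $Y_1(t)$, so that after an appropriate coupling (and possibly an additional auxiliary randomisation in the Berbee construction, to ensure the coupled block is conditionally independent of the bridge given $\mathcal A$) the bias collapses entirely into the mixing error, at which point the rate $\lambda^{c'\sqrt r}$ is precisely balanced by the assumption $\log n = o(r^{1/2})$.
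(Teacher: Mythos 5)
Your treatment of $\bm D_{\bm h,k}$ coincides with the paper's: Berbee's coupling applied to $(\bm X_1,\dots,\bm X_t)$ and $(\bm X_{(k-1)r+t+1},\dots,\bm X_{kr})$ (separation at least $(k-1)r\ge r$), then H\"older with exponent $(2+\nu)/(1+\nu)$ together with Condition~\ref{cond:int_h}(b) and \eqref{eq:inttt}, giving a bound of order $\sqrt{n/r}\,\lambda^{r(1+\nu)/(2+\nu)}=o(1)$ since $\log n=o(r)$. No issues there, and your symmetry reduction $Y_1(t)\stackrel{d}{=}Y_1(r-t)$ for the small-$t$ range of the $\bm E_{\bm h}$ term is also a legitimate (and tidy) way to halve that case.

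The gap is in the coupling step for $\bm E_{\bm h}$, precisely where you place your ``main obstacle.'' In $Y_1(t)=V_1(t)\vee Q(t)$ the block $(\tilde{\bm X}_1,\dots,\tilde{\bm X}_{r-t})$ is independent of the \emph{entire} vector $(\bm X_1,\dots,\bm X_t)$, i.e.\ of $\mathcal A=\sigma(\bm X_1,\dots,\bm X_{t-\flo{r^{1/2}}})$ and of the bridge $Q(t)$ jointly. Berbee applied across the gap of size $\flo{r^{1/2}}$ produces a copy of $(\bm X_{t+1},\dots,\bm X_r)$ that is independent of $\mathcal A$ only; its joint law with $Q(t)$ is not controlled, so $V_1'(t)\vee Q(t)$ need not be distributed as $Y_1(t)$. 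To control the joint law with $Q(t)$ you would have to couple relative to $\sigma(\bm X_1,\dots,\bm X_t)$, i.e.\ at lag $1$, with error $\beta(1)\not\to 0$; and conditional independence given $\mathcal A$ (your proposed auxiliary randomisation) is weaker than the unconditional independence from $(\mathcal A,Q(t))$ that the target law requires. So the bias does not ``collapse entirely into the mixing error,'' and the argument as written does not close.

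The paper instead gives up on sharing the bridge and uses a three-term triangle inequality: drop $\bm X_{t-\flo{r^{1/2}}+1},\dots,\bm X_t$ from $Y_1(t)$; Berbee-couple the two bridge-free maxima across the gap (error $\beta(\flo{r^{1/2}})\le c\lambda^{\sqrt r}$, using \eqref{eq:inttt2}, which is exactly your rate); re-insert the bridge to recover $\bm M_r$, using \eqref{eq:inttt3}. The price is the term you dismiss as naive: one must show that $\sqrt{n/r}$ times (a H\"older power of) the probability that the bridge attains the overall maximum vanishes, and the trivial symmetry bound $\flo{r^{1/2}}/r$ is indeed insufficient. The paper argues this probability is exponentially small by comparing each bridge observation with $O(r/\flo{r^{1/2}})$ well-separated observations rendered iid by successive couplings; be aware that for $|J|+1$ iid continuous variables the comparison probability is $1/(|J|+1)$ rather than the $2^{-|J|}$ asserted there, so the step at which your route and the paper's diverge is also the most delicate step of the paper's own argument. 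In any event, your proposal is missing a workable mechanism for the bridge terms, and the coupling you describe cannot deliver the joint law you need.
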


\begin{proof}
Without loss of generality we may assume that $q = 1.$ First consider $D_{h,k}$ and define, for $t \in \{1,\ldots, r-1\}$,
\[
    D_{n,h,k,t} = 
    h \Big( \frac{\max ( \bm X_1, \dots,  \bm X_t,  \bm X_{{(k-1)r+t+1}},  \dots,  \bm X_{kr}) -   \bm b_r}{  \bm a_r} \Big)
    -  h \Big( \frac{\max ( \bm X_1, \dots,  \bm X_t, \tilde{\bm X}_{1},  \dots, \tilde{\bm X}_{r-t}) -  \bm b_r}{\bm a_r} \Big).
\]
Note that showing $\sup_{t=1,\ldots,r-1} (n/r)^{1/2} |\Exp[D_{n,h,k,t}]| = o(1)$ implies $D_{h,k}= 0.$ 
By applying Berbee's coupling Lemma \citep{Ber79} for each fixed $t \in\{ 1, \ldots, r-1\}$ to the vectors $(\bm X_1,\ldots, \bm X_t)$ and $(\bm X_{(k-1)r+t+1}, \ldots, \bm X_{kr})$ we may assume that the random vector $(\tilde{\bm X}_{1}, \ldots, \tilde{\bm X}_{r-t})$ satisfies 
   $         \Prob\big((\tilde{\bm X}_{1}, \ldots, \tilde{\bm X}_{r-t}) \neq  (\bm X_{(k-1)r+t+1}, \ldots, \bm X_{kr}) \big)\leq \beta(r)
\leq c \lambda^r;$ where the last inequality follows by assumption. Thus, by Hölder's inequality and \eqref{eq:inttt},
\[
    \sup_{t=1,\ldots, r-1} \sqrt{\frac{n}{r}} \Exp[|D_{n,h,k,t}|]
    \lesssim 
    \sqrt{\frac{n}r} \lambda^{r(1+\nu)/(2+\nu)} = \exp \Big[r \Big( \log\lambda \frac{1+\nu}{2+\nu} + \frac{\log(n/r)}{2r}\Big) \Big].
\]
The expression on the right converges to zero since $\log n = o(r)$. As a consequence, $D_{h,k}=0$.

It remains to show that $E_h=0$. Writing 
\[
E_{n,h,t} = 
     h \Big( \frac{\max ( \bm X_1, \dots,  \bm X_t, \tilde{\bm X}_{1},  \dots, \tilde{\bm X}_{r-t}) - \bm b_r}{ \bm a_r} \Big) - h(\bm Z_{r,1}).
\]
it is sufficient to show that $\sup_{t=1,\ldots,r-1} (n/r)^{-1/2} |\Exp[E_{n,h,t}]| = o(1)$. For that purpose, we split the supremum according to $t> r/2$ or $t \le r/2$; both cases can then be treated similarly, and we only provide details on the former one. For simplicity, we assume that $r/2\in\N$.

Write $\ell = \flo{r^{1/2}}$. The proof is finished once we show that
\begin{align}
\nonumber  
    &\sup_{t = r/2, \dots, r} \Big| \sqrt{\frac{n}{r}} \Exp\Big[h \Big( \frac{\max ( \bm X_1, \dots,  \bm X_t, \tilde{\bm X}_{1},  \dots, \tilde{\bm X}_{r-t}) - \bm b_r}{ \bm a_r} \Big) \\ 
\label{proof:Eh_zero_eq1} &\hspace{5cm} - 
    h\Big(\frac{\max(\bm X_1,\ldots,\bm X_{t-\ell}, \tilde{\bm X}_{1},\ldots, \tilde{\bm X}_{r-t}) - \bm b_r}{\bm a_r}\Big)\Big] \Big| = o(1) \\
\nonumber    
    &\sup_{t = r/2, \dots, r} \Big| \sqrt{\frac{n}{r}} \Exp\Big[h \Big( \frac{\max ( \bm X_1, \dots,  \bm X_{t-\ell}, \tilde{\bm X}_{1},  \dots, \tilde{\bm X}_{r-t}) - \bm b_r}{ \bm a_r} \Big) \\
\label{proof:Eh_zero_eq2} &\hspace{5cm} - 
    h\Big(\frac{\max(\bm X_1,\ldots,\bm X_{t-\ell}, {\bm X}_{t+1},\ldots, {\bm X}_r) - \bm b_r}{\bm a_r}\Big)\Big] \Big| = o(1) \\
\label{proof:Eh_zero_eq3}    
&\sup_{t = r/2, \dots, r} \Big| \sqrt{\frac{n}{r}} \Exp\Big[ 
    h\Big(\frac{\max(\bm X_1,\ldots,\bm X_{t-\ell}, \bm X_{t+1},\ldots, \bm X_r) - \bm b_r}{\bm a_r}\Big) - h(\bm Z_{r,1})\Big] \Big| = o(1).
\end{align}
The proof of \eqref{proof:Eh_zero_eq2} is similar to the proof of $D_{h,k}=0$ given above (invoking \eqref{eq:inttt2} instead of \eqref{eq:inttt}), and is therefore omitted (the final argument requires $\log n = o(r^{1/2})$, which is then exactly met by assumption). The proof of \eqref{proof:Eh_zero_eq2} is a simplified version of the proof of \eqref{proof:Eh_zero_eq3}, so we only prove the latter for the sake of brevity. In view of Condition \refstar{cond:int_h}(b) and \eqref{eq:inttt3}, an application of Hölder's inequality implies
\begin{align*}
    & \phantom{{}={}} \sup_{t = r/2, \dots, r} \Big| \sqrt{\frac{n}{r}} \Exp\Big[ 
    h\Big(\frac{\max(\bm X_1,\ldots,\bm X_{t-\ell}, \bm X_{t+1},\ldots, \bm X_r) - \bm b_r}{\bm a_r}\Big) - h(\bm Z_{r,1}) \Big] \Big| \\
    &\lesssim \sup_{t = r/2, \dots, r} \sqrt{\frac{n}{r}} \Big\{ \Prob\big( \exists j \in \{1, \dots, d\}: \max(X_{t-\ell+1,j},\ldots, X_{t,j}) > \max(X_{1,j},\ldots,X_{t-\ell,j}, X_{t+1,j},\ldots, X_{r,j})\big)\Big\}^{\frac{1+\nu}{2+\nu}}.
\end{align*}
By the union-bound, it is sufficient to show that
\begin{align} \label{eq:rn}
R_n \nonumber & \equiv \sup_{t = r/2, \dots, r} \sqrt{\frac{n}r} \max_{j=1}^d \Big\{ \Prob\big(\max(X_{t-\ell+1,j},\ldots, X_{t,j}) > \max(X_{1,j},\ldots,X_{t-\ell,j}, X_{t+1,j},\ldots, X_{r,j})\big) \Big\}^{\frac{1+\nu}{2+\nu}} \\
&= o(1).
\end{align}
Subsequently, we write $X_t$ instead of $X_{t-j}$, as all subsequent bounds are uniform in $j$. We then have
\begin{align*}
    &\phantom{{}={}} \Prob\big( \max(X_{t-\ell + 1},\ldots, X_t) > \max(X_1,\ldots,X_{t-\ell}, X_{t+1},\ldots, X_r)\big) \\
    &\leq \sum_{i=t-\ell+1}^t \Prob \big( X_i > \max(X_1,\ldots,X_{t-\ell}, X_{t+1},\ldots, X_r)\big).
\end{align*}
For fixed $i$ in the previous sum, let $J \subset \{1, \dots, t-\ell\} $ denote the maximal set of indexes such that $|j_1 - i|\ge \ell$ and $|j_1 - j_2|\ge \ell$ for all distinct $j_1, j_2 \in J$;
note that $|J|=O(r/\ell)$ since $t> r/2$. We then have
\[
\Prob \big( X_i > \max(X_1,\ldots,X_{t-\ell}, X_{t+1},\ldots, X_r)\big)
\le
\Prob \big( X_i > \max(X_j: j \in J)\big).
\]
We may now successively apply Berbee's coupling lemma \citep{Ber79} to construct a vector $(\check X_{j})_{j \in J}$ with iid coordinates that is independent of $X_i$ and satisfies $\check X_j=_d X_j$ for all $j\in J$ such that
\[
\Prob \big( X_i > \max(X_s: s \in J)\big) \le \Prob\big(X_i > \max(\check X_s: s \in J)\big) + |J| \beta(\ell).
\]
More precisely, writing $J=\{j_1, \dots, j_{|J|}\}$, the first application is to $X_{j_1}$ and $(X_{j_2}, \dots, X_{j_{|J|}}, X_i)$. The second application is to $(\check X_{j_1}, X_{j_2})$ and $(X_{j_3}, \dots, X_{j_{|J|}}, X_i)$, where $\check X_{j_1}$ is the random variable constructed with the first application, and so on.

Since $\Prob \big( X_i > \max(\check X_s: s \in J)\big)=2^{-|J|}$ by Fubini's theorem, we obtain from the last three displays that
\[
\sup_{t=r/2, \dots, r}\Prob\big( \max(X_{t-\ell + 1},\ldots, X_t) > \max(X_1,\ldots,X_{t-\ell}, X_{t+1},\ldots, X_r)\big)
\le \ell 2^{-|J|} + \ell|J|\beta(r) \lesssim \ell 2^{-r/\ell} + r \lambda^\ell.
\]
In view of our choice of $\ell=\flo{r^{1/2}}$, and letting $\zeta = \min(2,1/\lambda)>1$, the right hand side is bounded by $2 r \zeta^{-\sqrt r}$, whence $R_n$ from \eqref{eq:rn} can be bounded by
\[
R_n \lesssim  \sqrt{\frac{n}r} \big( r \zeta^{-\sqrt r} \big)^{(1+\nu)/(2+\nu)} 
=
\exp\Big\{ - \sqrt r  \Big( \log(\zeta) \frac{1+\nu}{2+\nu} - \frac{\log r}{\sqrt r} \frac{1+\nu}{2+\nu} - \frac{\log(n/r)}{\sqrt r} \Big) \Big\} = o(1)
\]
by assumption on $r$. This proves \eqref{eq:rn}, and the proof is finished.
\end{proof}

\section{Resampling sliding block maxima vs.\ resampling circular block maxima}
\label{sec:inconsistency-sliding}

In this section, we provide an intuitive heuristic for the inconsistency of naive resampling of sliding block maxima and explain why the method of resampling circular block maxima solves the problem.
We thereby extend the brief discussion in the introduction and in Remark~\refstar{rem:slidboot-inconsistent} from the main article. At the end of this section, we formally prove the statements made in Remark~\refstar{rem:slidboot-inconsistent}.

For simplicity, assume $m=m(2) = n/(2r) \in \N$, and write $I_i = I_{2r,i}=\{(i-1)2r+1, \dots, 2ri\}$ for the $i$th disjoint block of size $2r$. We are interested in the affinely standardized empirical mean
\[
\bar{\Gb}_{n,r}^{[\sbl]}(h) = \sqrt{n/r} \Big\{ \frac1n \sum_{s=1}^n h(Z_{r,s}^{[\sbl]}) - \Exp[h(Z_{r,s}^{[\sbl]})] \Big\}
=
\sqrt{n/r} \frac1m \sum_{i=1}^m D_{n,i}^{[\sbl]}
\]
and in its naive sliding and circular blocks bootstrap analogs
\[
\hat{\Gb}^{[\sbl],*}_{n,r}(h) 
= \sqrt{n/r} \frac1m \sum_{i=1}^m Y_{n,i} D_{n,i},
\qquad 
\hat{\Gb}^{(\cbl),*}_{n,r}(h)
=\sqrt{n/r} \frac1m \sum_{i=1}^m Y_{n,i} D_{n,i}^{(\cbl)};
\]
here, $D_{n,i}^{[\sbl]}=(2r)^{-1} \sum_{s \in I_i} h(Z_{r,s}^{[\sbl]}) - \Exp[h(Z_{r,s}^{[\sbl]})]$, $D_{n,i}^{(\cbl)}=(2r)^{-1} \sum_{s \in I_i} h(Z_{r,s}^{(\cbl)}) - \Exp[h(Z_{r,s}^{(\cbl)})]$ and $(Y_{n,i})_i$ are iid multipliers with expectation 0 and variance 1. Writing $\sigma_r(s,t) = \Cov\big(  h(Z_{r,s}^{[\sbl]}) ,  h(Z_{r,t}^{[\sbl]}) \big)$, the variance of $\Gb_{n,r}^{[\sbl]}(h)$ can be written as
\begin{align}
\Var\big( \bar \Gb_{n,r}^{[\sbl]}(h) \big)
=
\frac{1}{nr} \Var \Big( \sum_{i=1}^m \sum_{s \in I_i} h(Z_{r,s}^{[\sbl]}) \Big) 
&=\nonumber
\frac{1}{nr} \sum_{i,j=1}^m \sum_{s \in I_i} \sum_{t \in I_j}\sigma_r(s,t) \\
& \approx \nonumber
\frac{1}{nr} \sum_{i=1}^m  \sum_{s \in I_i} \sum_{t=s-r+1}^{s+r-1}  \sigma_r(s,t) \\
&= \label{eq:var-sb}
\frac{1}{2r^2} \sum_{s=1}^{2r} \sum_{t=s-r+1}^{s+r-1}  \sigma_r(s,t),
\end{align}
where, at the $\approx$-sign, we have used asymptotic independence of two sliding block maxima in case their respective index sets do not overlap and where, for the ease of notation, we have extended the available sample by $r$  observations at the beginning and the end of the sampling period of length $n$ (which does not matter asymptotically). Moreover, we used stationarity at the last equation. The index set of the double sum in the last line, say $B_r$, is illustrated in green in the left panel of Figure~\ref{fig:index-summation}.

Next, since $(Y_{n,i}D_{n,i})_i$ is uncorrelated, we have
\begin{align}
\Var\big( \hat{\Gb}_{n,r}^{[\sbl],*}(h) \big)
=
\frac{n}{mr} \Var \big( D_{n,1} \big) 
&=
\frac1{2r^2} \sum_{s =1}^{2r}\sum_{t =1}^{2r}  \sigma_r(s,t)
\approx \label{eq:var-sb-boot}
\frac1{2r^2} \sum_{(s,t) \in D_r}\sigma_r(s,t) ,
\end{align}
where 
\[
D_r =\{ (s,t) \in \{1, \dots, 2r\}^2: |s-t|\le r\}
\]
and where we have again used asymptotic independence of two sliding block maxima in case their respective index sets do not overlap (at the $\approx$-sign); see the middle panel of Figure \ref{fig:index-summation} for an illustration of the index set $D_r$. Compared to \eqref{eq:var-sb}, we observe that the summands with index $(s,t) \in B_r$ such that either $t>2r$ or $t \le0$ are missing in \eqref{eq:var-sb-boot}.
This makes a significant difference asymptotically, and ultimately yields inconsisteny of the naive sliding blocks approach.

For the circ-max approach,
writing $ \sigma^{(\cbl)}_r(s,t) := \Cov(h(Z_{r,s}^{(\cbl)}), h(Z_{r,t}^{(\cbl)}))$, we have 
\begin{align*}
\Var\big( \hat{\Gb}^{(\cbl),*}_{n,r}(h) \big)
=
\frac{n}{mr} \Var \big( D_{n,1}^{(\cbl)} \big) 
&=
\frac1{2r^2}\sum_{s =1}^{2r}\sum_{t =1}^{2r}\sigma_r^{(\cbl)}(s,t)\\
&=
\frac1{2r^2}\Big( \sum_{(s,t) \in D_r} + \sum_{(s,t) \in E_r^+} + \sum_{(s,t) \in E_r^-} \Big)\sigma^{(\cbl)}_r(s,t), 
\end{align*}
where
\begin{align*}
    E_r^+ &= \{ (s,t) \in \{1, \dots, 2r\}^2: s-t >r\}, \\
    E_r^- &= \{ (s,t) \in \{1, \dots, 2r\}^2: t-s >r\};
\end{align*}
see the blue sets in the right panel in Figure~\ref{fig:index-summation}.
Now, by Proposition~\ref{prop:overlap_wconv_kloop} and stationarity,
\begin{align*}
\begin{array}{lllll}
\sigma^{(\cbl)}_r(s,t) &\approx  \sigma_r(0, |s-t|) &=  \sigma_r(s,t)  &  \qquad & \text{ if } (s,t) \in D_r ,\\
\sigma^{(\cbl)}_r(s,t) &\approx \sigma_r(0, 2r - (s-t)) &= \sigma_r(s, 2r+t) &  \qquad & \text{ if }  (s,t) \in E_r^+ ,\\
\sigma^{(\cbl)}_r(s,t) &\approx \sigma_r(0, 2r - (t-s)) &= \sigma_r(s, t-2r) &  \qquad & \text{ if } (s,t) \in E_r^-.
\end{array}
\end{align*}
As a consequence, we can match the blue triangles  in Figure~\ref{fig:index-summation} with the red ones by an index shift:
\begin{align*}
\Var\big( \tilde{\Gb}^{(\cbl)}_{n,r}(f) \big)
&\approx
\frac1{2r^2} \Big\{ \sum_{(s,t) \in D_r} \sigma_r(s,t) + \sum_{(s,t) \in E_r^+}\sigma_r(s,2r+t) + \sum_{(s,t) \in E_r^-} \sigma_r(s,t-2r) \Big\} \\
&=
\frac1{2r^2} \Big\{ \sum_{(s,t) \in D_r} \sigma_r(s,t) + \sum_{(s,t) \in \tilde E_r^+}\sigma_r(s,t) + \sum_{(s,t) \in \tilde E_r^-} \sigma_r(s,t) \Big\} \\
&=
\frac{1}{2r^2} \sum_{s=1}^{2r} \sum_{t=s-r+1}^{s+r-1}  \sigma_r(s,t).
\end{align*}
This is the same expression as in \eqref{eq:var-sb}, and heuristically explains consistency of the circmax-approach. \qed

\begin{figure}
    \centering
    \includegraphics[width=0.99\textwidth]{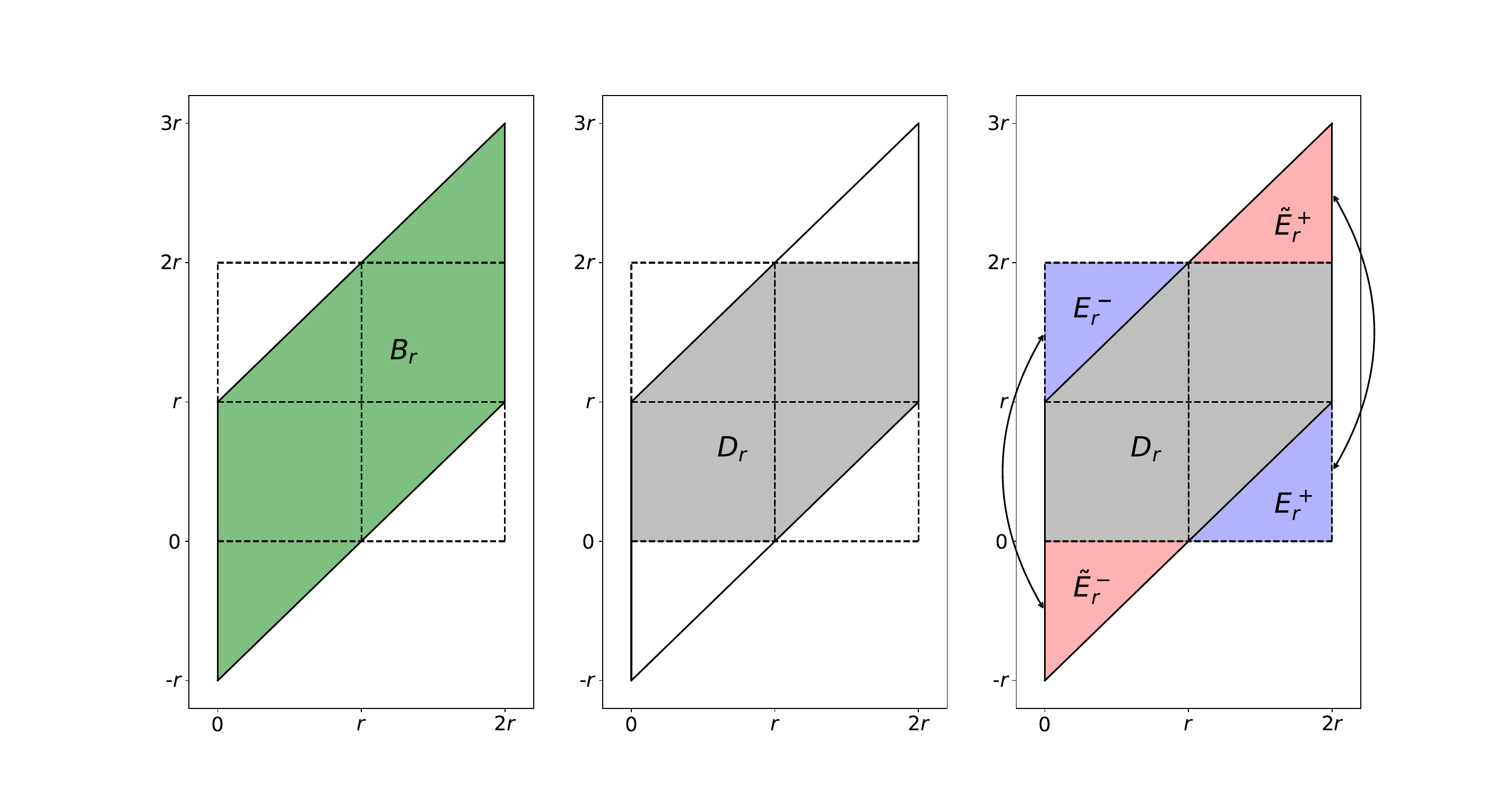}
    \caption{Left: index set for the sliding blocks variance. Middle: index set for the naive sliding blocks bootstrap variance. Right: index sets for the circular blocks bootstrap variance with $k=2$.}
    \label{fig:index-summation}
\end{figure}

\begin{proof}[Proof for Remark~\refstar{rem:slidboot-inconsistent}]
We sketch the proof of the conditional weak convergence in~(\refstar{eq:slidboot-inconsistent}) provided that Conditions~\refstar{cond:mda}, \refstar{cond:ser_dep} and~\refstar{cond:int_h}(a) are met.
Repeating the arguments from the proof of Theorem~\refstar{theo:mult_boot}, the assertion follows from unconditional weak convergence of 
\[
\tilde \Gb_{n,r}^{[\sbl],*} h = \sqrt{\frac{n}r} \frac1{m(k)} \sum_{i=1}^{m(k)} Y_{n,i} D_{n,i},
\]
where $(Y_{n,i})_i$ are iid with expectation 0 and variance 1, and where $D_{n,i}$ are defined as in \eqref{eq:dni}, but with $\sbl$ instead of $\cblk$. We only provide the proof for the convergence of the variance of $\tilde \Gb_{n,r}^{[\sbl],*} h$ to $\Sigma_h^{(k)}$; the remaining arguments are then the same as in the proof of Lemma~\ref{lem:aux_mult_clt}.

Since $(Y_{n,i}D_{n,i})_i$ in uncorrelated, we obtain
\[
\Var(\tilde \Gb_{n,r}^{[\sbl],*)} h) = k \Var(D_{n,1}) = \frac1{kr^2} \sum_{s=1}^{kr} \sum_{t=1}^{kr} \Cov(h(\bm Z_{r,s}^{[\sbl]}), h(\bm Z_{r,t}^{[\sbl]}))
\]
Using the notation after \eqref{eq:proof_kloop_sliding_clt_3}, we obtain 
\[
\lim_{n\to\infty} \Var(\tilde \Gb_{n,r}^{[\sbl],*)} h) 
=
\frac1k \int_0^k \int_0^k g(\xi, \xi') \diff \xi'\diff \xi .
\]
We start by considering the case $k\ge 2$. Then, by the calculation in \eqref{eq:intdec2} but with $g_{(k)}$ replaced by $g$,
\[
\int_0^k \int_\xi^k g(\xi, \xi') \diff \xi'\diff \xi 
=
I_1 + I_2
\]
where
\begin{align*}
I_1 &= (k-1) \Sigma_h^{[\sbl]} / 2 + \int_0^1 \int_0^y u(x) \diff x \diff y,
\qquad
I_2 = \int_0^1 \int_{k-1+\xi}^k u(\xi'-\xi) \diff \xi'\diff \xi = 0.
\end{align*}
The first integral over $u$ can be rewritten as
\begin{align}
\label{eq:intu}
\int_0^1 \int_0^y u(x) \diff x \diff y = \int_0^1 \int_x^1 u(x) \diff y \diff x
=
\int_0^1 (1-x) u(x) \diff x = \Sigma_h^{[\sbl]}/2 - \int_0^1 x u(x) \diff x,
\end{align}
so that
\[
I_ 1= k \Sigma_h^{[\sbl]} / 2 - \int_0^1 x u(x) \diff x.
\]
For symmetry reasons, we also have
$
\int_0^k \int_0^\xi g(\xi, \xi') \diff \xi'\diff \xi = \int_0^k \int_\xi^k g(\xi, \xi') \diff \xi'\diff \xi 
$
so that 
\[
\Var(\tilde \Gb_{n,r}^{[\sbl],*} h) = k \Var(D_{n,1}) =   \Sigma_h^{[\sbl]} - \frac2k \int_0^1 x u(x) \diff x +o(1)
\]
as asserted.

It remains to consider the case $k=1$. A straightforward calculation then shows that
\[
\int_0^1 \int_\xi^1 g(\xi, \xi') \diff \xi'\diff \xi 
=
\int_0^1 \int_{\xi}^1 u(\xi'-\xi) \diff \xi'\diff \xi = \int_0^1 \int_0^{1-\xi} u(y) \diff y \diff \xi =  \int_0^1 \int_0^{x} u(y) \diff y \diff x,
\]
which implies the assertion by \eqref{eq:intu}.
\end{proof}

%%%%%%%%%%%%%%%%%%%%%%%%%%%%%%%%%%%%%%%%%%%%%%%%%%%%%%%%%%%%%%%%%%%%%%%%%%%%%%%%%%
%\appendix
\section{Details on mean estimators in the ARMAX-GPD model}
\label{sec:armax-mu}

\begin{corollary}
\label{cor:mean_est}
Suppose $(X_t)_{t\in\Z}$ is an ARMAX-GPD time series as in Model~\refstar{mod:armax-gpd}, for some $\beta \in [0,1)$ and some $\gamma<1/2$.
Then, if the block size parameter satisfies $r = o(n)$ and $\log n = o(r^{1/2})$,
\begin{equation*}
\frac{\sqrt{n/r}}{(r(1-\beta))^\gamma} \big( \hat \mu_n^{[\mbl]} - \mu_r \big)
\wconv \mc N(0, \sigma^2_{\mbl}), \quad \mbl \in \{\dbl, \sbl, \cblk \},
\end{equation*}
where the asymptotic variance is given by
\begin{align}\label{eq:meanEstAsyVar}
    \sigma^2_{\dbl} = 
    \begin{cases}
        \frac{g_2 - g_1^2}{\gamma^2 } , & \gamma < 1/2, \gamma \neq 0, \\
        \frac{\pi^2}{6}, & \gamma = 0,
    \end{cases}
    \qquad
    \sigma^2_{\cblk(k)} = \sigma^2_{\sbl} = 
    \begin{cases}
        4\Gamma(-2\gamma)I(\gamma), & \gamma < 0,\\
        4(\log 4 -1), &\gamma = 0, \\
        -\frac{g_2}{\gamma} I(\gamma) & 0 < \gamma < 1/2.
    \end{cases}
\end{align}
Here, $\Gamma$ denote the gamma function, $g_j = \Gamma(1-j\gamma)$ and 
$
    I(\gamma) := 2 \int_0^{1/2} \{ \alpha_{2\gamma}(w) -1 \} \{ w^{-\gamma-1}(1-w)^{-\gamma-1} \} \diff w
$
with $\alpha_c(w) = w^{-1} \int_0^w (1-z)^c \diff z$. We have $\sigma^2_{\cblk}=\sigma^2_{\sbl} < \sigma^2_{\dbl}$, and the ratio $\gamma \mapsto \sigma^2_{\dbl} / \sigma^2_{\sbl}(\gamma)$ is presented in Figure \ref{fig:asyVarRatioMean}. 
\end{corollary}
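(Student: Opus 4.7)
The estimator $\hat\mu_n^{[\mbl]} = n^{-1}\sum_{s=1}^n M_{r,s}^{[\mbl]}$ is an empirical mean of the block-maxima sample and fits directly into the framework of Remark \ref{rem:normal} with $p=q=1$, $h(x)=x$, $\psi_n(z_1,\ldots,z_n)=n^{-1}\sum z_i$, $A(a,b)=a$, $B(a,b)=b$, and $\vartheta_r := \Exp[Z_{r,1}] = (\mu_r-b_r)/a_r$. With these choices (\ref{eq:phi_decomposition})--(\ref{eq:linerror}) hold as exact identities with zero remainder, so $\sqrt{n/r}(\hat\mu_n^{[\mbl]}-\mu_r)/a_r$ equals $\bar\Gb_{n,r}^{[\mbl]} h$ for $\mbl \in \{\dbl,\sbl\}$ and $\tilde\Gb_{n,r}^{[\cblk]} h$ for $\mbl=\cblk$. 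Since $a_r = (r(1-\beta))^\gamma$ in the ARMAX-GPD model, the claim reduces to applying Theorems \ref{theo:cltblocks1} and \ref{theo:cltblocks2} to $h=\mathrm{id}$ and then computing the limiting variances in closed form.

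To invoke the two CLTs I would verify their assumptions as follows. The ARMAX-GPD process is exponentially $\beta$-mixing, so Condition \ref{cond:ser_dep} is satisfied with arbitrarily large $\omega$ whenever $r=o(n)$, and Condition \ref{cond:mda} is already part of Model \ref{mod:armax-gpd}. Using the exact block-maximum distribution $\Prob(M_r\leq x) = F_\gamma(x)^{\beta+(1-\beta)r}$ and the explicit $a_r, b_r$, a direct tail calculation shows that $(M_r-b_r)/a_r$ and its permuted analogues from Condition \ref{cond:int_h}(b) have uniformly bounded $(2+\nu)$-moments for some small $\nu>0$ whenever $\gamma<1/2$; choosing $\omega$ large enough secures $\nu>2/\omega$. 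No bias condition \ref{cond:bias_dbsl} is needed for the sliding and disjoint cases since $\hat\mu_n^{[\mbl]}$ is centered at the finite-$r$ parameter $\mu_r$ itself rather than at $a_r\Exp[Z]+b_r$. For the circmax case, the bias $(D_{h,k}+E_h)/k$ in Theorem \ref{theo:cltblocks2} must vanish; this follows from Lemma \ref{lem:Eh_zero}, whose hypotheses (exponential $\beta$-mixing, $\log n = o(r^{1/2})$, and the uniform integrability bounds already established) are all met.

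Theorems \ref{theo:cltblocks1} and \ref{theo:cltblocks2} then deliver asymptotic normality with $\sigma^2_\dbl = \Var(Z)$ for $Z \sim \mathrm{GEV}(\gamma)$ and $\sigma^2_\sbl = \sigma^2_\cblk = 2\int_0^1 \Cov(Z_{1,\xi}, Z_{2,\xi})\,d\xi$ for $(Z_{1,\xi}, Z_{2,\xi}) \sim G_\xi$ (so the equality of sliding and circmax variances is automatic). The disjoint formulas in (\ref{eq:meanEstAsyVar}) are immediate from the standard GEV moments $\Exp[Z] = (g_1-1)/\gamma$ and $\Exp[Z^2] = (g_2 - 2g_1 + 1)/\gamma^2$ for $\gamma\neq 0$, and from $\Var(Z) = \pi^2/6$ in the Gumbel case. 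The strict inequality $\sigma^2_\sbl < \sigma^2_\dbl$ follows from the Loewner ordering in Theorem \ref{theo:cltblocks1} combined with Lemma \ref{lem:sliding-covpos}, which guarantees that the integrand in the sliding variance is strictly positive on a neighbourhood of $\xi=0$.

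The main obstacle is the explicit evaluation of $2\int_0^1 \Cov(Z_{1,\xi}, Z_{2,\xi})\,d\xi$ in the form given by $I(\gamma)$. I would use the stochastic representation $(Z_{1,\xi}, Z_{2,\xi}) \stackrel{d}{=} (U\vee V, W\vee V)$ with $U, W$ i.i.d.\ with CDF $G^\xi$, independent of $V$ with CDF $G^{1-\xi}$ (as in the proof of Lemma \ref{lem:sliding-covpos}), expand the covariance into three pieces, interchange the $\xi$-integral with the expectations against the marginal densities of $U,V,W$, substitute $w = G(v)^\xi$ or an analogous change of variables, and use the identity $\alpha_{2\gamma}(w) = w^{-1}\int_0^w(1-z)^{2\gamma}\,dz$ to collapse the result to a one-dimensional integral. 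The cases $\gamma<0$, $\gamma=0$, and $0<\gamma<1/2$ must be treated separately because the integrability at $w=0,1$ differs; this is tedious but otherwise routine bookkeeping.
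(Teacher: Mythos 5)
Your route is essentially the paper's: reduce $\sqrt{n/r}\,a_r^{-1}(\hat\mu_n^{[\mbl]}-\mu_r)$ exactly (zero remainder) to $\bar\Gb_{n,r}^{[\mbl]}\,\mathrm{id}$ for $\mbl\in\{\dbl,\sbl\}$ and to $\tilde\Gb_{n,r}^{[\cblk]}\,\mathrm{id}$ for the circmax case, apply Theorems~\ref{theo:cltblocks1} and~\ref{theo:cltblocks2}, remove the circmax bias via Lemma~\ref{lem:Eh_zero} (which is precisely why the hypothesis $\log n=o(r^{1/2})$ appears in the statement), and then evaluate the limiting variances. The paper compresses the reduction into a one-line reference to the analogous result in \cite{BucSta24} and devotes its proof entirely to the variance computation, which it carries out by transforming $(Z_{1,\xi},Z_{2,\xi})$ to exponential margins and importing closed-form integrals (their Equations (C.8), (C.9), (C.11)); your plan via the representation $(U\vee V, W\vee V)$ is the same computation done from scratch, but you only sketch it. Be aware that matching $2\int_0^1\Cov(Z_{1,\xi},Z_{2,\xi})\,\mathrm d\xi$ to the three displayed closed forms in \eqref{eq:meanEstAsyVar} is the substantive content of the corollary, so this step must actually be executed or cited; everything else (verification of Conditions~\ref{cond:ser_dep} and~\ref{cond:int_h} from exponential $\beta$-mixing and $\gamma<1/2$, the GEV moment computation for $\sigma^2_\dbl$) is routine and correct.

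One step is wrong as stated: your justification of $\sigma^2_\sbl<\sigma^2_\dbl$. The Loewner ordering in Theorem~\ref{theo:cltblocks1} yields only $\sigma^2_\sbl\le\sigma^2_\dbl$, and Lemma~\ref{lem:sliding-covpos} (strict positivity of $\xi\mapsto\Cov(h(Z_{1,\xi}),h(Z_{2,\xi}))$ near $\xi=0$) is an argument that $\sigma^2_\sbl>0$; it pushes in the wrong direction for showing that the sliding variance is strictly \emph{smaller} than the disjoint one. Strictness has to come from elsewhere --- in the paper it is effectively read off the explicit formulas (the ratio plotted in Figure~\ref{fig:asyVarRatioMean} stays above one), which is another reason the closed-form evaluation cannot be skipped.
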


\begin{figure}[h!] 
\centering
\makebox{\includegraphics[width=0.6\textwidth]{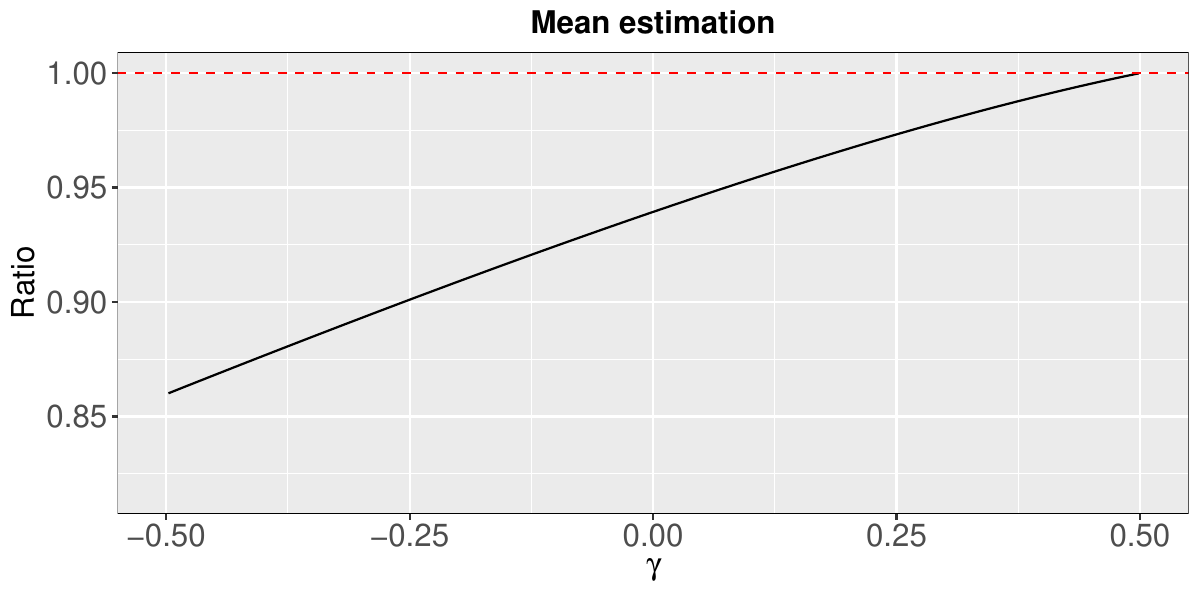}}\vspace{-.1cm}
\caption{Ratio of the asymptotic variances $\sigma^2_{\dbl} / \sigma^2_{\sbl}$ from (\ref{eq:meanEstAsyVar}).
}\label{fig:asyVarRatioMean}	
\vspace{-.3cm}
\end{figure}

\begin{proof}
The proof is akin to the one of Equation (4.3) in \cite{BucSta24}; most details are omitted for the sake of brevity and we only discuss the calculation of the asymptotic variance parameter $\sigma^2_\sbl$ (note that $\sigma^2_\dbl$ simply corresponds to the variance of the $\mathrm{GEV}(\gamma)$-distribution by definition).

We start by considering the case $\gamma \neq 0.$ For $i\in\{1,2\}$, the random variables $S_{i,\xi} := (1+\gamma Z_{i,\xi})^{1/\gamma}$ are standard exponentially distributed, and we have $\Cov(Z_{1,\xi}, Z_{2,\xi}) = \gamma^{-2} \Cov(S_{1,\xi}^{-\gamma}, S_{2,\xi}^{-\gamma}) =: \gamma^{-2} C_\xi.$ For $\gamma < 0,$ Equation (C.8) in the supplement of \cite{BucSta24} implies that
\[
    \int_0^1 C_\xi \diff \xi 
    = 2\gamma^2 \Gamma(-2\gamma) \int_0^{1/2}(\alpha_{2\gamma}(w)-1) \big(w^{-\gamma -1}(1-w)^{-\gamma-1} \diff w) 
    = 2\gamma^2 \Gamma(-2\gamma)I(\gamma),
\]
hence $\sigma^2_{\sbl} = 4 \Gamma(-2\gamma) I(\gamma).$ For $\gamma > 0,$  the display above (C.9) in the same reference implies
\[
    \int_0^1 C_\xi \diff \xi
    = - \gamma \frac{\Gamma(1-2\gamma}{2}I(\gamma).
\]
Hence, $\sigma^2_{\sbl} = -\Gamma(1-2\gamma)/\gamma I(\gamma).$ 

Finally, for $\gamma = 0$ consider the transformed random variables $S_{i,\xi} = \exp(-Z_{i,\xi})$ for $i \in \{1,2\}$, which gives $\Cov(Z_{1,\xi}, Z_{2,\xi}) = \Cov(\log S_{1,\xi}, \log S_{2,\xi}) = C_\xi.$ By formula (C.11) from the same reference we have
\[
    \int_0^1 C_\xi \diff \xi 
    = \int_0^1 \frac{1}{w(1-w)} \int_0^1 -\log(A_\xi(w)) \diff \xi \diff w,
\]
where $A_\xi(w) = \xi + (1-\xi) \big(w \wedge (1-w)\big)$ is the Pickands-dependence function of the associated Copula $C_\xi$ of the Marshall-Olkin distribution with dependence parameter $\xi.$ It follows that 
\begin{equation*}
    \int_0^1 C_\xi \diff \xi 
    = 2 \int_{1/2}^1 \frac{1}{w(1-w)} \int_0^1 -\log(\xi + (1-\xi)w) \diff \xi \diff w
    = \log 4 - 1,
\end{equation*}
which implies the asserted formula.
\end{proof}

\section{Additional simulation results}
\label{sec:sim-add}

\subsection{Fixed block size: mean estimation}
\label{sec:sim-fixr-mean}

Throughout, we consider exactly the same setting as in Section~\refstar{sec:sim} from the main paper, but with the most vanilla target parameter: the expected value $\mu_r = \Exp[M_{r}]$ of the block maximum distribution. 
The respective disjoint, sliding and circular block maxima estimators are $\hat \mu_n^{[\mbl]} := n^{-1} \sum_{i=1}^n M_{r,i}^{[\mbl]}$ with $\mbl \in \{\dbl, \sbl, \cblk\}$, where $\cblk$ denotes the circmax estimator with parameter $k=2$. Recall that the asymptotic behavior of these estimators within the ARMAX-GPD-Model from Model~\refstar{mod:armax-gpd} is explicitly given in Corollary~\ref{cor:mean_est}.

The results are summarized in several figures that directly correspond to figures in the main paper for return level estimation:
\begin{compactitem}
\item The three estimators are compared in terms of their MSE in Figure~\ref{fig:meanRelMseBias}; this is akin to Figure~\refstar{fig:RlRelMseBias} from the main paper.
\item The three bootstrap approaches are compared in terms of histograms and qq-plots of their estimation errors in Figure~\ref{fig:meanHistQQ}; this is akin to Figure~\refstar{fig:RlHists}  from the main paper.
\item The three three bootstrap approaches are compared in terms of their ability to provide reliable estimators for the estimation variance; see Figure~\ref{fig:meanBstVar}, which is akin  to Figure~\refstar{fig:rlBstVar} from the main paper.
\item Results that allow for a comparison of the raw basic bootstrap confidence intervals based on the disjoint and the sliding-circular blocks approaches are presented in Figure~\ref{fig:meanBstCiVar};  this is akin to Figure~\refstar{fig:rlBstCiVar} from the main paper.
\item Finally, results on the data-adaptive size-correction are presented in Figure~\ref{fig:meanBstFacoCi}; this is akin to Figure~\refstar{fig:faCoRlCovWidth} from the main paper.
\end{compactitem}

Overall, the findings are qualitatively comparable to those presented in the Section \refstar{sec:sim} from the main paper. The most notable differences are as follows: the precision of all methods (in particular of the raw basic bootstrap confidence intervals) is much higher, and in particular, the bias is negligible (the squared bias contributes at most 0.2\% of the MSE). This may be explained by the fact that the mean is possibly the most simple target parameter; in particular, no linear expansions are required for studying their asymptotics mathematically. This is also expressed by the fact that much smaller enlargement factors are needed for correcting the raw confidence intervals' coverage: the estimated regression curve is $c(m, \gamma)= 1.222 -0.001 m +0.251 \gamma $. 

\begin{figure}[t!] 
\centering
\makebox{\includegraphics[width=0.95\textwidth]{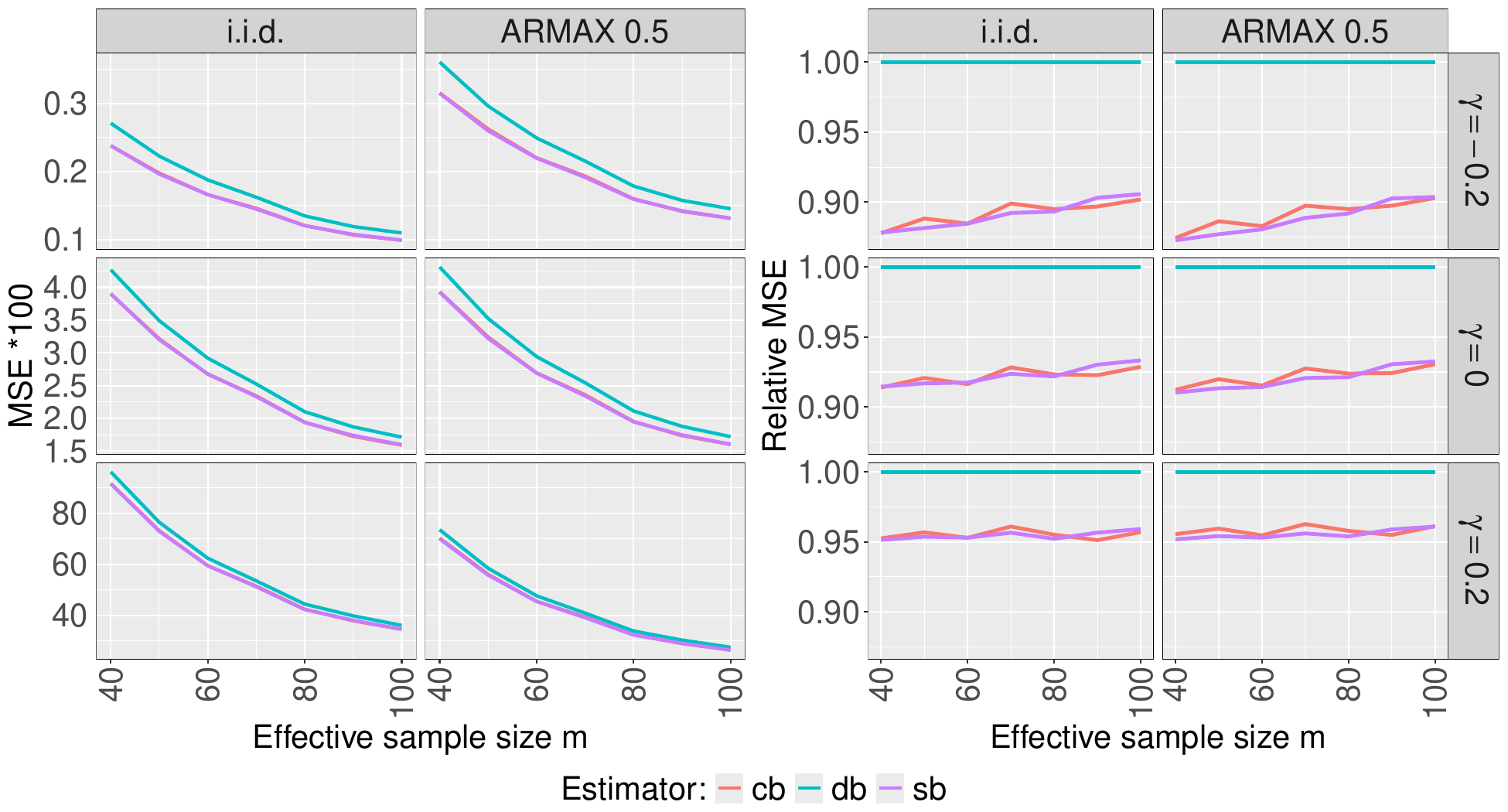}}\vspace{-.1cm}
\caption{Mean estimation with fixed block size $r=365$. Left: Mean squared error $\mathrm{MSE}(\hat \mu_n^{[\mbl]})$. Right: relative MSE with respect to the disjoint blocks method, i.e., $\mathrm{MSE}(\hat \mu_n^{[\mbl]}) / \mathrm{MSE}(\hat\mu_n^{[\dbl]})$.
}\label{fig:meanRelMseBias}	
\vspace{-.1cm}
\end{figure}

\begin{figure}[t!] 
\centering
\makebox{\includegraphics[width=.95\textwidth]{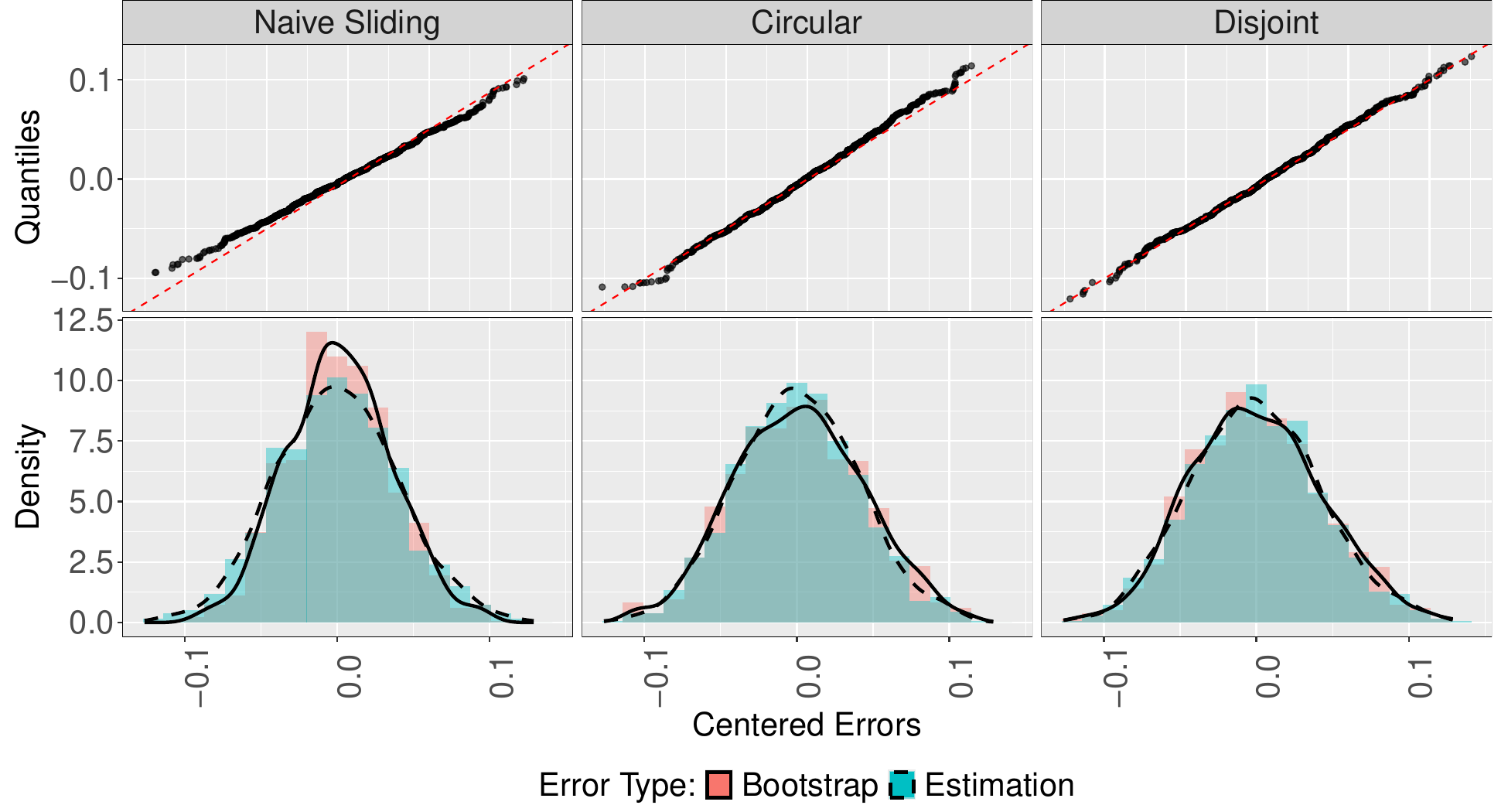}}\vspace{-.1cm}
\caption{Estimation error $\hat \mu_{n}^{[\mbl]}- \mu_r$ vs.\ bootstrap error $\hat \mu_{n}^{(\mbl'),*}- \hat \mu_{n}^{(\mbl')}$ for mean estimators with fixed block size $r=365$. Top row: qq-plots. Bottom row: histograms and kernel density estimates. Underlying data from Model~\ref{mod:armax-gpd} with $\gamma = -0.2, \beta=0.5$ and $m = 80$.
}\label{fig:meanHistQQ}	
\vspace{-.1cm}
\end{figure}

\begin{figure}[t!] 
\centering
\makebox{\includegraphics[width=0.95\textwidth]{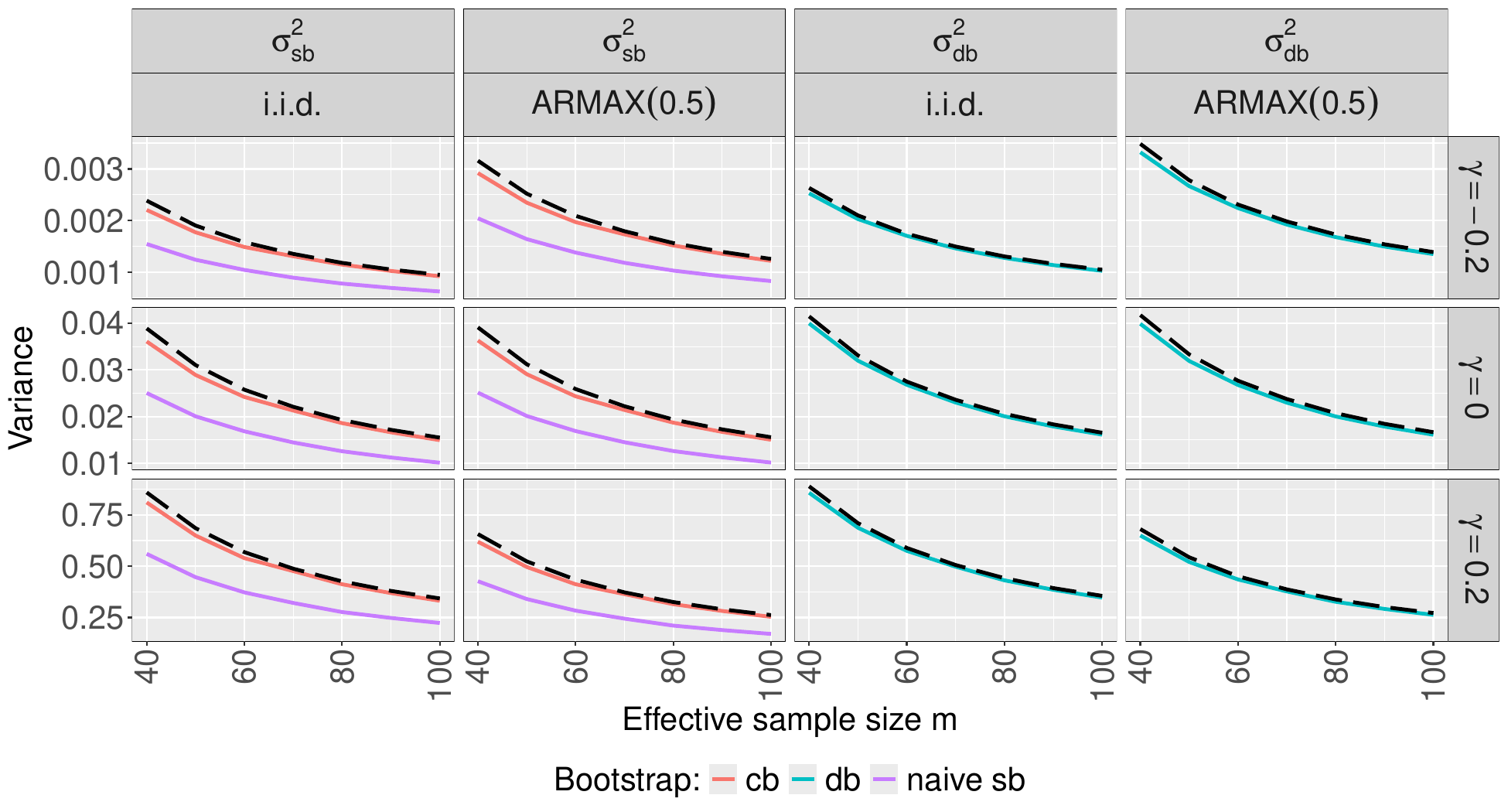}}\vspace{-.1cm}
\caption{Bootstrap-based estimation of the mean estimation variance $\sigma_\mbl^2 = \Var(\hat \mu_n^{[\mbl]})$ with fixed block size $r=365$. Left two columns: target parameter $\sigma_\sbl^2$ (dashed line), with the two colored lines representing the empirical variance of the naive sliding and the circular bootstrap sample, respectively, averaged other $N=1,000$ simulation runs. Right two columns: the same with target parameter $\sigma_\dbl^2$ (dashed line) and the colored line the empirical variance of the disjoint bootstrap sample.
}\label{fig:meanBstVar}	
\vspace{-.1cm}
\end{figure}

\begin{figure}[t!] 
\centering
\makebox{\includegraphics[width=0.95\textwidth]{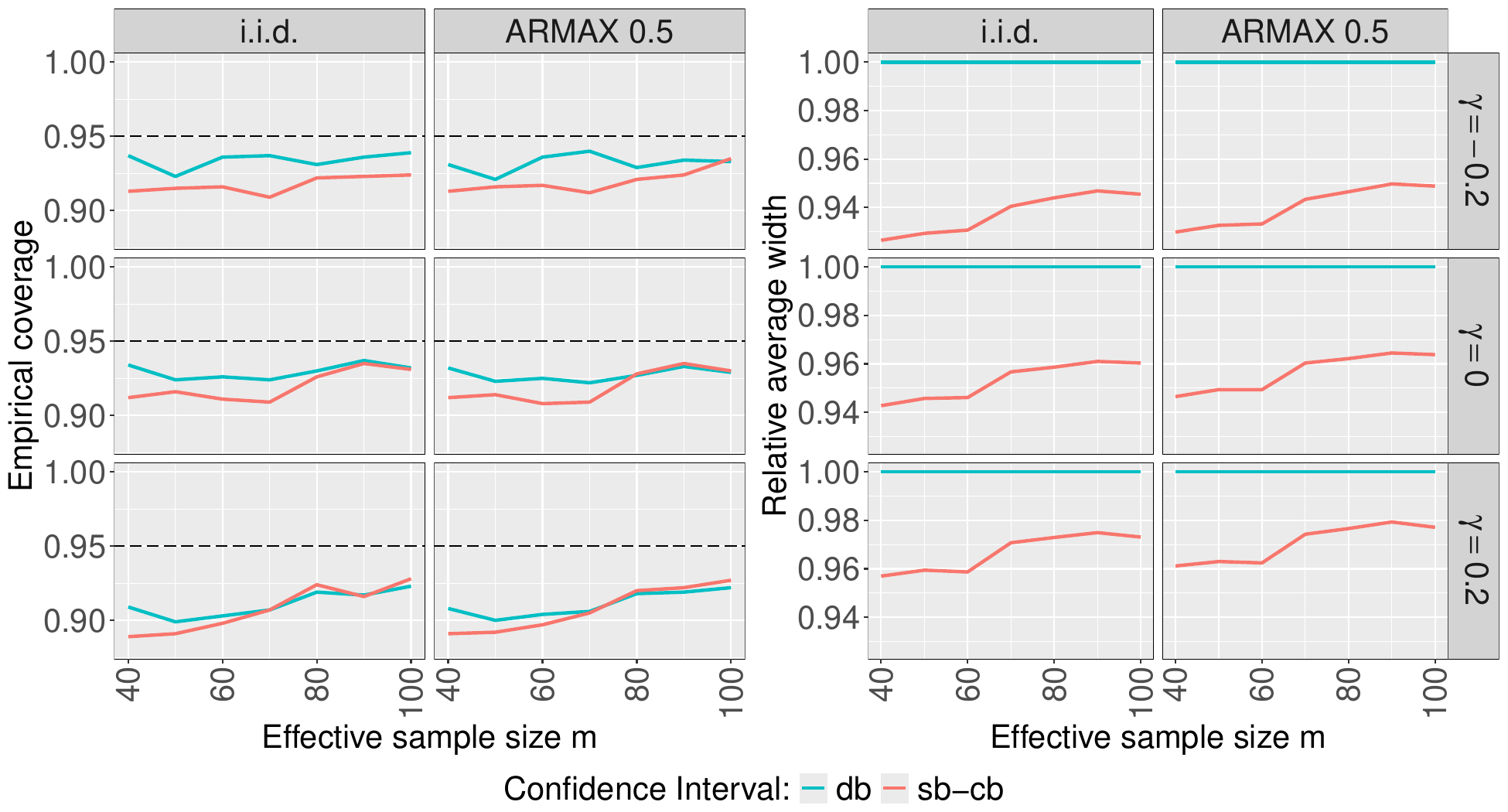}}\vspace{-.1cm}
\caption{Basic bootstrap confidence intervals for $\mu_r$ with fixed block size $r=365$. 
Left: empirical coverage with intended coverage of $95\%$ (dashed line). 
Right: relative average width with respect to the disjoint method, i.e., $\mathrm{width(CI}[\dbl]) / \mathrm{width(CI}[\mbl])$. 
}\label{fig:meanBstCiVar}	
\vspace{-.1cm}
\end{figure}
\begin{figure}[t!] 
\centering
\makebox{\includegraphics[width=0.95\textwidth]{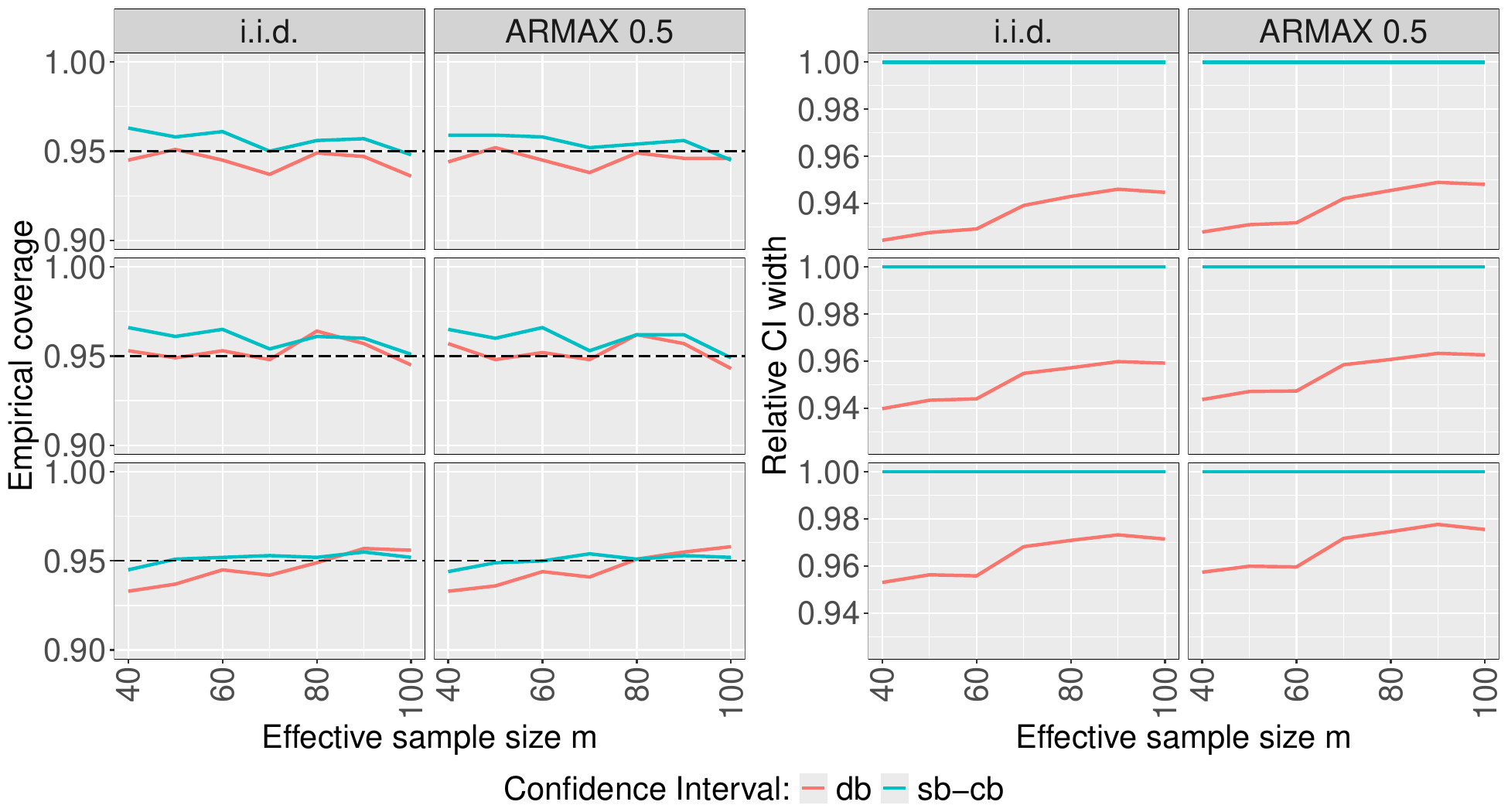}}\vspace{-.1cm}
\caption{Performance of the data-adaptive size-corrected confidence intervals for the mean. 
Left: empirical coverage of the data-adaptive size-corrected confidence intervals $\CI^{[\dbl]}(\hat c)$ and $\CI^{[\sblcbl]}(\hat c)$. Right: Respective relative width $\mathrm{width}(\CI^{[\sblcbl]}(\hat c))  / \mathrm{width}(\CI^{[\dbl]}(\hat c))$
.
}\label{fig:meanBstFacoCi}	
\vspace{-.1cm}
\end{figure}

\subsection{Fixed block size: estimation of Spearman's rho}
\label{sec:sim-fixr-spearman}

In this section, we consider a bivariate target parameter, namely Spearman's $\rho$ of $\bm M_r=(M_{r1}, M_{r2})$, throughout denoted as $\rho_r$. As in Sections~\refstar{sec:sim} and \ref{sec:sim-fixr-mean}, we consider a fixed block size of $r=365$. Seven data generating processes have been considered, each of which is based on an IID sequence of bivariate random vectors with Cauchy(1)-distributed margins, and with copula $C$ being either the independence copula, the  Gaussian copula, the $t_\nu$-copula with $\nu=4$ degrees of freedom, and the Gumbel-Hougaard copula. The parameter of three last-named copulas was chosen in such a way that the associated value of Spearman's rho, denoted $\rho_{\bm X}$,  is in $\{0.2, 0.4\}$. Note that the Gaussian copula is tail independent, while the $t$- and Gumbel copulas exhibit upper tail dependence.

The results are summarized in Figure~\ref{fig:spearman-estimator} (estimator performance) and in Figure~\ref{fig:spearman} (raw basic bootstrap confidence intervals), where we restrict attention to $\rho_{\bm X}=0.4$, as $\rho_{\bm X}=0.2$ provides qualitatively the same results. The findings are similar to the ones for return level and mean estimation: the sliding and circular blocks estimators behave very similar, and they outperform the disjoint blocks estimator. The contribution of the squared bias to the MSE was found to be negligible (less than 1\% for all scenarios). The disjoint and sliding-circular (raw) basic bootstrap confidence intervals exhibit a similar coverage (which is mostly below the intended coverage), but with much smaller intervals for the latter approach. This transfers to the the  size-corrected intervals, which are not displayed for the sake of brevity.

Finally, it should be noted that similar results were obtained when the above-mentioned copula models where used for innovations in a time series model.

\begin{figure}[t!] 
\centering
\makebox{\includegraphics[width=0.95\textwidth]{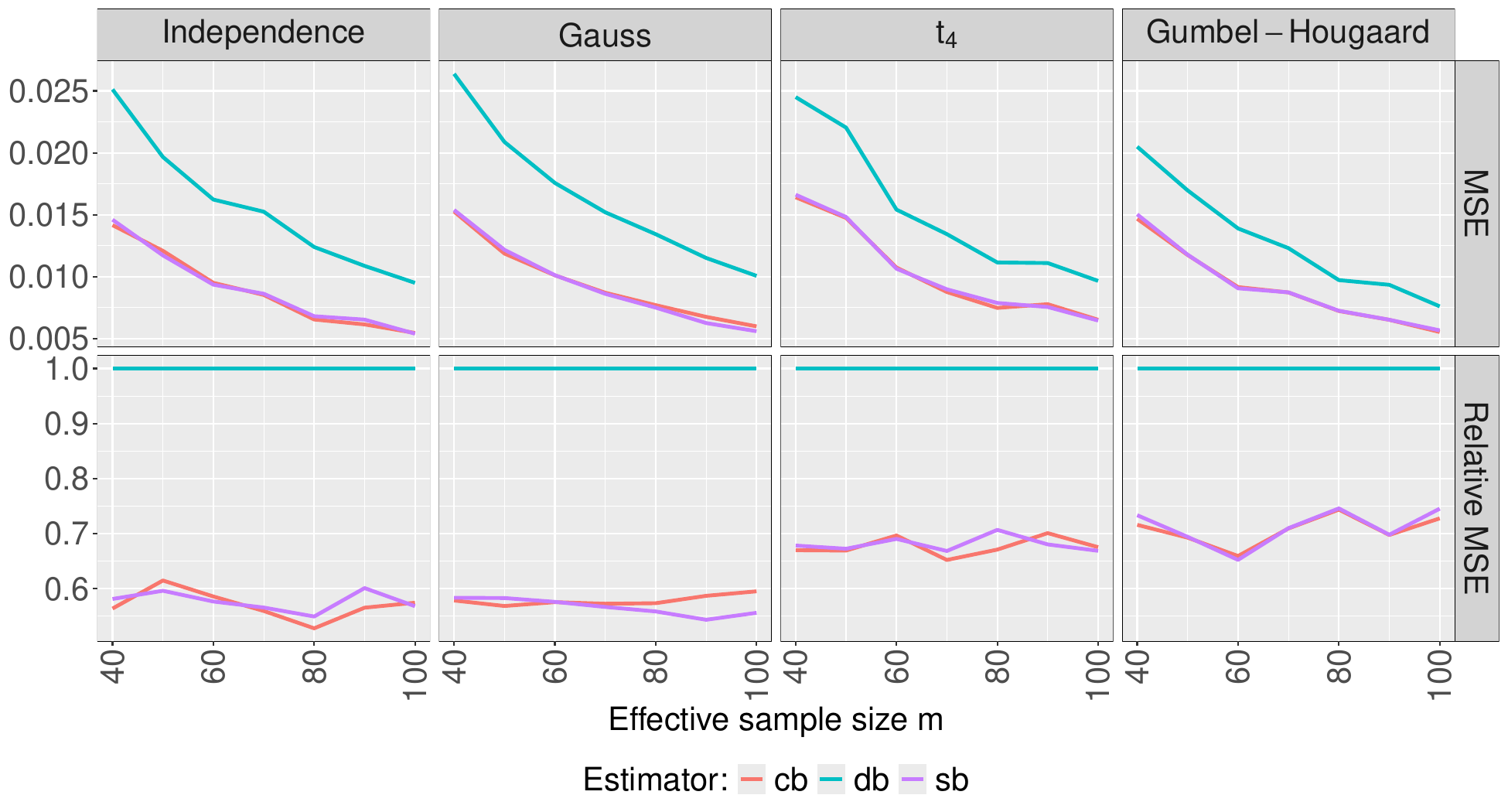}}\vspace{-.1cm}
\caption{Estimation of Spearman's $\rho=\rho_r$ with fixed block size $r = 365$. Top row: mean squared error $\mathrm{MSE}(\hat \rho_r^{[\mbl]})$. Bottom row: relative MSE with respect to the disjoint blocks method, i.e., $\mathrm{MSE}(\hat \rho_r^{[\mbl]}) / \mathrm{MSE}(\hat \rho_r^{[\dbl]})$.
}\label{fig:spearman-estimator}	
\vspace{-.1cm}
\end{figure}

\begin{figure}
\centering
\makebox{\includegraphics[width=0.95\textwidth]{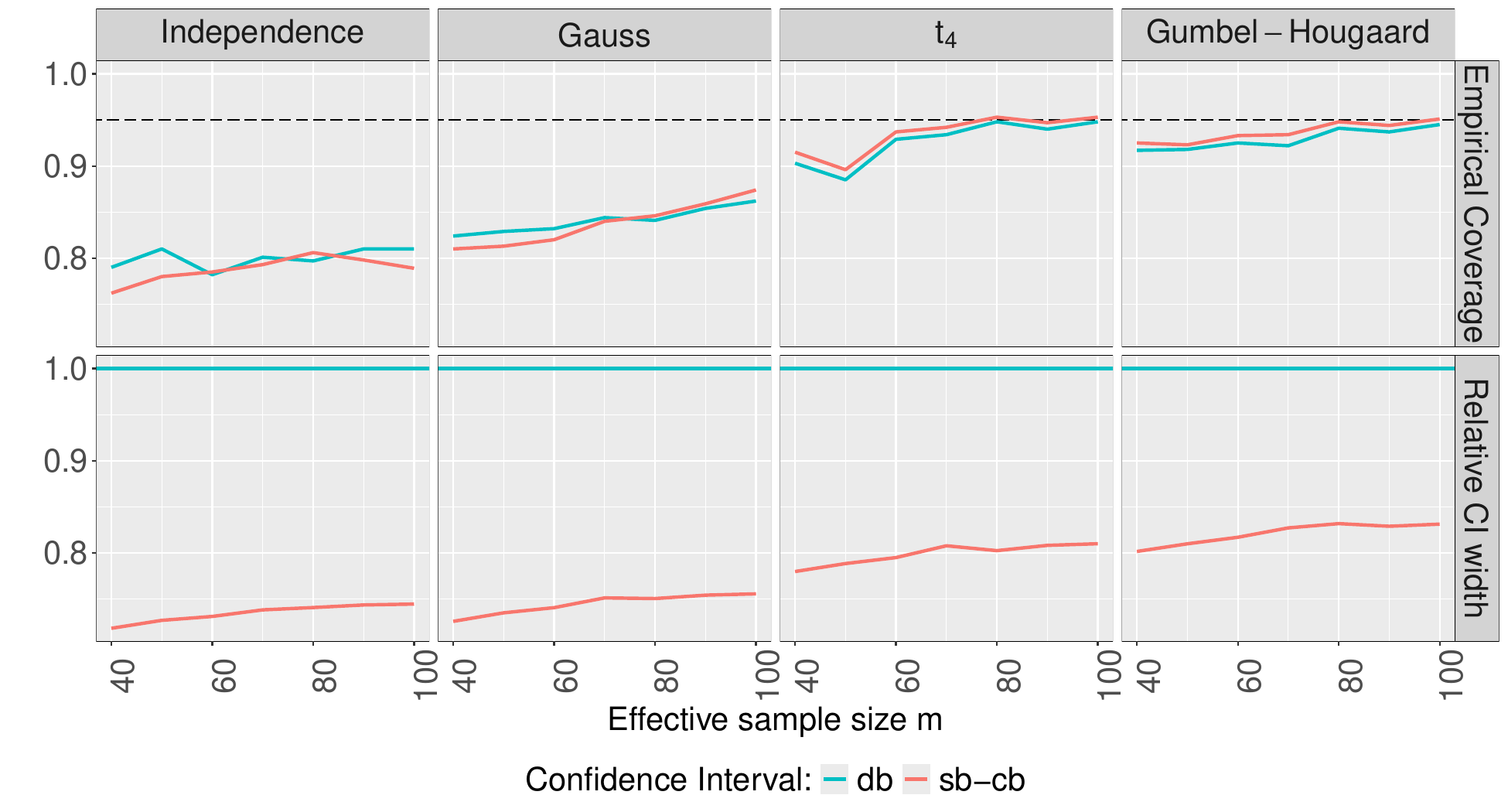}}\vspace{-.1cm}
\caption{Basic bootstrap confidence intervals (no size correction) for Spearman's $\rho=\rho_r$ with fixed block size $r = 365$. Top row: Empirical coverage. Bottom row: relative confidence interval widths, i.e., $\mathrm{width}(\CI^{[\sblcbl]}) / \mathrm{width}(\CI^{[\dbl]})$.
}
\label{fig:spearman}	
\vspace{-.1cm}
\end{figure}

\subsection{Fixed total sample size: shape estimation}
\label{sec:sim-fixn}

In this section, we consider the estimation of target parameters associated with the limiting law of $\bm M_r$ for $r\to\infty$, as well as bootstrap approximations for the respective estimation errors. For illustrative purposes, we restrict attention to the univariate, heavy tailed case and the estimation of the shape parameter $\alpha$ by pseudo-maximum likelihood estimation, as discussed in Section~\refstar{sec:frechet-small} from the main paper. 

\begin{model}[ARMAX-Pareto-Model]
\label{mod:armax-pareto}
$(X_t)_{t\in\Z}$ is a stationary, positive time series whose stationary distribution is the Pareto($\alpha$) distribution for some $\alpha>0$, defined by its CDF $F_\alpha(x) = 1-x^{-\alpha}$ for $x>1$. After transformation to the $\text{Fréchet}(1)$-scale, the temporal dynamics correspond to the ARMAX(1)-model with parameter $\beta\in [0,1)$; see Model~\ref{mod:armax-gpd} for details. Throughout, we consider the six models defined by the combinations of $\alpha\in \{0.5, 1, 1.5\}$ and $\beta\in \{0.0.5\}$.
\end{model}

For the simulation experiment, we fix the total sample size to $n=1,000$, and treat the block size $r$ as a variable tuning parameter akin to the choice of $k$ in the peak-over-threshold method. More precisely, 
we consider choices of $r$ ranging from 8 to 40, resulting in effective sample sizes $m=n/r$ ranging from 125 to 25.

\smallskip
\noindent \textbf{Performance of the estimators.}
In Figure \ref{fig:FreNFixMse} we depict the MSE of $\hat \alpha_n^{(\mbl)}$ from Section~\refstar{sec:frechet-small} with $\mbl \in \{\dbl, \sbl, \cblk\}$ as a function of the effective sample size $m=n/r$ (where we choose $k=2$ for the circular method; other choices lead to similar results). For $\beta=0.5$, we observe the typical bias-variance tradeoff resulting in a u-shaped MSE-curve: the larger the effective sample size, the  smaller the variance and the larger the bias (for $\beta=0$, the MSE is dominated by the variance even for block sizes as small as 8).
Apart from that, the findings are similar to the previous results for the case where $r$ was fixed: the sliding and circular block maxima estimator perform remarkably similar (in particular, the additional bias in the circular maxima is irrelevant), and it uniformly outperforms the  disjoint blocks estimator across all models under consideration.

\begin{figure}[t!] 
\centering
\makebox{\includegraphics[width=0.9\textwidth]{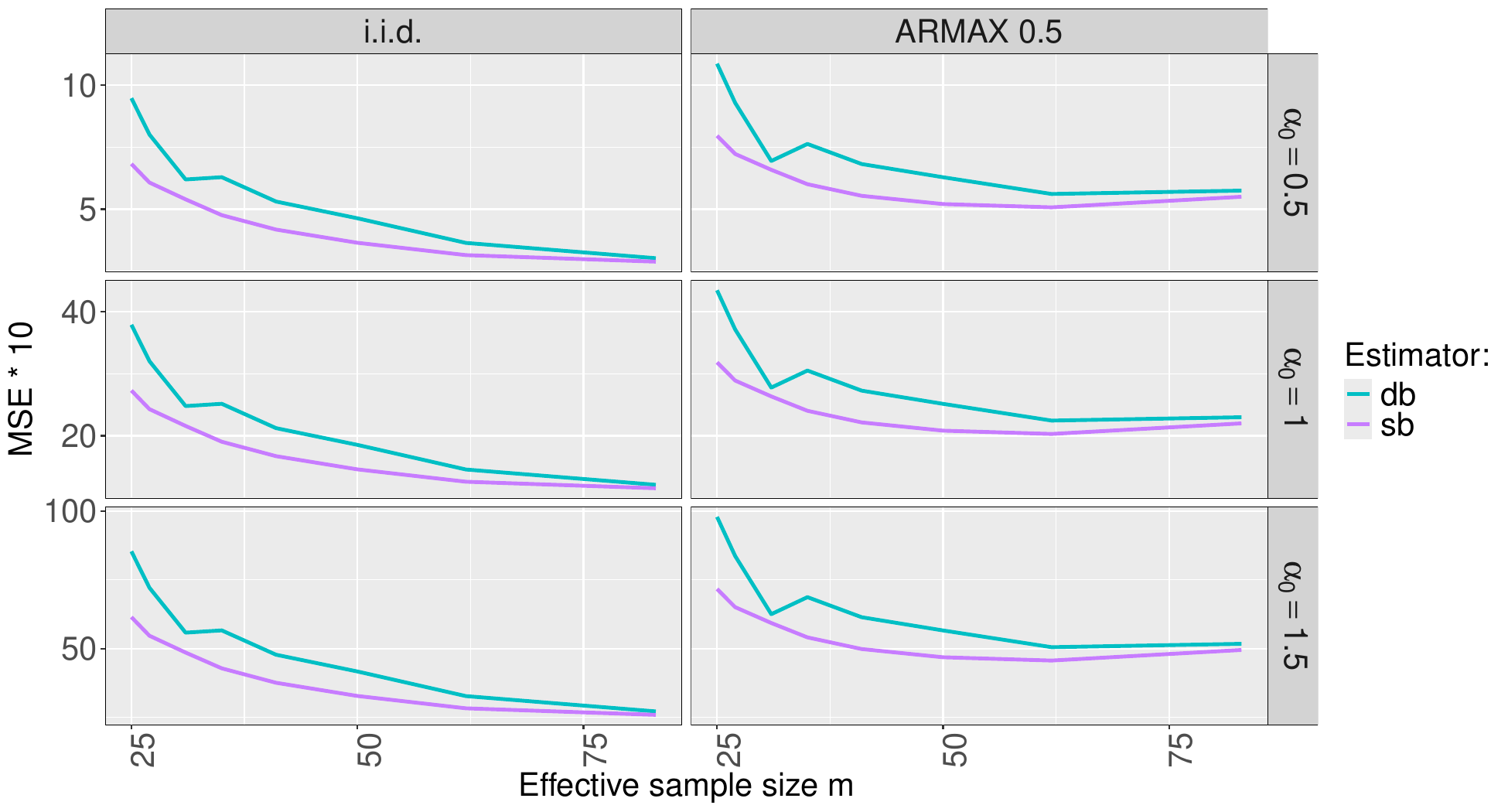}}\vspace{-.1cm}
\caption{
Mean squared error for the estimation of $\alpha$ with fixed sample size $n=1,000$.
}\label{fig:FreNFixMse}	
\vspace{-.1cm}
\end{figure}

\smallskip
\noindent \textbf{Performance of the bootstrap.} 
Next, we consider each of the bootstrap estimators $\hat \alpha_n^{(\mbl), \ast}$ with $\mbl \in \{\dbl, \sbl, \cblk \}$,  with number of bootstrap replications set to $B=1,000$.
Similar as in Section~\refstar{sec:sim}, we start by evaluating the performance of the three bootstrap approaches in terms of their ability to provide accurate estimates of the estimation variance $\sigma_\mbl^2(r) = \Var(\hat \alpha_n^{(\mbl)})$ with $\mbl \in \{\dbl, \sbl\}$.
In Figure~\ref{fig:plotFreFixNVarEst}, we depict the average over the $N=1,000$ bootstrap estimates, along with the true parameters determined in a presimulation based on $10^6$ repetitions. 
The findings are akin to those in Section~\refstar{sec:sim} and \ref{sec:sim-fixr-mean}, the only additional remarkable observation being that the disjoint blocks method tends to overestimate the variance parameter in the IID case, in particular for large block sizes.

\begin{figure}[t!] 
\centering
\makebox{\includegraphics[width=0.95\textwidth]{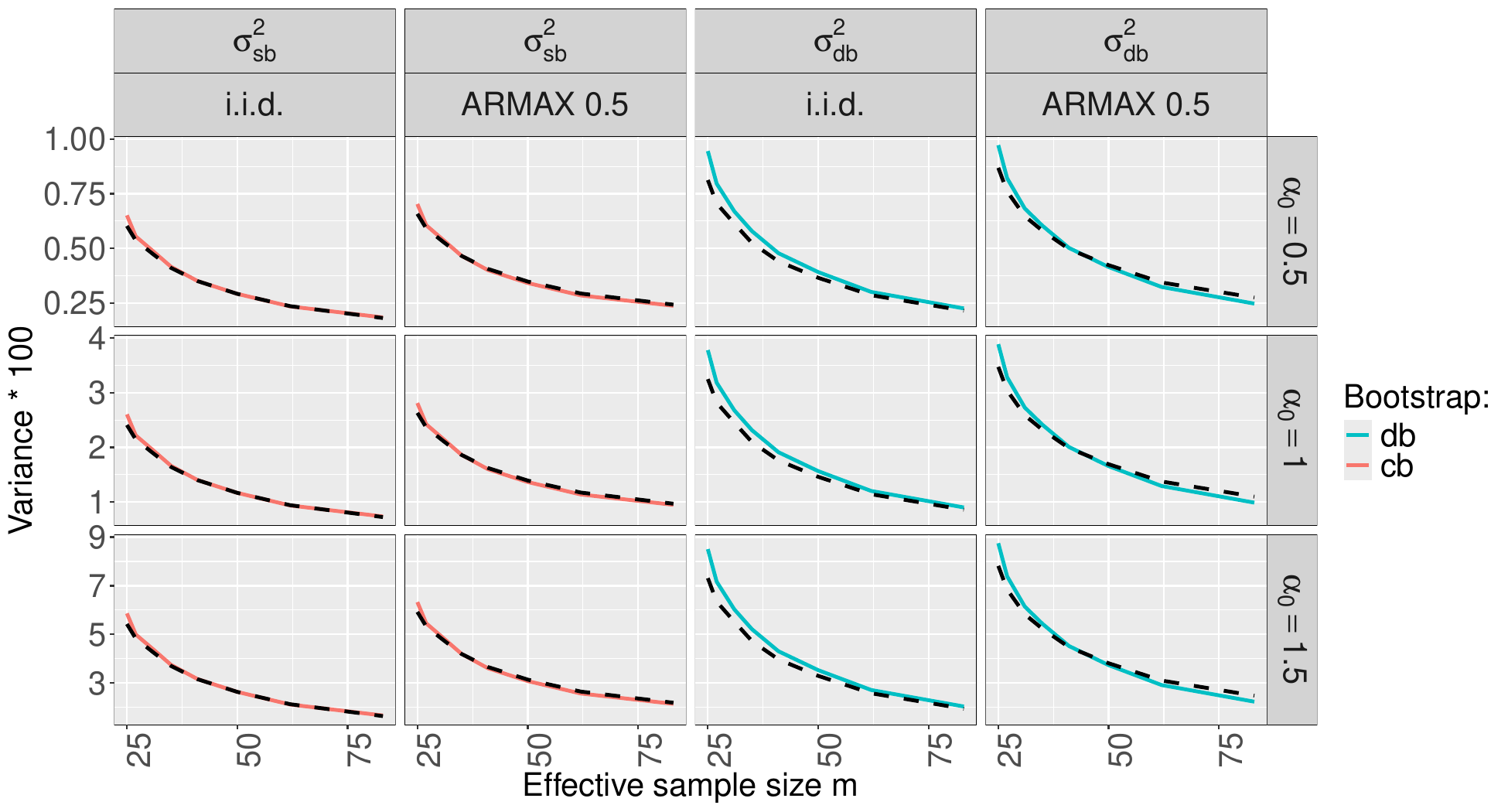}}\vspace{-.1cm}
\caption{Bootstrap-based estimation of the estimation variance $\sigma_\mbl^2(r) = \Var(\hat \alpha_n^{(\mbl)})$ with fixed sample size $n = 1,000$. Left two columns: sliding ($\mbl=\sbl$); the dashed line corresponds to $\sigma_\sbl^2(r)$. Right two columns: disjoint ($\mbl=\dbl$); the dashed line corresponds to $\sigma_\dbl^2(r)$.
}
\label{fig:plotFreFixNVarEst}	
\vspace{-.1cm}
\end{figure}

Finally, we evaluate the performance of the bootstrap approaches in terms of their ability to provide accurate (basic bootstrap) confidence intervals of pre-specified coverage.
The empirical coverage and the average widths of the respective intervals are depicted in Figure~\ref{fig:plotFreFixNCi}, where we omit the naive sliding method because of its inconsistency.  
We find that in most scenarios the desired coverage is almost reached by any method (observed coverage $\ge 92\%)$, as long as the block size is not too small. Overall, the disjoint blocks method has the the best coverage by a small margin, often reaching the intended level exactly (possibly because of the overestimation of the variance parameter observed in Figure~\ref{fig:FreNFixMse}). The observed qualitative behavior may further be explained by the fact that the target parameter is an asymptotic parameter, such that both the sliding and the disjoint blocks method provide bias estimates (with the same asymptotic bias). Smaller confidence intervals for the sliding-circular method hence necessarily imply a smaller coverage because they are concentrated around a biased estimate.

\begin{figure}[t!] 
\centering
\makebox{\includegraphics[width=0.95\textwidth]{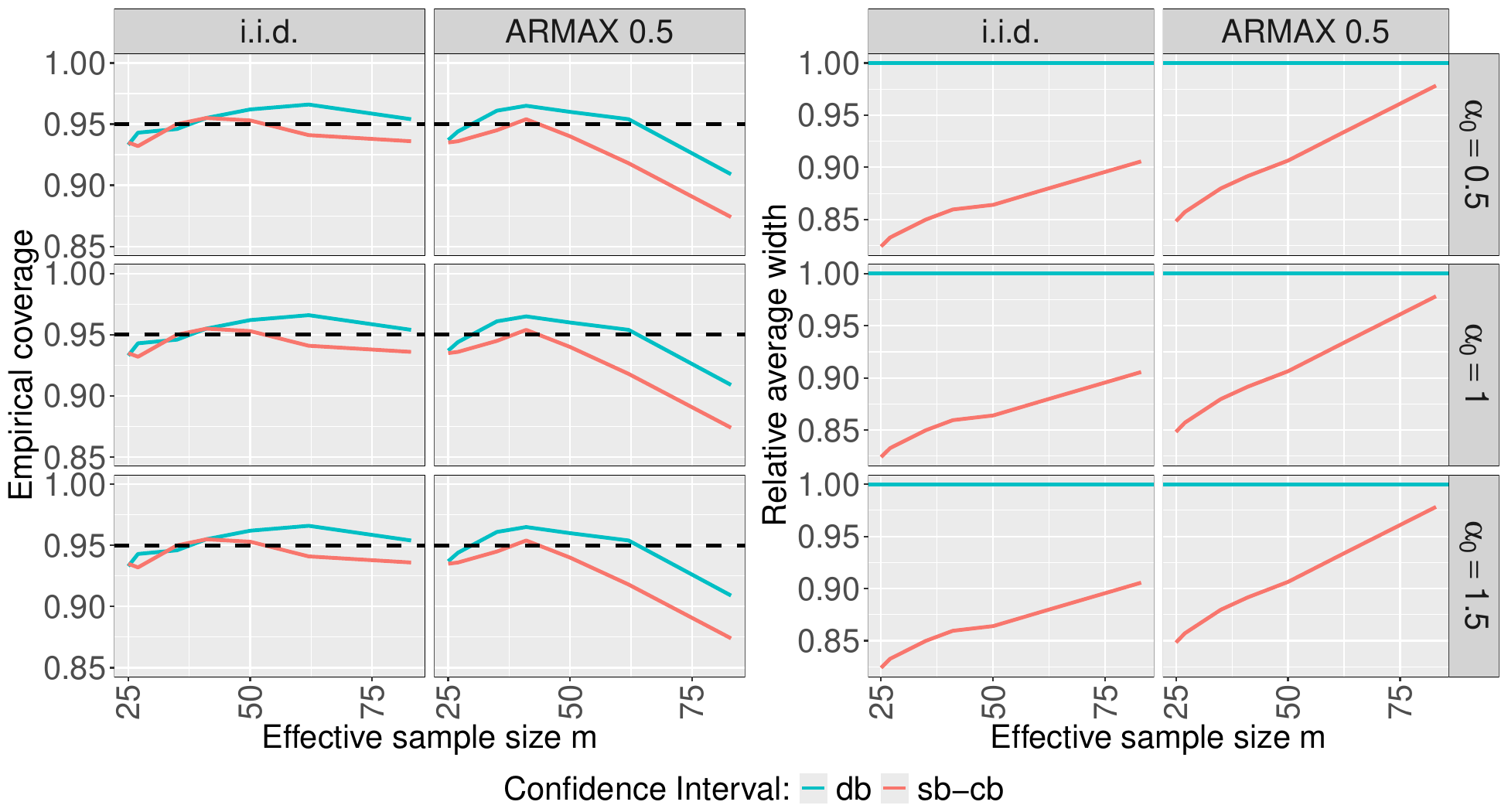}}\vspace{-.1cm}
\caption{Empirical coverage and average width of basic bootstrap CIs for shape estimation ($\alpha$), fixed $n=1{,}000$. 
Left: empirical coverage with intended coverage of $95\%$ (dashed line). 
Right: relative average width with respect to the disjoint method, i.e., $\mathrm{width(CI}(\dbl)) / \mathrm{width(CI}(\mbl))$.  
}\label{fig:plotFreFixNCi}	
\vspace{-.1cm}
\end{figure}

%%%%%%%%%%%%%%%%%%%%%%%%%%%%%%%%%%%%%%%%%%%%%%%%%%%%%%%%%%%%%%%%%%%%%%%%%%%%%%%%%%
\subsection{Runtime comparison}
\label{sec:runtime}

In classical situations, the computational cost of the bootstrap depends linearly on the number of bootstrap replications and is therefore high if a single evaluation of a statistic of interest is computationally intensive. 
Since both the sliding and the circular block maxima samples are much larger than the plain disjoint block maxima sample (sample sizes $n$ vs.\ $n/r$, respectively), one may naively think that the former methods require substantially more computational resources. As we will argue below and prove with simulations, this naive heuristic is not correct.

For any block maxima method, the starting point for a single evaluation of a statistic of interest is the calculation of the respective block maxima samples, which requires evaluating $O(n/r)$ maxima for the disjoint block maxima method, or $O(n)$ maxima for the sliding and circular block maxima method. 
Subsequently, when the statistic of interest is applied to one of the samples, the fact that both the sliding and the circmax samples can be efficiently stored as a weighted sample of size $O(n/r)$ implies that the additional computational cost of the latter two methods is, approximately, only a constant multiple of the additional cost for the plain disjoint block maxima method.
Next, for the bootstrap approaches proposed within this paper, no additional evaluation of maxima is ever required, whence, overall, the only major difference between the three approaches is the initial calculation of the block maxima samples. Therefore, the relative computational effort should only depend on $n$, and only moderately. 

The above heuristic has been confirmed by Monte Carlo simulations. Exemplary results are presented in~Figure \ref{fig:timeComp}, which rely on simulated data from Model~\refstar{mod:armax-pareto} (parameters: $\beta=0.5$ and $\alpha=1.5$) with fixed block size $r=90$ and total sample size ranging form $40 \cdot 90=3,600$ up to $100 \cdot 90=9,000$. The target parameter is the runtime for calculating $B \in \{250, 500, 750, 1,000\}$ bootstrap replicates of $\hat {\bm \theta}_n^{[\mbl]}$ from \eqref{eq:mle-frechet} for $\mbl \in \{ \dbl, \sbl, \cblk\}$, assessed by taking the median over $N=500$ repetitions each. 
The disjoint blocks method has been considered as a benchmark, whence we depict relative runtimes with respect to that method. We find that, as expected, the relative runtime is mostly depending on $n$, with only a moderate loss in performance for the circmax-method.
Similar results were obtained for other estimators and models.

\begin{figure}[ht!] 
\centering
\makebox{\includegraphics[width=0.9\textwidth]{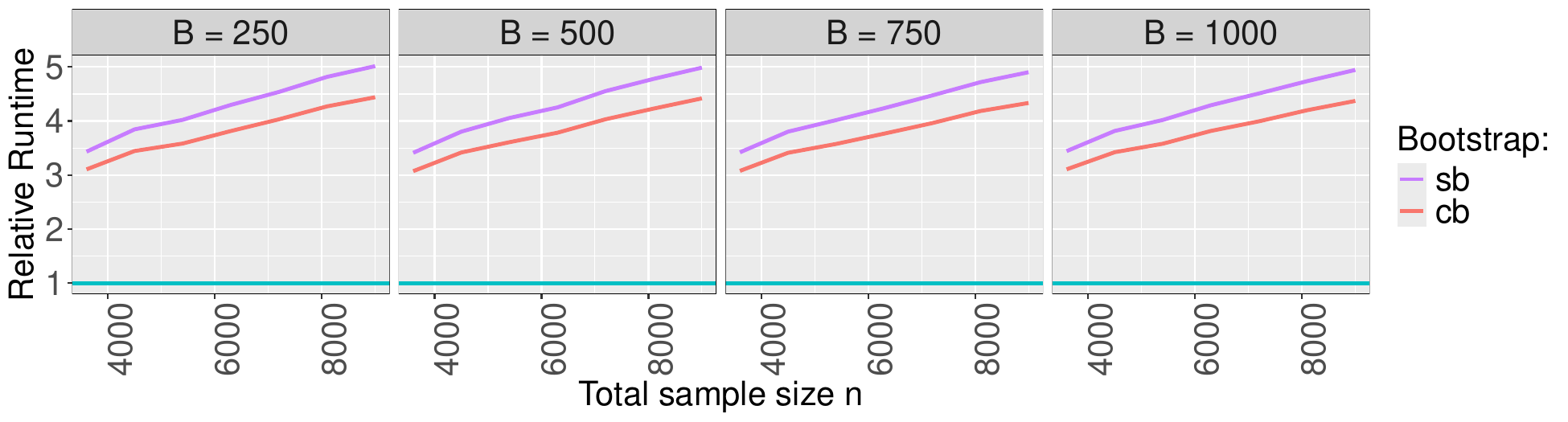}}\vspace{-.1cm}
\caption{Relative median runtimes of different bootstrap algorithms for bootstrapping $\hat {\bm \theta}_{n}^{[\mbl]}$ (relative to the runtime of the disjoint blocks bootstrap) for fixed blocksize $r = 90$ as a function of the effective sample size and for different numbers of bootstrap replicates~$B.$
}\label{fig:timeComp}	
\vspace{-.3cm}
\end{figure}

\putbib
\end{bibunit}

\end{document}